\documentclass{amsart}
\usepackage{color}
\usepackage{amsmath}
\usepackage{amssymb}
\usepackage{amsthm}
\usepackage{amscd}
\usepackage{enumitem}
\usepackage{tikz}
\usepackage{tikz-cd}
\usepackage{hyperref}
\usepackage[utf8]{inputenc}
\usepackage[all]{xy}

\usepackage{hyperref}
\hypersetup{
  colorlinks   = true,          
  urlcolor     = blue,          
  linkcolor    = blue,          
  citecolor   = red             
}

\theoremstyle{plain}
\newtheorem{theorem}{Theorem}[section]

\newtheorem{proposition}[theorem]{Proposition}
\newtheorem{corollary}[theorem]{Corollary}

\newtheorem{rem}[theorem]{Remark}
\newtheorem{Cor}[theorem]{Corollary}

\newtheorem{Lem}[theorem]{Lemma}
\newtheorem{rems}[theorem]{Remarks}


\theoremstyle{definition}
\newtheorem{definition}[theorem]{Definition}

\numberwithin{equation}{section}

\newcommand\fantome[1]{}

\def \N {\mathbb N}
\def \Z {\mathbb Z}

\def \C {\mathbb C}

\def \F {\mathbb F}

\DeclareMathOperator{\ev}{ev}

\DeclareMathOperator{\Hom}{Hom}
\DeclareMathOperator{\Fitt}{Fitt}

\DeclareMathOperator{\Tr}{Tr}

\DeclareMathOperator{\Frac}{Frac}
\DeclareMathOperator{\Gal}{Gal}
\DeclareMathOperator{\Ker}{Ker}
\DeclareMathOperator{\Lie}{Lie}
\DeclareMathOperator{\Vol}{Vol}
\DeclareMathOperator{\Spec}{Spec}
\DeclareMathOperator{\MSpec}{MSpec}
\DeclareMathOperator{\id}{id}

\makeatletter
\@namedef{subjclassname@2020}{%
  \textup{2020} Mathematics Subject Classification}
\makeatother

\author{Tiphaine Beaumont}

\address{
Universit\'e de Caen Normandie,
Laboratoire de Math\'ematiques Nicolas Oresme,
CNRS UMR 6139,
Campus II, Boulevard Mar\'echal Juin,
B.P. 5186,
14032 Caen Cedex, France.
}
\email{tiphaine.beaumont@unicaen.fr}

\title{On equivariant class formulas for $t$-modules}


\setcounter{tocdepth}{1}

\parskip 3pt

\begin{document}

\begin{abstract}
We obtain an equivariant class formula for $z$-deformation of $t$-modules. Under mild conditions, it allows us to get an equivariant class formula for $t$-modules.
\end{abstract}

\subjclass[2020]{11M38, 11G09, 11F80}

\keywords{ Drinfeld modules, Anderson modules, $L$-functions, class formula}

\date{\today}

\maketitle

\tableofcontents


\section{Introduction}

In \cite{ref9}, Taelman introduced the notions of class module and unit module for Drinfeld modules and gave a conjectural class formula when $A=\F_q[\theta]$. He proved it later in  \cite{ref8}. 

 It was extended  by Fang in \cite{ref18} for Anderson modules and by Demeslay in \cite{ref16, ref17} for Anderson modules with variables. Mornev proved the class formula for some Drinfeld $A$-modules with general $A$  in \cite{ref21}.
 Recently, in \cite{ref1}, Anglès, Ngo Dac and Tavares Ribeiro proved the class formula for a general $A$ and some Anderson modules, in particular for Drinfeld modules. 

For an abelian Galois group $G$, the equivariant class formula was proved by differents ways when $p$ does not divide $\vert G\vert$ by Anglès and Taelman in \cite{ref10}, Anglès and Tavares Ribeiro in \cite{ref3} for Drinfeld modules. It was extended by Fang in \cite{ref20} when $p$ does not divide $\vert G\vert$ for Anderson modules.

Recently, Ferrara, Green, Higgins and D. Popescu in \cite{ref4} adapted the method of Taelman in \cite{ref8} to the equivariant theory for Drinfeld modules for general $G$.

This article is based on \cite{ref4}  with the utilisation of the $z$-deformation of $t$-modules \cite{ref19, ref2, ref3}. Evaluating at $z=1$, it enables us to get an equivariant class formula in some cases using the method of \cite{ref1} in the equivariant setting. 

The strategy of the proofs consist of Taelman's techniques used in the equivariant setting with variable. We combine the results and the appendix of Ferrara, Green, Higgins and D. Popescu (see \cite{ref4}) corresponding to the equivariant context with the results of Demeslay (see \cite{ref15}) which corresponds to the part with variables.

As many proofs follow the same line those in \cite{ref4} but for $\F_q(z)$ instead of $\F_q$ we give the statements and omit the proofs. 

\bigskip

Let us briefly describe the results of this paper. 

Let $p$ be a prime number and $q$ a power of $p$. Let $A=\F_q[\theta]$ with $\theta$ an indeterminate and 
 $k=\F_q(\theta)$ its field of fractions. We denote $k_\infty=\F_q\left(\left(\theta^{-1}\right)\right)$. Let $L$ be a finite extension of $k$.  We denote $L_\infty=L\otimes_k k_\infty$. 
We set the $\F_q$-algebra homomorphism $\tau:L_\infty\longrightarrow L_\infty$ which associates $x^q$ to $x$.

Let $K/k$ be a finite extension and $\mathcal{O}_K$ the integral closure of $A$ in $K$. Let $\mathcal{M}_n(K)\{\tau\}$ be the ring of twisted polynomials with coefficients in $\mathcal{M}_n(K)$. 
Let $E$ be a Anderson module of dimension $n$ defined over $\mathcal{O}_K$ : it means we take a $\F_q$-algebra homomorphism $\phi_E:	A \longrightarrow\mathcal{M}_n(\mathcal{O}_K)\{\tau\}$ which sends $\theta$ to $\sum\limits_{i=0}^rA_i\tau^i$
where for all $i\in [\![0;r]\!], A_i\in \mathcal{M}_n(\mathcal{O}_K)\{\tau\}$ and $A_0$ verifies $(A_0-\theta I_n)^n=0_n$. 
In particular, a Drinfeld module is an Anderson module of dimension 1.

Let $B$ be an $\mathcal{O}_K$-algebra. We denote by $E(B)$ the $A$-module $B^n$ equipped with the structure  of $A$-module induced by $\phi_E$. We also have the  $A$-module $B^n$ whose  structure of $A$-module is given by the morphism $\delta_E:	A \longrightarrow\mathcal{M}_n(\mathcal{O}_K)$ such that $\delta_E(\theta)=A_0$. We write it $\Lie_E(B).$

There exists a unique power serie $\exp_E\in I_n +\mathcal{M}_n(K)\{\tau\}\tau$ which verifies the equality $\exp_E\delta_E(\theta) =\phi_E(\theta) \exp_E$.  Moreover, it converges on $\Lie_E(L_\infty)$ if  $L/K$ is a finite extension.

\bigskip

We introduce the notion of almost taming module which generalizes the notion of taming module introduced by Ferrara, Green, Higgins and D. Popescu in \cite{ref4}.

\begin{definition} Let $L/K$ be a finite extension of abelian Galois group $G$. An {\it almost taming module} for $L/K$ is an $A$-module which verifies
\begin{itemize}
\item $M$ is an $A$-lattice of $L_\infty$,
\item $M$ is a projective  $A[G]$-module,
\item $M$ is an $\mathcal{O}_K\{\tau\}[G]$-module. 
\end{itemize}
\end{definition}

Following Taelman \cite{ref9}, we define $$U(E( M))=\{x\in \Lie_E(L_\infty), \exp_E(x)\in E(M)\}$$  as the unit module attached to $M$ and $$H(E( M))=\dfrac{E(L_\infty)}{ E(M)+\exp_E(\Lie_E(L_\infty))}$$ the class module for $M$. The unit module is an $A$-lattice of $\Lie_E(L_\infty).$
\bigskip

Anglès and Tavares Ribeiro introduced the notion of $z$-deformation for Drinfeld modules in \cite{ref3}. With Ngo Dac, they developped it and extended it for Anderson modules in \cite{ref19, ref2}. 
It allowed them to define the Stark units attached to $\mathcal{O}_L$ which we will extend to $M$.

Let $z$ be an indeterminate over $k_{\infty}$. We set $\widetilde{L}_\infty=L\otimes_{k}\widetilde{k}_\infty$ where $\widetilde{k}_\infty=\F_q(z)((\theta^{-1}))$.
We keep the notation $\tau$ for the $\F_q(z)$-algebra homomorphism $\tau:\widetilde{L}_\infty\longrightarrow\widetilde{L}_\infty$ which associates $x^q$ to $x$.

We recall that $E$ is an  Anderson module such that $\phi_E(\theta)=\sum\limits_{i=0}^r A_i\tau^i$. Then we can define $\widetilde{E}$ called the $z$-deformation of $E$ by the homomorphism of $\F_q(z)$-algebras $\phi_{\widetilde{E}}: \F_q(z)[\theta] \longrightarrow \mathcal{M}_n(K(z))\{\tau\}$ such that 
$\phi_{\widetilde{E}}(\theta)=\sum\limits_{i=0}^rz^i A_i\tau^i.$

If  $\exp_E=\sum\limits_{i\geq 0}E_i\tau^i$, we set $\exp_{\widetilde{E}}=\sum\limits_{i\geq 0}E_iz^i\tau^i$. It verifies $\exp_{\widetilde{E}}\delta_E(\theta) =\phi_{\widetilde{E}}(\theta) \exp_{\widetilde{E}}$. Furthermore, it converges on $\Lie_{\widetilde{E}}(\widetilde{L}_\infty)$. 
We denote $\widetilde{A}=\F_q(z)[\theta]$ and  $\widetilde{M}=M\otimes_A\widetilde{A}$.
Following Anglès, Ngo Dac and Tavares Ribeiro, we define   $$U(\widetilde{E}(\widetilde{M}))=\{x\in \Lie_{\widetilde{E}}(\widetilde{L}_\infty), \exp_{\widetilde{E}}(x)\in \widetilde{E}(\widetilde{M})\}.$$

By the same reasoning of \cite{ref16}, it is an $\widetilde{A}$-lattice of $\Lie_{\widetilde{E}}(\widetilde{L}_\infty)$.

Let  $\mathbb{T}_z(k_\infty)$ be the Tate algebra
 with coefficents in $k_\infty$ and $\mathbb{T}_z(L_\infty)=L_\infty\otimes_{k_\infty}\mathbb{T}_z(k_\infty). $ Note that  $\mathbb{T}_z(k_\infty)\subset \widetilde{L}_\infty.$

We define the module of $z$-units of $\widetilde{E}$ relative to $M[z]$ by  $$U(\widetilde{E}(M[z]))=\{x\in \Lie_{\widetilde{E}}(\mathbb{T}_z(L_\infty)), \exp_{\widetilde{E}}(x)\in \widetilde{E}(M[z])\}.$$

Following Anglès, Ngo Dac and Tavares Ribeiro in \cite{ref2} for Drinfeld modules and in \cite{ref19} for $t$-modules   we are now in position to define the module of Stark units of a $t$-module $E$ attached to  $M$.

\bigskip

We denote by $\ev:\mathbb{T}_z(L_\infty)\longrightarrow L_\infty$ the evaluation at $z=1$. The module of Stark units of $M$ is defined by $$U_{St}(E(M))=\ev(U(\widetilde{E}(M[z]))).$$ 
It is contained in   $U(E(M))$ and it is an $A$-lattice of $\Lie_E(L_\infty)$. 
\bigskip 

Following Ferrara, Green, Higgins and D. Popescu in \cite{ref4}, we define an equivariant regulator $[\Lambda: \Lambda']_G$   for two projective $A[G]$-modules $\Lambda, \Lambda'$ of $L_\infty^n$ or  for two projective $\widetilde{A}[G]$-modules $\Lambda, \Lambda'$ of $\widetilde{L}_\infty^n$.

We recall that $\MSpec(A)$ corresponds to the set of maximal ideals of $A$.

We define the $G$-equivariant $L$-function attached to $M$ by 
$$\mathcal{L}_G(E (M))=\prod\limits_{v\in \MSpec(A)}\dfrac{\vert \Lie_E(M/vM)\vert_G }{\vert E(M/vM)\vert_G} $$ where $\vert X \vert_G$ corresponds to the unique monic generator of $\Fitt_{A[G]}(X)$. In the same way, $\mathcal{L}_G(\widetilde{E} (\widetilde{M}))=\prod\limits_{v\in \MSpec(A)}\dfrac{\vert \Lie_{\widetilde{E}}(\widetilde{M}/v\widetilde{M}\vert_G }{\vert \widetilde{E}(\widetilde{M}/v\widetilde{M})\vert_G} $ where $\vert X \vert_G$ corresponds to the unique monic generator of $\Fitt_{\widetilde{A}[G]}(X)$.

Thanks to the introduction of $z$, the unit module becomes $G$-cohomologically trivial, which is not necessary the case without this variable.
Combining the method of Ferrara, Green, Higgins and D. Popescu in \cite{ref4} with the $z$-deformation, we get the following result.

\begin{theorem}
\textit{(Equivariant class number formula for $z$-deformation)}

We have
$$\left[\Lie_{\widetilde{E}}(\widetilde{M}):U(\widetilde{E}(\widetilde{M}))\right]_G=\mathcal{L}_G(\widetilde{E}( \widetilde{M})).$$
\end{theorem}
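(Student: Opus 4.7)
The plan is to adapt the strategy of Ferrara--Green--Higgins--Popescu \cite{ref4} from the base ring $A$ to the ring $\widetilde A = \F_q(z)[\theta]$, exploiting the fact that the $z$-deformation restores cohomological properties that are lost when working over $A[G]$ for general $G$. I would begin by constructing the fundamental four-term exact sequence of $\widetilde A[G]$-modules
\[
0 \longrightarrow U(\widetilde E(\widetilde M)) \longrightarrow \Lie_{\widetilde E}(\widetilde L_\infty)/\widetilde M \xrightarrow{\exp_{\widetilde E}} \widetilde E(\widetilde L_\infty)/\widetilde E(\widetilde M) \longrightarrow H(\widetilde E(\widetilde M)) \longrightarrow 0,
\]
and verifying that each term is controlled: $U(\widetilde E(\widetilde M))$ and $\widetilde M$ are $\widetilde A$-lattices in $\Lie_{\widetilde E}(\widetilde L_\infty)$, and the two middle quotients compare via $\exp_{\widetilde E}$ up to a compact error. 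The crucial input supplied by the $z$-deformation, following Anglès--Ngo Dac--Tavares Ribeiro \cite{ref19,ref2,ref3}, is that $U(\widetilde E(\widetilde M))$ becomes $G$-cohomologically trivial and hence projective over $\widetilde A[G]$. It is precisely this projectivity that makes the equivariant index $\left[\Lie_{\widetilde E}(\widetilde M):U(\widetilde E(\widetilde M))\right]_G$ well-defined; without the $z$-variable, no such cleanness can be guaranteed when $p$ divides $|G|$.

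The next step is to express the $G$-equivariant $L$-function as an equivariant regulator. Each local factor $|\Lie_{\widetilde E}(\widetilde M/v\widetilde M)|_G/|\widetilde E(\widetilde M/v\widetilde M)|_G$ should be reinterpreted as the equivariant Fitting ideal of the two-term complex $\widetilde M/v\widetilde M \xrightarrow{\phi_{\widetilde E}(\theta) - \delta_E(\theta)} \widetilde M/v\widetilde M$, and the Euler product over $v \in \MSpec(A)$ must be shown to converge, in a suitable completion of $\widetilde A[G]$, to the equivariant determinant of $\phi_{\widetilde E}(\theta)-\delta_E(\theta)$ acting on a pair of $\widetilde A[G]$-projective lattices inside $\Lie_{\widetilde E}(\widetilde L_\infty)$. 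Demeslay's variable-theoretic analytic machinery \cite{ref15} is what furnishes this convergence in the $z$-direction, while the appendix of \cite{ref4} supplies the Fitting-ideal bookkeeping needed to pass from finite quotients to a global object living in $\widetilde A[G]$.

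To conclude, I would match the two computations: the equivariant determinant read off from the Euler product must coincide with the equivariant index extracted from the four-term sequence above, yielding the desired equality $[\Lie_{\widetilde E}(\widetilde M):U(\widetilde E(\widetilde M))]_G = \mathcal{L}_G(\widetilde E(\widetilde M))$. The hardest point will be technical rather than conceptual: tracking Fitting ideals of projective-but-not-free $\widetilde A[G]$-modules through the exact sequence, and propagating Demeslay's $z$-analytic estimates into the equivariant framework so that the product over $\MSpec(A)$ has the expected multiplicative behaviour. Once the $\widetilde A[G]$-projectivity of $U(\widetilde E(\widetilde M))$ is secured, the remainder of the argument is a careful transcription of \cite{ref4} with $\F_q$ replaced by $\F_q(z)$ throughout, which is why many of the subsequent proofs can be omitted.
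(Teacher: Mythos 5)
Your overall strategy is the same as the paper's: realize the Euler product $\mathcal{L}_G(\widetilde{E}(\widetilde{M}))$ as a nuclear/equivariant determinant via the trace formula of \cite{ref4} transposed from $\F_q$ to $\F_q(z)$, compare it with the equivariant index through the volume--determinant relation, and use the projectivity of $U(\widetilde{E}(\widetilde{M}))$ over $\widetilde{A}[G]$ to make the index well defined. That part is sound.

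There is, however, one genuine gap. Your four-term exact sequence keeps $H(\widetilde{E}(\widetilde{M}))$ as a possibly nontrivial term, and you never assert or use its vanishing. The statement you are proving has no class-module factor: matching the determinant of the Euler product against the index extracted from your exact sequence would, in general, only give
$$\bigl[\Lie_{\widetilde{E}}(\widetilde{M}):U(\widetilde{E}(\widetilde{M}))\bigr]_G\,\bigl\vert H(\widetilde{E}(\widetilde{M}))\bigr\vert_G=\mathcal{L}_G(\widetilde{E}(\widetilde{M})),$$
and even writing $\vert H(\widetilde{E}(\widetilde{M}))\vert_G$ requires knowing that $H(\widetilde{E}(\widetilde{M}))$ is $G$-cohomologically trivial. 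The essential input of the $z$-deformation is not directly the cohomological triviality of $U(\widetilde{E}(\widetilde{M}))$, but the fact that $H(\widetilde{E}(\widetilde{M}))=\{0\}$, which follows from the density of the $\F_q(z)$-span of $\mathbb{T}_z(L_\infty)$ in $\widetilde{L}_\infty$ (as in \cite{ref3}). In the paper this vanishing is what (i) makes $\exp_{\widetilde{E}}$ induce an isomorphism $\Lie_{\widetilde{E}}(\widetilde{L}_\infty)/\exp_{\widetilde{E}}^{-1}(\widetilde{M})\cong\widetilde{E}(\widetilde{L}_\infty)/\widetilde{E}(\widetilde{M})$ so that both quotients lie in the class $\mathcal{C}$ with trivial $H$, (ii) yields the $G$-cohomological triviality, hence $\widetilde{A}[G]$-projectivity, of $U(\widetilde{E}(\widetilde{M}))$ (your proposal inverts this dependency), and (iii) removes the class-module factor from the final formula. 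Your argument needs this vanishing made explicit; without it the proof does not deliver the stated equality.
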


In particular, by following the method  in \cite{ref1}, we obtain the next theorem.

\begin{theorem} 
 Let $\Lambda$ be a projective $A[G]$-module such that $\Lambda\subset U_{St}(E(M))$ and $\Lambda$ is a $A$-lattice of $L_\infty$. 
 Let $E$ be a $t$-module defined over $\mathcal{O}_K$. Then
$$\mathcal{L}_G(E( M))^{-1}[ \Lie_E(M):\Lambda]_{A[G]}\in A[G].$$
Furthermore, 
$$\displaystyle\det_G\left(\dfrac{[ \Lie_E(M):\Lambda]_{A[G]}}{\mathcal{L}_G(E( M))}\right)=[ U_{St}(E(M)):\Lambda]_{A}.$$ 
\end{theorem}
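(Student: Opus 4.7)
The plan is to derive this theorem from the equivariant class number formula for the $z$-deformation (the preceding theorem) by specializing at $z = 1$, following \cite{ref1} in the equivariant context. First I would pull the class formula back from the $\widetilde{A}[G]$-level to the $A[z][G]$-level: since $\widetilde{A} = \F_q(z)[\theta]$ is the localization of $A[z]$ at the nonzero polynomials in $z$ and $U(\widetilde{E}(M[z])) \otimes_{A[z]} \widetilde{A} = U(\widetilde{E}(\widetilde{M}))$, the class formula can be rewritten as an identity of $A[z][G]$-regulators, with $M[z]$ and $\mathbb{T}_z(L_\infty)$ replacing $\widetilde{M}$ and $\widetilde{L}_\infty$. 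Now all modules are sitting over $A[z][G]$, on which evaluation at $z=1$ acts naturally via $\ev$.

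Next, using the hypothesis $\Lambda \subset U_{St}(E(M)) = \ev(U(\widetilde{E}(M[z])))$ together with the projectivity of $\Lambda$ over $A[G]$, I would lift $\Lambda$ to a projective $A[z][G]$-sublattice $\widetilde{\Lambda} \subset U(\widetilde{E}(M[z]))$ with $\ev(\widetilde{\Lambda}) = \Lambda$. Transitivity of the regulator then gives
$$\left[\Lie_{\widetilde{E}}(M[z]):\widetilde{\Lambda}\right]_{A[z][G]} = \left[\Lie_{\widetilde{E}}(M[z]):U(\widetilde{E}(M[z]))\right]_{A[z][G]} \cdot \left[U(\widetilde{E}(M[z])):\widetilde{\Lambda}\right]_{A[z][G]},$$
and the first factor equals $\mathcal{L}_G(\widetilde{E}(\widetilde{M}))$ by the previous step. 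Specializing at $z=1$, the left side becomes $[\Lie_E(M):\Lambda]_{A[G]}$, while $\mathcal{L}_G(\widetilde{E}(\widetilde{M}))|_{z=1} = \mathcal{L}_G(E(M))$ factor-by-factor in the Euler product since $\phi_{\widetilde{E}}(\theta)|_{z=1} = \phi_E(\theta)$. The integrality $\mathcal{L}_G(E(M))^{-1}[\Lie_E(M):\Lambda]_{A[G]} \in A[G]$ is then the $z=1$-specialization of the integrality of the $A[z][G]$-index $[U(\widetilde{E}(M[z])):\widetilde{\Lambda}]_{A[z][G]}$. Applying $\det_G$ to this specialized identity and using that $\ev(U(\widetilde{E}(M[z]))) = U_{St}(E(M))$ converts the $A[G]$-index into the $A$-index $[U_{St}(E(M)):\Lambda]_A$, giving the determinant formula.

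The main obstacle lies in the specialization step: one must check that no factor of $(z-1)$ appears in denominators at any stage, so that evaluation at $z=1$ is well defined termwise — most delicately for $\mathcal{L}_G(\widetilde{E}(\widetilde{M}))$ and for the $A[z][G]$-valued regulators. This is precisely what the introduction of $z$ buys: as emphasized in the introduction, the unit module becomes $G$-cohomologically trivial after deformation, which forces the relevant Fitting ideals to be principal and the regulators to be genuine elements of $A[z][G]$ rather than of a localization. Equally delicate is the construction of $\widetilde{\Lambda}$ itself — lifting a projective $A[G]$-module to a projective $A[z][G]$-module inside $U(\widetilde{E}(M[z]))$ in an $\ev$-compatible way — where the projectivity of $\Lambda$ over $A[G]$ and the fact that $\Lambda$ lies in the image of $\ev$ are both used essentially.
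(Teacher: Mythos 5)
Your overall strategy --- specialize the equivariant class formula for the $z$-deformation at $z=1$, following \cite{ref1} --- is the same as the paper's, but two of your intermediate steps are genuine gaps rather than routine verifications. The paper never constructs a projective $A[z][G]$-lattice $\widetilde{\Lambda}\subset U(\widetilde{E}(M[z]))$ with $\ev(\widetilde{\Lambda})=\Lambda$, nor does it define a regulator $[\,\cdot:\cdot\,]_{A[z][G]}$: the regulator and Fitting-ideal formalism is developed only over $\ell[\theta][G]$ with $\ell$ a \emph{field} ($\F_q$ or $\F_q(z)$), and it does not transfer to the two-variable ring $A[z][G]=\F_q[G][z][\theta]$, where Fitting ideals need not be principal and there is no notion of monic generator. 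Moreover, while projectivity of $\Lambda$ over $A[G]$ does give an $A[G]$-linear section of $\ev$, the $A[z][G]$-module generated by its image has no reason to be $A[z][G]$-projective, and your transitivity identity also presupposes that $U(\widetilde{E}(M[z]))$ is itself an $A[z][G]$-lattice in the relevant sense, which is not established (only $U(\widetilde{E}(\widetilde{M}))$ is shown to be $\widetilde{A}[G]$-projective, in Proposition \ref{uniteproj}). The paper avoids all of this by arguing with elements and ideals: it fixes a $k_\infty[G]$-basis $\mathcal{B}$ of $\Lie_E(L_\infty)$, which is also a $\mathbb{T}_z(k_\infty)[G]$-basis of $\Lie_{\widetilde{E}}(\mathbb{T}_z(L_\infty))$, lifts elements of $\Lambda$ individually to $U(\widetilde{E}(M[z]))$ using $\Lambda\subset U_{St}(E(M))=\ev(U(\widetilde{E}(M[z])))$ and Proposition \ref{engendre}, and specializes the resulting determinants, which lie in $\mathbb{T}_z(k_\infty)[G]$, at $z=1$; the $z$-integrality of the $L$-function is handled by the polynomials $x_P\in A[G][z]$ interpolating $\Fitt_{A[G]}(M/PM)$ at $z=0$ and $\Fitt_{A[G]}(E(M/PM))$ at $z=1$. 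Your diagnosis that $G$-cohomological triviality is what rules out denominators in $(z-1)$ conflates two issues: cohomological triviality is what makes the $\widetilde{A}[G]$-Fitting ideals principal, whereas the obstacle to evaluating at $z=1$ is $z$-integrality (landing in $A[G][z]$ or $\mathbb{T}_z(k_\infty)[G]$ rather than $\widetilde{A}[G]$ or $\widetilde{k}_\infty[G]$), which is a separate matter.

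For the determinant formula your argument is also missing its key input. The paper obtains $\det_G\bigl([\Lie_E(M):\Lambda]_{A[G]}/\mathcal{L}_G(E(M))\bigr)=[U_{St}(E(M)):\Lambda]_A$ by combining $\det_G([\,\cdot:\cdot\,]_{A[G]})=[\,\cdot:\cdot\,]_A$ and $\det_G(\mathcal{L}_G(E(M)))=\mathcal{L}(E(M))$ (Remarks \ref{detg}) with the non-equivariant Stark-unit class formula $\mathcal{L}(E(M))=[\Lie_E(M):U_{St}(E(M))]_A$, which itself rests on Theorem \ref{classformula} and Theorem \ref{fitting}. You replace this by the assertion that applying $\ev$ and then $\det_G$ to the $A[z][G]$-index of $\widetilde{\Lambda}$ in $U(\widetilde{E}(M[z]))$ yields $[U_{St}(E(M)):\Lambda]_A$; making that precise would require controlling the kernel of $\ev$ on $U(\widetilde{E}(M[z]))$, i.e.\ essentially re-proving Theorem \ref{fitting}, and is not justified as stated. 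You should cite the non-equivariant class formula explicitly at this point.
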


With this theorem, we obtain the following corollary of which the last assertion could be deduced from \cite[Theorem 6.2.1 ]{ref4}.

\begin{corollary}
We denote $N=\Tr_G(M)$.  If $H(E(N))$ is trivial, then $U(E(M))$ and $U_{St}(E(M))$ are projective $A[G]$-modules.  We have

 $$[ \Lie_E(M):U_{St}(E(M))]_{A[G]}=\mathcal{L}_G(E( M)).$$
 
Furthermore, $$[ \Lie_E(M):U(E(M))]_{A[G]}\vert H(E(M))\vert_{A[G]}=\mathcal{L}_G(E( M)).$$
\end{corollary}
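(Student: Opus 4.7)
The plan is to reduce to Theorem 1.2 applied with $\Lambda = U_{St}(E(M))$ itself, once the projectivity of $U_{St}(E(M))$ as an $A[G]$-module has been secured. To establish that both $U(E(M))$ and $U_{St}(E(M))$ are $A[G]$-projective, I would combine three facts: $(i)$ they are $A$-lattices in $\Lie_E(L_\infty)$ and hence projective over $A$; $(ii)$ the $z$-deformed unit module $U(\widetilde{E}(\widetilde{M}))$ is $\widetilde{A}[G]$-projective (as recorded in the discussion preceding Theorem 1.1); $(iii)$ under the hypothesis $H(E(N))=0$ with $N=\Tr_G(M)$, the obstruction to specializing this projectivity at $z=1$ disappears. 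Concretely, the vanishing of $H(E(N))$ should force the Tate cohomology groups of $G$ acting on $U_{St}(E(M))$ (and, via the evaluation exact sequence, on $U(E(M))$) to vanish, so that projectivity follows from the classical equivalence between $G$-cohomological triviality and $A[G]$-projectivity for $A$-projective modules.

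With projectivity at our disposal, apply Theorem 1.2 with $\Lambda=U_{St}(E(M))$. The first conclusion of that theorem places the ratio $\mathcal{L}_G(E(M))^{-1}[\Lie_E(M):U_{St}(E(M))]_{A[G]}$ inside $A[G]$, while the second conclusion, using $[U_{St}(E(M)):U_{St}(E(M))]_A = 1$, gives that its $\det_G$ equals $1$. Since the two sides $[\Lie_E(M):U_{St}(E(M))]_{A[G]}$ and $\mathcal{L}_G(E(M))$ are normalized via monic generators of Fitting ideals, an element of $A[G]$ interpolating them whose determinant equals one must itself equal one, yielding the first equality $[\Lie_E(M):U_{St}(E(M))]_{A[G]}=\mathcal{L}_G(E(M))$.

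For the second equality, the key input is the equivariant analogue of the short exact sequence from \cite{ref1},
\[
0 \longrightarrow U_{St}(E(M)) \longrightarrow U(E(M)) \longrightarrow H(E(M)) \longrightarrow 0,
\]
valid as a sequence of $A[G]$-modules. Multiplicativity of the equivariant index along this sequence provides
\[
[\Lie_E(M):U_{St}(E(M))]_{A[G]} = [\Lie_E(M):U(E(M))]_{A[G]}\cdot |H(E(M))|_{A[G]},
\]
and combining with the first formula yields the announced identity. The main obstacle I expect lies in the projectivity step: one must carefully convert the vanishing $H(E(N))=0$ on the ``trace'' side into the vanishing of Tate cohomology on the equivariant unit module, blending the descent arguments of \cite{ref4} with the specialization $z\to 1$ machinery developed via the $z$-deformation in \cite{ref16, ref3}.
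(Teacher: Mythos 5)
There are two genuine gaps. First, the projectivity of $U(E(M))$ and $U_{St}(E(M))$ is the heart of the corollary, and you leave it as a hope ("the vanishing of $H(E(N))$ should force the Tate cohomology groups \dots to vanish"), explicitly flagging it as the main obstacle rather than proving it. The paper's proof supplies the missing mechanism: from $H(E(N))=0$ one gets $\exp_E(K_\infty^n)+N^n=K_\infty^n$, whence $\Tr_G\bigl(M^n+\exp_E(L_\infty^n)\bigr)=K_\infty^n=\bigl(M^n+\exp_E(L_\infty^n)\bigr)^G$, so $\widehat{H}^0\bigl(G,M^n+\exp_E(L_\infty^n)\bigr)=0$ and this module is $G$-cohomologically trivial; hence $H(E(M))$ is $G$-cohomologically trivial, and then Proposition \ref{proj} (via the two exact sequences involving $H(\widetilde{E}(M[z]))[z-1]$ and the isomorphism of Proposition \ref{iso}) transfers cohomological triviality to $U(E(M))$ and $U_{St}(E(M))$. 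Without some such argument your appeal to "$A$-projective plus $G$-cohomologically trivial implies $A[G]$-projective" has nothing to act on. (Your derivation of the first displayed equality from Theorem \ref{sousmod} with $\Lambda=U_{St}(E(M))$, using that a monic element of $A[G]$ with $\det_G$ equal to $1$ is $1$, is fine once projectivity is in hand.)

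Second, your key input for the last equality, the short exact sequence $0\to U_{St}(E(M))\to U(E(M))\to H(E(M))\to 0$ of $A[G]$-modules, does not exist in this theory. What Proposition \ref{iso} gives is $U(E(M))/U_{St}(E(M))\cong H(\widetilde{E}(M[z]))[z-1]$, whereas $H(E(M))$ is the quotient $H(\widetilde{E}(M[z]))/(z-1)$; these have the same Fitting ideal over $A$ (Theorem \ref{fitting}) but are not isomorphic $A[G]$-modules in general, so multiplicativity of the index along your sequence is not available. The paper instead writes $[\Lie_E(M):U(E(M))]_{A[G]}[U(E(M)):U_{St}(E(M))]_{A[G]}=\mathcal{L}_G(E(M))$, invokes \cite[Theorem 6.2.1]{ref4} to place $\mathcal{L}_G(E(M))/[\Lie_E(M):U(E(M))]_{A[G]}$ in $\Fitt_{A[G]}(H(E(M)))$, and only then uses the $A$-level identity $\bigl\vert U(E(M))/U_{St}(E(M))\bigr\vert_A=\vert H(E(M))\vert_A$ to force $\bigl\vert U(E(M))/U_{St}(E(M))\bigr\vert_G=\vert H(E(M))\vert_G$. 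This divisibility-plus-norm argument is what replaces your nonexistent exact sequence.
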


\textbf{Aknowledements : } The author thanks Bruno Anglès and Tuan Ngo Dac for the discussions that lead to this paper.


\section{Background}
\subsection{Notation} ${}$\par

Let $p$ be a prime number and $q$ a power of $p$. Let $A=\F_q[\theta]$ with $\theta$ an indeterminate over $\F_q$ and 
 $k=\F_q(\theta)$ its field of fractions. We denote $k_\infty=\F_q\left(\left(\theta^{-1}\right)\right)$. We set $\C_\infty$ the completion of an algebraic closure of $k_\infty$. We set $v_\infty$ the valuation on $\C_\infty$ such that $v_\infty(\theta^{-1})=1$. 
 
Let  $z$ be an indeterminate over $\C_\infty$. 
We keep the notation  $v_\infty$ for the valuation on $\C_\infty(z)$  such that for $P\in\C_\infty[z]$ where $P(z)=\sum\limits_{i=0}^na_iz^i$ with  for all $i$, $a_i\in \C_\infty$ we have $v_\infty(P)=\min\{v_\infty(a_i), i\in [\![0;n]\!]\}$.

Let $K$ be a subfield of $\C_\infty$ such that $k_\infty\subset K$ and $K$ is complete with respect to  $v_\infty$. We denote by $\mathbb{T}_z(K)$ the completion of $K[z]$ for $v_\infty$, i.e.,  it corresponds to the elements of the form $\sum\limits_{i\geq0}a_iz^i$ where $a_i\in K$ and $\lim\limits_{i\to \infty} v_\infty(a_i)=\infty$. In particular, we have  $\mathbb{T}_z(k_\infty)=\F_q[z]\left(\left(\theta^{-1}\right)\right).$ 

We denote by $\widetilde{K}$ the completion of $K(z)$ for $v_\infty$. In particular, we have $\widetilde{k}_\infty=\F_q(z)\left(\left(\theta^{-1}\right)\right)$.  The $\F_q(z)$-vector space spanned by $\mathbb{T}_z(K)$ is dense in $\widetilde{K}$.

Let $K/k$ be a finite extension. We set $K_\infty=K\otimes_kk_\infty$ and $\mathcal{O}_K$ the integral closure of $A$ in $K$. Likewise, we set $\widetilde{K}_\infty=K\otimes_k\widetilde{k}_\infty$. We denote by $\widetilde{\mathcal{O}}_K$ the $\F_q(z)$-vector space spanned by $\mathcal{O}_K$ in $\widetilde{K}_\infty$ and $\widetilde{A}=\F_q(z)[\theta]$. In particular, $\widetilde{\mathcal{O}}_K$ is the integral closure of  $\widetilde{A}$ in $\widetilde{K}_\infty$. 

 We denote by $\tau:K_\infty\longrightarrow K_\infty$ the continuous morphism of $\F_q$-algebras which sends $x\in K_\infty$ to $x^q$.
 We still denote by $\tau:\widetilde{K}_\infty\longrightarrow \widetilde{K}_\infty$ the continuous morphism of $\F_q(z)$-algebras which sends $x\in \widetilde{K}_\infty$ to $x^q$.

For the rest of this paper, we take $\ell$ corresponding to $\F_q$ or $\F_q(z)$. 
We denote by $L_\ell=L\otimes_{k}\ell(\theta)$, $\ell_\infty=\ell\left(\left(\theta^{-1}\right)\right)$ and $L_{\infty,\ell}=L\otimes_k \ell_\infty$. We set $\mathcal{O}_{K,\ell}$ for $\mathcal{O}_{K}$ when $\ell=\F_q$ and $\widetilde{\mathcal{O}_{K}}$ when $\ell=\F_q(z)$. 

\subsection{Some projective modules} ${}$\par

In this section, we recall some definitions and results of Ferrara, Green, Higgins and D. Popescu in \cite[Section 7.2]{ref4}. We state them for $\F_q(z)$ and not just $\F_q$ as the arguments stay the same. We also generalize the notion of taming module of \cite{ref4} by an almost taming module. 

Let $L/k$ be a finite extension and $E$ a $t$-module defined over $\mathcal{O}_K$ where $K$ verifies $k\subset K$ and $L/K$ is a Galois extension of abelian group $G$. 

We recall that $l$ is $\F_q$ or $\F_q(z)$ and that $\N^*$ corresponds to the positive integers.

\begin{definition} 
Let $m\in \N^*$. 
\begin{itemize}
\item A {\it $\ell[\theta]$-lattice} in $L_{\infty,\ell}^m$ is a free $\ell[\theta]$-submodule $L_{\infty,\ell}^m$ of rank $m\dim_{\ell\left(\left(\theta^{-1}\right)\right)}L_{\infty,\ell}$ which generates $L_{\infty,\ell}^m$ as a $\ell\left(\left(\theta^{-1}\right)\right)$-vector space. 
\item A {\it $\ell[\theta][G]$-lattice} in $L_{\infty,\ell}^m$ is a $\ell[\theta][G]$-submodule of $L_{\infty,\ell}^m$ which is a $\ell[\theta]$-lattice of $L_{\infty,\ell}$.
\item A  {\it projective $\ell[\theta][G]$-lattice (respectively free)} in $L_{\infty,\ell}^m$ is a $\ell[\theta][G]$-lattice of $L_{\infty,\ell}^m$ which is  $\ell[\theta][G]$-projective (respectively free). 
 
\end{itemize}
\end{definition}

\begin{proposition}\cite[Prop. 7.2.1]{ref4}
We have the following assertions : 

\begin{itemize}

\item $L_{\infty,\ell}^m$ is a free $\ell_\infty[G]$-module.
\item If $\Lambda$ is a $\ell[\theta][G]$-lattice in $L_{\infty,\ell}^m$, $\ell(\theta)\Lambda$ is a free $\ell(\theta)[G]$-module.
\item For two $\ell[\theta][G]$-lattices $\Lambda_1, \Lambda_2$ of $L_{\infty,\ell}^m$ such that $\ell(\theta)\Lambda_1=\ell(\theta)\Lambda_2$, there exists a free $\ell[\theta][G]$-lattice $\Lambda$ of $L_{\infty,\ell}^m$ such that $\Lambda_1,\Lambda_2\subset \Lambda$. 
 
\end{itemize}
\end{proposition}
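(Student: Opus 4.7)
For (i), the plan is to apply the normal basis theorem to the Galois extension $L/K$, which furnishes an isomorphism $L \simeq K[G]$ of $K[G]$-modules. Tensoring over $K$ with $K_{\infty,\ell} := K \otimes_k \ell_\infty$ gives
\[
L_{\infty,\ell} \;=\; L \otimes_K K_{\infty,\ell} \;\simeq\; K_{\infty,\ell}[G]
\]
as $K_{\infty,\ell}[G]$-modules. Since $K/k$ is finite separable, $K_{\infty,\ell}$ is a finite-dimensional $\ell_\infty$-vector space of dimension $[K:k]$, hence free as an $\ell_\infty$-module. Choosing an $\ell_\infty$-basis identifies $L_{\infty,\ell}$ with $\ell_\infty[G]^{[K:k]}$ as an $\ell_\infty[G]$-module, and $L_{\infty,\ell}^m$ with $\ell_\infty[G]^{m[K:k]}$.

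For (ii), I would set $W := \ell(\theta)\Lambda$, a finitely generated $\ell(\theta)[G]$-module whose $\ell(\theta)$-dimension equals the $\ell[\theta]$-rank of $\Lambda$. By the defining property of a lattice,
\[
W \otimes_{\ell(\theta)} \ell_\infty \;=\; \ell_\infty \cdot \Lambda \;=\; L_{\infty,\ell}^m,
\]
which is free over $\ell_\infty[G]$ by (i). The key input to invoke is a Noether--Deuring type descent principle: for the finite-dimensional $\ell(\theta)$-algebra $\ell(\theta)[G]$, a finitely generated module is free if and only if it becomes free after extending scalars along the field extension $\ell(\theta) \hookrightarrow \ell_\infty$. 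This can be deduced from Krull--Schmidt plus the fact that base change to a field extension preserves multiplicities of indecomposable summands, so no semisimplicity of $\ell(\theta)[G]$ is required (which matters, since $p$ may divide $|G|$). Applying it to $W$ yields freeness over $\ell(\theta)[G]$.

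For (iii), both $\Lambda_1$ and $\Lambda_2$ lie in the common $\ell(\theta)[G]$-module $W := \ell(\theta)\Lambda_1 = \ell(\theta)\Lambda_2$, which is free over $\ell(\theta)[G]$ by (ii). Fix a basis $w_1,\dots,w_s$ of $W$ and set $\Lambda_0 := \bigoplus_{i=1}^s \ell[\theta][G]\, w_i$. Because $\Lambda_1$ and $\Lambda_2$ are each finitely generated over the PID $\ell[\theta]$ and both lie in $W = \ell(\theta)\Lambda_0$, clearing denominators produces a single nonzero $d \in \ell[\theta]$ with $d\Lambda_1, d\Lambda_2 \subset \Lambda_0$. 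Then
\[
\Lambda \;:=\; \bigoplus_{i=1}^s \ell[\theta][G]\,(d^{-1}w_i)
\]
is a free $\ell[\theta][G]$-module containing both $\Lambda_1$ and $\Lambda_2$; it is a lattice because $\ell_\infty \Lambda \supset \ell_\infty \Lambda_1 = L_{\infty,\ell}^m$.

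I expect (ii) to be the main obstacle. Statement (i) is a direct consequence of the normal basis theorem, and (iii) is a bookkeeping argument over a PID once (ii) is in hand; but (ii) is the one genuinely representation-theoretic step, relying on Noether--Deuring descent applied to the non-semisimple group algebra $\ell(\theta)[G]$.
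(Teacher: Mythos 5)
Your proof is correct. Note first that the paper itself offers no argument for this proposition: it is quoted from \cite[Prop.~7.2.1]{ref4} with the remark that the proofs carry over verbatim from $\F_q$ to $\F_q(z)$. Your treatment of (i) (normal basis theorem for $L/K$, then base change along $k\to\ell_\infty$) and of (iii) (fix a free $\ell(\theta)[G]$-basis of $W=\ell(\theta)\Lambda_1$, clear denominators, rescale) matches the standard argument, and your verification that the resulting $\Lambda$ has the correct $\ell[\theta]$-rank and spans $L_{\infty,\ell}^m$ over $\ell_\infty$ is complete. For (ii) you take a genuinely different route from the one the paper's toolkit points to: the machinery recalled in Theorem~\ref{Projct} (= \cite[Cor.~7.1.7]{ref4}) is set up precisely so that one shows $\ell(\theta)\Lambda$ is $G$-cohomologically trivial --- because Tate cohomology of the finite group $G$ commutes with the flat base change $\ell(\theta)\hookrightarrow\ell_\infty$ and $L_{\infty,\ell}^m$ is $\ell_\infty[G]$-free by (i) --- hence $\ell(\theta)[G]$-projective, and then free because it is projective of constant rank over the semilocal ring $\ell(\theta)[G]$. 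You instead invoke Noether--Deuring descent for the finite-dimensional algebra $\ell(\theta)[G]$ along $\ell(\theta)\hookrightarrow\ell_\infty$; this is equally valid and more self-contained, whereas the cohomological route reuses a result the rest of the paper needs anyway. One small caveat: your parenthetical justification of Noether--Deuring (``base change preserves multiplicities of indecomposable summands'') is not literally true, since an indecomposable module may split after extension of scalars; the correct proof over the infinite field $\ell(\theta)$ uses $\Hom_{A_K}(M_K,N_K)\cong\Hom_A(M,N)\otimes_k K$ together with the non-vanishing of a determinant polynomial (or, for finite extensions, Krull--Schmidt applied to $M^{[K:k]}\cong N^{[K:k]}$). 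Since Noether--Deuring itself is a standard theorem valid for arbitrary finite-dimensional algebras and arbitrary field extensions, this does not affect the correctness of your argument.
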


We generalize the definition of taming module introduced by  Ferrara, Green, Higgins and D. Popescu in \cite{ref4}  by the next definition. 

\begin{definition} An {\it almost taming module} for $L_\ell/K_\ell$ is a $\ell[\theta]$-module which verifies
\begin{itemize}
\item $M$ is a $\ell[\theta]$-lattice of $L_{\infty,\ell}$,
\item $M$ is a projective  $\ell[\theta][G]$-module,
\item $M$ is an $\mathcal{O}_{K,\ell}\{\tau\}[G]$-module. 
\end{itemize}
\end{definition}
In particular, a taming module is an almost taming module.

We denote by $\mathcal{O}_{K,\infty}$ (respectively $\mathcal{O}_{L,\infty}$) the intersection of valuation rings of infinite places of  $K$ (respectively $L$). 
 
 \begin{definition}\begin{enumerate}
\item \cite[Def. 7.2.7]{ref4}\label{modereinf} An  {\it$\infty$-taming module} for $L/K$ is a projective $\mathcal{O}_{K,\infty}[G]$-module of local constant rank 1 denoted   $\mathcal{W}^\infty$ such that $\mathcal{W}^\infty \subset \mathcal{O}_{L,\infty}$ and the quotient $\mathcal{O}_{L,\infty}/\mathcal{W}^\infty $ is finite.
 \item An {\it almost taming pair} is a couple $(M_\ell,\mathcal{W}^\infty$) where $M_\ell$ is an almost taming module for $L_\ell/K_\ell$ and $\mathcal{W}^\infty$ is a $\infty$-taming module.
 \end{enumerate}
\end{definition}

We can see that if $M$ is an almost taming module (respectively taming) for $L/K$ then $\widetilde{M}$ is an almost taming module (respectively taming) for $\widetilde{L}/\widetilde{K}$.

\subsection{Characteristic $p$ group-ring and cohomology }
In this section, we recall some statements of \cite[Section 7.1]{ref4} which will be useful to prove that a module is $\ell[\theta][G]$-projective by using group cohomology.

We set $G=H\times \Delta$ where $H$ is the $p$-Sylow of $G$. We assume that  $R$ is a Dedekind ring. For $\chi\in \Hom\left(\Delta,\overline{\Frac(R)}\right)$ where $\overline{\Frac(R)}$ is a separable closure of $\Frac(R)$, we denote by $\widehat{\chi}$ its equivalence class under $\chi\sim \sigma\circ\chi$ for $\sigma\in \Gal_{\overline{\Frac(R)}}$. We denote by $\widehat{\Delta}(R)$ all the equivalence classes.  We obtain the idempotents of $R[G]$,  indexed by these classes  $$e_{\widehat{\chi}}=\dfrac{1}{\vert\Delta\vert}\sum\limits_{\psi\in\widehat{\chi}, \delta\in\Delta}\psi(\delta)\delta^{-1}$$ for all $\widehat{\chi}\in \widehat{\Delta}(R)$. 

We have the ring isomorphism: $$R[G]=\bigoplus_{\widehat{\chi}\in \widehat{\Delta}(R)}e_{\widehat{\chi}}R[G]\cong \bigoplus_{\widehat{\chi}\in \widehat{\Delta}(R)}R(\chi)[H]$$ where $R(\chi)$ is the Dedekind ring obtained by adding the values of $\chi$.
 We use the isomorphism $e_{\widehat{\chi}}R[G]\cong R(\chi)[H]$ with the evaluation at $\chi$.
 For each $R[G]$-module $M$, in the same way, we obtain  $$M=\bigoplus_{\widehat{\chi}\in \widehat{\Delta}(R)}e_{\widehat{\chi}}M\cong \bigoplus_{\widehat{\chi}\in \widehat{\Delta}(R)}M^\chi$$ where $M^\chi=M\otimes_{R[G]}R(\chi)[H]$.

\begin{theorem} \cite[Corollary 7.1.7]{ref4} \label{Projct}
Let  $R$ be a Dedekind ring or a field of characteristic $p$, $G$ a finite abelian group and $M$ a finitely generated $R[G]$-module. 
\begin{itemize}
\item  $M$ is $R[G]$-projective if and only if $M$ is $R$-projective and $H$-cohomologically trivial if and only if $M$ is $R$-projective and $G$-cohomologically trivial.
\item If $R$ is a discrete valuation ring or a field then $R[G]$ is a semi-local ring of local direct summands $R(\chi)[H]$ for all $\widehat{\chi}$.

\item If $R$ is a discrete valuation ring or a field then $M$  is a free $R[G]$-module if and only if  $M$ is a projective $R[G]$-module of constant rank.

\end{itemize}

\end{theorem}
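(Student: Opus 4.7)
\emph{Plan.} The strategy is to push every assertion through the ring decomposition $R[G] \cong \bigoplus_{\widehat{\chi} \in \widehat{\Delta}(R)} R(\chi)[H]$ recalled just before the statement. Since $\Delta$ has order coprime to $p$ (because $H$ is the $p$-Sylow of $G$) and $R$ has characteristic $p$, the idempotents $e_{\widehat{\chi}}$ actually lie in $R[G]$, and any finitely generated $R[G]$-module splits accordingly as $M = \bigoplus_{\widehat{\chi}} M^\chi$. Projectivity, freeness, $H$-cohomological triviality and constant local rank all decompose component-wise, so it suffices to prove each of the three bullets after replacing $(R, G)$ with $(R(\chi), H)$ for a fixed character $\chi$. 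This reduces everything to the classical theory of modules over the group ring of a $p$-group in characteristic $p$.

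\emph{Carrying out the three bullets.} For the first bullet, the $\Delta$-cohomology of any $R[\Delta]$-module vanishes in positive degrees because $|\Delta|$ is invertible in $R$; the Hochschild--Serre spectral sequence then collapses to give $H^\ast(G, M) \cong H^\ast(H, M^\Delta)$, and component-wise this equates $G$-cohomological triviality with $H$-cohomological triviality. On a single $\chi$-component one invokes the classical theorem (see for instance Brown's \emph{Cohomology of groups}, or Serre's \emph{Local fields}) that for a $p$-group $H$ and a module $N$ over a Dedekind ring $R'$ of characteristic $p$, $N$ is $R'[H]$-projective if and only if $N$ is $R'$-projective and $H$-cohomologically trivial. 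For the second bullet, when $R(\chi)$ is a DVR or field of characteristic $p$, the ring $R(\chi)[H]$ is local: its unique maximal ideal is the preimage of the maximal ideal of $R(\chi)$ under the augmentation map, the point being that the augmentation ideal is nilpotent modulo any prime of $R(\chi)[H]$ since $H$ is a $p$-group in characteristic $p$. A finite direct sum of such local rings is semi-local, with the $R(\chi)[H]$ as its local direct summands. For the third bullet, a finitely generated projective module over a semi-local ring is determined up to isomorphism by its rank at each maximal ideal, so constant rank forces freeness on each component and hence on $M$.

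\emph{Main obstacle.} The essential non-formal input is the first bullet's characterization of $R'[H]$-projectivity in terms of $R'$-projectivity together with $H$-cohomological triviality, for $R'$ a Dedekind ring and $H$ a $p$-group in characteristic $p$. This is a cornerstone of the cohomology of $p$-groups, to be quoted rather than reproved. The delicate structural point is that over a Dedekind ring (as opposed to a field) one must combine the Maschke-style splitting for $\Delta$, which works because $|\Delta|$ is a unit, with Tate cohomology for the $p$-Sylow $H$; the excerpt's decomposition $R[G] = \bigoplus R(\chi)[H]$ is precisely designed to separate the prime-to-$p$ and $p$-parts so that each piece is amenable to the classical arguments.
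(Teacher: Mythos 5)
The paper gives no proof of this statement at all---it is quoted verbatim from \cite[Corollary 7.1.7]{ref4}---and your sketch follows exactly the route that the paper sets up in the paragraphs immediately preceding the theorem and that the cited reference uses: split $R[G]$ via the idempotents $e_{\widehat{\chi}}$ into the pieces $R(\chi)[H]$, dispose of the prime-to-$p$ factor $\Delta$ by Maschke/Hochschild--Serre since $\vert\Delta\vert$ is invertible in characteristic $p$, and reduce each bullet to the classical facts about modules over the group ring of a $p$-group (nilpotence of the augmentation ideal, hence locality of $R(\chi)[H]$, hence projective of constant rank $=$ free). Your outline is correct and matches the intended argument; the one genuinely non-formal input (the first bullet for a $p$-group over a Dedekind ring of characteristic $p$) is quoted rather than reproved, which is consistent with the paper's own treatment of the whole statement as a citation.
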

\subsection{Monic elements} ${}$\par
All this section was proven in \cite[Section 7.3]{ref4} when $\ell=\F_q$. The same arguments work for $\ell=\F_q(z)$. 
As $G$ is finite, we have $\ell[G]\left[\!\left[\theta^{-1}\right]\!\right]=\ell[\![\theta^{-1}]\!][G]$ and $\ell[G]\left(\left(\theta^{-1}\right)\right)=\ell\left(\left(\theta^{-1}\right)\right)[G]$.

\begin{definition} We define $\ell\left(\left(\theta^{-1}\right)\right)[H]^+$ as a sub-group of unitary elements $\ell\left(\left(\theta^{-1}\right)\right)[H]^\times$ by $$\ell\left(\left(\theta^{-1}\right)\right)[H]^+=\bigcup_{n\in \mathbb{Z}}\theta^n(1+\theta^{-1}\ell[H][\![\theta^{-1}]\!]).$$

\end{definition}
We use the decomposition of characters of $\ell[G]$ to obtain the direct sum $$\phi_\Delta:\ell[G]\left(\left(\theta^{-1}\right)\right)\cong \bigoplus_{\widehat{\chi}\in \widehat{\Delta}(l)}\ell(\chi)[H]\left(\left(\theta^{-1}\right)\right).$$
This allows us to have the next definition.

\begin{definition} We define $\ell\left(\left(\theta^{-1}\right)\right)[G]^+$ as a sub-group of the group of monic elements $\ell\left(\left(\theta^{-1}\right)\right)[G]^\times$ by $$\ell\left(\left(\theta^{-1}\right)\right)[G]^+=\phi_\Delta^{-1}\left(\bigoplus_{\widehat{\chi}\in  \widehat{\Delta}(l)}\ell(\chi)[H]\left(\left(\theta^{-1}\right)\right)^+\right).$$

\end{definition}

\begin{rem} We say that a polynomial $f\in \ell[G][\theta]$ is monic if $$f\in \ell[G][\theta]^+=\ell[G][\theta]\cap \ell\left(\left(\theta^{-1}\right)\right)[G]^+.$$ 
\end{rem}

\begin{theorem} (Weirstrass decomposition)
Let  $(\mathcal{O},m)$ be a complete local ring. Let $f\in\mathcal{O}[\![X]\!]\setminus m[\![X]\!]$ such that $f=\sum\limits_{i\in \mathbb{N}}a_iX^i$. Assume that $n$ is the smaller integer such that $a_n\not\in m$. Then $f$ has the unique Weierstrass decomposition $$f=(X^n+b_{n-1}X^{n-1}+\ldots+b_0).u \text{ with }b_i\in  m \text{ and }u\in\mathcal{O}[\![X]\!]^\times.$$

\end{theorem}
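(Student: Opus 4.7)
The plan is to reduce the Weierstrass decomposition to a division algorithm in $\mathcal{O}[\![X]\!]$. Specifically, I would first establish the following auxiliary claim: for every $g\in\mathcal{O}[\![X]\!]$ there exist unique $q\in\mathcal{O}[\![X]\!]$ and $r\in\mathcal{O}[X]$ with $\deg r<n$ such that $g=qf+r$. Granting this, apply it to $g=X^n$ to get $X^n=qf+r$, hence $f=q^{-1}(X^n-r)$, and set $P:=X^n-r$ and $u:=q^{-1}$. The theorem then reduces to checking that $q$ is a unit in $\mathcal{O}[\![X]\!]$ and that the coefficients of $r$ lie in $m$. Both follow by reduction modulo $m$: since $\mathcal{O}$ is local and $a_n\notin m$, we have $\overline{f}=X^n(\overline{a_n}+\overline{a_{n+1}}X+\cdots)$ in $(\mathcal{O}/m)[\![X]\!]$ with the second factor a unit, so the reduced identity $\overline{X}^n=\overline{q}\,\overline{f}+\overline{r}$ with $\deg\overline{r}<n$ forces $\overline{r}=0$ and $\overline{q}$ a unit, giving $r\in m[X]$ and $q\in\mathcal{O}[\![X]\!]^\times$.

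To prove the auxiliary claim, introduce the $\mathcal{O}$-linear truncation and shift operators
\[
\beta\Bigl(\sum_i c_iX^i\Bigr)=\sum_{i<n}c_iX^i,\qquad \alpha\Bigl(\sum_i c_iX^i\Bigr)=\sum_{i\ge 0}c_{i+n}X^i,
\]
so that $h=\beta(h)+X^n\alpha(h)$ for every $h\in\mathcal{O}[\![X]\!]$. Then $\alpha(f)$ has constant term $a_n\in\mathcal{O}^\times$, hence is a unit in $\mathcal{O}[\![X]\!]$, while $\beta(f)\in m[X]$. Requiring $g-qf$ to have degree $<n$ amounts to $\alpha(g-qf)=0$, which using $qf=q\beta(f)+X^n q\alpha(f)$ rearranges into the fixed-point equation
\[
q=\alpha(f)^{-1}\alpha(g)-\alpha(f)^{-1}\alpha\bigl(q\cdot\beta(f)\bigr).
\]
Because $\beta(f)\in m[X]$, the operator $\Psi\colon q\mapsto-\alpha(f)^{-1}\alpha(q\beta(f))$ sends $m^k\mathcal{O}[\![X]\!]$ into $m^{k+1}\mathcal{O}[\![X]\!]$, i.e.\ it is strictly contracting for the $m$-adic topology. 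As $\mathcal{O}$ is $m$-adically complete, so is $\mathcal{O}[\![X]\!]$ coefficient-wise, and Banach's fixed-point theorem yields a unique $q$; the element $r:=g-qf$ is then automatically a polynomial of degree $<n$ since $\alpha(g-qf)=0$ by construction.

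Uniqueness of the Weierstrass decomposition is a direct consequence of uniqueness in the division: given another decomposition $f=P'u'$ with $P'=X^n+\sum_{i<n}b_i'X^i$, $b_i'\in m$, and $u'\in\mathcal{O}[\![X]\!]^\times$, the identity $X^n=(u')^{-1}f-\sum_{i<n}b_i'X^i$ is a valid division of $X^n$ by $f$ with quotient $(u')^{-1}$ and remainder of degree $<n$, so uniqueness forces this to coincide with the data already extracted. The main obstacle in this plan is verifying that $\Psi$ is genuinely contracting in a suitable complete topology; this hinges crucially on both hypotheses of the theorem, namely that $(\mathcal{O},m)$ is complete and that $\beta(f)\in m[X]$ (which is the content of the assumption $a_0,\dots,a_{n-1}\in m$), and neither can be dropped.
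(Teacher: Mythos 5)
Your proof is correct and is the standard argument: you first establish Weierstrass division ($g=qf+r$ with $\deg r<n$) by a successive-approximation/contraction argument in the $m$-adic topology, then specialize to $g=X^n$ and read off the distinguished polynomial and the unit by reduction modulo $m$. The paper itself gives no proof of this statement (it defers the whole section to \cite[Section 7.3]{ref4}), and the classical proofs in the literature follow exactly the route you take, so there is nothing to compare; the only point worth making explicit is that ``complete local ring'' is taken to include $m$-adic separatedness ($\bigcap_k m^k=0$), which you implicitly use both for the uniqueness of the fixed point of $\Psi$ and for the uniqueness assertion of the theorem.
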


It permits us to obtain the next proposition. 
\begin{proposition} 
We have $$\ell\left(\left(\theta^{-1}\right)\right)[G]^\times=\ell\left(\left(\theta^{-1}\right)\right)[G]^+\times \ell[\theta][G]^\times.$$

\end{proposition}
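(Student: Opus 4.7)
The plan is to use $\phi_\Delta$ to reduce to the case where $G$ is a $p$-group, and then apply Weierstrass preparation. The ring isomorphism $\phi_\Delta$ decomposes $\ell((\theta^{-1}))[G]$ as the direct sum of the $\ell(\chi)((\theta^{-1}))[H]$, and restricts to the analogous decomposition of $\ell[\theta][G]$; since by definition $\ell((\theta^{-1}))[G]^+$ is carried by $\phi_\Delta$ to the sum of the $\ell(\chi)((\theta^{-1}))[H]^+$, the claim reduces to proving, for each $\widehat{\chi}$, that $\ell(\chi)((\theta^{-1}))[H]^\times = \ell(\chi)((\theta^{-1}))[H]^+ \times \ell(\chi)[\theta][H]^\times$. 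Set $R = \ell(\chi)[H]$: since $H$ is a $p$-group and $\ell(\chi)$ has characteristic $p$, the augmentation ideal $I_H$ is nilpotent, so $R$ is a complete local ring with residue field $\ell(\chi)$, and by the standard description of units in a polynomial ring over a ring whose nilradical equals its maximal ideal, $R[\theta]^\times$ consists of the polynomials $c_0 + c_1\theta + \ldots + c_n\theta^n$ with $c_0 \in R^\times$ and $c_1, \ldots, c_n \in I_H$.

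For existence, given $u \in R((\theta^{-1}))^\times$, write $u = \theta^M g$ with $g \in R[[\theta^{-1}]]$ and $g(0) \neq 0$. As $u$ is a unit in $R((\theta^{-1}))$, which is local with nilpotent maximal ideal $I_H \cdot R((\theta^{-1}))$, some coefficient of $g$ lies outside $I_H$. Applying the Weierstrass decomposition to $g$ viewed in $R[[X]]$ with $X = \theta^{-1}$ gives
\[
g = (X^n + b_{n-1}X^{n-1} + \ldots + b_0)\, w \qquad \text{with } b_i \in I_H,\ w \in R[[X]]^\times.
\]
Rewriting in $\theta$ and factoring $\theta^{M-n}$ out of the Weierstrass polynomial produces
\[
u = \theta^{M-n}\bigl(1 + b_{n-1}\theta + \ldots + b_0\theta^n\bigr)\, w.
\]
Finally let $w_0 = w(0) \in R^\times$ and $\tilde w = w_0^{-1} w \in 1 + \theta^{-1}R[[\theta^{-1}]]$; then
\[
u = \bigl(\theta^{M-n}\tilde w\bigr) \cdot w_0\bigl(1 + b_{n-1}\theta + \ldots + b_0\theta^n\bigr),
\]
with the first factor in $R((\theta^{-1}))^+$ and the second in $R[\theta]^\times$ (constant term $w_0 \in R^\times$, other coefficients $w_0 b_i \in I_H$).

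For triviality of the intersection, observe that an element of $R((\theta^{-1}))^+$ has leading $\theta$-coefficient $1 \notin I_H$, while a polynomial unit of positive degree has its leading coefficient in $I_H$. Hence an element lying in both, viewed as a polynomial, must have $\theta$-degree zero, and then it is the constant $1$. The main calculation is the bookkeeping in the existence step: one must factor a carefully chosen power of $\theta$ out of the Weierstrass polynomial to isolate a factor with leading $\theta$-coefficient $1$, and then absorb the constant term $w_0$ of $w$ into the polynomial part so that the remaining factor genuinely lives in $R((\theta^{-1}))^+$. Everything else is essentially formal once the Weierstrass theorem and the structure of $\phi_\Delta$ are in hand.
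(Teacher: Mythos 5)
Your proof is correct and takes essentially the same route as the paper, which defers this proposition to \cite[Section 7.3]{ref4}: decompose via $\phi_\Delta$ into the components $\ell(\chi)\left(\left(\theta^{-1}\right)\right)[H]$, use that $\ell(\chi)[H]$ is a complete (Artinian) local ring with nilpotent maximal ideal the augmentation ideal, and apply the Weierstrass decomposition in the variable $X=\theta^{-1}$. Both the bookkeeping in the existence step (extracting $\theta^{M-n}$ and absorbing $w(0)$ into the polynomial unit) and the leading-coefficient argument for the triviality of the intersection are accurate.
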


\begin{Cor} \label{unitaire}
We have the isomorphism $$\ell\left(\left(\theta^{-1}\right)\right)[G]^\times\big/\ell[\theta][G]^\times=\ell\left(\left(\theta^{-1}\right)\right)[G]^+$$ which associates  to a class $g$ its unique monic representative $g^+$.

\end{Cor}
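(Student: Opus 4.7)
The plan is to read the corollary as an immediate consequence of the preceding proposition, which asserts the internal direct product decomposition
\[
\ell\!\left(\!\left(\theta^{-1}\right)\!\right)[G]^\times = \ell\!\left(\!\left(\theta^{-1}\right)\!\right)[G]^+ \times \ell[\theta][G]^\times .
\]
From this it follows that every unit $g$ admits a unique factorization $g = g^+ \cdot u$ with $g^+ \in \ell((\theta^{-1}))[G]^+$ and $u \in \ell[\theta][G]^\times$; in particular $\ell((\theta^{-1}))[G]^+ \cap \ell[\theta][G]^\times = \{1\}$.

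Next I would define $\Phi : \ell((\theta^{-1}))[G]^\times \to \ell((\theta^{-1}))[G]^+$ by $\Phi(g) = g^+$, i.e., projection onto the first factor of the direct product. Being a projection of an internal direct product, $\Phi$ is a group homomorphism; its image is the entire first factor (since any element of $\ell((\theta^{-1}))[G]^+$ equals its own $g^+$), and its kernel is exactly the second factor $\ell[\theta][G]^\times$. The first isomorphism theorem then yields the isomorphism of the corollary, and the prescription $[g] \mapsto g^+$ coincides with the induced map. Well-definedness on classes is automatic, since two representatives of the same class in the quotient differ by an element of $\ell[\theta][G]^\times$, which by uniqueness of the decomposition leaves $g^+$ unchanged.

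There is no real obstacle here: the content is entirely packaged in the direct product decomposition of the previous proposition, and the corollary is merely its reformulation in quotient language. The only thing to verify, for the reader's benefit, is that the factor $\ell[\theta][G]^\times$ in the direct product is the same subgroup by which we are quotienting; this is a tautology.
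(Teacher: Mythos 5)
Your argument is correct and is exactly the intended derivation: the paper states the corollary without proof as an immediate consequence of the direct product decomposition $\ell\left(\left(\theta^{-1}\right)\right)[G]^\times=\ell\left(\left(\theta^{-1}\right)\right)[G]^+\times \ell[\theta][G]^\times$, and your projection-plus-first-isomorphism-theorem reading is the standard way to unpack it (the projection is a homomorphism since the unit group of the commutative ring $\ell\left(\left(\theta^{-1}\right)\right)[G]$ is abelian). No gaps.
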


\subsection{Fitting ideals} ${}$\par

Let $R$ be a commutative ring and $M$ a finitely generated $R$-module.
Let $$R^a\longrightarrow R^b\longrightarrow M\longrightarrow 0$$ be a finite presentation of $M$ and $X$ be the matrix of $R^a\longrightarrow R^b$. Then we define  $\Fitt_R(M)$ as the ideal of $R$ spanned by the minors of size $b\times b$ if $b\leq a$ and $\Fitt_R(M)=0$ if $b\geq a$. $\Fitt_R(M)$ is independent of the choice of the finite presentation of $M$. 

\begin{itemize}
\item If $M\cong M_1\times M_2$ is the direct product of two $R$-modules of finite presentation then $\Fitt_R(M)=\Fitt_R(M_1)\Fitt_R(M_2)$.
\item If $R\longrightarrow R'$ is a ring homomorphism then $\Fitt_{R'}(R'\otimes_RM)=R'\otimes_R\Fitt_R(M)$.  
\item If $M_1\longrightarrow M\longrightarrow M_2\longrightarrow 0$ is exact  then $\Fitt_R(M_1)\Fitt_R(M_2)\subset \Fitt_R(M)$.

\end{itemize} 

Furthermore,  if $R$ is a Dedekind ring, then $\Fitt_R(M_1)\Fitt_R(M_2)= \Fitt_R(M)$. 

If $M$ is a  finitely generated and torsion $R$-module, there exist ideals $I_1,\ldots, I_n$  of $R$ such that $M\cong R/I_1\times \ldots
\times R/I_n$. We have $\Fitt_R(M)=\prod\limits_{i=1}^nI_i$. 
\bigskip

From now on,  $R$ is a noetherian semi-local ring. Let $N$  be a $R[\theta]$-module which is finitely generated and projective as a $R$-module. For example, as in \cite{ref4} $R=\mathbb{F}_q[G]$ and $N=\mathcal{M}/v$ for $\mathcal{M}$ a taming module for $L/K$ and $v\in \MSpec(A)$ or $N=\Lambda_1/\Lambda_2$ where $\Lambda_2\subseteq \Lambda_2$ are two projective $A[G]$-lattices of $L_\infty$. 

We can also take $R=\mathbb{F}_q(z)[G]$ or the same objects with $\mathcal{M}$ an almost taming module.

\begin{proposition}\cite[Prop. 7.4.1]{ref4} \label{fitt}
Let $N$ be a finitely generated $R[\theta]$-module and a projective $R$-module. 
\begin{itemize}
\item If $R$ is local and $rank_R(N)=m$ then $\Fitt_{R[\theta]}(N)$ is principal and has a unique monic generator denoted by $\vert N\vert_{R[\theta]}\in R[\theta]^+$. It has degree $m$ and is given by $\vert N\vert_{R[\theta]}=\det_{R[\theta]}(\theta I_m-A_\theta)$ where $A_\theta\in M_m(R)$ is the matrix of the $R$-endomorphism of $N$ given by the multiplication of $\theta$ in any $R$-basis of $N$. 

\item  If $R$ is semilocal (such that $R=\oplus_i R_i$) then  $\Fitt_{R[\theta]}(N)$ is principal and has a unique monic generator  $\vert N\vert_{R[\theta]}=\sum\limits_i\vert N\otimes_RR_i\vert_{R_i[\theta]}$ which belongs to $R[\theta]^+$. 

\end{itemize}

\end{proposition}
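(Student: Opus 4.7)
The plan is to present $N$ as the cokernel of an explicit $m \times m$ square matrix over $R[\theta]$, so that the Fitting ideal can be read off directly from the definition recalled in this section.

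First I handle the local case. Since $R$ is local noetherian and $N$ is finitely generated and $R$-projective, $N$ is automatically $R$-free of rank $m$; I fix an $R$-basis $e_1,\ldots,e_m$. The $\theta$-action is then encoded by a matrix $A_\theta \in M_m(R)$, and the $R[\theta]$-linear surjection $\phi : R[\theta]^m \twoheadrightarrow N$ sending the standard basis vectors $\iota_i$ to the $e_i$ has kernel containing the rows of $\theta I_m - A_\theta$, which encode the relations $\theta \cdot e_i = \sum_j (A_\theta)_{ji} e_j$. The heart of the argument is the reverse inclusion: the congruences $\theta \iota_i \equiv \sum_j (A_\theta)_{ji} \iota_j \pmod{\ker\phi}$ allow an induction on $\theta$-degree that reduces any element of $R[\theta]^m$ modulo these rows to a vector in $R^m$, and vanishing of such a vector in $N$ forces it to be zero by $R$-linear independence of the $e_i$. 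This yields the finite presentation
$$R[\theta]^m \xrightarrow{\theta I_m - A_\theta} R[\theta]^m \xrightarrow{\phi} N \to 0,$$
and the definition of the Fitting ideal from a finite presentation immediately gives $\Fitt_{R[\theta]}(N) = (\det(\theta I_m - A_\theta))$. The determinant is monic in $\theta$ of degree $m$, and uniqueness of the monic representative follows because in the local ring $R$ the units of $R[\theta]$ have $\theta$-degree zero, so two monic polynomials generating the same principal ideal must coincide.

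For the semilocal case I decompose $R = \bigoplus_i R_i$ with each $R_i$ local, so that $R[\theta] = \bigoplus_i R_i[\theta]$ and $N = \bigoplus_i N_i$ where $N_i := N \otimes_R R_i$ is again projective, hence $R_i$-free. Applying the first part to each $N_i$ produces a unique monic generator $|N_i|_{R_i[\theta]}$ of $\Fitt_{R_i[\theta]}(N_i)$. Combining the base-change property $\Fitt_{R_i[\theta]}(N_i) = R_i[\theta] \otimes_{R[\theta]} \Fitt_{R[\theta]}(N)$ (equivalently, multiplication by the idempotent cutting out the $i$-th factor) with the direct-sum decomposition of $R[\theta]$ yields $\Fitt_{R[\theta]}(N) = \bigoplus_i \Fitt_{R_i[\theta]}(N_i)$: principal and generated by the tuple of the $|N_i|_{R_i[\theta]}$, which is exactly the element denoted $\sum_i |N_i|_{R_i[\theta]}$ in the statement and which lies in $R[\theta]^+$ by the semilocal extension of the definition. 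The main obstacle is the first-step claim that the rows of $\theta I_m - A_\theta$ generate $\ker\phi$ and not merely lie in it; this is a Cayley--Hamilton--style content step handled by the degree induction above. Everything else is formal bookkeeping with the Fitting-ideal properties recalled in this subsection and with the idempotent decomposition of the semilocal ring.
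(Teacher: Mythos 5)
The paper offers no proof of this proposition; it is quoted verbatim from \cite[Prop.~7.4.1]{ref4}. Your argument is the standard one and, I expect, the same as the one in that reference: realize $N$ as the cokernel of the square presentation matrix $\theta I_m-A_\theta$ (free-ness over the local ring $R$, then the degree-reduction argument showing the columns of $\theta I_m-A_\theta$ generate $\ker\phi$), read off $\Fitt_{R[\theta]}(N)=(\det(\theta I_m-A_\theta))$, and patch the local cases together with the idempotents in the semilocal case. All of that is correct.

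The one step that is genuinely wrong as stated is your justification of uniqueness: ``in the local ring $R$ the units of $R[\theta]$ have $\theta$-degree zero.'' This fails exactly in the situation the proposition is designed for. The local factors here are $R=\ell(\chi)[H]$ with $H$ a nontrivial $p$-group, so $R$ is a non-reduced local ring, and $R[\theta]$ has units of positive degree (e.g.\ $1+y\theta$ with $y$ nilpotent). Uniqueness still holds, but for a different reason: if $f=ug$ with $f,g$ both of degree $m$ and leading coefficient $1$, then the coefficient of $\theta^{\deg u+m}$ in $ug$ equals the top coefficient of $u$ (the leading coefficient of $g$ being $1$, hence a non-zero-divisor), which forces $\deg u=0$ and then $u=1$. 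Equivalently, one can invoke the Weierstrass decomposition $\ell\left(\left(\theta^{-1}\right)\right)[G]^\times=\ell\left(\left(\theta^{-1}\right)\right)[G]^+\times\ell[\theta][G]^\times$ of Corollary \ref{unitaire}, which is how the paper's formalism of monic representatives is set up. With that repair your proof is complete.
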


We recall that $\ell[G][\theta]^+=\ell[G][\theta]\cap \ell[G]\left(\left(\theta^{-1}\right)\right)^+.$
It allows us to have the following definition introduced in \cite{ref4} when $R=\F_q[G]$. 
\begin{definition} 
Let  $M$ be a $\ell[\theta][G]$-module which is a $l$-vector space of finite dimension and $G$-cohomologically trivial. Then we have $$\vert M\vert_G=\vert M\vert_{\ell[\theta][G]}\in \ell[G][\theta]^+.$$ 

\end{definition}

If there is the exact sequence of $\ell[\theta][G]$-modules
$$0\longrightarrow B\longrightarrow C \longrightarrow D\longrightarrow0$$

 with $B, C$ and $D$ which verify the conditions of the previous definition, then we have the equality $\vert C\vert_G=\vert B\vert_G\vert D\vert_G$. 

We will also use the notation $\vert X\vert_A$ for the monic generator of $\Fitt_A(X)$.  
\section{Anderson module and class formula}
In this section, we recall the definition of a $t$-module, the class module, the unit module and the Stark units. For this section, we take  $L/k$  a finite extension and $E$ a $t$-module defined over $\mathcal{O}_K$ where $K$ verifies $k\subset K\subset L$. 

Let $M$ be  an $A$-lattice of $L_\infty$ which is an $\mathcal{O}_K\{\tau\}$-module. It means that $M$ is stable over $E$. 
\subsection{Anderson module} ${}$\par

For $B\in \mathcal{M}_{n,m}(\C_{\infty})$ and $r\in \N$ such that $B=(B_{i,j})_{\substack{
i\in [\![1;n]\!] \\
j\in[\![1;m]\!]}}$, we denote $$\tau^r(B)=B^{(r)}=(\tau^r(B_{i,j}))_{\substack{
i\in [\![1;n]\!] \\
j\in[\![1;m]\!]}}.$$
Let $\mathcal{M}_n(\C_{\infty})\{\tau\}$ be the ring of twisted polynomials with coefficients in $\mathcal{M}_n(\C_{\infty})$ such that for $B,C\in \mathcal{M}_n(\C_{\infty})$ and $i,j\in \N$, we have $$B\tau^iC\tau^j=BC^{(i)}\tau^{i+j}.$$

For $X\in \C_{\infty}^n=\mathcal{M}_{n,1}(\C_{\infty})$ and $\sum A_i\tau^i\in\mathcal{M}_{n}(\C_{\infty})\{\tau\}$, we have $(\sum A_i\tau^i)X=\sum A_i\tau^i(X)$.

\begin{definition} 
Let $n,r\in \N^*$. A {\it $t$-module} $E$ of dimension $n$ consists of a $\F_q$-algebra homomorphism 

$$\begin{array}{ccccc}
\phi_E:	\F_q[\theta] &\to&\mathcal{M}_n(\C_\infty)\{\tau\} \\
\theta &\mapsto &\sum\limits_{i=0}^rA_i\tau^i
\end{array}$$
such that for all $i\in [\![0;r]\!], A_i\in \mathcal{M}_n(\C_{\infty})$ and $A_0$ verifies $(A_0-\theta I_n)^n=0_n$.

\end{definition}

A $t$-module is also called an Anderson $\F_q[t]$-module. We say that the Anderson module is defined on $\mathcal{O}_K$ where $k\subset K$ if for all $i\in [\![0;r]\!],  A_i\in \mathcal{M}_n(\mathcal{O}_K)$.

\begin{definition} 
A {\it Drinfeld module} is a $t$-module of dimension 1.

\end{definition}

We have the following lemma of Fang.

\begin{Lem}\cite[Lemma 1.4]{ref18} We have the following assertions : 
\begin{itemize}
\item $A_0^{q^n}=\theta^{q^n}I_n$, 
\item $\displaystyle\inf_{j\in\Z}(v_\infty(A_0^j)+j)$ is finite.
\end{itemize}
\end{Lem}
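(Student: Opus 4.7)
The plan is to exploit the fact that $A_0$ commutes with the scalar matrix $\theta I_n$, so that Frobenius linearity in characteristic $p$ reduces the two assertions to elementary manipulations.

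For the first bullet, the starting point is that $(A_0-\theta I_n)^n=0_n$. Since $q^n\geq n$, we also have $(A_0-\theta I_n)^{q^n}=0_n$. Now $A_0$ and $\theta I_n$ commute, and the ring $\mathcal{M}_n(\C_\infty)$ has characteristic $p$, so for any pair of commuting matrices $X,Y$ the Frobenius identity $(X-Y)^{q^n}=X^{q^n}-Y^{q^n}$ holds (iterating the usual char-$p$ binomial). Applying this with $X=A_0$ and $Y=\theta I_n$ immediately gives $A_0^{q^n}-\theta^{q^n}I_n=0_n$, proving the first assertion.

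For the second bullet, the first assertion shows that $A_0$ is invertible in $\mathcal{M}_n(\C_\infty)$ with inverse $\theta^{-q^n}A_0^{q^n-1}$, so the negative powers $A_0^j$ are well defined. Given $j\in\Z$, write the euclidean division $j=q^n k+s$ with $k\in\Z$ and $0\leq s<q^n$. Using $A_0^{q^n k}=(A_0^{q^n})^k=\theta^{q^n k}I_n$ from the first assertion, we obtain
$$A_0^j=\theta^{q^n k}A_0^s.$$
Extending $v_\infty$ to matrices by $v_\infty(B)=\min_{i,k}v_\infty(B_{i,k})$, multiplication by the nonzero scalar $\theta^{q^n k}$ simply shifts $v_\infty$ by $-q^n k$, so
$$v_\infty(A_0^j)+j=v_\infty(A_0^s)-q^n k+q^n k+s=v_\infty(A_0^s)+s.$$
Thus the quantity $v_\infty(A_0^j)+j$ depends only on the residue $s\in\{0,1,\dots,q^n-1\}$, and hence takes only finitely many finite values as $j$ ranges over $\Z$. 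Its infimum is therefore the minimum over a finite set, and in particular finite.

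The only place that requires minimal care is the interplay between the matrix valuation $v_\infty$ and scalar multiplication, but this is immediate once $v_\infty$ is defined entry-wise: the rest is pure commuting-Frobenius algebra. There is no serious obstacle.
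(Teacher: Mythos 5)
Your proof is correct and is essentially the argument of the cited reference \cite[Lemma 1.4]{ref18} (the paper itself only cites Fang and gives no proof): the first bullet follows from the nilpotency of $A_0-\theta I_n$ combined with the Frobenius identity for commuting matrices in characteristic $p$, and the second from the resulting periodicity $A_0^{q^nk+s}=\theta^{q^nk}A_0^s$, which reduces the infimum to a minimum over the finitely many residues $s$. No gaps.
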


Let $B$ be an $\mathcal{O}_K$-algebra. We denote by $E(B)$ the $A$-module $B^n$ equipped with the structure of $A$-module induced by $\phi_E$ and $\Lie_E(B)$   the $A$-module $B^n$  whose  structure of $A$-module is given by the morphism 

$$\begin{array}{ccccc}
\delta_E:	A &\to&\mathcal{M}_n(\C_{\infty}) \\
\theta &\mapsto &A_0
\end{array}.$$
 
By the previous lemma, it can be extended to $$\begin{array}{ccccc}
\delta_E:	k_\infty &\to&\mathcal{M}_n(\C_{\infty}) \\
\sum_{i\geq m}a_i\theta^{-i} &\mapsto &\sum_{i\geq m}a_iA_0^{-i}
\end{array}.$$

We can endow $\Lie_E(L_\infty)$ with a structure of $k_\infty$-vector space.

There exists a unique power series $\exp_E\in I_n +\tau\mathcal{M}_n(K)\{\tau\}$ which verifies the equality $\exp_E\delta_E(\theta) =\phi_E(\theta) \exp_E$. Moreover, it converges on $\Lie_E(\C_\infty)$. 
\subsection{Modules and units} ${}$\par

Following Taelman \cite{ref8}, we define  
$$H(E( M))=\dfrac{E(L_\infty)}{ E(M)+\exp_E(\Lie_E(L_\infty))}$$ the {\it class module} for $M$ and $$U(E( M))=\{x\in \Lie_E(L_\infty), \exp_E(x)\in E(M)\}$$ the {\it unit module} attached to $M$.

We have the exact sequence of $A$-modules induced by $\exp_E$ : 

$$0\longrightarrow \dfrac{\Lie_E(L_\infty)}{U(E( M))}\longrightarrow \dfrac{E(L_\infty)}{E(M)}\longrightarrow  H(E( M))\longrightarrow 0.$$
\begin{proposition} We have the following assertions :  
\begin{itemize}
\item $H(E(M))$ is an $A$-module finitely generated and of torsion,
\item $U(E( M))$ is an $A$-lattice in $\Lie_E(L_\infty)$.
\end{itemize}
\end{proposition}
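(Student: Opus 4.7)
The plan is to carry out a standard topological argument in the style of Taelman, using that $\exp_E$ is an $\F_q$-linear local isomorphism at $0$ to compare the discrete lattice structures in $\Lie_E(L_\infty)$ and $E(L_\infty)$. Equip both $\Lie_E(L_\infty)$ and $E(L_\infty)$ with the natural topology coming from $L_\infty^n$, so that $M$ (through $E(M)$) is a discrete cocompact subgroup. The crucial input is the valuation estimate: since $\exp_E \in I_n + \tau\mathcal{M}_n(K)\{\tau\}$, for $x$ with $v_\infty(x)$ large enough one has $v_\infty(\exp_E(x)-x) > v_\infty(x)$, so there is an open neighborhood $V$ of $0$ in $\Lie_E(L_\infty)$ on which $\exp_E$ is a homeomorphism onto an open neighborhood of $0$ in $E(L_\infty)$.

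From this I would deduce three structural facts: first, $\ker(\exp_E)$ is discrete in $\Lie_E(L_\infty)$ (closed plus discrete near $0$); second, $\exp_E(\Lie_E(L_\infty))$ is open in $E(L_\infty)$; third, $E(L_\infty)/E(M)$ is compact (since $E(M)$ is a discrete cocompact $A$-lattice). The finiteness of $H(E(M))$ then follows immediately from the usual two-way quotient trick: it is simultaneously a continuous quotient of the compact group $E(L_\infty)/E(M)$ and of the discrete group $E(L_\infty)/\exp_E(\Lie_E(L_\infty))$, so it is compact and discrete, hence finite, and therefore a fortiori finitely generated and torsion over $A$.

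For the second assertion, I would feed this back into the exact sequence already recorded in the paper,
\[
0 \longrightarrow \Lie_E(L_\infty)/U(E(M)) \longrightarrow E(L_\infty)/E(M) \longrightarrow H(E(M)) \longrightarrow 0.
\]
Since the middle term is compact and $H(E(M))$ is finite, the kernel is compact, giving cocompactness of $U(E(M))$. Discreteness of $U(E(M))$ is obtained by shrinking the neighborhood $V$ of the first paragraph until $\exp_E(V)$ meets $E(M)$ only at $0$; then $V \cap U(E(M)) = V \cap \ker(\exp_E) = \{0\}$. A closed discrete cocompact $A$-submodule of the finite-dimensional $k_\infty$-vector space $\Lie_E(L_\infty)$ is automatically a free $A$-module of full rank spanning the ambient space, i.e.\ an $A$-lattice in the sense recalled earlier.

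The main point requiring care is that, for an Anderson module of dimension $n > 1$, the $A$-action on $\Lie_E(L_\infty)$ is via $\delta_E$ and $A_0$ is only required to satisfy $(A_0 - \theta I_n)^n = 0$. One must therefore invoke Fang's lemma to extend $\delta_E$ to $k_\infty$ and verify that the $k_\infty$-vector space structure on $\Lie_E(L_\infty)$ is compatible with the natural topology from $L_\infty^n$; once this is in place the topological arguments above go through unchanged.
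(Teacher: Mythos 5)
Your argument is correct and is precisely the standard Taelman--Fang compact/discrete argument that the paper's proof invokes by reference (citing \cite{ref9} and \cite{ref18} with $\mathcal{O}_L$ replaced by $M$): local isometry of $\exp_E$ near $0$, openness of its image, compactness of $E(L_\infty)/E(M)$, and the resulting finiteness of $H(E(M))$ and discreteness/cocompactness of $U(E(M))$. You also correctly flag the one point specific to dimension $n>1$, namely using Fang's lemma to put the $k_\infty$-structure on $\Lie_E(L_\infty)$ via $\delta_E$, so nothing further is needed.
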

\begin{proof}
See \cite{ref9} for Drinfeld modules and \cite{ref18} for $t$-modules by replacing $\mathcal{O}_L$ by $M$. 

\end{proof}

Let $N, N'$ be two $A$-lattices of $L_\infty^n$. There exists $X\in Gl_m(k_\infty)$ such that $X\mathcal{B}=\mathcal{B'}$ where $\mathcal{B}$ and $\mathcal{B'}$ are bases of $N$ and $N'$ respectively. We define $[N:N']_A$ as the unique monic generator of the ideal $(\det(X))$. We can show that it does not depend on the choice of the bases. 
\\

 We look at a class formula  discovered by Taelman in \cite{ref8} for Drinfeld modules. Then it was proved by Fang in \cite{ref8} for $t$-modules and by Desmeslay in \cite{ref16} for $t$-modules with variables. 

 \begin{theorem}(The class formula)\label{classformula} The product 
$\mathcal{L}(E(M))=\prod\limits_{P\in \Spec(A)}\dfrac{\vert \Lie_E(M/PM)\vert_A}{\vert E(M/PM)\vert_A}$ converges in $k_\infty$. Furthermore, 
$$\mathcal{L}(E(M))=\left[\Lie_E(M):U(E(M))\right]_A\vert H(E(M))\vert_A.$$
\end{theorem}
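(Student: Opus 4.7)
The plan is to follow the strategy of Taelman \cite{ref8}, as extended by Fang \cite{ref18} and Demeslay \cite{ref16} to the present setting of $t$-modules with $A$-lattices $M$ more general than $\mathcal{O}_L$. The central algebraic input is the short exact sequence of $A$-modules
$$
0 \longrightarrow \Lie_E(L_\infty)/U(E(M)) \xrightarrow{\exp_E} E(L_\infty)/E(M) \longrightarrow H(E(M)) \longrightarrow 0,
$$
which is immediate from the definitions: the left map is injective because $U(E(M)) = \exp_E^{-1}(E(M))$, while its cokernel is exactly $H(E(M))$.

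The first step is to verify convergence of the product defining $\mathcal{L}(E(M))$ in $k_\infty$. For each maximal ideal $v \in \Spec(A)$, both $\Lie_E(M/vM)$ and $E(M/vM)$ sit on the same finite $\F_q$-vector space $M/vM$ and have Fitting ideals generated by monic polynomials in $\theta$ of the same degree, so their ratio is a $1$-unit of $k_\infty$. A direct expansion shows it is of the form $1 + O(\theta^{-c\deg v})$ for some $c>0$, because $\phi_E(\theta) - \delta_E(\theta) \in \tau \cdot \mathcal{M}_n(\mathcal{O}_K)\{\tau\}$ and $\tau$ acts on $M/vM$ through operators whose contributions are absorbed in high powers of $v$; this gives convergence, as in Demeslay \cite{ref16}.

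The second and main step is to identify $\mathcal{L}(E(M))$ with the right-hand side via Anderson's trace/regulator formula. For each degree bound $d$, introduce the cofinite ideal $\mathfrak{a}_d = \prod_{\deg v \leq d} v$ and consider the finite $\F_q$-vector space quotients $\Lie_E(M/\mathfrak{a}_d M)$ and $E(M/\mathfrak{a}_d M)$. By multiplicativity of Fitting ideals over coprime primes, the ratio of their characteristic polynomials under $\theta$ equals the truncated Euler product $\prod_{\deg v \leq d} |\Lie_E(M/vM)|_A / |E(M/vM)|_A$. Passing to the limit $d \to \infty$ and invoking the exponential exact sequence above, one identifies the limit with $[\Lie_E(M):U(E(M))]_A \cdot |H(E(M))|_A$: the injective piece contributes the Taelman regulator, and the cokernel contributes $|H(E(M))|_A$.

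The main technical obstacle lies in this limit step, specifically in justifying the contribution at infinity which produces the regulator. For a $t$-module of dimension $n > 1$, $\delta_E(\theta) = A_0$ is only nilpotent modulo $\theta$ via $(A_0 - \theta I_n)^n = 0$, so the extension of $\delta_E$ to $k_\infty$ is well-defined only through the series $\sum_{i \geq m} a_i A_0^{-i}$, which converges thanks to Fang's lemma recalled above. This is precisely the ingredient that makes $\Lie_E(L_\infty)$ a Banach $k_\infty$-module on which Anderson's nuclear-operator formalism applies, and the compatibility of this formalism with the exponential exact sequence is the heart of the argument.
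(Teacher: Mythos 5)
Your overall strategy is the one the paper itself uses: the paper's proof of this theorem is a one-line reduction to Demeslay's argument (\cite{ref17} with $s=0$ and $\mathcal{O}_L$ replaced by $M$), i.e.\ to Taelman's method, and your two opening steps --- the exact sequence $0\to \Lie_E(L_\infty)/U(E(M))\to E(L_\infty)/E(M)\to H(E(M))\to 0$ induced by $\exp_E$, and the estimate that each Euler factor is a $1$-unit of the form $1+O(\theta^{-c\deg v})$ --- are exactly right and match the references.

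The gap is in your main step. After observing (correctly, by CRT and multiplicativity of Fitting ideals) that the truncated product $\prod_{\deg v\le d}\vert \Lie_E(M/vM)\vert_A/\vert E(M/vM)\vert_A$ is a ratio of characteristic polynomials of $\theta$ on $\Lie_E(M/\mathfrak{a}_dM)$ and $E(M/\mathfrak{a}_dM)$, you assert that ``passing to the limit $d\to\infty$ and invoking the exponential exact sequence, one identifies the limit'' with $[\Lie_E(M):U(E(M))]_A\,\vert H(E(M))\vert_A$. That identification \emph{is} the theorem, and no mechanism is supplied for extracting the regulator and the class module from such a limit; this is also not how the cited proofs actually proceed. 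In Taelman--Fang--Demeslay the identification is obtained from two separate results: (i) the trace formula $\prod_{v}\det_{A[\![Z]\!]}\bigl(1+\psi\mid (M/vM)^n\bigr)=\det_{A[\![Z]\!]}\bigl(1+\psi\mid L_\infty^n/M^n\bigr)^{-1}$ for the nuclear operator $\psi=\tfrac{1-\phi_E(\theta)Z}{1-\delta_E(\theta)Z}-1$, proved by a localization lemma at each prime together with Anderson's $a^{q^d}-a$ argument for a sufficiently large set $S$ of places; and (ii) the volume--determinant comparison $\det_{A[\![Z]\!]}(1+\Delta_\gamma\mid V_1)=\Vol(V_2)/\Vol(V_1)$ for the map $\gamma$ induced by $\exp_E$, which is $\infty$-tangent to the identity, after which the exact sequence computes the ratio of volumes as $[\Lie_E(M):U(E(M))]_A\,\vert H(E(M))\vert_A$. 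Your closing paragraph names the right analytic ingredients (Fang's lemma on $A_0$, convergence of $\exp_E$, the nuclear formalism) but explicitly defers ``the compatibility of this formalism with the exponential exact sequence,'' which is precisely the content a proof must supply. You also use implicitly that $M$ is an $\mathcal{O}_K\{\tau\}$-module (so that $E(M)$ and $E(M/vM)$ make sense and $\mathcal{O}_L$ can legitimately be replaced by $M$ in the cited arguments); this hypothesis should be stated.
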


\begin{proof}
It is the same as \cite{ref17} if $s=0$ and by replacing $\mathcal{O}_L$ by $M$.
\end{proof}
\subsection{Stark units} ${}$\par
In this section, we recall the $z$-deformation introduced by Anglès and Tavares Ribeiro in \cite{ref3} for Drinfeld modules and Anglès, Ngo Dac and Tavares Ribeiro in \cite{ref3} for $t$-modules and the definition of Stark units. 
\bigskip

Let $z$ be an indeterminate over $k_{\infty}$.

If  $E$ is a $t$-module such that $\phi_E(\theta)=\sum\limits_{i=0}^r A_i\tau^i$ then we can define a $t$-module $\widetilde{E}$ called the $z$-deformation of $E$ by the homomorphism of $\F_q(z)$-algebras :   

$$\begin{array}{ccccc}
\widetilde{\phi}	_E: \F_q(z)[\theta] &\to&\mathcal{M}_n(\widetilde{\C}_\infty)\{\tau\} \\
\theta &\mapsto &\sum\limits_{i=0}^rz^i A_i\tau^i
\end{array}.$$

Let $\exp_E$ be the unique element in $\mathcal{M}_n(K)\{\tau\}$ such that $\exp_E\equiv I_n \mod \tau$ and $\exp_E\delta_E(\theta) =\phi_E(\theta) \exp_E$. Likewise, if  $\exp_E=\sum\limits_{i\geq 0}E_i\tau^i$, we set $\exp_{\widetilde{E}}=\sum\limits_{i\geq 0}E_iz^i\tau^i$. We can show that it is the only element in $\mathcal{M}_n(K(z))\{\tau\}$ such that $\exp_{\widetilde{E}}\equiv I_n \mod \tau$ and $\exp_{\widetilde{E}}\delta_E(\theta) =\phi_{\widetilde{E}}(\theta) \exp_{\widetilde{E}}$. 

Furthermore, it converges on $\Lie_{\widetilde{E}}(\widetilde{L}_\infty)$. 
We denote by $\widetilde{M}=M\otimes_A\widetilde{A}$. We define   $U(\widetilde{E}(\widetilde{M}))=\{x\in \Lie_{\widetilde{E}}(\widetilde{L}_\infty), \exp_{\widetilde{E}}(x)\in \widetilde{E}(\widetilde{M})\}$  and  $H(\widetilde{E}(\widetilde{M}))=\dfrac{\widetilde{E}(\widetilde{L}_\infty)}{\widetilde{E}(\widetilde{M})+\exp_{\widetilde{E}}(\Lie_{\widetilde{E}}(\widetilde{L}_\infty))}$.

We have the following proposition proved by Desmeslay in \cite{ref16} when $M=\mathcal{O}_L$.  
  \begin{proposition} \label{reseautildea}\cite[Proposition 2.6]{ref16} We have the following assertions : 
 \begin{itemize}
 \item $H(\widetilde{E}(\widetilde{M}))$ is a $\F_q(z)$-vector space of finite dimension. 
 \item $U(\widetilde{E}(\widetilde{M}))$ is an $\widetilde{A}$-lattice of $\Lie_{\widetilde{E}}(\widetilde{L}_\infty)$. 
 \end{itemize}

\end{proposition}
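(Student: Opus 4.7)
The plan is to adapt Demeslay's proof of the analogous statement from \cite[Proposition 2.6]{ref16}, which handles the case $M = \mathcal{O}_L$, to our setting of a general $A$-lattice $M$ of $L_\infty$ stable under $\mathcal{O}_K\{\tau\}$. All the essential inputs (convergence of $\exp_{\widetilde{E}}$ on all of $\Lie_{\widetilde{E}}(\widetilde{L}_\infty)$, the $\widetilde{A}$-lattice property of $\widetilde{M}$, and the existence of a logarithm inverting $\exp_{\widetilde{E}}$ near the origin) carry over directly, since the axioms we impose on $M$ are exactly those used for $\mathcal{O}_L$ in the original argument.

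First, I would record the exact sequence of $\widetilde{A}$-modules induced by $\exp_{\widetilde{E}}$:
$$0 \to \Lie_{\widetilde{E}}(\widetilde{L}_\infty)/U(\widetilde{E}(\widetilde{M})) \xrightarrow{\exp_{\widetilde{E}}} \widetilde{E}(\widetilde{L}_\infty)/\widetilde{E}(\widetilde{M}) \to H(\widetilde{E}(\widetilde{M})) \to 0,$$
where injectivity of the left-hand arrow is immediate from the definition of $U(\widetilde{E}(\widetilde{M}))$. Both assertions then reduce to controlling this sequence. The key analytic tool is the formal inverse $\log_{\widetilde{E}} \in I_n + \tau \mathcal{M}_n(K(z))\{\tau\}$ of $\exp_{\widetilde{E}}$, which converges on some open $\widetilde{k}_\infty$-neighborhood $V$ of the origin in $\Lie_{\widetilde{E}}(\widetilde{L}_\infty)$ and realizes a mutually inverse pair with $\exp_{\widetilde{E}}$ on $V$. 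Taking $V = \theta^{-N}\widetilde{M}$ for $N$ large enough (depending on the valuations of the coefficients of $\exp_{\widetilde{E}}$ and $\log_{\widetilde{E}}$), I would show that $\exp_{\widetilde{E}}(V) + \widetilde{E}(\widetilde{M})$ has finite $\F_q(z)$-codimension in $\widetilde{E}(\widetilde{L}_\infty)$, which forces $H(\widetilde{E}(\widetilde{M}))$ to be a finite-dimensional $\F_q(z)$-vector space.

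For the second assertion, I would combine this with the exact sequence: since $\log_{\widetilde{E}}$ yields a continuous $\widetilde{A}$-module section near $0$, the intersection $U(\widetilde{E}(\widetilde{M})) \cap V = \log_{\widetilde{E}}(\widetilde{M} \cap \exp_{\widetilde{E}}(V))$ is an $\widetilde{A}$-submodule of finite index in $\widetilde{M}$, showing that $U(\widetilde{E}(\widetilde{M}))$ is finitely generated over $\widetilde{A}$ and has the same $\widetilde{k}_\infty$-rank as $\Lie_{\widetilde{E}}(\widetilde{L}_\infty)$. Because $\widetilde{A} = \F_q(z)[\theta]$ is a principal ideal domain, this yields that $U(\widetilde{E}(\widetilde{M}))$ is a free $\widetilde{A}$-module of the correct rank, hence an $\widetilde{A}$-lattice.

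The principal difficulty is the fact that $\widetilde{k}_\infty = \F_q(z)((\theta^{-1}))$ is \emph{not} locally compact, so one cannot argue via compactness of $\widetilde{L}_\infty / \widetilde{M}$ as in Taelman's original treatment over $k_\infty$. Instead, one exploits that $\widetilde{A}$ remains a principal ideal domain and that the successive quotients $\theta^{-N-1}\widetilde{M}/\theta^{-N}\widetilde{M}$ are finite-dimensional $\F_q(z)$-vector spaces, providing an algebraic substitute for compact quotients. Verifying that the exponential covers all sufficiently deep levels of this filtration, via an explicit estimate on the valuations of the coefficients $E_i z^i$, is the technical heart of Demeslay's argument and is the step that requires the most care to transcribe in the more general setting of an almost taming module.
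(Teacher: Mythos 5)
Your overall strategy is the intended one: the paper gives no argument for this proposition beyond the citation to Demeslay's \cite[Proposition 2.6]{ref16} and the remark that the case $M=\mathcal{O}_L$ carries over verbatim, and that argument is exactly the Taelman--Demeslay scheme you describe (the exact sequence induced by $\exp_{\widetilde{E}}$, the local inverse $\log_{\widetilde{E}}$, and the replacement of local compactness of $k_\infty$ by finite $\F_q(z)$-dimensionality of suitable quotients). Two of your concrete steps, however, do not work as written. First, $V=\theta^{-N}\widetilde{M}$ is not an open neighborhood of the origin: $\widetilde{M}$ is an $\widetilde{A}$-lattice, hence discrete in $\Lie_{\widetilde{E}}(\widetilde{L}_\infty)$, and so is every $\theta^{-N}\widetilde{M}$; moreover the chain $\theta^{-N}\widetilde{M}\subset\theta^{-N-1}\widetilde{M}$ is \emph{increasing}, so it cannot serve as a neighborhood basis of $0$, and $\log_{\widetilde{E}}$ does not converge on it. The correct objects are open $\F_q(z)[\![\theta^{-1}]\!]$-submodules such as $\theta^{-N}\mathcal{O}^n$, where $\mathcal{O}$ is the unit ball of $\widetilde{L}_\infty$: these decrease in $N$, have finite-dimensional successive quotients over $\F_q(z)$, and are where $\exp_{\widetilde{E}}$ and $\log_{\widetilde{E}}$ are mutually inverse isometries for $N$ large. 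With that correction your finite-codimension argument for $H(\widetilde{E}(\widetilde{M}))$ is fine.

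Second, the claim that $U(\widetilde{E}(\widetilde{M}))\cap V=\log_{\widetilde{E}}(\widetilde{M}\cap\exp_{\widetilde{E}}(V))$ is an $\widetilde{A}$-submodule of finite index in $\widetilde{M}$ is false and misses the point of that computation: for $V$ a small enough open submodule, $\exp_{\widetilde{E}}(V)$ is again a small neighborhood of $0$, and since $\widetilde{E}(\widetilde{M})$ is discrete one gets $\widetilde{E}(\widetilde{M})\cap\exp_{\widetilde{E}}(V)=\{0\}$, hence $U(\widetilde{E}(\widetilde{M}))\cap V=\{0\}$. That is precisely what proves \emph{discreteness} of $U(\widetilde{E}(\widetilde{M}))$ (and note $U\cap V$ is in any case not an $\widetilde{A}$-submodule, since $V$ is not stable under $\delta_E(\theta)$). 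The full-rank and finite-generation statements are not obtained by an index count inside $\widetilde{M}$, but from the injection $\Lie_{\widetilde{E}}(\widetilde{L}_\infty)/U(\widetilde{E}(\widetilde{M}))\hookrightarrow\widetilde{E}(\widetilde{L}_\infty)/\widetilde{E}(\widetilde{M})$ whose cokernel is $H(\widetilde{E}(\widetilde{M}))$: once $H$ is known to be finite-dimensional over $\F_q(z)$, the quotient $\Lie_{\widetilde{E}}(\widetilde{L}_\infty)/U(\widetilde{E}(\widetilde{M}))$ differs from the full divisible module $\widetilde{E}(\widetilde{L}_\infty)/\widetilde{E}(\widetilde{M})\cong(\widetilde{k}_\infty/\widetilde{A})^{d}$ by something finite-dimensional, which together with discreteness forces $U(\widetilde{E}(\widetilde{M}))$ to be a free $\widetilde{A}$-module of full rank, i.e., a lattice. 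You should rework the second half of your argument along these lines.
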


We recall that $\mathbb{T}_z(k_\infty)$ is the Tate algebra
 with coefficents in $k_\infty$, meaning that $\mathbb{T}_z(k_\infty)=\F_q[z]\left(\left(\theta^{-1}\right)\right)$. We also have $\mathbb{T}_z(L_\infty)=L_\infty\otimes_{k_\infty}\mathbb{T}_z(k_\infty). $

The map $\exp_{\widetilde{E}} : \Lie_{\widetilde{E}}(\widetilde{L}_\infty)\longrightarrow \widetilde{E}(\widetilde{L}_\infty)$ can be restricted to an homomophism of $A[z]$-modules from $\Lie_{\widetilde{E}}(\mathbb{T}_z(L_\infty))$ to $\widetilde{E}(\mathbb{T}_z(L_\infty))$ that we still denote by  $\exp_{\widetilde{E}}$.

 Following Anglès, Ngo Dac and Tavares Ribeiro, we define  $$H(\widetilde{E}(M[z]))=\dfrac{\widetilde{E}(\mathbb{T}_z(L_\infty))}{\widetilde{E}(M[z])+\exp_{\widetilde{E}}(\Lie_{\widetilde{E}}(\mathbb{T}_z(L_\infty)))}.$$ 
 
\begin{proposition}
$H(\widetilde{E}(M[z]))$ is a finitely generated and torsion $\F_q[z]$-module.  Furthermore, $H(\widetilde{E}(\widetilde{M}))=\{0\}$. 
 \end{proposition}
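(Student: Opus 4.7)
The plan is to follow the strategy of Angl\`es--Ngo Dac--Tavares Ribeiro, adapting Demeslay's proof of Proposition \ref{reseautildea} from the case $M=\mathcal{O}_L$ to a general $A$-lattice $M$ stable under $\mathcal{O}_K\{\tau\}$. The two assertions are treated sequentially: first finite generation and torsion over $\F_q[z]$, then vanishing after passing from $M[z]$ to $\widetilde{M}$.

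For finite generation, I would exploit the fact that $\exp_{\widetilde{E}}=I_n+\sum_{i\geq 1}E_iz^i\tau^i$ is very close to the identity on elements of large $v_\infty$-valuation. More precisely, since $\tau$ multiplies valuations by $q$ and $v_\infty(z)=0$, one checks that for $N$ large enough the sub-$\F_q[z]$-module
$$B_N=\{x\in \mathbb{T}_z(L_\infty)^n:v_\infty(x)\geq N\}$$
is stable under $\exp_{\widetilde{E}}$ and the restriction is a bijection onto itself, with inverse given by the formal inverse series which converges on $B_N$. Consequently $\exp_{\widetilde{E}}(\Lie_{\widetilde{E}}(\mathbb{T}_z(L_\infty)))\supseteq B_N$. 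A direct computation in an $A$-basis of $M$ (which is also a $k_\infty$-basis of $L_\infty$) shows that $\mathbb{T}_z(L_\infty)^n/(M[z]^n+B_N)$ is a finitely generated $\F_q[z]$-module; hence so is its quotient $H(\widetilde{E}(M[z]))$.

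For torsion, I would reduce modulo $z$. Since $\phi_{\widetilde{E}}(\theta)\equiv A_0=\delta_E(\theta)\pmod z$ and $\exp_{\widetilde{E}}\equiv I_n\pmod z$, right-exactness of the functor $-\otimes_{\F_q[z]}\F_q$ yields
$$H(\widetilde{E}(M[z]))/zH(\widetilde{E}(M[z]))\cong \Lie_E(L_\infty)/\bigl(\Lie_E(M)+\Lie_E(L_\infty)\bigr)=0.$$
Localizing at $(z)\subset \F_q[z]$ and applying Nakayama's lemma to the finitely generated module $H(\widetilde{E}(M[z]))_{(z)}$, one concludes $H(\widetilde{E}(M[z]))_{(z)}=0$, whence there exists $s\in\F_q[z]$ with $s(0)\neq 0$ annihilating $H(\widetilde{E}(M[z]))$: this is the required torsion. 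For the vanishing of $H(\widetilde{E}(\widetilde{M}))$, I would use that $\exp_{\widetilde{E}}$ is $\F_q(z)$-linear and $s$ is a unit in $\F_q(z)$. For every $x\in \mathbb{T}_z(L_\infty)^n$, writing $sx=m+\exp_{\widetilde{E}}(y)$ with $m\in M[z]^n\subset \widetilde{M}^n$ and $y\in \Lie_{\widetilde{E}}(\mathbb{T}_z(L_\infty))\subset \Lie_{\widetilde{E}}(\widetilde{L}_\infty)$ gives $x=s^{-1}m+\exp_{\widetilde{E}}(s^{-1}y)\in \widetilde{M}^n+\exp_{\widetilde{E}}(\Lie_{\widetilde{E}}(\widetilde{L}_\infty))$, so every element of the $\F_q(z)$-span of $\mathbb{T}_z(L_\infty)^n$ lies in this sum. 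Since this span is dense in $\widetilde{L}_\infty$ (Section 2.1) and $\widetilde{M}^n$ is open in $\widetilde{L}_\infty^n$ (being an $\widetilde{A}$-lattice), approximating an arbitrary $x\in \widetilde{L}_\infty^n$ closely enough forces the error into $\widetilde{M}^n$, yielding $H(\widetilde{E}(\widetilde{M}))=0$.

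The main technical obstacle will be establishing the finite generation of $H(\widetilde{E}(M[z]))$: one must carefully set up the $\theta^{-1}$-adic topology on $\mathbb{T}_z(L_\infty)$ and verify that $\exp_{\widetilde{E}}$ restricts to a bijection on $B_N$ uniformly in the $z$-variable, which is where the good behaviour of the $z$-deformation is genuinely used. Once this is in place, the remaining arguments are soft: Nakayama for the torsion and a density-plus-openness argument for the vanishing.
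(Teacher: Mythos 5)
Your overall strategy is the standard one and coincides with what the paper does (the paper simply defers to \cite[Theorem 3.3]{ref19} and \cite[Proposition 2]{ref3}, whose proofs run exactly along your lines: local isometry of $\exp_{\widetilde{E}}$ for finite generation, triviality mod $z$ for torsion, density for the vanishing). The finite-generation argument via $B_N$ and the torsion argument via $H/zH=0$ plus Nakayama are both correct; the Nakayama route is a clean, self-contained way to get torsion without first knowing $H(\widetilde{E}(\widetilde{M}))=0$, and the extraction of a single annihilator $s$ with $s(0)\neq 0$ is exactly what you need for the last step.

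There is, however, one genuinely wrong justification in the final step: you claim that $\widetilde{M}^n$ is \emph{open} in $\widetilde{L}_\infty^n$ ``being an $\widetilde{A}$-lattice''. An $\widetilde{A}$-lattice is \emph{discrete}, not open: by definition one has $\widetilde{M}^n\cap(\theta^{-N}\mathcal{O}_{\widetilde{L}_\infty})^n=\{0\}$ for $N$ large, so $\widetilde{M}^n$ contains no neighborhood of $0$ and your ``approximate and push the error into $\widetilde{M}^n$'' step fails as stated. The correct way to close the argument is to use the openness of the \emph{other} summand: the same estimate you used for $B_N$ shows that $\exp_{\widetilde{E}}$ restricts to a bijective isometry of $\{x\in\widetilde{L}_\infty^n : v_\infty(x)\geq N\}$ onto itself for $N\gg 0$, so $\exp_{\widetilde{E}}(\Lie_{\widetilde{E}}(\widetilde{L}_\infty))$ contains an open neighborhood of $0$. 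Hence $\widetilde{M}^n+\exp_{\widetilde{E}}(\Lie_{\widetilde{E}}(\widetilde{L}_\infty))$ is an open (hence closed) subgroup of $\widetilde{L}_\infty^n$ containing the dense $\F_q(z)$-span of $\mathbb{T}_z(L_\infty)^n$, and therefore equals $\widetilde{L}_\infty^n$. With this substitution your proof is complete and matches the cited arguments.
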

\begin{proof}
For the first assertion, the proof is the same as \cite[Theorem 3.3]{ref19} with $M$ instead of $\mathcal{O}_L$. For the second, it is the same as \cite[Proposition 2]{ref3} with $t$-modules instead of Drinfeld modules. 
It follows from the fact that the $\F_q(z)$-modules generated by $\mathbb{T}_z(L_\infty)$ is dense in $\widetilde{L}_\infty$. 
\end{proof}

We define the module of $z$-units by  $$U(\widetilde{E}(M[z]))=\{x\in \Lie_{\widetilde{E}}(\mathbb{T}_z(L_\infty)), \exp_{\widetilde{E}}(x)\in \widetilde{E}(M[z])\}.$$ 
\begin{proposition}\label{engendre}\cite[Proposition 1]{ref3}  We have : 
\begin{itemize}
\item $U(\widetilde{E}(\widetilde{M}))$ is the $\F_q(z)$-vector space generated by $U(\widetilde{E}(M[z]))$.

\item $U(\widetilde{E}(M[z]))$ is a finitely generated $A[z]$-module. 
\end{itemize}
\end{proposition}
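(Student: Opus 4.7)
My plan is to prove the two assertions, treating the second (finite generation) first since it is more concrete. \textbf{For part (2)}, the key point is that $\exp_{\widetilde{E}}$ is injective on $\Lie_{\widetilde{E}}(\mathbb{T}_z(L_\infty))$. Indeed, if $w\in\mathbb{T}_z(L_\infty)^n$ satisfies $\exp_{\widetilde{E}}(w)=0$, then evaluation at $z=0$ (well-defined on $\mathbb{T}_z(k_\infty)=\F_q[z]((\theta^{-1}))$) gives $w|_{z=0}=0$, since only the $i=0$ term $E_0=I_n$ of $\exp_{\widetilde{E}}=\sum_i E_iz^i\tau^i$ survives. Hence $w\in z\mathbb{T}_z(L_\infty)^n$; writing $w=zw'$ and using $\tau(z)=z$, one gets $\exp_{\widetilde{E}}(w)=z\exp_{\widetilde{E}}(w')$, so $\exp_{\widetilde{E}}(w')=0$; iterating yields $w\in\bigcap_{k\geq 1}z^k\mathbb{T}_z(L_\infty)^n=\{0\}$. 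Therefore $\exp_{\widetilde{E}}$ realizes $U(\widetilde{E}(M[z]))$ as an $A[z]$-submodule of $\widetilde{E}(M[z])=M[z]^n$. Since $M$ is a finitely generated $A$-module, $M[z]^n$ is finitely generated over the Noetherian ring $A[z]=\F_q[\theta,z]$, and so is its submodule $U(\widetilde{E}(M[z]))$.

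\textbf{For part (1)}, the inclusion $\F_q(z)\cdot U(\widetilde{E}(M[z]))\subseteq U(\widetilde{E}(\widetilde{M}))$ is immediate from the containments $M[z]\subset\widetilde{M}$, $\mathbb{T}_z(L_\infty)\subset\widetilde{L}_\infty$, and the $\widetilde{A}$-linearity of $\exp_{\widetilde{E}}$. For the converse, given $u\in U(\widetilde{E}(\widetilde{M}))$, the element $\exp_{\widetilde{E}}(u)\in\widetilde{M}=M\otimes_A\widetilde{A}$ is a finite $\widetilde{A}$-combination of elements of $M$; clearing $\F_q(z)$-denominators produces $f(z)\in\F_q[z]\setminus\{0\}$ with $\exp_{\widetilde{E}}(fu)=f\exp_{\widetilde{E}}(u)\in M[z]\subset\mathbb{T}_z(L_\infty)^n$. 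What remains is to show $fu\in\Lie_{\widetilde{E}}(\mathbb{T}_z(L_\infty))$, i.e., the claim: \emph{if $w\in\widetilde{L}_\infty^n$ and $\exp_{\widetilde{E}}(w)\in\mathbb{T}_z(L_\infty)^n$, then $w\in\mathbb{T}_z(L_\infty)^n$}.

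This claim is the main obstacle and it is not formal, since $\F_q(z)\cdot\mathbb{T}_z(L_\infty)$ is merely dense (and strictly contained) in $\widetilde{L}_\infty$. My strategy is to use the formal inverse $\log_{\widetilde{E}}=\sum_i L_iz^i\tau^i$ of $\exp_{\widetilde{E}}$, defined by the inverse recursion. It converges on a neighborhood $V=\{x\in\widetilde{L}_\infty^n : v_\infty(x)\geq N\}$ of $0$ for a suitable $N$, and satisfies $\log_{\widetilde{E}}\circ\exp_{\widetilde{E}}=\mathrm{id}$ on $V$. Since each monomial $L_iz^i\tau^i$ stabilizes $\mathbb{T}_z(L_\infty)^n$, one has $\log_{\widetilde{E}}(V\cap\mathbb{T}_z(L_\infty)^n)\subset\mathbb{T}_z(L_\infty)^n$, which yields the claim for $w\in V$. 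For arbitrary $w$, the estimate $v_\infty(A_0^{-k})\to\infty$ deduced from $A_0^{q^n}=\theta^{q^n}I_n$ (Fang's lemma) forces $\delta_E(\theta^{-k})w=A_0^{-k}w\in V$ for $k\gg 0$, and one reduces to the small case via the functional equation $\phi_{\widetilde{E}}(\theta^k)\exp_{\widetilde{E}}(A_0^{-k}w)=\exp_{\widetilde{E}}(w)$, combined with the stability of $\mathbb{T}_z(L_\infty)^n$ under $A_0^{\pm 1}$ and the twisted polynomials $\phi_{\widetilde{E}}(\theta^k)$. The entire argument parallels \cite[Proposition 1]{ref3} with $M$ replacing $\mathcal{O}_L$ throughout.
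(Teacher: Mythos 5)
The paper offers no proof of this proposition beyond the citation of \cite[Proposition 1]{ref3}, so your reconstruction has to stand on its own; it contains a genuine gap in the second assertion. Your injectivity argument for $\exp_{\widetilde{E}}$ on $\mathbb{T}_z(L_\infty)^n$ (reduce at $z=0$, factor out $z$, iterate) is correct, and it does identify $U(\widetilde{E}(M[z]))$ with an $A[z]$-submodule of $\widetilde{E}(M[z])$. But the $A[z]$-structure transported by $\exp_{\widetilde{E}}$ is the one in which $\theta$ acts through $\phi_{\widetilde{E}}(\theta)$, and $M[z]^n$ equipped with \emph{that} action is not a finitely generated $A[z]$-module, so Noetherianity gives nothing. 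Already for the Carlitz module with $M=A$: since $\deg_\theta\bigl(\theta x+zx^q\bigr)=\max(1+\deg_\theta x,\,q\deg_\theta x)$, applying $\phi_{\widetilde{C}}(\theta)$ multiplies the $\theta$-degree by $q$ once it is $\geq 1$, so the module generated by finitely many elements of $\theta$-degree $\leq d$ misses all elements whose degree is not of the form $q^j e$ with $e\leq d$; this is the same phenomenon as Poonen's theorem that $E(\mathcal{O}_L)$ is a free $A$-module of countably infinite rank plus torsion. Finite generation of $U(\widetilde{E}(M[z]))$ genuinely requires the analytic input of the cited proofs: one uses that $\exp_{\widetilde{E}}$ is an isometric bijection of a small ball $\{v_\infty\geq N\}$ onto itself and that $M[z]^n$ meets this ball trivially (so $U(\widetilde{E}(M[z]))$ is discrete), combined with the finiteness over $\F_q[z]$ of the relevant quotients of $\mathbb{T}_z(L_\infty)^n$ by $M[z]^n$ plus the ball, as in \cite{ref19, ref2}.

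For the first assertion your plan is essentially the standard one and the small-ball case via $\log_{\widetilde{E}}$ is fine, but the reduction to it points the wrong way: from $\exp_{\widetilde{E}}(w)=\phi_{\widetilde{E}}(\theta^k)\exp_{\widetilde{E}}(A_0^{-k}w)$ and $\exp_{\widetilde{E}}(w)\in\mathbb{T}_z(L_\infty)^n$ you must conclude that $\exp_{\widetilde{E}}(A_0^{-k}w)\in\mathbb{T}_z(L_\infty)^n$, which requires inverting $\phi_{\widetilde{E}}(\theta^k)$ on a suitable domain while preserving $\mathbb{T}_z(L_\infty)^n$; the stability of $\mathbb{T}_z(L_\infty)^n$ under $\phi_{\widetilde{E}}(\theta^k)$ itself is the wrong direction. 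This is repairable (a contraction/fixed-point argument identifies $\exp_{\widetilde{E}}(A_0^{-k}w)$ as the unique small solution $y$ of $y=A_0^{-k}(\exp_{\widetilde{E}}(w)-\Psi(y))$ with $\Psi=\phi_{\widetilde{E}}(\theta^k)-A_0^k$, and that equation visibly preserves $\mathbb{T}_z(L_\infty)^n$), or it can be bypassed altogether by writing $u=u_1+u_2$ with $u_1\in\F_q(z)\cdot\mathbb{T}_z(L_\infty)^n$ a truncation and $u_2$ in the small ball, and applying the $\log_{\widetilde{E}}$ argument only to $u_2$. As written, however, both bullet points need more than you have supplied.
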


Following Anglès, Ngo Dac and Tavares Ribeiro in \cite{ref2} for Drinfeld modules and in \cite{ref19} for $t$-modules   we define the module of Stark units attached to a $t$-module.

\bigskip

We denote by  $\ev:\mathbb{T}_z(L_\infty)\longrightarrow L_\infty$ the evaluation at $z=1$. The {\it module of Stark units} of $M$ is defined by $$U_{St}(E(M))=\ev(U(\widetilde{E}(M[z]))).$$ 
As $\ev(\exp_{\widetilde{E}})=\exp_E$, we can see that $U_{St}(E(M))$ is an $A$-submodule of  $U(E(M)).$
\bigskip 

We define the morphism of  $\mathbb{F}_q[z]$-modules

$$\begin{array}{ccccc}
\alpha:	\mathbb{T}_z(L_\infty)^n &\to& H(\widetilde{E}(M[z])) \\
x &\mapsto &\dfrac{\exp_{\widetilde{E}}(x)-\exp_E(x)}{z-1} \mod (M[z]^n+\exp_{\widetilde{E}}(\mathbb{T}_z(L_\infty)^n)
\end{array}.$$

For $f\in A[z]$, we set $$H(\widetilde{E}(M[z]))[f]=\{x\in H(\widetilde{E}(M[z])), fx=0\}.$$

\begin{proposition}\label{iso}
We have an isomorphism of $A$-modules induced by $\alpha$ : 

$$\bar{\alpha}:\dfrac{U(E(M))}{U_{St}(E(M))}\cong H(\widetilde{E}(M[z]))[z-1].$$
\end{proposition}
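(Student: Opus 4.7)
The plan is to show $\bar{\alpha}$ is a well-defined $A$-module isomorphism, by establishing in order: well-definedness of $\alpha$ on $\mathbb{T}_z(L_\infty)^n$, the containment $\alpha(U(E(M)))\subseteq H(\widetilde{E}(M[z]))[z-1]$, the identification $\ker(\alpha|_{U(E(M))}) = U_{St}(E(M))$, and surjectivity onto the $(z-1)$-torsion.

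Well-definedness uses the factorisation
\[
\exp_{\widetilde{E}}(x)-\exp_E(x) = (z-1)\sum_{i\geq 1}[i]_z\,E_i\,\tau^i(x),\qquad [i]_z := 1+z+\cdots+z^{i-1},
\]
which shows the quotient by $z-1$ converges in $\mathbb{T}_z(L_\infty)^n$. For $x\in U(E(M))$ one has $\exp_E(x)\in E(M)\subseteq\widetilde{E}(M[z])$ and $\exp_{\widetilde{E}}(x)\in\exp_{\widetilde{E}}(\Lie_{\widetilde{E}}(\mathbb{T}_z(L_\infty)))$, so $(z-1)\alpha(x)=0$ in $H(\widetilde{E}(M[z]))$ and the image lies in the $(z-1)$-torsion. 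For the inclusion $U_{St}(E(M))\subseteq \ker(\alpha|_{U(E(M))})$, suppose $x=\ev(y)$ with $y\in U(\widetilde{E}(M[z]))$ and write $y=x+(z-1)v$; the $\F_q(z)$-linearity of $\exp_{\widetilde{E}}$ gives
\[
\frac{\exp_{\widetilde{E}}(x)-\exp_E(x)}{z-1} = \frac{\exp_{\widetilde{E}}(y)-\exp_E(x)}{z-1} - \exp_{\widetilde{E}}(v).
\]
The first summand lies in $M[z]^n$ (its numerator belongs to $M[z]^n$ and vanishes at $z=1$, hence is divisible by $z-1$ inside $M[z]^n$ because $M$ is $\F_q$-torsion-free), and the second lies in $\exp_{\widetilde{E}}(\Lie_{\widetilde E}(\mathbb{T}_z(L_\infty)))$, so $\alpha(x)=0$. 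Conversely, if $\alpha(x)=0$ write $(\exp_{\widetilde{E}}(x)-\exp_E(x))/(z-1) = m+\exp_{\widetilde{E}}(w)$ with $m\in M[z]^n$, $w\in\mathbb{T}_z(L_\infty)^n$, and set $y=x-(z-1)w$; then $\exp_{\widetilde{E}}(y)=\exp_E(x)+(z-1)m\in M[z]^n$, so $y\in U(\widetilde{E}(M[z]))$ with $\ev(y)=x$, which places $x$ in $U_{St}(E(M))$.

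For surjectivity, let $h=[t]\in H(\widetilde{E}(M[z]))[z-1]$ with $t\in\mathbb{T}_z(L_\infty)^n$, and lift the relation $(z-1)h=0$ to an equality $(z-1)t = m+\exp_{\widetilde{E}}(u)$ with $m\in M[z]^n$, $u\in\mathbb{T}_z(L_\infty)^n$. Setting $x=\ev(u)$ and evaluating at $z=1$ yields $\exp_E(x)=-\ev(m)\in M^n$, so $x\in U(E(M))$ for free. Decomposing $m = \ev(m)+(z-1)m'$ and $u = x+(z-1)u'$ and substituting gives
\[
(z-1)t = \bigl(\exp_{\widetilde{E}}(x)-\exp_E(x)\bigr) + (z-1)\bigl(m'+\exp_{\widetilde{E}}(u')\bigr),
\]
and dividing by $z-1$ identifies $t$ with a lift of $\alpha(x)$ modulo $M[z]^n+\exp_{\widetilde{E}}(\Lie_{\widetilde E}(\mathbb{T}_z(L_\infty)))$, hence $\bar{\alpha}(x)=h$.

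Finally, $A$-linearity of $\bar{\alpha}$ follows from the identity $\phi_{\widetilde{E}}(\theta)-\phi_E(\theta) = (z-1)\sum_{i\geq 1}[i]_z\,A_i\,\tau^i$: applied to $\exp_E(x)\in M^n$, this yields an element of $(z-1)M[z]^n$ since $M$ is stable under $\mathcal{O}_K\{\tau\}$, so $\alpha(\delta_E(\theta)x)\equiv \phi_{\widetilde{E}}(\theta)\alpha(x)$ in $H(\widetilde{E}(M[z]))$. The main obstacle is the surjectivity step: the crucial observation is that the hypothesis $(z-1)h=0$ automatically forces the evaluated preimage candidate $x=\ev(u)$ to satisfy $\exp_E(x)\in M^n$, producing an element of $U(E(M))$ without further adjustment.
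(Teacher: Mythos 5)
Your proof is correct: each step (well-definedness via the factorization $\exp_{\widetilde{E}}-\exp_E=(z-1)\sum_{i\ge 1}[i]_z E_i\tau^i$, the kernel identification with $U_{St}(E(M))$, surjectivity by evaluating the relation $(z-1)t=m+\exp_{\widetilde{E}}(u)$ at $z=1$, and $A$-linearity from $\phi_{\widetilde{E}}(\theta)-\phi_E(\theta)\in(z-1)\mathcal{M}_n(\mathcal{O}_K)\{\tau\}[z]$ together with the $\mathcal{O}_K\{\tau\}$-stability of $M$) is sound, and this is exactly the argument the paper delegates to \cite[Theorem 3.3]{ref19} with $\mathcal{O}_L$ replaced by $M$. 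The only cosmetic point is that divisibility by $z-1$ of a polynomial in $M[z]^n$ vanishing at $z=1$ needs no torsion-freeness hypothesis; it is the same telescoping identity you already use.
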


\begin{proof}
It is the same as \cite[Theorem 3.3]{ref19}  replacing $\mathcal{O}_L$ by $M$.

\end{proof}

\begin{theorem}\label{fitting}
$\dfrac{U(E( M))}{U_{St}(E( M))}$ is a finite $A$-module and $U_{St}(E( M))$ is an $A$-lattice in $\Lie_E(L_\infty)$. Furthermore, $\left\vert\dfrac{U(E( M))}{U_{St}(E( M))}\right\vert_A=\vert H(E( M))\vert_A$.

\end{theorem}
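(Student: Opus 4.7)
The plan is to exploit Proposition \ref{iso} together with a specialization-at-$z=1$ isomorphism linking $H(\widetilde{E}(M[z]))$ to $H(E(M))$, and then extract the Fitting-ideal equality from the kernel--cokernel symmetry of an endomorphism of a finite $A$-module. I would prove the three assertions in the order finiteness, lattice property, and Fitting-ideal identity.

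For finiteness: the proposition stated just before the definition of $U(\widetilde{E}(M[z]))$ asserts that $H(\widetilde{E}(M[z]))$ is a finitely generated torsion $\F_q[z]$-module; since $\F_q[z]$ is a PID, it is a finite direct sum of cyclic modules $\F_q[z]/(f_j)$ with $f_j \neq 0$, each a finite set, so $H(\widetilde{E}(M[z]))$ itself is finite. By Proposition \ref{iso}, its $(z-1)$-torsion submodule is $A$-isomorphic to $U(E(M))/U_{St}(E(M))$, which is therefore a finite $A$-module. The lattice property of $U_{St}(E(M))$ then follows formally from that of $U(E(M))$: $U_{St}(E(M))$ is a finitely generated $A$-submodule of finite index in the $A$-lattice $U(E(M))$, hence spans the same $k_\infty$-vector space $\Lie_E(L_\infty)$ and is itself an $A$-lattice.

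Next I would construct an isomorphism $H(\widetilde{E}(M[z]))/(z-1)H(\widetilde{E}(M[z])) \cong H(E(M))$. Because $\tau$ fixes $z$, the exponential $\exp_{\widetilde{E}} = \sum_{i \geq 0} E_i z^i \tau^i$ is $\F_q[z]$-linear and satisfies $\ev \circ \exp_{\widetilde{E}} = \exp_E \circ \ev$. Starting from the short exact sequence of $\F_q[z]$-modules
$$0 \longrightarrow \widetilde{E}(\mathbb{T}_z(L_\infty)) \stackrel{z-1}{\longrightarrow} \widetilde{E}(\mathbb{T}_z(L_\infty)) \stackrel{\ev}{\longrightarrow} E(L_\infty) \longrightarrow 0,$$
a diagram chase shows that $\ev$ sends $\widetilde{E}(M[z]) + \exp_{\widetilde{E}}(\Lie_{\widetilde{E}}(\mathbb{T}_z(L_\infty)))$ onto $E(M) + \exp_E(\Lie_E(L_\infty))$ with preimage equal to that submodule plus $(z-1)\widetilde{E}(\mathbb{T}_z(L_\infty))$; passing to the quotient yields the claimed $A$-module isomorphism.

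To conclude, I would invoke the following formal fact: for any $A$-linear endomorphism $\varphi$ of a finite $A$-module $N$, one has $\Fitt_A(\ker \varphi) = \Fitt_A(N/\varphi(N))$. Indeed, localizing at each maximal ideal $\mathfrak{p}$ of $A$, the four-term exact sequence $0 \to \ker \varphi \to N \xrightarrow{\varphi} N \to N/\varphi(N) \to 0$ forces $(\ker \varphi)_\mathfrak{p}$ and $(N/\varphi(N))_\mathfrak{p}$ to have equal length over the DVR $A_\mathfrak{p}$, and over a DVR the Fitting ideal of a finite torsion module is determined by its length. Applied to $\varphi = z-1$ on $N = H(\widetilde{E}(M[z]))$ and combined with the two preceding isomorphisms, this gives $|U(E(M))/U_{St}(E(M))|_A = |H(E(M))|_A$. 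The step I expect to be most delicate is the diagram chase of the third paragraph: one must simultaneously verify the surjectivity of $\ev$ on each summand of the denominator and correctly identify its preimage, making essential use of the $\F_q[z]$-linearity of $\exp_{\widetilde{E}}$.
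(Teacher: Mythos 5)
Your argument is correct and is essentially the proof the paper intends: the paper simply defers to \cite[Theorem 3.3]{ref19}, whose argument is exactly the one you reconstruct (finiteness of $H(\widetilde{E}(M[z]))$ over $\F_q[z]$, Proposition \ref{iso} for the kernel of $z-1$, the specialization exact sequence identifying the cokernel of $z-1$ with $H(E(M))$, and the length/Fitting-ideal equality between kernel and cokernel of an endomorphism of a finite module). No gaps; the delicate point you flag (identifying the preimage under $\ev$ of $E(M)+\exp_E(\Lie_E(L_\infty))$) is handled correctly by lifting constants and using $\ker(\ev)=(z-1)\mathbb{T}_z(L_\infty)$.
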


\begin{proof}
It is the same as \cite[Theorem 3.3]{ref19}  replacing $\mathcal{O}_L$ by $M$. 

\end{proof}

With the class formula, as Anglès and Tavares Ribeiro in  \cite[Theorem 1]{ref3}  for Drinfeld modules and Anglès, Ngo Dac and Tavares Ribeiro  in  \cite[Theorem 3.3]{ref19} for $t$-modules,  we obtain $$\mathcal{L}(E(M))=[\Lie_E(M):U_{St}(E(M))]_A.$$

For a general $A$, it was proved by Mornev in \cite{ref21} for some Drinfeld modules by using shtuka cohomology and by Anglès, Ngo Dac and Tavares Ribeiro for some $t$-modules, in particular Drinfeld modules in \cite{ref1} by using the $z$-deformation. 

\section{Equivariant trace formula} ${}$\par
In this section, we recall the theory of nuclear operators and determinants which was introduced by Taelman  in \cite[Section 2]{ref8} and then developped for the equivariant setting by Ferrara, Green, Higgins and D. Popescu in \cite[Section 2]{ref4}. 

\subsection{Nuclear operators} ${}$\par

In what follows, $R_\ell=\ell[G]$ where $\ell=\F_q$ or $\F_q(z)$. All the proofs of this section for $\ell=\F_q$ are in \cite[Section 2]{ref4}. The same arguments work for $\ell=\F_q(z)$. 

Let $V$ be a $R_\ell$-module which is $R_\ell$-projective. 

We will look at the determinant of $V$ as a $R_\ell$-module and not as a  $l$-vector space. In this case, we look at determinant of continuous endomorphism of finitely generated $R_\ell$-modules  and projective.

\begin{definition} \label{base}
 
Let $\mathcal{U}=\{U_i\}_{i\geq m}$ be a sequence of open $R_\ell$-submodules of $V$ which verify : 

\begin{itemize}
\item for all $i$, $V/U_i$ is finitely generated,
\item every $U_i$ is $G$-cohomologically trivial,
\item $\forall i\geq m, U_{i+1}\subset U_i$,
\item $\mathcal{U}$ is a basis of neighborhoods of 0 in $V$. 
\end{itemize}
\end{definition}
 
First we assume that $\mathcal{U}$ exists and we define all that follow for $(V,\mathcal{U})$. Then we will see that it does not depend on the choice of $\mathcal{U}$.

 \begin{definition} 
 
Let $\phi$ be an endomorphism of $V$. We say that $\phi$ is {\it locally contracting} for  $\mathcal{U}$ if there exists $I\in \mathbb{N}$ such that $I\geq m$ and $\forall i\geq I,  \phi(U_i)\subset  U_{i+1} $. Such a neighborhood  $U=U_I$ of $0$ is called a {\it nucleus} for $\phi$. 
\end{definition}

In particular, if  $V$ is already a finitely generated $R_\ell$-module, we can take $U_i=\{0\}$ for $i\geq1$ and every endomorphism of $V$ is locally contracting.

\begin{proposition}
 
Let $\phi$ and $\psi$ be locally contracting endomorphisms of $V$ for $\mathcal{U}$. Then
\begin{itemize}
\item There exists a common nucleus. 
\item  $\phi+\psi$ is locally contracting. 
\item $\phi\circ\psi$ is locally contracting. 
\end{itemize}
\end{proposition}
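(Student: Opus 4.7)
The plan is to exploit the two pieces of data that come with ``locally contracting'': an integer $I_\phi$ past which $\phi$ shifts the filtration by one step, and an integer $I_\psi$ with the same property for $\psi$. Setting $I=\max(I_\phi,I_\psi)$ immediately produces a common contraction index, so $U_I$ is simultaneously a nucleus for $\phi$ and for $\psi$. This takes care of the first bullet, and it is the observation that will make the other two bullets routine, since we only need to verify the shift property at indices $i\geq I$.

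For the sum, I would simply recall that each $U_i$ is an $R_\ell$-submodule of $V$, hence stable under addition. For $i\geq I$ one has $\phi(U_i)\subset U_{i+1}$ and $\psi(U_i)\subset U_{i+1}$, so $(\phi+\psi)(U_i)\subset U_{i+1}+U_{i+1}=U_{i+1}$. Therefore $U_I$ is a nucleus for $\phi+\psi$.

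For the composition, I would use the two-step shift coming from applying the contracting property twice. For $i\geq I$, first $\psi(U_i)\subset U_{i+1}$; since $i+1>I\geq I_\phi$, the inclusion $\phi(U_{i+1})\subset U_{i+2}$ applies, giving $(\phi\circ\psi)(U_i)\subset U_{i+2}$. Because the filtration is decreasing, $U_{i+2}\subset U_{i+1}$, which yields the required shift $(\phi\circ\psi)(U_i)\subset U_{i+1}$, so $U_I$ is a nucleus for $\phi\circ\psi$.

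There is no real obstacle here beyond unpacking the definition; the only points that need care are that the $U_i$ are indeed submodules (so closed under sums and under applying the given endomorphisms), and that the chain $\{U_i\}$ is descending so that the two-step shift for the composition can be weakened back to a one-step shift. Both are built into Definition~\ref{base}, so the proposition follows directly.
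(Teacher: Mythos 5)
Your proof is correct and is essentially the standard argument that the paper (which defers to \cite[Section 2]{ref4} and omits the proof) has in mind: take the maximum of the two contraction indices for the common nucleus, use that each $U_{i+1}$ is a submodule for the sum, and use the two-step shift together with the decreasing chain $U_{i+2}\subset U_{i+1}$ for the composition. No issues.
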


We define the  $R_\ell[\![Z]\!]$-modules $$V[\![Z]\!]/Z^N=V\otimes_{R_\ell}R_\ell[\![Z]\!]/Z^N \hspace{1cm} \text{and} \hspace{1cm} V[\![Z]\!]=\lim\limits_{\overleftarrow{N}} V[\![Z]\!]/Z^N.$$ 

A continuous $R_\ell[\![Z]\!]$-linear morphism $\psi$ of $V[\![Z]\!]$ (respectively $R_\ell[\![Z]\!]/Z^N$-linear morphism of $V[\![Z]\!]/Z^N$) can be write as $\psi=\sum\limits_{r\geq 0}\phi_rZ^r$ (respectively $\psi=\sum\limits_{r\geq 0}^{N-1}\phi_rZ^r$) where the $\phi_r$ are endormorphisms of $V$.
Let  $V$ be a compact,  $R$-module $G$-cohomologically trivial. For $j\geq i\geq m$, and $U_i, U_j$ as in Definition \ref{base}, $V/U_i$ and $U_i/U_j$ are $G$-cohomologically trivial because $V, U_i$ and $U_j$ are. The $R_\ell$-modules $V/U_i$ and $U_i/U_j$ are finitely generated and projective. Therefore, we can take determinants of endomorphisms.

 \begin{definition} 
 
A linear $R_\ell[\![Z]\!]$-continuous endomorphism $\phi$ of $V[\![Z]\!]$ (respectively $V[\![Z]\!]/Z^N$) is said {\it nuclear} if for all $r\geq 0$ (respectively for all $r$ such that $N>r\geq 0$) the endomorphisms $\phi_r$ of $V$ are locally contracting. 
\end{definition}

\begin{proposition} \cite[Proposition 2.1.9]{ref4}
Let $\psi : V[\![Z]\!]/Z^N\longrightarrow V[\![Z]\!]/Z^N$ be a nuclear endomorphism. Let $U=U_J$ and $W=U_I$ be common  nuclei for $\phi_n$. Then 
 $$\det_{R_\ell[\![Z]\!]/Z^N}(1+\psi\vert V/U)=\det_{R_\ell[\![Z]\!]/Z^N}(1+\psi\vert V/W).$$ 
\end{proposition}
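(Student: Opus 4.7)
The plan is to prove invariance in two steps: first reduce to the case of nested nuclei, then handle the nested case via multiplicativity of determinants in short exact sequences. For the reduction, because $\mathcal{U}$ is a basis of neighborhoods of $0$, I can choose $K \geq \max(I,J)$ such that $U_K \subset U \cap W$. Since $K \geq I$ and $K \geq J$, the open submodule $U_K$ is again a common nucleus for all of $\phi_0,\ldots,\phi_{N-1}$. Thus it suffices to prove the identity $\det(1+\psi \vert V/U_K) = \det(1+\psi \vert V/U)$ and the analogous one for $W$, so I may assume throughout that $W \subset U$ with both being common nuclei.

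Now assume $W = U_J \subset U = U_I$ with $J \geq I$. The nucleus condition gives $\phi_r(U) \subset U_{I+1} \subset U$ and $\phi_r(W) \subset U_{J+1} \subset W$ for every $r < N$, so $\psi$ preserves both $U$ and $W$. This yields an exact sequence of $R_\ell[\![Z]\!]/Z^N$-modules compatible with $1+\psi$:
$$0 \longrightarrow U/W \longrightarrow V/W \longrightarrow V/U \longrightarrow 0.$$
Each of the three terms is finitely generated and $G$-cohomologically trivial (as a quotient of $G$-cohomologically trivial modules in the sense of Definition \ref{base}), hence projective over $R_\ell$ by Theorem \ref{Projct}, hence projective over $R_\ell[\![Z]\!]/Z^N$. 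Multiplicativity of the determinant in short exact sequences of endomorphisms of finitely generated projective modules therefore gives
$$\det_{R_\ell[\![Z]\!]/Z^N}(1+\psi \vert V/W) = \det_{R_\ell[\![Z]\!]/Z^N}(1+\psi \vert U/W) \cdot \det_{R_\ell[\![Z]\!]/Z^N}(1+\psi \vert V/U),$$
so it remains to show that the middle factor equals $1$.

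For this I use the finite filtration $U = U_I \supset U_{I+1} \supset \cdots \supset U_J = W$. Each subquotient $U_i/U_{i+1}$ is $R_\ell$-projective and finitely generated, and for every $r < N$ and every $i$ with $I \leq i < J$, the common nucleus property $\phi_r(U_i) \subset U_{i+1}$ shows that $\phi_r$ acts as zero on $U_i/U_{i+1}$. Consequently $\psi = \sum_{r=0}^{N-1} \phi_r Z^r$ acts as zero on each subquotient, so $\det(1+\psi \vert U_i/U_{i+1}) = 1$. Applying multiplicativity again over this finite filtration yields $\det(1+\psi \vert U/W) = 1$, which concludes the argument.

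The main point to handle carefully is the multiplicativity of the determinant in short exact sequences of $R_\ell[\![Z]\!]/Z^N$-modules endowed with an endomorphism — it is a standard fact for endomorphisms of finitely generated projective modules over a commutative ring, but one must verify that the hypotheses propagate: the $G$-cohomological triviality of $V$ and of each $U_i$ together with Theorem \ref{Projct} is precisely what guarantees the $R_\ell$-projectivity of the successive quotients $V/U$, $U/W$ and $U_i/U_{i+1}$, and this projectivity is preserved under the base change $R_\ell \to R_\ell[\![Z]\!]/Z^N$.
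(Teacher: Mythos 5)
Your proof is correct and is essentially the argument the paper defers to (\cite[Prop.~2.1.9]{ref4}, following Taelman): reduce to nested nuclei via $U_{\max(I,J)}$, use multiplicativity of the determinant along $0\to U/W\to V/W\to V/U\to 0$, and kill the middle factor by the filtration $U_I\supset U_{I+1}\supset\cdots\supset U_J$, on whose graded pieces every $\phi_r$ acts as zero. The one phrase worth tightening is ``as a quotient of $G$-cohomologically trivial modules'': a quotient of cohomologically trivial modules need not be trivial in general; what you actually use is that both $U_i$ and $U_j$ (resp.\ $V$) are cohomologically trivial together with the long exact sequence in Tate cohomology, which is exactly what Definition~\ref{base} guarantees.
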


With this proposition, we see that the determinant does not depend on the choice of the nucleus. So we can have the next definition.
 \begin{definition} 
 
Let $\psi$ be a nuclear endomorphism of $V[\![Z]\!]/Z^N$ and $U$ be a common nucleus for $\phi_r$. Then we set $$\det_{R_\ell[\![Z]\!]/Z^N}(1+\psi\vert V)=\det_{R_\ell[\![Z]\!]/Z^N}(1+\psi\vert V/U). $$  Moreover, if $\psi$ is a nuclear endormophism of $V[\![Z]\!]$, we define the determinant of $(1+\psi)$ in $R_\ell[\![Z]\!]=\lim\limits_{\overleftarrow{N}} R_\ell[\![Z]\!]/Z^N$ as being $$\det_{R_\ell[\![Z]\!]}(1+\psi\vert V)=\lim_{\overleftarrow{N}} \det_{R_\ell[\![Z]\!]/Z^N}(1+\psi\vert V).$$ 
\end{definition}

\begin{proposition}\cite[Proposition 2.1.12]{ref4} \label{prod}
Let $\phi$ and $\psi$ be two nuclear endomorphisms of $V[\![Z]\!]$. Then $(1+\phi)(1+\psi)-1$ is nuclear and $$\det_{R_\ell[\![Z]\!]}((1+\psi)(1+\phi)\vert V)=\det_{R_\ell[\![Z]\!]}(1+\psi\vert V)\det_{R_\ell[\![Z]\!]}(1+\phi\vert V).$$
\end{proposition}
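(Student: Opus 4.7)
The plan is to reduce the statement to a finite-dimensional determinant identity by working modulo $Z^N$ for each $N \geq 1$, then invoking the multiplicativity of determinants on a finitely generated projective $R_\ell$-module and passing to the inverse limit.

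First I would check that $(1+\phi)(1+\psi) - 1 = \phi + \psi + \phi \circ \psi$ is nuclear. Writing $\phi = \sum_{r \geq 0} \phi_r Z^r$ and $\psi = \sum_{r \geq 0} \psi_r Z^r$, the coefficient of $Z^r$ in this operator is $\phi_r + \psi_r + \sum_{i+j = r} \phi_i \psi_j$, a finite sum of endomorphisms of $V$. The earlier proposition ensures that sums and compositions of locally contracting endomorphisms for $\mathcal{U}$ are again locally contracting, so each coefficient is locally contracting and the operator is nuclear.

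Next, by the definition $\det_{R_\ell[\![Z]\!]}(1 + \cdot \mid V) = \varprojlim_N \det_{R_\ell[\![Z]\!]/Z^N}(1 + \cdot \mid V)$, it is enough to prove the identity modulo $Z^N$ for every $N$. Fix $N$. Only finitely many endomorphisms of $V$ are involved modulo $Z^N$, namely the coefficients $\phi_r, \psi_r$ for $r < N$ together with the finitely many composites $\phi_i \psi_j$ with $i + j < N$. Iterating the existence of a common nucleus from the earlier proposition, I can choose a single $U = U_I \in \mathcal{U}$ that is simultaneously a nucleus for all these operators. Then $\phi_r(U), \psi_r(U), (\phi\psi)_r(U) \subset U$ for each relevant $r$, so each of $1+\phi$, $1+\psi$, and their composition $(1+\phi)(1+\psi)$ descends to an $R_\ell[\![Z]\!]/Z^N$-linear endomorphism of $(V/U) \otimes_{R_\ell} R_\ell[\![Z]\!]/Z^N$.

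Since $V/U$ is a finitely generated projective $R_\ell$-module, $(V/U) \otimes_{R_\ell} R_\ell[\![Z]\!]/Z^N$ is finitely generated projective over the commutative ring $R_\ell[\![Z]\!]/Z^N$, and the usual multiplicativity $\det(AB) = \det(A)\det(B)$ applies to the induced endomorphisms. Combined with the fact that the nuclear determinant modulo $Z^N$ is computed as the ordinary determinant on $V/U$ for any common nucleus (the preceding proposition guaranteeing independence of the nucleus), this yields the identity modulo $Z^N$. Taking the inverse limit over $N$ gives the claim in $R_\ell[\![Z]\!]$. The main obstacle is the choice of a single $U$ that works simultaneously for the three determinants $\det(1+\phi \mid V)$, $\det(1+\psi \mid V)$, $\det((1+\phi)(1+\psi) \mid V)$; this is exactly what the common-nucleus lemma provides, and the independence of the determinant on the nucleus ensures that replacing individual nuclei by a single $U$ does not alter the values being compared.
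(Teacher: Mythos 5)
Your argument is correct and is essentially the standard proof that the paper delegates to \cite[Proposition 2.1.12]{ref4}: nuclearity of $\phi+\psi+\phi\psi$ from closure of locally contracting operators under sums and composites, then reduction modulo $Z^N$, a single common nucleus $U$ for the finitely many relevant coefficients, ordinary multiplicativity of determinants on the finitely generated projective module $V/U$, and passage to the inverse limit. The only cosmetic discrepancy is the order of the factors ($(1+\phi)(1+\psi)$ versus $(1+\psi)(1+\phi)$ in the displayed formula), which is immaterial since the right-hand side is symmetric and your argument applies verbatim to either composite.
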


\begin{proposition} \cite[Proposition 2.1.13]{ref4}
Let $V'\subset V$ be a  closed $R_\ell$ sub-module of $V$ which is $G$-cohomologically trivial. We set $V''=V/V'$. Let $\mathcal{U'}=\{U'_i\}_{i}$, where $U'_i=U_i\cap V'$ and $\mathcal{U''}=\{U''_i\}_{i}$ where $U''_i$ is the image of $U_i$ in $V/V'$. We assume that $U'_i$ and $U''_i$ are $G$-cohomologically trivial. Let $\psi=\sum\phi_rZ^r:V[\![Z]\!]\longrightarrow V[\![Z]\!]$ be a nuclear endormorphism such that $\phi_r(V')\subset V' $ for all $r$. Then the endomorphisms induced by $\psi$ over $(V',\mathcal{U}')$ and $(V'',\mathcal{U}'')$ are nuclear. Furthermore $$\det_{R_\ell[\![Z]\!]}(1+\psi\vert V)=\det_{R_\ell[\![Z]\!]}(1+\psi\vert V')\det_{R_\ell[\![Z]\!]}(1+\psi\vert V'').$$
\end{proposition}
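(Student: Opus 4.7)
The plan is to reduce the claim modulo $Z^N$ to a short exact sequence of finitely generated projective $R_\ell$-modules, apply the standard block-triangular multiplicativity of the determinant there, and pass to the inverse limit.

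First I would verify that $(V',\mathcal{U}')$ and $(V'',\mathcal{U}'')$ satisfy Definition \ref{base} and that the induced operators are nuclear. The descending chain conditions for $\mathcal{U}'$ and $\mathcal{U}''$ are inherited from $\mathcal{U}$; the $G$-cohomological triviality of each $U'_i$ and $U''_i$ is part of the hypothesis; $V'/U'_i\hookrightarrow V/U_i$ (hence finitely generated) and $V''/U''_i$ is a quotient of $V/U_i$; and the induced topologies make $\mathcal{U}'$ and $\mathcal{U}''$ neighborhood bases of $0$. Since $\phi_r(V')\subset V'$ and $\phi_r(U_i)\subset U_{i+1}$ for $i$ large, one gets $\phi_r(U'_i)=\phi_r(U_i\cap V')\subset U_{i+1}\cap V'=U'_{i+1}$, and analogously $\phi_r(U''_i)\subset U''_{i+1}$, so the two induced endomorphisms are locally contracting.

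Next I would fix $N\geq 1$ and, invoking the preceding proposition on the existence of a common nucleus, pick $U=U_J$ simultaneously nuclear for $\phi_0,\dots,\phi_{N-1}$. Setting $U'=U\cap V'$ and letting $U''$ be the image of $U$ in $V''$, these are common nuclei for the induced operators. This yields a short exact sequence of finite $R_\ell$-modules
$$0\longrightarrow V'/U'\longrightarrow V/U\longrightarrow V''/U''\longrightarrow 0,$$
preserved by $1+\psi\bmod Z^N$. Because $V,V',V''$ and $U,U',U''$ are all $G$-cohomologically trivial, so are the three quotients; they are finite-dimensional over the field $\ell$, so Theorem \ref{Projct} forces them to be $R_\ell$-projective. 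Base change to the flat ring $R_\ell[\![Z]\!]/Z^N$ preserves projectivity, hence the sequence obtained after base change splits and is stable under $1+\psi\bmod Z^N$. Writing the operator in block-triangular form on a splitting gives
$$\det_{R_\ell[\![Z]\!]/Z^N}(1+\psi\mid V/U)=\det_{R_\ell[\![Z]\!]/Z^N}(1+\psi\mid V'/U')\cdot\det_{R_\ell[\![Z]\!]/Z^N}(1+\psi\mid V''/U'').$$
By the previous proposition (independence from the choice of nucleus) this is the desired identity modulo $Z^N$, and passing to the inverse limit over $N$ yields the formula in $R_\ell[\![Z]\!]$.

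The main obstacle is confirming $R_\ell$-projectivity of the three quotients $V/U$, $V'/U'$, $V''/U''$, since this is what allows the short exact sequence to split and the determinant to be multiplicative; this is precisely why $G$-cohomological triviality of $V'$, $U'_i$ and $U''_i$ is built into the hypotheses, so that the structure theorem of Theorem \ref{Projct} applies uniformly.
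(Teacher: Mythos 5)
Your proof is correct and follows the same route as the cited argument in \cite[Proposition 2.1.13]{ref4} (the paper itself only refers to that reference): choose a common nucleus $U$ for $\phi_0,\dots,\phi_{N-1}$, observe that $U'=U\cap V'$ and the image $U''$ are nuclei for the induced operators, use $G$-cohomological triviality plus Theorem \ref{Projct} to see that $V'/U'\to V/U\to V''/U''$ is a short exact sequence of finitely generated projective $R_\ell$-modules on which the determinant is multiplicative, and pass to the limit over $N$. The only cosmetic point is that flatness of $R_\ell[\![Z]\!]/Z^N$ is not what you need; projectivity is preserved under arbitrary base change, and exactness of the base-changed sequence follows because the original one splits over $R_\ell$.
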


We assume that $V$ is a compact $R_\ell$-module $G$-cohomologically trivial. We will see that the determinant of $V$ does not depend of the choice of the basis.  Let $\mathcal{U}=\{U_i\}_{i}$ and $\mathcal{U'}=\{U'_i\}_{i}$ be two bases of $V$ as in Definition \ref{base}. Let $\psi\in End_{R_\ell}(V)$ and $\phi=\phi_rZ^r\in End_{R_\ell[\![Z]\!]}(V[\![Z]\!])$ be such that $\psi$ is locally contracting and  $\phi$ is nuclear with $\mathcal{U}$ and $\mathcal{U}'$.

 \begin{definition} \cite[Definition 2.2.1]{ref4}
 
We say that $\mathcal{U}$ {\it $\psi$-dominates} $\mathcal{U}'$  and we write it  $\mathcal{U} \succeq_\psi\mathcal{U}'$ if there exists $N\in \mathbb{N}$ such that for $i\geq N$,there exists $j\geq N$ such that $U_i\supseteq U'_j$ and $\psi(U_i)\subseteq U'_j$.

We say that $\mathcal{U}$ $\phi$-dominates $\mathcal{U}'$  and we write   $\mathcal{U} \succeq_\phi\mathcal{U}'$ if for all $r$, $\mathcal{U} \succeq_{\phi_r}\mathcal{U}'$.
\end{definition}

 \begin{Lem} 
Let $V,\psi$, $\mathcal{U}$ and $\mathcal{U}'$ be such that $\mathcal{U} \succeq_\psi\mathcal{U}'$. Then $$\det_{R_\ell[\![Z]\!]}(1+\psi\vert V)=\det'_{R_\ell[\![Z]\!]}(1+\psi\vert V).$$
\end{Lem}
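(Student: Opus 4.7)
The plan is to reduce the equality of determinants to an application of the multiplicativity in short exact sequences (the analogue of Proposition 2.1.13 for the basis $\mathcal{U}'$) via a carefully chosen pair of nuclei, one from each basis.

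First I would unwind the domination hypothesis $\mathcal{U}\succeq_\psi\mathcal{U}'$ to produce indices $i,j$ (with $i,j$ large enough to exceed the thresholds guaranteeing that $U_i$ is a common nucleus for $\psi$ in $\mathcal{U}$ and $U'_j$ is a common nucleus for $\psi$ in $\mathcal{U}'$) such that both $U'_j\subseteq U_i$ and $\psi(U_i)\subseteq U'_j$. The second inclusion is the crucial point: modulo $U'_j$ the endomorphism of $U_i/U'_j$ induced by $\psi$ vanishes identically. Consequently $\det_{R_\ell[\![Z]\!]}\bigl(1+\psi\,\vert\, U_i/U'_j\bigr)=1$.

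Next I would apply the multiplicativity of the determinant (Proposition 2.1.13) to the short exact sequence
\[
0\longrightarrow U_i/U'_j\longrightarrow V/U'_j\longrightarrow V/U_i\longrightarrow 0
\]
of $R_\ell[G]$-modules, which is an exact sequence of finitely generated projective $R_\ell$-modules (all three terms are $G$-cohomologically trivial since $V$, $U_i$ and $U'_j$ are, by the assumptions in Definition \ref{base}). Since $\psi$ stabilises $U_i$ and $U'_j$, it induces compatible endomorphisms on the three modules, and multiplicativity gives
\[
\det_{R_\ell[\![Z]\!]}\bigl(1+\psi\,\vert\, V/U'_j\bigr)=\det_{R_\ell[\![Z]\!]}\bigl(1+\psi\,\vert\, U_i/U'_j\bigr)\,\det_{R_\ell[\![Z]\!]}\bigl(1+\psi\,\vert\, V/U_i\bigr).
\]
By the observation in the previous paragraph the first factor on the right is $1$, so the two sides of the lemma coincide after identifying each with the determinant on the appropriate quotient.

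Finally I would argue the identification is licit: by the independence-of-nucleus statement (Proposition 2.1.9) the left-hand side of the lemma equals $\det_{R_\ell[\![Z]\!]}\bigl(1+\psi\,\vert\, V/U_i\bigr)$, while the right-hand side equals $\det'_{R_\ell[\![Z]\!]}\bigl(1+\psi\,\vert\, V/U'_j\bigr)$, both computations being well-defined because $U_i$ (resp.\ $U'_j$) is a nucleus for $\psi$ in $\mathcal{U}$ (resp.\ $\mathcal{U}'$). I expect the main obstacle to be checking that the intermediate quotient $U_i/U'_j$ is $G$-cohomologically trivial in the generality needed (so that Proposition 2.1.13 applies and the determinant on it is defined), and handling the passage from each $V[\![Z]\!]/Z^N$ to the inverse limit uniformly; both should follow from the running assumptions on $\mathcal{U}$ and $\mathcal{U}'$ together with the long exact cohomology sequence.
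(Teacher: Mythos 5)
Your proposal is correct and follows essentially the same route as the proof the paper relies on (Lemma 2.2.2 of \cite{ref4}, to which this section defers): use domination to find $U'_j\subseteq U_i$ with $\psi(U_i)\subseteq U'_j$, observe that $\psi$ induces the zero map on $U_i/U'_j$ so that factor contributes determinant $1$, and conclude by multiplicativity along $0\to U_i/U'_j\to V/U'_j\to V/U_i\to 0$ together with independence of the nucleus. The cohomological triviality of $U_i/U'_j$ follows, as you anticipate, from that of $U_i$ and $U'_j$.
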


\subsection{Some good bases} ${}$\par
We will see some examples of compact, projective $R_\ell$-modules and some associated bases $G$-cohomologically trivial that we could use.

Let $(M,\mathcal{W}^\infty)$ be an almost taming pair for $L/K$. For $P\in \Spec(A)$, we denote by $\widehat{A}_{P,\ell}$ the $P$-adique completion of $\ell[\theta]$ and $\widehat{\ell(\theta)}_P$ the $P$-adique completion of $\ell(\theta)$. We set $M_\ell=M\otimes_{\F_q}\ell$. It means that $M_\ell=M$ when $\ell=\F_q$ and $M_\ell=\widetilde{M}$ when $\ell=\F_q(z)$. In the same way,  $\mathcal{W}^\infty_\ell=\mathcal{W}_\ell^\infty\otimes_{\F_q}\ell$. We define for $i\geq 0$, 

$$U_{i,P,\ell}=M_\ell\otimes_{\ell[\theta]}P^i\widehat{A}_{P,\ell} \text{ and  } U_{i,\infty,\ell}=\mathcal{W}^\infty\otimes_{\ell(z)[\![\theta^{-1}]\!]}\theta^{-i}\ell[\![\theta^{-1}]\!].$$ 

These $U_{i,P,\ell}$ and $U_{i,\infty,\ell}$ are $G$-cohomologically trivial by choice of $M$ and $\mathcal{W}^\infty$ and are a decreasing sequence for inclusion. 

Let $V$ be a element ot the class $\mathcal{C}$ which correspond to $\ell[G]$-modules compact which verify an exact sequence $0\longrightarrow \Lie_E(L_{\infty,\ell})/\Lambda \longrightarrow V\longrightarrow H\longrightarrow0$ where $\Lambda$ is a $\ell[\theta]$-lattice of $\Lie_E(L_{\infty,\ell})$ and $H$ is a $\ell(\theta)$-vector space of finite dimension which is a $\ell[\theta][G]$-module. We want to construct a basis $\mathcal{U}$ of $V$. To do so, we will use some $R_\ell$-submodules  of $L_{\infty,\ell}$ which are  $G$-cohomologically trivial. 
For $i\geq 0$, we take $U_{i,\infty,\ell}\subseteq L_{\infty,\ell}$.

They are compact $R_\ell$-submodules, $G$-cohomologically trivial of $L_{\infty,\ell}$ which form a basis of neighborhoods of $0$ in $L_{\infty,\ell}$. 

We recall that $\Lie_E(L_{\infty,\ell})\cong L_{\infty,\ell}^n$ for some $n$. 
As $\Lambda$ is discrete, there exists $r\geq 0$ such that $(U_{r,\infty,\ell})^n\cap \Lambda=\{0\}$. For $i\geq r$, we associate $(U_{i,\infty})^n$ with its image in the exact sequence. If we define $\mathcal{U}=\{U_{i,\infty,\ell}\}_{i\geq r}$, $\mathcal{U}^n$ gives us a good basis of $V$. \bigskip

Let $S$ be a finite set of places of $k$ containing $\infty$. We set $V_\ell$ the $\ell(\theta)$-vector space spanned by $M_\ell$ ( i.e.,$V=M_\ell\otimes_{\ell[\theta]} \ell(\theta)$). For $P\in \Spec(\ell[\theta])$, we denote $V_{P,\ell}=V_\ell\otimes_{\ell(\theta)}\widehat{\ell(\theta)}_P$, $M_{P,\ell}=M_\ell\otimes_{\ell[\theta]}\widehat{\ell[\theta]}_P$. We denote $V_{S,\ell}=\prod\limits_{P\in S} V_{P,\ell}$ and $M_{S,\ell}=M_\ell\otimes_{\ell[\theta]}\ell[\theta]_S$ where $\ell[\theta]_S$ is the ring of $S$-integers (i.e., $ \ell[\theta]_{S}=\{a\in \ell(\theta) \vert  \forall v\not\in S, v(a)\geq 0 \}$).

We can show that $M_{S,\ell}$ is a lattice of  $V_{S,\ell}$. In particular, 
$M_{S,\ell}$ is discrete and co-compact in $V_{S,\ell}$. 

We see that $V_{S,\ell}$ is  $G$-cohomologically trivial because $V_{S,\ell}=\prod\limits_{P\in S} V_{P,\ell}$ and the  $V_{P,\ell}$ are.

As $M_\ell$ is $\ell[\theta][G]$-projective then $M_{S,\ell}$ is $\ell[\theta]_S[G]$-projective. Thus $M_{S,\ell}$ is $G$-cohomologically trivial which implies that $V_{S,\ell}/M_{S,\ell}$ too. 

So $V_{S,\ell}/M_{S,\ell}$ is a compact and projective $R_\ell$-module. As previously, we will take some $R_\ell$-modules $G$-cohomologically trivial of $V_{S,\ell}$ which will induce a basis over $V_{S,\ell}/M_{S,\ell}$. For $i\geq 0$ and $P\in S\cap \Spec(\ell[\theta])$, we set $U_{i,P,\ell}=M_\ell\otimes_{\ell[\theta]}P^i\widehat{\ell[\theta]}_P$.

For $i\geq 0$, we denote 
$U_{i,\infty,\ell}=\mathcal{W}^\infty_\ell
\otimes_{\ell[\![\theta^{-1}]\!]}\theta^{-i}\ell[\![\theta^{-1}]\!]$.
We have $$U_{i,S,\ell}=\prod\limits_{v\in S}U_{i,v,l}\subseteq \prod\limits_{v\in S}V_{v,l}=V_{S,\ell}.$$ 

$(U_{i,S,\ell})_{i\geq 0}$ form a basis of projective $R_\ell$-modules, open in $V_{S,\ell}$. As $M_{S,\ell}$ is discrete in $V_{S,\ell}$, there exists $r\in \mathbb{N}^*$ such that $U_{r,S,\ell}\cap M_{S}=\{0\}$. For $i\geq r$, we identify $U_{i,S,\ell}$ with their image in $V_{S,\ell}/M_{S,\ell}$. We set $\mathcal{U}=(U_{i,S})_{i\geq r}$. It gives us a good basis for $V_{S,\ell}/M_{S,\ell}$. It means that $\mathcal{U}^n$ is a good basis for $(V_{S,\ell}/M_{S,\ell})^n$.
\bigskip

First, we fix an almost taming pair $(M,\mathcal{W}^\infty)$ for $L/K$. Then we will see that the determinant does not depend on the choice of this pair. We set $\mathcal{O}_{K,S,\ell}=\mathcal{O}_{K,S}\otimes_{\F_q}\ell$. 
As we can use the same arguments that \cite[Section 2.32]{ref4} but with $(V_{S,\ell}/M_{S,\ell})^n$ instead of $L_S/M_S$, we just give the statements. 
 \begin{Lem} \label{oslocal}
Let $S$ be a finite set of places of $\ell(\theta)$ containing $\infty$. Let $\phi=a\tau^s$ for $a\in \mathcal{M}_n(\mathcal{O}_{K,S,\ell})$ and $s\geq 1$. Then $\phi$ is a locally contracting endomorphism of  $(V_{S,\ell}/M_{S,\ell})^n$  for the basis induced by $(M,\mathcal{W}^\infty)$.  
\end{Lem}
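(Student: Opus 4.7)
The plan is to verify the contracting condition place by place, exploiting the product decomposition $V_{S,\ell}=\prod_{v\in S}V_{v,\ell}$ and $U_{i,S,\ell}=\prod_{v\in S}U_{i,v,\ell}$. Since $\phi=a\tau^s$ has $a\in\mathcal{M}_n(\mathcal{O}_{K,S,\ell})$ and $M_\ell$ is an $\mathcal{O}_{K,\ell}\{\tau\}$-module, $\phi$ preserves $M_{S,\ell}^n$ and hence descends to $(V_{S,\ell}/M_{S,\ell})^n$; it therefore suffices to exhibit an integer $I\geq r$ such that $\phi(U_{i,v,\ell}^n)\subseteq U_{i+1,v,\ell}^n$ for every $v\in S$ and every $i\geq I$, and then take the maximum over the finite set $S$.

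For a finite place $P\in S\cap\Spec(\ell[\theta])$, I would exploit that $\tau$ is the $\F_q$-linear (resp.\ $\F_q(z)$-linear) Frobenius with $\tau(\theta)=\theta^q$; consequently $\tau$ sends $P^i\widehat{\ell[\theta]}_P$ into a submodule of $P$-adic filtration index at least $qi-c_P$ for a constant $c_P$ depending only on the ramification of $\tau$ at $P$, and iterating gives $\tau^s(U_{i,P,\ell})\subseteq U_{q^s i-c_P',P,\ell}$. Multiplication by $a$ may decrease the $P$-adic filtration index by at most a bounded quantity $m_P(a)$, since $a$ is $S$-integral and $P\in S$ forces only bounded pole orders. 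Combining, $\phi(U_{i,P,\ell}^n)\subseteq U_{q^s i-c_P'-m_P(a),P,\ell}^n$, and for all $i\geq (c_P'+m_P(a)+1)/(q^s-1)$ the right-hand side is contained in $U_{i+1,P,\ell}^n$.

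At the infinite place, the analogous argument uses the valuation $v_\infty$, for which $v_\infty(\tau(x))=qv_\infty(x)$ holds exactly. The $\infty$-taming property of $\mathcal{W}^\infty$ (finiteness of $\mathcal{O}_{L,\infty}/\mathcal{W}^\infty$) gives a uniform shift constant controlling how far $\tau^s$ perturbs the filtration $(U_{i,\infty,\ell})_i$, and $v_\infty(a)\geq -m_\infty(a)$ provides the corresponding bound for multiplication by $a$. The same numerical inequality $q^s i-(\text{constants})\geq i+1$ then yields the desired inclusion for $i$ sufficiently large. Taking $I$ to be the maximum of the local thresholds over $v\in S$ concludes the argument.

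The main obstacle is precise bookkeeping of how $\tau$ interacts with the $P$-adic completion at a finite place, especially when $\ell=\F_q(z)$: because $\tau$ fixes $\F_q(z)$ but raises $\theta$ to $\theta^q$, the image $\tau(P)$ need not be a power of $P$, and one must verify via the tensor-product description $M_{P,\ell}=M_\ell\otimes_{\ell[\theta]}\widehat{\ell[\theta]}_P$ that $\tau$ extends continuously and contracts the filtration by the expected factor. Once this compatibility is established, the remainder of the proof follows the pattern of \cite[Section 2.3]{ref4} verbatim.
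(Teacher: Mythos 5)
Your overall strategy --- working place by place through the product $V_{S,\ell}=\prod_{v\in S}V_{v,\ell}$, showing that $\tau^{s}$ raises the filtration index by roughly a factor $q^{s}$ while multiplication by $a$ lowers it by a bounded amount, and taking the maximum of the finitely many local thresholds --- is exactly the argument the paper intends (it omits the proof and defers to \cite[Section 2.3]{ref4}, where this is done for $\ell=\F_q$ and $n=1$). The reduction to componentwise inclusions and the treatment of the infinite place via the sandwich $\theta^{-a-i}\mathcal{O}_{L,\infty}\subseteq U_{i,\infty,\ell}\subseteq\theta^{-i}\mathcal{O}_{L,\infty}$ coming from the finiteness of $\mathcal{O}_{L,\infty}/\mathcal{W}^{\infty}$ are fine.

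At the finite places, however, you explicitly leave unproved what you yourself identify as the main obstacle --- that $\tau$ contracts the $P$-adic filtration --- and that is the one step requiring an actual argument rather than bookkeeping, so as written the proof is incomplete. The missing point is the semilinearity of $\tau$ over the coefficient ring: for $f\in\ell[\theta]$ and $x\in M_\ell$ one has $\tau(f\cdot x)=\sigma(f)\cdot\tau(x)$ with $\sigma(f)=f(\theta^{q})$ (for $\ell=\F_q$ this is just $f^{q}$; for $\ell=\F_q(z)$ the coefficients in $\F_q(z)$ are fixed by $\tau$). When $P$ is (the extension to $\ell[\theta]$ of) a prime of $A=\F_q[\theta]$ --- which is the relevant case, cf.\ the paper's ``for $P\in \Spec(A)$'' in the definition of $\widehat{A}_{P,\ell}$ and the $L$-functions indexed by $\MSpec(A)$ --- the coefficients of $P$ lie in $\F_q$, so $\sigma(P)=P(\theta^{q})=P(\theta)^{q}$ and hence $\tau(U_{i,P,\ell})\subseteq M_\ell\otimes_{\ell[\theta]}P^{qi}\widehat{A}_{P,\ell}=U_{qi,P,\ell}$ \emph{exactly}: there is no ``ramification constant'' $c_P$, and no notion of ramification of $\tau$ is needed. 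Your residual worry is nevertheless well founded if one reads the statement literally for arbitrary maximal ideals of $\F_q(z)[\theta]$: for $P=\theta-z$ one has $\sigma(P)=\theta^{q}-z$, which is not divisible by $P$, so $\sigma$ is not even $P$-adically continuous and $\tau$ does not extend to $V_{P,\ell}$; the gap cannot be closed ``verbatim'' in that generality, and one must restrict the finite places to those coming from $\MSpec(A)$. With that restriction and the identity $\sigma(P)=P^{q}$, your estimate $\phi(U_{i,P,\ell}^{n})\subseteq U_{q^{s}i-m_P(a),P,\ell}^{n}\subseteq U_{i+1,P,\ell}^{n}$ for $i$ large goes through and the lemma follows.
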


 \begin{Cor} \label{nucl}
Let $S$ be a finite set of places of $\ell(\theta)$ containing $\infty$. Every $\phi\in  \mathcal{M}_n(\mathcal{O}_{K,S,\ell})\{\tau\}\tau$ is a locally contracting endomorphism of $(V_{S,\ell}/M_{S,\ell})^n$ for the basis induced by $(M,\mathcal{W}^\infty)$.  Moreover, $\psi\in  \mathcal{M}_n(\mathcal{O}_{K,S,\ell})\{\tau\}\tau[\![Z]\!]$ is a nuclear endomorphism of $(V_{S,\ell}/M_{S,\ell})^n[\![Z]\!]$ for this basis. 
\end{Cor}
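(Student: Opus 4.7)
The plan is to deduce this statement as an essentially formal consequence of Lemma \ref{oslocal} combined with the stability properties of locally contracting endomorphisms stated earlier in the section. First, I would write an arbitrary $\phi \in \mathcal{M}_n(\mathcal{O}_{K,S,\ell})\{\tau\}\tau$ as a finite sum $\phi = \sum_{s=1}^{N} a_s \tau^s$ with $a_s \in \mathcal{M}_n(\mathcal{O}_{K,S,\ell})$; such a decomposition exists because $\mathcal{M}_n(\mathcal{O}_{K,S,\ell})\{\tau\}$ is by construction a ring of twisted polynomials, not power series. By Lemma \ref{oslocal}, each summand $a_s\tau^s$ is a locally contracting endomorphism of $(V_{S,\ell}/M_{S,\ell})^n$ for the basis $\mathcal{U}^n$ induced by the almost taming pair $(M,\mathcal{W}^\infty)$.

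Next, I would invoke the proposition which asserts that two locally contracting endomorphisms admit a common nucleus and that their sum is again locally contracting. Applying this inductively on $N$, one finds a single nucleus $U = U_I^n$ that works simultaneously for all the terms $a_s \tau^s$ with $1 \leq s \leq N$, and then $\phi = \sum_s a_s \tau^s$ sends $U_i^n$ into $U_{i+1}^n$ for $i \geq I$. This proves the first assertion.

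For the nuclear statement, I would expand any element $\psi \in \mathcal{M}_n(\mathcal{O}_{K,S,\ell})\{\tau\}\tau[\![Z]\!]$ as $\psi = \sum_{r \geq 0} \phi_r Z^r$ with coefficients $\phi_r \in \mathcal{M}_n(\mathcal{O}_{K,S,\ell})\{\tau\}\tau$. The first part of the corollary applied to each $\phi_r$ shows that every coefficient is locally contracting on $(V_{S,\ell}/M_{S,\ell})^n$ for the basis induced by $(M,\mathcal{W}^\infty)$, which is precisely the definition of a nuclear endomorphism of $(V_{S,\ell}/M_{S,\ell})^n[\![Z]\!]$.

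There is no genuine obstacle here; the only thing to verify carefully is that the common-nucleus argument extends from two locally contracting endomorphisms to any finite collection, which is a trivial induction. The same argument applies uniformly to both $\ell = \F_q$ and $\ell = \F_q(z)$, since Lemma \ref{oslocal} and the nuclear operator formalism have been set up in parallel for both choices of $\ell$.
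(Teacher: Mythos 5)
Your proposal is correct and follows exactly the argument the paper intends: the paper omits the proof of this corollary, referring to the analogous statements in \cite{ref4}, and the standard deduction there is precisely your decomposition of a twisted polynomial with no constant term into monomials $a_s\tau^s$ with $s\geq 1$, an appeal to Lemma \ref{oslocal} for each monomial, the finite common-nucleus/sum stability of locally contracting endomorphisms, and then the definition of nuclearity applied coefficientwise in $Z$. Nothing is missing.
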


 \begin{proposition} \label{commut}
Let $S$ be a finite set of places of $\ell(\theta)$ containing $\infty$. Let $a,b\in \mathcal{M}_n(\mathcal{O}_{K,S,\ell})$ and $\phi=b\tau^s$ for $s\geq 1$. Then for all $m\geq 1$, $$\det_{R_\ell[\![Z]\!]}(1+a\phi Z^m\vert (V_{S,\ell}/M_{S,\ell})^n)=\det_{R_\ell[\![Z]\!]}(1+\phi aZ^m\vert (V_{S,\ell}/M_{S,\ell})^n).$$
\end{proposition}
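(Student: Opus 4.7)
The plan is to embed the situation in an auxiliary block operator on $W := V \oplus V$, where $V := (V_{S,\ell}/M_{S,\ell})^n$, and extract the sought equality by factoring $1+T$ in two dual ways. I consider
\[
T = \begin{pmatrix} 0 & aZ^m \\ \phi & 0 \end{pmatrix}
\]
as an endomorphism of $W[\![Z]\!]$, and equip $W$ with a shifted basis $\widetilde{U}_i := (U_{i,S,\ell})^n \oplus (U_{i+C,S,\ell})^n$, where $C$ is chosen larger than the pole orders at places of $S$ of the entries of $a$. With this basis, the $Z^0$-coefficient $\begin{pmatrix} 0 & 0 \\ \phi & 0 \end{pmatrix}$ of $T$ is locally contracting by Corollary \ref{nucl} applied to $\phi = b\tau^s$, and the $Z^m$-coefficient $\begin{pmatrix} 0 & a \\ 0 & 0 \end{pmatrix}$ sends $\widetilde{U}_i$ into $(U_{i+1,S,\ell})^n \oplus (0) \subseteq \widetilde{U}_{i+1}$ by the choice of $C$; hence $T$ is nuclear on $W[\![Z]\!]$.

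Next, one checks directly the two block ``LU'' factorizations
\[
1 + T = \begin{pmatrix} 1 & aZ^m \\ 0 & 1 \end{pmatrix} \begin{pmatrix} 1 - a\phi Z^m & 0 \\ \phi & 1 \end{pmatrix} = \begin{pmatrix} 1 & 0 \\ \phi & 1 \end{pmatrix} \begin{pmatrix} 1 & aZ^m \\ 0 & 1 - \phi a Z^m \end{pmatrix}.
\]
Each of the four factors is of the form $1 + \psi$ with $\psi$ a nuclear operator preserving one of the closed submodules $V \oplus 0$ or $0 \oplus V$ of $W$. The decomposition formula of \cite[Proposition 2.1.13]{ref4} computes each factor's determinant via the preserved submodule and its quotient: the two off-diagonal factors have determinant $1$, while the remaining two contribute $\det_{R_\ell[\![Z]\!]}(1 - a\phi Z^m \vert V)$ and $\det_{R_\ell[\![Z]\!]}(1 - \phi a Z^m \vert V)$ respectively. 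Applying Proposition \ref{prod} to each factorization yields
\[
\det_{R_\ell[\![Z]\!]}(1 - a\phi Z^m \vert V) = \det_{R_\ell[\![Z]\!]}(1 + T \vert W) = \det_{R_\ell[\![Z]\!]}(1 - \phi a Z^m \vert V),
\]
and replacing $a$ by $-a$, which preserves the hypothesis $a \in \mathcal{M}_n(\mathcal{O}_{K,S,\ell})$, converts this to the desired equality.

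The main technical obstacle is verifying that the shifted basis $\{\widetilde{U}_i\}$ satisfies Definition \ref{base} and that the nuclear determinants computed with respect to it agree, after restriction to $V \oplus 0$ and projection to $W/(V \oplus 0)$, with those computed via the ambient basis $\mathcal{U}^n$ on $V$. Since the shift is by the constant $C$, the shifted and product bases dominate each other in the sense of \cite[Definition 2.2.1]{ref4}, so the invariance of the nuclear determinant under such dominations delivers the agreement needed to conclude.
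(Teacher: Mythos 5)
Your block-triangular factorization of $1+T$ on $V\oplus V$ is the standard Taelman-style proof of the commutation rule, and it is essentially the argument of \cite[Section 2.3]{ref4} to which the paper defers (the paper itself gives no proof of this proposition, so your write-up is in effect supplying the omitted details). The algebra checks out: both factorizations of $1+T$ are correct since $Z$ is central, each factor preserves one of $V\oplus 0$ or $0\oplus V$, the submodule/quotient decomposition kills the two unipotent factors and extracts $\det(1-a\phi Z^m\vert V)$ and $\det(1-\phi aZ^m\vert V)$ from the other two, and the substitution $a\mapsto -a$ is legitimate. The one step you must tighten is the local contraction of the $Z^0$-coefficient of $T$ (the lower-left entry $\phi$) with respect to the shifted basis $\widetilde U_i=(U_{i,S,\ell})^n\oplus(U_{i+C,S,\ell})^n$: this requires $\phi\bigl((U_{i,S,\ell})^n\bigr)\subseteq (U_{i+1+C,S,\ell})^n$, which is strictly stronger than the conclusion of Corollary \ref{nucl} (which only gives $\phi(U_i)\subseteq U_{i+1}$) and so is not justified by the citation as written. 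The stronger statement is true for $\phi=b\tau^s$ because $\tau^s$ essentially multiplies the index of $U_{i,v,\ell}$ by $q^s$ at every $v\in S$, so $b\tau^s(U_i)\subseteq U_{q^si-d}\subseteq U_{i+1+C}$ for all large $i$; in other words you need the quantitative estimate from the proof of Lemma \ref{oslocal}, not merely its statement (note that shifting the basis the other way would not work, since multiplication by $a$ expands rather than contracts, so this is genuinely the point where the asymmetry between $a$ and $\phi$ is used). With that repaired, the remaining technical points you flag — that $\{\widetilde U_i\}$ satisfies Definition \ref{base}, and that the determinant of $1-\phi aZ^m$ on $0\oplus V$ computed for the basis $\{(U_{i+C,S,\ell})^n\}$ agrees with the one for $\{(U_{i,S,\ell})^n\}$ via $\psi$-domination — are handled correctly.
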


\begin{Lem} 
Let $S$ be a finite set of places of $\ell(\theta)$ containing $\infty$. Let \newblock{ $\psi\in \mathcal{M}_n(\mathcal{O}_{K,S,\ell})\{\tau\}[\![Z]\!]\tau$} seen as a $R_\ell[\![Z]\!]$ endomorphism of $(V_{S,\ell}/M_{S,\ell})^n[\![Z]\!]$. Then $\det_{R_\ell[\![Z]\!]}(1+\psi \vert (V_{S,\ell}/M_{S,\ell})^n)$ is independent of the almost taming pair.
\end{Lem}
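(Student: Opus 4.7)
The plan is to adapt the proof of [ref4, Section 2.3.2] to the setting $\ell = \F_q(z)$, with $(V_{S,\ell}/M_{S,\ell})^n$ in place of $L_S/M_S$. Given two almost taming pairs $(M_1, \mathcal{W}_1^\infty)$ and $(M_2, \mathcal{W}_2^\infty)$, I would reduce the comparison to two independent steps: varying $\mathcal{W}^\infty$ with $M$ fixed, and varying $M$ with $\mathcal{W}^\infty$ fixed.

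For the variation of $\mathcal{W}^\infty$ alone, the compact $R_\ell$-module $(V_{S,\ell}/M_{S,\ell})^n$ is unchanged, and only the basis of neighborhoods at the infinite place differs. Since $\mathcal{O}_{L,\infty}/\mathcal{W}_i^\infty$ is finite for $i=1,2$, after a shift of indices the two bases mutually $\psi$-dominate (using that $\psi$ is nuclear by Corollary~\ref{nucl}), and the last lemma of Section~4.1 immediately gives equality of the two determinants.

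For the variation of $M$ alone, I would invoke the third item of Proposition~7.2.1 of [ref4] (recalled in Section~2.2) to obtain a free $\ell[\theta][G]$-overlattice $N \supseteq M_1 + M_2$ in $L_{\infty,\ell}$. The $\mathcal{O}_K\{\tau\}$-stability of the $M_i$ produces, for $i=1,2$, a $\psi$-stable short exact sequence of compact $R_\ell$-modules
\begin{equation*}
0 \longrightarrow (N_{S,\ell}/M_{i,S,\ell})^n \longrightarrow (V_{S,\ell}/M_{i,S,\ell})^n \longrightarrow (V_{S,\ell}/N_{S,\ell})^n \longrightarrow 0,
\end{equation*}
whose pieces are $G$-cohomologically trivial by Theorem~\ref{Projct} and the projectivity of $M_i$ and $N$. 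The multiplicativity proposition of [ref4, Prop.~2.1.13], recalled in Section~4.1, then factors each determinant as
\begin{equation*}
\det_{R_\ell[\![Z]\!]}\bigl(1+\psi \mid (V_{S,\ell}/M_{i,S,\ell})^n\bigr) = \det_{R_\ell[\![Z]\!]}\bigl(1+\psi \mid (N_{S,\ell}/M_{i,S,\ell})^n\bigr) \cdot \det_{R_\ell[\![Z]\!]}\bigl(1+\psi \mid (V_{S,\ell}/N_{S,\ell})^n\bigr).
\end{equation*}
The rightmost factor is independent of $i$, so the problem reduces to showing that the finite-module factors $\det_{R_\ell[\![Z]\!]}\bigl(1+\psi \mid (N_{S,\ell}/M_{i,S,\ell})^n\bigr)$ coincide for $i=1,2$.

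The main obstacle is this last equality. Setting $M_0 := M_1 \cap M_2$ and applying multiplicativity once more along the chain $M_0 \subseteq M_i \subseteq N$, combined with the second-isomorphism identifications $(M_1+M_2)/M_i \cong M_j/M_0$ for $j\ne i$, both sides can be rewritten as the same combination of determinants on $(M_k/M_0)^n$ and $(N/(M_1+M_2))^n$, yielding the cancellation. Because $M_0$ and $M_1+M_2$ need not themselves be $\ell[\theta][G]$-projective, one must approximate them by free $\ell[\theta][G]$-sub- and superlattices of finite index and verify that the extra contributions cancel; this is the delicate bookkeeping that forms the heart of the [ref4] argument. The transition from $\F_q$ to $\F_q(z)$ introduces no new difficulty, since Theorem~\ref{Projct} characterises projectivity uniformly over any field of characteristic $p$ and the nuclearity statements of Section~4.2 hold with the same proofs.
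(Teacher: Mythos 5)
Your first step---comparing, for a fixed quotient module, the neighborhood bases induced by two $\infty$-taming modules via mutual $\psi$-domination and the domination lemma of the nuclear-operator section---is the right mechanism, and once it is applied at \emph{every} place of $S$ rather than only at $\infty$ it is essentially the entire proof: two almost taming modules $M,M'$ satisfy $fM'\subseteq M\subseteq f^{-1}M'$ for some nonzero $f\in\ell[\theta]$, so the families $\{M\otimes_{\ell[\theta]} P^i\widehat{\ell[\theta]}_P\}_i$ and $\{M'\otimes_{\ell[\theta]} P^i\widehat{\ell[\theta]}_P\}_i$ mutually $\psi$-dominate, and the determinant of $1+\psi$ on the \emph{fixed} module $(V_{S,\ell}/M_{S,\ell})^n$ is therefore independent of which almost taming pair is used to manufacture the basis $\mathcal{U}$ of Definition \ref{base}. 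That is the only sense in which the lemma can hold: it asserts independence of the auxiliary pair entering the nuclear structure, not equality of determinants taken on the two different quotients $(V_{S,\ell}/M_{1,S,\ell})^n$ and $(V_{S,\ell}/M_{2,S,\ell})^n$.

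Your second step aims at that stronger equality, and it fails for two concrete reasons. First, the free $\ell[\theta][G]$-overlattice $N$ produced by [ref4, Prop.\ 7.2.1] is only an $\ell[\theta][G]$-module and has no reason to be $\tau$-stable, so $\psi$ does not preserve $(N_{S,\ell}/M_{i,S,\ell})^n$ and the multiplicativity of determinants along exact sequences (which requires $\phi_r(V')\subseteq V'$ for every $r$) cannot be invoked; the $\tau$-stable candidates $M_1+M_2$ and $M_1\cap M_2$ are, as you yourself note, not projective, and there is no bookkeeping in [ref4] reconciling these two requirements because [ref4] does not argue this way. Second, the finite factors you hope to cancel genuinely do not cancel: take $n=1$, $G$ trivial, $L=K=k$, $S=\{\infty\}$, $\mathcal{W}^\infty=\F_q[\![\theta^{-1}]\!]$, $\psi=\tau Z$, and the almost taming modules $M_1=A$ and $M_2=\theta A$ (both are $A$-lattices, $A[G]$-projective, and stable under $\tau$ and $\mathcal{O}_K$). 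Since $k_\infty=A\oplus\theta^{-1}\F_q[\![\theta^{-1}]\!]$, the quotient of $k_\infty/A$ by the nucleus $\theta^{-1}\F_q[\![\theta^{-1}]\!]$ is zero and $\det_{R_\ell[\![Z]\!]}(1+\tau Z\mid k_\infty/A)=1$; whereas $k_\infty=\theta A\oplus\F_q[\![\theta^{-1}]\!]$, the same quotient of $k_\infty/\theta A$ is $\F_q$ spanned by the class of $1$, on which $\tau$ acts as the identity, so $\det_{R_\ell[\![Z]\!]}(1+\tau Z\mid k_\infty/\theta A)=1+Z$. No decomposition argument can equate these, so the reduction you set up in the second half cannot be completed; the proof should instead consist of the domination argument alone, carried out at all $v\in S$.
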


\subsection{The trace formula} ${}$\par

In this section, let $(M,\mathcal{W}^\infty$)
be an almost taming pair. This section was in \cite{ref4} when $\ell=\F_q$ and $E$ is a Drinfeld module.

\begin{Lem} \label{localisation2}
Let $S$ be a finite set of primes of  $\ell(\theta)$ including $\infty$, $P\in \MSpec(\ell[\theta])\setminus S$ and $S'=S\cup \{P\}$. Then for $f\in  \mathcal{M}_n(\mathcal{O}_{K,S,\ell})\{\tau\}[\![Z]\!]\tau Z$, we have $$\det_{R_\ell[\![Z]\!]}(1+f \vert (M_\ell/P M_\ell)^n)=\dfrac{\det_{R_\ell[\![Z]\!]}(1+f \vert (V_{S',\ell}/ M_{S',\ell})^n)}{\det_{R_\ell[\![Z]\!]}(1+f \vert (V_{S}/ M_{S,\ell})^n)}.$$
\end{Lem}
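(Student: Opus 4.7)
The plan is to realise $(V_{S',\ell}/M_{S',\ell})^n$ as an extension of $(V_{S,\ell}/M_{S,\ell})^n$ by the $P$-adic completion $M_{P,\ell}^n$, apply the multiplicativity of the nuclear determinant (Proposition~2.1.13), and identify the ``local factor at $P$'' with the finite-dimensional determinant on $(M_\ell/PM_\ell)^n$. First I would establish the short exact sequence of compact, $G$-cohomologically trivial $R_\ell$-modules
\[ 0 \longrightarrow M_{P,\ell}^n \longrightarrow (V_{S',\ell}/M_{S',\ell})^n \longrightarrow (V_{S,\ell}/M_{S,\ell})^n \longrightarrow 0, \]
which comes from the decomposition $V_{S',\ell} = V_{S,\ell} \oplus V_{P,\ell}$ together with the diagonal embedding of $M_{S',\ell} = M_\ell[1/P]$: the identities $M_\ell[1/P] \cap M_{P,\ell} = M_{S,\ell}$ and $M_\ell[1/P] + M_{P,\ell} = V_{P,\ell}$ (strong approximation at~$P$) yield the rewriting $V_{S',\ell}/M_{S',\ell} \cong (V_{S,\ell} \times M_{P,\ell})/\mathrm{diag}(M_{S,\ell})$, from which the sequence is read off with injection $m \mapsto (0,m) \bmod M_{S',\ell}$ and surjection induced by the first projection. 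Cohomological triviality of $M_{P,\ell}$ follows from its projectivity over $\widehat{\ell[\theta]}_P[G]$, inherited from that of $M_\ell$ over $\ell[\theta][G]$.

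The basis $\{U_{i,S',\ell}^n\}$ restricts on the submodule to $\{(P^iM_{P,\ell})^n\}$ and projects to $\{U_{i,S,\ell}^n\}$ on the quotient; since the coefficients of $f$ lie in $\mathcal{M}_n(\mathcal{O}_{K,S,\ell})$ and $P \notin S$ these are $P$-integral, so $f$ stabilises the submodule $M_{P,\ell}^n$. Proposition~2.1.13 then yields
\[ \det_{R_\ell[\![Z]\!]}(1+f\mid (V_{S',\ell}/M_{S',\ell})^n) = \det_{R_\ell[\![Z]\!]}(1+f\mid M_{P,\ell}^n) \cdot \det_{R_\ell[\![Z]\!]}(1+f\mid (V_{S,\ell}/M_{S,\ell})^n). \]

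It remains to identify $\det_{R_\ell[\![Z]\!]}(1+f\mid M_{P,\ell}^n)$ with $\det_{R_\ell[\![Z]\!]}(1+f\mid (M_\ell/PM_\ell)^n)$. Working modulo~$Z^N$ reduces the computation to a determinant on $(M_\ell/P^kM_\ell)^n$ for some $k = k(N)$ large enough to serve as a nucleus; multiplicativity along the filtration $\{P^jM_\ell/P^{j+1}M_\ell\}$ gives
\[ \det_{R_\ell[\![Z]\!]}(1+f\mid(M_\ell/P^kM_\ell)^n) = \prod_{j=0}^{k-1}\det_{R_\ell[\![Z]\!]}(1+f\mid (P^jM_\ell/P^{j+1}M_\ell)^n). \]
I would argue that each factor with $j \ge 1$ equals~$1$: in a basis adapted to a $P$-adic uniformiser, the Frobenius identity $(a+b)^q = a^q + b^q$ in characteristic~$p$ forces the matrix of~$\tau$ on the $P^j/P^{j+1}$-layers to have non-zero entries only in rows whose index is a multiple of~$q$, so that the $1 + Z\cdot\tau$-contributions collapse under determinant expansion and only the $j = 0$ factor survives. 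Passing to $N \to \infty$ and combining with the multiplicativity formula above yields the lemma.

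The main obstacle is this final collapse. For $\ell = \F_q$ it is transparent because $\tau(P) = P^q$ makes $\tau$ strongly $P$-adically contracting and the $j \ge 1$ layers visibly contribute trivially. For $\ell = \F_q(z)$ the $P$-adic behaviour of $\tau$ is less clean since $\tau(P) = P(\theta^q)$ need not even lie in~$P$, but the structural observation above---that the rows of the matrix of~$\tau$ are supported on multiples of~$q$---nonetheless forces the collapse, in parallel with the corresponding computation in \cite[Section~2]{ref4} for the $\F_q$-case.
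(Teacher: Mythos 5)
Your overall skeleton is the same as the paper's: the short exact sequence $0\to M_{P,\ell}^n\to (V_{S',\ell}/M_{S',\ell})^n\to (V_{S,\ell}/M_{S,\ell})^n\to 0$ (built from $\widehat{\ell(\theta)}_P=\ell[\theta]_{S'}+\widehat{\ell[\theta]}_P$ and $\ell[\theta]_{S'}\cap\widehat{\ell[\theta]}_P=\ell[\theta]_S$), followed by multiplicativity of nuclear determinants. One small slip there: $M_{S',\ell}=M_{S,\ell}[1/P]=M_\ell\otimes_{\ell[\theta]}\ell[\theta]_{S'}$, not $M_\ell[1/P]$, unless $S=\{\infty\}$.

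The genuine gap is in the last step, which you yourself flag as ``the main obstacle.'' Your filtration-plus-collapse argument is not needed, and as written it is not a proof: the claim that the matrix of $\tau$ on the layers $P^jM_\ell/P^{j+1}M_\ell$ has ``non-zero entries only in rows whose index is a multiple of $q$'' is not made precise and does not by itself force the determinant contributions to be $1$. The paper's resolution is a one-line observation you are missing. Since $f\in\mathcal{M}_n(\mathcal{O}_{K,S,\ell})\{\tau\}[\![Z]\!]\tau Z$, each coefficient $f_r$ is a sum of terms $a\tau^s$ with $s\ge 1$ and $a$ $P$-integral (as $P\notin S$), and $\tau(P^i m)=P^{iq}\tau(m)$; hence $f_r(P^iM_{P,\ell}^n)\subset P^{iq}M_{P,\ell}^n\subset P^{i+1}M_{P,\ell}^n$ for all $i\ge 1$. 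So $PM_{P,\ell}^n$ is already a common nucleus for the $f_r$ with respect to the basis $\{P^iM_{P,\ell}^n\}$, and by definition of the nuclear determinant
\[
\det\nolimits_{R_\ell[\![Z]\!]}(1+f\mid M_{P,\ell}^n)=\det\nolimits_{R_\ell[\![Z]\!]}\bigl(1+f\mid M_{P,\ell}^n/PM_{P,\ell}^n\bigr)=\det\nolimits_{R_\ell[\![Z]\!]}\bigl(1+f\mid (M_\ell/PM_\ell)^n\bigr),
\]
with no passage through the higher graded pieces (on which $f$ in any case induces the zero map, for the same reason). Finally, your worry about the case $\ell=\F_q(z)$ is unfounded: $\tau$ is the $\F_q(z)$-linear extension of the $q$-power map on $K_\infty$, and for $P\in\F_q[\theta]$ one has $P(\theta^q)=P(\theta)^q$ because the coefficients of $P$ lie in $\F_q$; so $\tau(P)=P^q\in(P)^q$ exactly as in the $\F_q$ case, and the argument is identical for both values of $\ell$.
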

\begin{proof}
We recall that $V_{P,\ell}=V_\ell\otimes_{\ell(\theta)}\widehat{\ell(\theta)}_P$, $M_{P,\ell}=M_\ell\otimes_{\ell[\theta]}\widehat{\ell[\theta]}_P$ and $M_{S,\ell}=M_\ell\otimes_{\ell[\theta]}\ell[\theta]_S$.

We have the exact sequence 

$$0\longrightarrow M_{P,\ell}^n\longrightarrow^\psi(\dfrac{V_{S',\ell}}{M_{S',\ell}})^n\longrightarrow^\eta(\dfrac{V_{S,\ell}}{M_{S,\ell}})^n\longrightarrow0$$

where for $a=(a_1,\ldots, a_n)\in M_{P,\ell}^n$, $\psi=(\psi_1,\ldots, \psi_n)$,  $\psi_i(a_i)=\overline{(0,a_i)}\in \dfrac{V_{S',\ell}}{M_{S',\ell}}$.  As $\widehat{\ell(\theta)}_P=\ell[\theta]_{S'}+\widehat{\ell[\theta]}_P$, we have $V_{P}=M_{S',\ell}+M_{P,\ell}$. 

Thus, for $b\in V_{P}$, there exists $a'\in M_{S',\ell}$ and $b'\in M_{P,\ell}$ such that $b=a'+b'$. We define $\eta=(\eta_1,\ldots,\eta_n)$ such that for $a_i\in V_{S,\ell}$ et $b_i\in V_{P}$, $\eta_i(\overline{(a_i,b_i)})=\overline{a_i-a_i'}$. 
\\
First we show that $\eta$ is well defined. We write $b_i=a_i'+b_i'=a_i''+b_i''$ where $a_i'\in M_{S',\ell}$ and $b_i'\in M_{P,\ell}$. So we have $a_i'-a_i''=b_i''-b_i'\in M_{S',\ell}\cap M_{P,\ell}=M_{S,\ell}$ as $\ell[\theta]_{S'}\cap \widehat{\ell[\theta]}_P=\ell[\theta]_{S}$.
For $b\in M_{P,\ell}^n$, we have $\eta(\psi(b))=0$. 

Let $(a_i,b_i) \mod M_{S',\ell} \in \Ker(\eta_i)$. There exists  $a_i'\in M_{S',\ell}$, $b_i'\in M_{P,\ell}$ and $c_i\in M_{S,\ell}$ such that $b_i=a_i'+b_i'$ et $a_i-a_i'=c_i$.

As $a_i'\in M_{S',\ell}$, $b_i'\in M_{P,\ell}$ and $c_i\in M_{S,\ell}$, we have $$(a_i,b_i)-(a_i,a_i)=(0,b_i-a_i)=(0,b_i-a_i'-c_i)=(0,b_i'-c_i)\in \{0\}\times M_{P,\ell}.$$ Thus $\Ker(\eta_i)\subset Im(\psi_i)$.
\\

We have the following open neighborhoods of 0 : $\mathcal{U}$ and $\mathcal{U}'$  over  $V_{S,\ell}/M_{S,\ell}$  and $V_{S',\ell}/M_{S',\ell}$ induced par the almost taming pair : $(M,\mathcal{W}^\infty$).

We have $\mathcal{U}=\prod\limits_{v\in S}\mathcal{U}_v$ and $\mathcal{U'}=\prod\limits_{v\in S'}\mathcal{U}_v$ where $\mathcal{U}_v=\{M_\ell\otimes_{\ell[\theta]} v^i\widehat{\ell[\theta]}_v\}_{i\geq 1}$ for $v\in \Spec(A)$ and   $\mathcal{U}_\infty=\{\mathcal{W}_\ell\otimes_{\ell[\![\theta^{-1}]\!]}\theta^{-i}\ell[\![\theta^{-1}]\!]\}_{i\geq 1}$. We have $\eta(\mathcal{U}'^n)=\mathcal{U}^n$ and $\psi^{-1}(\mathcal{ U}'^n)=\mathcal{U}_P^n$.

As $f\in  \mathcal{M}_n(\mathcal{O}_{K,S,\ell})\{\tau\}[\![Z]\!]\tau Z$,  $f_r\in   \mathcal{M}_n(\mathcal{O}_{K,S,\ell})\{\tau\}\tau$. Thus they commute with $\psi$ and $\eta$ and are locally contracting for $\mathcal{U}_{P}^n, \mathcal{U}^n$ et $\mathcal{U}'^n$.

We obtain $$\det_{R_\ell[\![Z]\!]}(1+f \vert (V_{S',\ell}/M_{S',\ell})^n)=\det_{R_\ell[\![Z]\!]}(1+f\vert M_{P,\ell}^n)\det_{R_\ell[\![Z]\!]}(1+\phi \vert (V_{S,\ell}/M_{S,\ell})^n).$$
As $f\in   \mathcal{M}_n(\mathcal{O}_{K,S,\ell})\{\tau\}[\![Z]\!]\tau Z$ and $P\not\in S$, $f_r(PM_{P,\ell}^n)\subset PM_{P,\ell}^n$. It implies that we can take $PM_{P,\ell}^n$ as commun nucleus for $f_r$. Furthermore, $M_{P,\ell}/PM_{P,\ell}\cong M_\ell/{P}M_\ell$. It follows $$\det_{R_\ell[\![Z]\!]}(1+f \vert M_{P,\ell}^n)=\det_{R_\ell[\![Z]\!]}(1+f \vert (M_\ell/{P}M_\ell)^n) =\dfrac{\det_{R_\ell[\![Z]\!]}(1+f \vert (V_{S',\ell}/ M_{S',\ell})^n)}{\det_{R_\ell[\![Z]\!]}(1+f \vert (V_{S,\ell}/ M_{S,\ell})^n)}.$$
\end{proof}

The next theorem is the same as \cite[Proposition 3.5]{ref16} but for $\ell[G]$ instead of $l$ or $\ell[G]$ instead of $\F_q[G]$ for \cite[Theorem 3.0.2]{ref4}. We recall it for the convenience of the reader. 

\begin{theorem} \label{formuledeclas2}
Let $S$ be a finite set of places of $\ell(\theta)$ containing  $\infty$, $P\in Spec(\ell[\theta])\setminus S$ and  $S'=S\cup \{P\}$ and $\psi\in  \mathcal{M}_n(\mathcal{O}_{K,S,\ell})\{\tau\}[\![Z]\!]\tau Z$. We have $$\prod\limits_{v\in \MSpec( \ell[\theta]_S)}\det_{R_\ell[\![Z]\!]}(1+\psi \vert ( M_\ell/v M_\ell)^n)=\det_{R_\ell[Z]\!]}(1+\psi \vert (V_{S,\ell}/ M_{S,\ell})^n)^{-1}.$$
\end{theorem}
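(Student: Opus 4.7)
The plan is to deduce the theorem from Lemma~\ref{localisation2} by iterating it and passing to a limit in $R_\ell[\![Z]\!]$, mirroring the strategy of \cite[Theorem 3.0.2]{ref4} (or equivalently \cite[Proposition 3.5]{ref16}) but now with $\ell[G]$-coefficients in place of $\ell$. Enumerate $\MSpec(\ell[\theta]_S) = \{P_1, P_2, \ldots\}$ and set $S_r = S \cup \{P_1, \ldots, P_r\}$. Applying Lemma~\ref{localisation2} successively at each $P_i$ (together with the multiplicativity of nuclear determinants in Proposition~\ref{prod}) yields the finite telescoping identity
\[
\det_{R_\ell[\![Z]\!]}(1+\psi \mid (V_{S,\ell}/M_{S,\ell})^n) \cdot \prod_{i=1}^r \det_{R_\ell[\![Z]\!]}(1+\psi \mid (M_\ell/P_iM_\ell)^n) = \det_{R_\ell[\![Z]\!]}(1+\psi \mid (V_{S_r,\ell}/M_{S_r,\ell})^n).
\]
It therefore suffices to establish two convergence facts in $R_\ell[\![Z]\!]$: that the infinite product on the left converges, and that the right-hand side tends to $1$ as $r \to \infty$.

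To handle both simultaneously I would fix $N \geq 0$ and work in $R_\ell[\![Z]\!]/Z^{N+1}$. Since $\psi \in \mathcal{M}_n(\mathcal{O}_{K,S,\ell})\{\tau\}[\![Z]\!]\tau Z$, only the finitely many coefficients $\psi_1, \ldots, \psi_N$ intervene, and each is a finite $\tau$-polynomial $\sum_{s\geq 1} a_{j,s}\tau^s$ with $a_{j,s} \in \mathcal{M}_n(\mathcal{O}_{K,S,\ell})$. By Corollary~\ref{nucl} these operators are nuclear on all the quotients in sight, so the determinants may be computed on common nuclei built from the almost taming pair $(M, \mathcal{W}^\infty)$. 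The key technical input is then the following estimate: for $P$ of sufficiently large degree (bounded explicitly in terms of the $\tau$-support of $\psi_1, \ldots, \psi_N$ and of the residue data of the $a_{j,s}$ modulo $P$), the $\tau$-shifts appearing in $\psi_1, \ldots, \psi_N$ become strictly nilpotent against the $Z$-filtration when acting on the $\ell[G]$-module $(M_\ell/PM_\ell)^n$, forcing $\det_{R_\ell[\![Z]\!]/Z^{N+1}}(1+\psi \mid (M_\ell/PM_\ell)^n) = 1$ for all but finitely many $P$. The same estimate, combined with the short exact sequence relating $(V_{S_r,\ell}/M_{S_r,\ell})^n$ to its localized pieces at $P_{r+1}, P_{r+2}, \ldots$, shows that the right-hand side of the telescoping identity is also $\equiv 1 \pmod{Z^{N+1}}$ once $r$ is large enough.

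Letting $N \to \infty$ then assembles both limits into
\[
\det_{R_\ell[\![Z]\!]}(1+\psi \mid (V_{S,\ell}/M_{S,\ell})^n) \cdot \prod_{P \in \MSpec(\ell[\theta]_S)} \det_{R_\ell[\![Z]\!]}(1+\psi \mid (M_\ell/PM_\ell)^n) = 1,
\]
which is exactly the asserted formula after rearrangement. The main obstacle is precisely the quantitative nilpotency estimate linking $\deg P$ to the vanishing of the $Z^N$-contribution of each local factor; it is the single place where the $\ell[G]$-structure and the Frobenius action truly interact, and must be verified carefully using Lemma~\ref{oslocal} and Proposition~\ref{commut} to reduce to the corresponding calculation already carried out in the non-equivariant setting of \cite{ref16, ref4}. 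Once this estimate is granted, the passage from the finite identity to its limit is formal bookkeeping with nuclear determinants.
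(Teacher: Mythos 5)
Your overall architecture matches the paper's: truncate modulo a power of $Z$, telescope Lemma~\ref{localisation2} over finitely many primes, and then show that the remaining (``tail'') contributions are trivial. However, the heart of the proof --- the reason the tail vanishes --- is exactly the step you leave as an unproven ``quantitative nilpotency estimate,'' and the mechanism you sketch for it is not the one that actually works. There is no nilpotency of the $\tau$-shifts against the $Z$-filtration to exploit: $\tau$ acts bijectively on $(M_\ell/PM_\ell)^n$, and the coefficient of $Z^m$ in the determinant genuinely involves nonzero operators. The actual argument (due to Anderson--Taelman, carried out in the paper) is algebraic, not analytic: fix $N$, choose $D=D_N$ with $\deg_\tau\psi_r< rD/N$ for $r<N$, and enlarge $S$ to the finite set $T=T_D$ of all primes some of whose residue degrees are $<D$. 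Over $\mathcal{O}_{K,T,\ell}$ one then has identities $1=\sum_j f_{dj}(a_{dj}^{q^d}-a_{dj})$ for each $d<D$, which show that $(1+\psi)\bmod Z^N$ lies in the multiplicative group $\mathcal{S}_{D,N}$ generated by elements of the form $\bigl(1-(S\tau^d)AZ^r\bigr)\bigl(1-A(S\tau^d)Z^r\bigr)^{-1}$. Each such generator has determinant $1$ on every $(M_\ell/vM_\ell)^n$ with $v\in\MSpec(\ell[\theta]_T)$ and, by Proposition~\ref{commut}, determinant $1$ on $(V_{T,\ell}/M_{T,\ell})^n$; hence both the product of local factors over $\MSpec(\ell[\theta]_T)$ and the global factor at $T$ equal $1$ modulo $Z^N$. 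This is the content you would need to supply; pointing to Lemma~\ref{oslocal} and Proposition~\ref{commut} is pointing at the right tools, but the commutator-group construction is the missing idea, not a routine estimate.

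A second, related weakness: your claim that the right-hand side of the telescoping identity tends to $1$ ``using the short exact sequence relating $(V_{S_r,\ell}/M_{S_r,\ell})^n$ to its localized pieces at $P_{r+1},P_{r+2},\ldots$'' does not parse as stated, since infinitely many primes remain and there is no such finite exact sequence. The paper sidesteps this entirely: for each $N$ it stops the telescoping at the \emph{specific} finite set $T_D$ and proves the global determinant over $V_{T,\ell}/M_{T,\ell}$ is exactly $1$ modulo $Z^N$, rather than taking a limit in $r$. You should restructure your limiting argument accordingly.
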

\begin{proof}

Let $\psi=\sum\limits_{r=1}^\infty\psi_rZ^r\in\mathcal{M}_n(\mathcal{O}_{K,S,\ell})\{\tau\}[\![Z]\!]\tau Z$. We want to show that we have the following equality
$$\prod\limits_{v\in \MSpec( \ell[\theta]_S)}\det_{R_\ell[\![Z]\!]/Z^N}(1+\psi \vert  (M_\ell/v M_\ell)^n)=\det_{R_\ell[\![Z]\!]/Z^N}(1+\psi \vert (V_{S}/ M_{S,\ell})^n)^{-1}.$$

Let $D=D_N$  be such that $\deg_\tau\psi_r<\dfrac{rD}{N}$ for all $r<N$. We set $$T=T_D=S\cup \{v\in \MSpec( \ell[\theta]_S)\hspace{0.1cm}\vert\hspace{0.1cm} \forall w\vert v, [\mathcal{O}_{K,S,\ell}/w:\ell]<D\}.$$

By applying succesively the previous lemma, we get $$\prod\limits_{v\in T\setminus S}\det_{R_\ell[\![Z]\!]/Z^N}(1+\psi \vert  (M_\ell/v M_\ell)^n)=\dfrac{\det_{R_\ell[\![Z]\!]/Z^N}(1+\psi (\vert V_{T,\ell}/ M_{T,\ell})^n)}{\det_{R_\ell[\![Z]\!]/Z^N}(1+\psi \vert (V_{S,\ell}/ M_{S,\ell})^n)} .$$ Furthermore, $\MSpec( \ell[\theta]_S)=T\setminus S\cup \MSpec( \ell[\theta]_T)$. So we obtain
  
$$\prod\limits_{v\in \Spec( \ell[\theta]_S)}X_v= \prod\limits_{v\in \Spec( \ell[\theta]_T)}X_v\prod\limits_{v\in T\setminus S}X_v
\hspace{0,1cm }
\text{where}\hspace{0,1cm } X_v=\det_{R_\ell[\![Z]\!]/Z^N}(1+\psi \vert  (M_\ell/v M_\ell)^n).$$
It suffices to show $$\prod\limits_{v\in \MSpec(\ell[\theta]_T)}\det_{R_\ell[\![Z]\!]/Z^N}(1+\psi \vert  (M_{\ell}/v M_{\ell})^n)=\det_{R_\ell[\![Z]\!]/Z^N}(1+\psi \vert (V_T/ M_{T,\ell})^n)^{-1}.$$

We set $\mathcal{S}_{D,N}\subset \mathcal{M}_n(\mathcal{O}_{K,T,\ell})\{\tau\}[\![Z]\!]/Z^N$ as $$\mathcal{S}_{D,N}=\left\{1+\sum\limits_{r=1}^{N-1}\psi_rZ^r \mod Z^N\vert \deg_\tau(\psi_r)<\dfrac{rD}{N}\forall r<N\right\}.$$

It is a multiplicative group where $(1+\psi)\mod Z^N$ belongs to by the choice of $D$.

We choose $T$ so that $\mathcal{O}_{K,T,\ell}$ does not have any residual group of degree  $d<D$ over $l$. So for $d<D$, there exists $M_d\in \N$,  $f_{dj},a_{dj}\in \mathcal{O}_{K,T,\ell}$ for $1\leq j\leq M_d$ such that $1=\sum\limits_{j=1}^{M_d}f_{dj}\left(a_{dj}^{q^d}-a_{dj}\right)$. 
 
Indeed, for $d<D,$ we denote by $I_d$ the ideal of  $\mathcal{O}_{K,T,\ell}$ spanned by  $\{a^{q^d}-a, a\in \mathcal{O}_{K,T,\ell}\}$. If we assume $I_d\not=\mathcal{O}_{K,T,\ell} $, there exists a maximal ideal $m_d$ such that $I_d\subset m_d\varsubsetneq \mathcal{O}_{K,T,\ell}$. 
  So for all $a\in \mathcal{O}_{K,T,\ell}$, $a^{q^d}-a\equiv 0 \mod I_d$ and so $a^{q^d}-a\equiv 0 \mod m_d$. 
  It implies that $\dim_{\ell}\mathcal{O}_{K,T,\ell}/m_d\leq d$ and we get a contradiction by the choice of $T$. It means that $I_d=\mathcal{O}_{K,T,\ell}$ and there exist $M_d\in \N$,  $f_{dj},a_{dj}\in \mathcal{O}_{K,T,\ell}$ pour $1\leq j\leq M_d$ such that $1=\sum\limits_{j=1}^{M_d}f_{dj}(a_{dj}^{q^d}-a_{dj})$. 
  
We set $F_{dj}=f_{dj}I_n$ and $A_{dj}=a_{dj}I_n$. 
Let $B\in\mathcal{M}_n(\mathcal{O}_{K,T,\ell})$, $ r<N$ and  $d<D$. We have $1-B\tau^dZ^r=1-\sum\limits_{j=1}^{M_d}BF_{dj}(\tau^dA_{dj}-A_{dj}\tau^d)Z^r$, i.e., $$1-B\tau^dZ^r-\sum\limits_{j=1}^{M_d}BF_{dj}A_{dj}\tau^dZ^r=1-\sum\limits_{j=1}^{M_d}BF_{dj}\tau^dA_{dj}Z^r.$$ 

Moreover, $1-B\tau^dZ^d-\sum\limits_{j=1}^{M_d}BF_{dj}A_{dj}\tau^dZ^r\equiv (1-B\tau^dZ^r)(\prod\limits_{j=1}^{M_d}1-A_{dj}(BF_{dj}\tau^d)Z^r)\mod Z^{r+1}$ and $1-\sum\limits_{j=1}^{M_d}BF_{dj}\tau^dA_{dj}Z^r\equiv \prod\limits_{j=1}^{M_d}1-(BF_{dj}\tau^d)A_{dj}Z^r\mod Z^{r+1}$.

So $1-B\tau^dZ^r\equiv\prod\limits_{j=1}^{M_d}\dfrac{1-(RF_{dj}\tau^d)A_{dj}Z^r}{1-A_{dj}(BF_{dj}\tau^d)Z^r}\mod Z^{r+1}$.

It means that $\left\{\dfrac{1-(S\tau^d)AZ^r}{1-A(S\tau^d)Z^r} \mid A,S\in \mathcal{M}_n(\mathcal{O}_{K,T,\ell})\right\}$ is a system of generators of $\mathcal{S}_{D,N}$. 

We have  $\det_{R_\ell[\![Z]\!]/Z^N}\left(\dfrac{1-(S\tau^d)AZ^r}{1-A(S\tau^d)Z^r} \vert  (M_{\ell}/v M_{\ell})^n\right)=1$ for $v\in \MSpec(\ell[\theta]_T)$.

By Proposition \ref{commut},  
$\det_{R_\ell[\![Z]\!]/Z^N}\left(\dfrac{1-(S\tau^d)AZ^r}{1-A(S\tau^d)Z^r} \vert (\dfrac{V_{T,\ell}}{ M_{T,\ell}})^n\right)=1$. 
So we obtain  $$\prod\limits_{v\in \MSpec(\ell[\theta]_T)}\det_{R_\ell[\![Z]\!]/Z^N}(1+\psi \vert  (M_{\ell}/v M_{\ell})^n)=1=\det_{R_\ell[\![Z]\!]/Z^N}(1+\psi \vert (V_{T,\ell}/ M_{T,\ell})^n)^{-1}$$ which gives us the desired result. 
\end{proof}

We define $E_\ell$ as $E$ if $\ell=\F_q$ and $\widetilde{E}$ if $\ell=\F_q(z)$.

We define 
$\psi_\ell$ as  $\dfrac{1-\phi_E(\theta) Z}{1-\delta_E(\theta) Z}-1$ if $\ell=\F_q$ and $\dfrac{1-\phi_{\widetilde{E}}(\theta) Z}{1-\delta_E(\theta) Z}-1$ if $\ell=\F_q(z)$.
\begin{Cor} \label{fonctionl2}
$\psi_\ell$ is a nuclear operator on  $(L_{\infty,\ell}/M_\ell)^n[\![Z]\!]$ and $$\det_{R_\ell[\![Z]\!]}(1+\psi_\ell \vert (L_{\infty,\ell}/M_\ell)^n)=\prod\limits_{v\in \MSpec(\ell[\theta])}\dfrac{\vert \Lie_{E_\ell}(M_\ell/v)\vert_G }{\vert E_\ell(M_\ell/v)\vert_G}.$$
\end{Cor}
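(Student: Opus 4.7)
The plan is to verify that $\psi_\ell$ satisfies the hypotheses of Theorem \ref{formuledeclas2}, apply that theorem with $S = \{\infty\}$, and then identify each local factor of the resulting product with the stated Fitting-ideal ratio.

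For the nuclearity statement, I would rewrite $\psi_\ell$ by putting the two fractions over a common denominator:
\[
\psi_\ell = \frac{(\delta_E(\theta) - \phi_{E_\ell}(\theta))Z}{1 - \delta_E(\theta)Z}.
\]
Using $\phi_E(\theta) = \sum_{i=0}^{r} A_i \tau^i$ (respectively $\phi_{\widetilde{E}}(\theta) = \sum_{i=0}^{r} z^i A_i \tau^i$) and $\delta_E(\theta) = A_0$, the numerator lies in $\mathcal{M}_n(\mathcal{O}_{K,\ell})\{\tau\}\tau\, Z$. Expanding $(1 - A_0 Z)^{-1} = \sum_{j \geq 0} A_0^{j} Z^{j} \in \mathcal{M}_n(\mathcal{O}_K)[\![Z]\!]$ and using $\tau^i C = C^{(i)}\tau^i$, every coefficient of $\psi_\ell$ as a power series in $Z$ lies in $\mathcal{M}_n(\mathcal{O}_{K,\ell})\{\tau\}\tau \subseteq \mathcal{M}_n(\mathcal{O}_{K,\{\infty\},\ell})\{\tau\}\tau$, so $\psi_\ell \in \mathcal{M}_n(\mathcal{O}_{K,\{\infty\},\ell})\{\tau\}[\![Z]\!]\tau Z$. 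By Corollary \ref{nucl} this yields nuclearity on $(V_{\{\infty\},\ell}/M_{\{\infty\},\ell})^n[\![Z]\!]$, and the identifications $V_{\{\infty\},\ell} = L_{\infty,\ell}$ and $M_{\{\infty\},\ell} = M_\ell$ give the first assertion.

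Next, I would apply Theorem \ref{formuledeclas2} with $S = \{\infty\}$. Since $\ell[\theta]_{\{\infty\}} = \ell[\theta]$, the product runs over all $v \in \MSpec(\ell[\theta])$, and inverting yields
\[
\det_{R_\ell[\![Z]\!]}\bigl(1+\psi_\ell \,\bigm|\, (L_{\infty,\ell}/M_\ell)^n\bigr) = \prod_{v \in \MSpec(\ell[\theta])} \det_{R_\ell[\![Z]\!]}\bigl(1+\psi_\ell \,\bigm|\, (M_\ell/vM_\ell)^n\bigr)^{-1}.
\]

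Finally, to identify each local factor I use that on the finite $R_\ell$-module $(M_\ell/vM_\ell)^n$ every endomorphism is trivially locally contracting (the zero submodule is a common nucleus), so Proposition \ref{prod} applied to the factorization $1+\psi_\ell = (1 - \phi_{E_\ell}(\theta)Z)(1 - \delta_E(\theta)Z)^{-1}$ gives
\[
\det_{R_\ell[\![Z]\!]}\bigl(1+\psi_\ell \,\bigm|\, (M_\ell/vM_\ell)^n\bigr) = \frac{\det\bigl(1 - \phi_{E_\ell}(\theta)Z \,\bigm|\, (M_\ell/v)^n\bigr)}{\det\bigl(1 - \delta_E(\theta)Z \,\bigm|\, (M_\ell/v)^n\bigr)}.
\]
By Proposition \ref{fitt} and the identity $\det(1 - T Z \mid N) = Z^{m}\, |N|_{G}(Z^{-1})$, where $m$ is the $R_\ell$-rank of $(M_\ell/v)^n$, the two $Z^{m}$ factors cancel and, under the identification $\theta \leftrightarrow Z^{-1}$, the ratio becomes $|E_\ell(M_\ell/v)|_G / |\Lie_{E_\ell}(M_\ell/v)|_G$. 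Substituting into the product formula yields the stated equality. The hard part will be exactly this last bookkeeping: verifying the cancellation of the normalizing $Z^{m}$ factors (using that $\phi_{E_\ell}(\theta)$ and $\delta_E(\theta)$ act on the same underlying $R_\ell$-module of the same rank) and interpreting the equality under the substitution $Z \leftrightarrow \theta^{-1}$, which realizes $R_\ell[\![Z]\!]$ as the $\theta^{-1}$-adic completion in which the infinite product on the right-hand side converges.
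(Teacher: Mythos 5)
Your proposal is correct and follows essentially the same route as the paper: expand $\psi_\ell=\sum_{r\geq 1}(\delta_E(\theta)-\phi_{E_\ell}(\theta))\delta_E(\theta)^{r-1}Z^r$ to place it in $\mathcal{M}_n(\mathcal{O}_{K,\ell})\{\tau\}[\![Z]\!]\tau Z$ and invoke Corollary \ref{nucl}, then apply Theorem \ref{formuledeclas2} with $S=\{\infty\}$ and identify each local factor with $\vert \Lie_{E_\ell}(M_\ell/vM_\ell)\vert_G/\vert E_\ell(M_\ell/vM_\ell)\vert_G$ via the substitution $Z=\theta^{-1}$ in $\det(\theta I-A_\theta)$. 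The only difference is that you spell out the multiplicativity (Proposition \ref{prod}) and the cancellation of the $Z^m$ normalizing factors, which the paper leaves implicit.
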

\begin{proof}

We can easily see that 
$\psi_\ell =\sum\limits_{r\geq1}(\delta_E(\theta)-\phi_{E_\ell}(\theta) )Z^r\delta_E^{r-1}(\theta).$
Thus $\psi_\ell\in \mathcal{M}_n(\mathcal{O}_{K,\ell})\{\tau\}[\![Z]\!]\tau Z$ and we can apply Corollary \ref{nucl} to obtain that $\psi_\ell$ is a nuclear operator on  $(L_{\infty,\ell}/M_\ell)^n$ and  $(M_\ell/v)^n$ for all $v$.

We apply the previous theorem at the case $S=\{\infty\}$. As $V_{\infty,\ell}=L_{\infty,\ell}$ and for all $v\in\MSpec({\ell[\theta]}),\dfrac{\det_{R_\ell[\![Z]\!]}(1-\delta_E(\theta) Z \vert (M_\ell/vM_\ell)^n)}{\det_{R_\ell[\![Z]\!]}(1-\phi_{E_\ell}(\theta)Z \vert (M_\ell/vM_\ell)^n)}=\dfrac{\vert \Lie_{E_\ell}(M_\ell/vM_\ell)\vert_{G}}{\vert E_\ell(M_\ell/vM_\ell)\vert_{G}}$   it gives us 

\begin{equation*}
\begin{aligned} 
\det_{R_\ell[\![Z]\!]}(1-\psi_\ell \vert (L_{\infty,\ell}/M_\ell)^n) &=\prod\limits_{v\in \MSpec({\ell[\theta]})}\det_{R_\ell[\![Z]\!]}(1-\psi_\ell\vert (M_\ell/vM_\ell)^n)^{-1}\\
& =\prod\limits_{v\in \MSpec(\ell[\theta])}\dfrac{\vert \Lie_{E_\ell}(M_\ell/vM_\ell)\vert_{G}}{\vert E_\ell(M_\ell/vM_\ell)\vert_{G}}.
\end{aligned}
\end{equation*} 

\end{proof}
 
In particular, it allows us to have the next definition.

\begin{definition} 
We define the {\it $G$-equivariant $L$-functions} 
$$\mathcal{L}_G(E_\ell( M_\ell))=\prod\limits_{v\in \MSpec(\ell[\theta])}\dfrac{\vert \Lie_{E_\ell}(M_\ell/v)\vert_G }{\vert E_\ell(M_\ell/v)\vert_G}. $$ 
\end{definition}

\section{Volume and applications}
In this section, we recall the statements of \cite[Section 5]{ref4} but for $l$ instead of $\F_q$ and also for Anderson modules.
The arguments stay the same but there are some technical changes in the conditions of \ref{52} and \ref{53} because of the fact that $\theta$ and $\delta_E(\theta)$ can have different norms . 

\subsection{Volume}

Let $\Lambda$, $\Lambda'$ be two free $\ell[\theta][G]$-lattices with $\mathcal{B}$ and $\mathcal{B}'$ as bases. By definition, they are $\ell\left(\left(\theta^{-1}\right)\right)[G]$-bases for $L_{\infty,\ell}^n$. There exists $X\in Gl_{mn}(\ell\left(\left(\theta^{-1}\right)\right)[G])$ ($m=[K:k]$) such that $\mathcal{B}'=X\mathcal{B}$. Then $\det(X)$ depends on the choice of $\mathcal{B}$ and $\mathcal{B}'$ but not $\det(X)^+$ which is the image of $\det(X)$ by $$\ell\left(\left(\theta^{-1}\right)\right)[G]^\times\twoheadrightarrow \ell\left(\left(\theta^{-1}\right)\right)[G]^\times/\ell[\theta][G]^\times\cong \ell\left(\left(\theta^{-1}\right)\right)[G]^+$$ by \ref{unitaire}. 

\begin{definition} For  $\Lambda$ and $\Lambda'$ two free $\ell[\theta][G]$-lattices, we define $[\Lambda:\Lambda']_G=\det(X)^+$. 
\end{definition}

If $\Lambda$ and $\Lambda'$ are free $\ell[\theta][G]$-lattices such that $\Lambda'\subset \Lambda$ then $\Lambda/\Lambda'$ is a $\ell[\theta][G]$-module  $G$-cohomologically trivial which is a $l$-vector space of finite dimension. 
We can show that $[\Lambda:\Lambda']_G=\vert\Lambda/\Lambda'\vert_G$ . Morerover, if $\Lambda'', \Lambda'$ and $\Lambda$ are free $\ell[\theta][G]$-lattices, we can easily show that $[\Lambda:\Lambda'']_G=[\Lambda:\Lambda']_G[\Lambda':\Lambda'']_G$.

We want to extend this indice to projective $\ell[\theta][G]$-lattices. To do so, we need the following lemma. 
\begin{Lem}\label{reseaulibre}
Let $\Lambda$ be a projective $\ell[\theta][G]$-lattice of $L_{\infty,\ell}^n$. Then
\begin{itemize}
\item There exists a free $\ell[\theta][G]$-lattice $F$ of $L_{\infty,\ell}^n$ such that $\Lambda\subset F$, 
\item For such a  $F$, $F/\Lambda$ is a $l$-vector space of finite dimension and a $\ell[\theta][G]$-module  $G$-cohomologically trivial.

\end{itemize}

\end{Lem}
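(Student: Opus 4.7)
My plan is to handle the two bullets in order, each by direct appeal to material already established in the text.

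For the first bullet, I would apply the third item of Proposition~7.2.1 to the pair $\Lambda_1=\Lambda_2=\Lambda$: the hypothesis $\ell(\theta)\Lambda_1=\ell(\theta)\Lambda_2$ is trivially satisfied, and the proposition hands me a free $\ell[\theta][G]$-lattice $F$ of $L_{\infty,\ell}^n$ with $\Lambda\subset F$.

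For the second bullet I would proceed in two stages. First, since both $F$ and $\Lambda$ are $\ell[\theta]$-lattices of the same ambient space $L_{\infty,\ell}^n$, each is a free $\ell[\theta]$-module of the same rank $r=n\dim_{\ell_\infty}L_{\infty,\ell}$, so $F/\Lambda$ is a finitely generated torsion $\ell[\theta]$-module. Since $\ell[\theta]$ is a PID, the structure theorem gives $F/\Lambda\cong \bigoplus_i \ell[\theta]/(f_i)$ with $f_i\neq 0$, and each summand $\ell[\theta]/(f_i)$ is a finite-dimensional $\ell$-vector space of dimension $\deg f_i$; summing yields the finite $\ell$-dimensionality claim. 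Second, for $G$-cohomological triviality I would invoke the short exact sequence of $\ell[\theta][G]$-modules
\[
0\longrightarrow \Lambda\longrightarrow F\longrightarrow F/\Lambda\longrightarrow 0.
\]
Both $\Lambda$ (projective by hypothesis) and $F$ (free by construction) are $\ell[\theta][G]$-projective, hence $G$-cohomologically trivial by Theorem~\ref{Projct}. The long exact sequence of Tate cohomology then forces $\widehat{H}^i(G,F/\Lambda)=0$ for every $i\in\mathbb{Z}$, which is exactly the desired statement.

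There is essentially no serious obstacle here; the entire argument is a bookkeeping exercise combining Proposition~7.2.1 with the cohomological triviality of projective $\ell[\theta][G]$-modules. The only point that deserves a quick check is that the equality of $\ell[\theta]$-ranks of $F$ and $\Lambda$ really does force $F/\Lambda$ to be torsion, and this is immediate from the fact that the $\ell_\infty$-span of either lattice is the whole $L_{\infty,\ell}^n$, so the $\ell(\theta)$-spans agree.
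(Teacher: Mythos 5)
Your proof is correct and follows the route the paper intends (the lemma is recalled from \cite[Section 7.2]{ref4} without proof): the first bullet is exactly the third item of Proposition 7.2.1 applied to $\Lambda_1=\Lambda_2=\Lambda$, and the second bullet follows from the structure theorem over the PID $\ell[\theta]$ together with the long exact sequence in Tate cohomology applied to $0\to\Lambda\to F\to F/\Lambda\to 0$, using Theorem \ref{Projct} to see that the projective modules $\Lambda$ and $F$ are $G$-cohomologically trivial. No gaps.
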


By Proposition \ref{fitt}, for such a $F$, $\Fitt_{\ell[\theta][G]}\left(\dfrac{F}{\Lambda}\right)$ is principal and admits a unique generator denoted  $\vert F/\Lambda\vert_G\in \ell\left(\left(\theta^{-1}\right)\right)[G]^+$ which is invertible.

This allows us to have the next definition. 
\begin{definition}
Let $\Lambda, \Lambda'$ be two projective $\ell[\theta][G]$-lattices of $L_{\infty,\ell}^n$. We take  two free $\ell[\theta][G]$-lattices $F$ and $F'$ of $L_{\infty,\ell}^n$ such that $\Lambda\subset F$ and $\Lambda'\subset F'$. We define $$[\Lambda:\Lambda']_G=[F:F']_G\dfrac{\vert F'/\Lambda'\vert_G}{\vert F/\Lambda\vert_G}.$$ 
\end{definition}

We can easily show that this definition is independent of the choice of $F$ and $F'$. 

As for the free lattices we have 
\begin{itemize}

\item if $\Lambda, \Lambda'$ and $\Lambda''$ are projective $\ell[\theta][G]$-lattices then $[\Lambda:\Lambda'']_G=[\Lambda:\Lambda']_G[\Lambda':\Lambda'']_G$.
\item if $\Lambda'\subset \Lambda$ are projective $\ell[\theta][G]$-lattices then $[\Lambda:\Lambda']_G=\vert\Lambda/\Lambda'\vert_G$. 
\end{itemize}
\bigskip

 Following \cite{ref4}, we define the class $\mathcal{C}$ of compact $\ell[\theta][G]$-modules  $V$ which are $G$-cohomologically trivial and verify an exact sequence of $\ell[\theta][G]$-modules 
$$0\longrightarrow L_{\infty,\ell}^n/\Lambda\xrightarrow[]{\hspace{0,2cm} f\hspace{0,2cm}} V\xrightarrow[]{\hspace{0,2cm}\pi\hspace{0,2cm}} H\longrightarrow 0$$ where $\Lambda$ is a $\ell[\theta][G]$-lattice of $L_{\infty,\ell}^n$ and $H$ is a   $\ell[\theta][G]$-module which is a $l$-vector space of finite dimension.

 $L_{\infty,\ell}^n/\Lambda$ is $\ell[\theta]$ divisible thus $\ell[\theta]$-injective as $\ell\left(\left(\theta^{-1}\right)\right)/\ell[\theta]$ is. It implies the existence of a section in the category of $\ell[\theta]$-modules. Thus we have the following isomorphism of $\ell[\theta]$-modules induced by  $(f,\id)$ : $$L_{\infty,\ell}^n/\Lambda\times s(H)\cong V .$$ 

We consider the structure of  $L_{\infty,\ell}^n/\Lambda\times s(H)$ as a  $\ell[\theta][G]$-module. For  $g\in G$ and $(x,s(h))\in~L_{\infty,\ell}^n/\Lambda\times~s(H)$, we set $g.(x,s(h)=(g.x+a_{g,h};b_{g,h})$ where $(a_{g,h};b_{g,h})$  is associated to $g.s(h)$ by the isomorphism $L_{\infty,\ell}^n/\Lambda\times s(H)\cong V$. We can show that $b_{g,h}=s(gh)$ and for $g_1, g_2\in G$ and $h\in H$, we have $a_{g_1g_2,h}=g_1a_{g_2,h}+a_{g_1,g_2h}$. 
\\

With this structure, we can introduce the next definition.

\begin{definition}
A $\ell[\theta][G]$- lattice $\Lambda'$  of $L_{\infty,\ell}^n$ is called  {\it $(V,\Lambda, H, s)$-admissible} if it verifies the following conditions :  
\begin{itemize}
\item $\Lambda\subset \Lambda'$,
\item $\Lambda'$ is a projective $\ell[\theta][G]$-module, 
\item $\Lambda'/\Lambda\times s(H)$ is a $\ell[\theta][G]$-submodule of $V$. 
\end{itemize}

\end{definition}
A $\ell[\theta][G]$-lattice $\Lambda'$  is said $V$-admissible if it is $(V,\Lambda, H, s)$-admissible for some $s$.

\begin{proposition}\label{admissible1}
For such  $(V,\Lambda, H, s)$, there exist lattices  $(V,\Lambda, H, s)$-admissible which are free $\ell[\theta][G]$-modules. 
\end{proposition}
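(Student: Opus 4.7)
The key observation is that the section $s$, being $\ell[\theta]$-linear on the $\ell[\theta]$-torsion module $H$, has $\ell[\theta]$-torsion image, so the cocycle $a_{g,h}$ is itself $\ell[\theta]$-torsion. This pins down the cocycle values inside a concrete $\ell[\theta][G]$-lattice of $L_{\infty,\ell}^n$ manifestly containing $\Lambda$, and the third clause of \cite[Prop.~7.2.1]{ref4} then upgrades any such lattice to a free $\ell[\theta][G]$-lattice.

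Because $H$ is a finite-dimensional $\ell$-vector space stable under $\theta$, its $\ell[\theta]$-annihilator contains a nonzero polynomial $f(\theta)$. Then $f(\theta)\, s(h)=s(f(\theta)\, h)=0$ in $V$, hence $f(\theta)\, a_{g,h}=g\cdot f(\theta)\, s(h)-f(\theta)\, s(gh)=0$ in $L_{\infty,\ell}^n/\Lambda$. So every $a_{g,h}$ lifts to an element of
\[
 f(\theta)^{-1}\Lambda := \{\,x\in L_{\infty,\ell}^n : f(\theta)\, x \in \Lambda\,\}.
\]
If $e_1,\ldots,e_r$ (with $r=n[L:k]$) is a $\ell[\theta]$-basis of $\Lambda$, then $f(\theta)^{-1}\Lambda=\bigoplus_i f(\theta)^{-1}\ell[\theta]\cdot e_i$ is $\ell[\theta]$-free of rank $r$ and $G$-stable (since $\Lambda$ is and $G$ commutes with the $\ell[\theta]$-action), hence a $\ell[\theta][G]$-lattice of $L_{\infty,\ell}^n$ containing $\Lambda$.

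Now fix an $\ell$-basis $h_1,\ldots,h_d$ of $H$ and choose lifts $\tilde a_{g,j}\in f(\theta)^{-1}\Lambda$ of $a_{g,h_j}$. Let $\Lambda'_0$ be the $\ell[\theta][G]$-submodule of $f(\theta)^{-1}\Lambda$ generated by $\Lambda$ together with the $\tilde a_{g,j}$. Sandwiched between two $\ell[\theta][G]$-lattices of rank $r$, it is itself a $\ell[\theta][G]$-lattice of rank $r$, and $\ell(\theta)\Lambda'_0=\ell(\theta)\Lambda$. Applying the third item of \cite[Prop.~7.2.1]{ref4} with $\Lambda_1=\Lambda_2=\Lambda'_0$ yields a free $\ell[\theta][G]$-lattice $\Lambda'$ of $L_{\infty,\ell}^n$ containing $\Lambda'_0$, so in particular $\Lambda\subset\Lambda'$ and every $\tilde a_{g,j}$ lies in $\Lambda'$.

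Admissibility is then routine: $\Lambda\subset\Lambda'$ and $\Lambda'$ is free (hence projective) over $\ell[\theta][G]$. The subset $\Lambda'/\Lambda\times s(H)\subset V$ is $\theta$-stable by $\ell[\theta]$-linearity of $s$, and $G$-stable because, writing $h=\sum_j c_j h_j$ with $c_j\in\ell$,
\[
 g\cdot(x,s(h)) = \bigl(gx+\sum_j c_j\, a_{g,h_j},\; s(gh)\bigr),
\]
where $gx\in\Lambda'/\Lambda$ by $G$-stability and each $a_{g,h_j}\in\Lambda'/\Lambda$ by construction. The main obstacle, I expect, is the initial reduction: recognizing that $H$ has a nonzero $\ell[\theta]$-annihilator and using it to place $f(\theta)^{-1}\Lambda$ as a uniform container for all lifts of cocycle values. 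Without this step one cannot ensure that the $\ell[\theta]$-module generated by the lifts keeps the correct $\ell[\theta]$-rank $n[L:k]$, since arbitrary lifts in $L_{\infty,\ell}^n$ may fail to lie in the $\ell(\theta)$-span of $\Lambda$.
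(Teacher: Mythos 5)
Your argument is correct and is essentially the standard one from \cite[Section 5]{ref4}, which the paper invokes without reproducing: the cocycle values $a_{g,h}$ are killed by an annihilator $f(\theta)$ of $H$, hence lift into $f(\theta)^{-1}\Lambda\subset\ell(\theta)\Lambda$, and the resulting $\ell[\theta][G]$-lattice is then enlarged to a free one via \cite[Prop.~7.2.1]{ref4}. The only point worth stating explicitly is that $h\mapsto a_{g,h}$ is $\ell$-linear (immediate from the $\ell$-linearity of $s$ and of the $G$-action), which is exactly what reduces the $G$-stability of $\Lambda'/\Lambda\times s(H)$ to the finitely many generators $a_{g,h_j}$.
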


Let $\Lambda'$ be an admissible $(V,\Lambda, H, s)$-lattice. Thus, there exists an exact sequence of $\ell[\theta][G]$-modules $$0\longrightarrow \Lambda'/\Lambda\times s(H)\longrightarrow V\longrightarrow L_{\infty,\ell}^n/\Lambda'\longrightarrow0.$$ As $V$ and $L_{\infty,\ell}^n/\Lambda'$ are $G$-cohomologically trivial, then $\Lambda'/\Lambda\times s(H)$ too. As it is a $\ell[\theta][G]$-module which is a $l$-vector space of finite dimension, by Proposition \ref{fitt} $\vert\Lambda'/\Lambda\times s(H)\vert_G$ is well defined and belongs to $\ell((\theta^{-1	}))[G]^+$. 

From now on we fix a projective $\ell[\theta][G]$-lattice $\Lambda_0$ to define the volume. This choice is not involved in the quotient of volumes which interrests us. 

\begin{proposition}
Let $V\in\mathcal{C}$ be such that $0\longrightarrow \Lie_E(L_{\infty,\ell})/\Lambda\longrightarrow V\longrightarrow H\longrightarrow 0$.
\begin{itemize}

\item Let $s$ be a section for $V$ and $\Lambda_1, \Lambda_2$ two admissible $(V,\Lambda, H, s)$-lattices. We have $$\dfrac{\vert \Lambda_1/\Lambda\times s(H)\vert_G}{[\Lambda_1:\Lambda_0]_G}=\dfrac{\vert \Lambda'_2/\Lambda\times s(H)\vert_G}{[\Lambda_2:\Lambda_0]_G}.$$
\item Let $s_1, s_2$ be two sections of the exact sequence and $\Lambda'$ an admissible \sloppy 
$(V,\Lambda, H, s_1)$ and $(V,\Lambda, H, s_2)$-lattice. 
Then we have $$\dfrac{\vert \Lambda'/\Lambda\times s_1(H)\vert_G}{[\Lambda':\Lambda_0]_G}=\dfrac{\vert \Lambda'/\Lambda\times s_2(H)\vert_G}{[\Lambda':\Lambda_0]_G}.$$
\end{itemize}
\end{proposition}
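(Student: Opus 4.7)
The plan is to deduce both assertions from the multiplicativity of the Fitting determinant $|\cdot|_G$ along short exact sequences of $\ell[\theta][G]$-modules that are finite-dimensional $\ell$-vector spaces and $G$-cohomologically trivial (the equality $|C|_G = |B|_G \cdot |D|_G$ recalled just after Proposition~\ref{fitt}), combined with the transitivity of the equivariant index $[\cdot:\cdot]_G$ for projective $\ell[\theta][G]$-lattices.

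For the first assertion, I would first reduce to the containment case $\Lambda_1 \subset \Lambda_2$. Given two arbitrary admissible lattices, the third bullet of \cite[Prop.~7.2.1]{ref4} supplies a free $\ell[\theta][G]$-lattice $\Lambda_3$ containing both. Since admissibility only requires the cocycle values $\{a_{g,h}\}$ to lie in the enlarged lattice modulo $\Lambda$, and these values already lie in $\Lambda_1/\Lambda \subset \Lambda_3/\Lambda$, the lattice $\Lambda_3$ is automatically $(V,\Lambda,H,s)$-admissible; the general case then follows by applying the containment case to $\Lambda_1 \subset \Lambda_3$ and to $\Lambda_2 \subset \Lambda_3$. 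In the containment case itself, the short exact sequence
\[
0 \longrightarrow \Lambda_1/\Lambda \times s(H) \longrightarrow \Lambda_2/\Lambda \times s(H) \longrightarrow \Lambda_2/\Lambda_1 \longrightarrow 0
\]
of $\ell[\theta][G]$-modules yields $|\Lambda_2/\Lambda \times s(H)|_G = |\Lambda_1/\Lambda \times s(H)|_G \cdot |\Lambda_2/\Lambda_1|_G$. Combining this with $|\Lambda_2/\Lambda_1|_G = [\Lambda_2:\Lambda_1]_G$ (both lattices being projective) and the multiplicativity $[\Lambda_2:\Lambda_0]_G = [\Lambda_2:\Lambda_1]_G \cdot [\Lambda_1:\Lambda_0]_G$ produces the required equality.

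For the second assertion, I fix $\Lambda'$ together with two sections $s_1, s_2$ for which $\Lambda'$ is admissible. Each $M_i := \Lambda'/\Lambda \times s_i(H) \subset V$ fits into a short exact sequence of $\ell[\theta][G]$-modules
\[
0 \longrightarrow \Lambda'/\Lambda \longrightarrow M_i \longrightarrow H \longrightarrow 0,
\]
where the first map is the inclusion $f$ restricted to $\Lambda'/\Lambda$ and the second is induced by $\pi$. Multiplicativity then yields $|M_i|_G = |\Lambda'/\Lambda|_G \cdot |H|_G$, a value manifestly independent of the choice of section. Dividing both sides by the common denominator $[\Lambda':\Lambda_0]_G$ completes the argument.

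The main obstacle is to verify the hypotheses under which the multiplicativity of $|\cdot|_G$ applies, namely that every module appearing above is a finite-dimensional $\ell$-vector space and is $G$-cohomologically trivial. Finite-dimensionality is clear since all these modules are subquotients of comparable lattices in $L_{\infty,\ell}^n$. For cohomological triviality, one invokes that $L_{\infty,\ell}^n$ is a free $\ell_\infty[G]$-module (\cite[Prop.~7.2.1]{ref4}), that every admissible lattice is projective (hence $G$-cohomologically trivial by Theorem~\ref{Projct}), and that $V$ itself is $G$-cohomologically trivial by definition of the class $\mathcal{C}$. The triviality of $M_i$, $\Lambda_2/\Lambda_1$, $\Lambda'/\Lambda$ and $H$ is then propagated through the relevant long exact sequences in cohomology.
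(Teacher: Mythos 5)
Your argument for the first bullet is sound: the reduction to the case $\Lambda_1\subset\Lambda_2$ via a common free over-lattice (which is indeed still admissible, since the cocycle values $a_{g,h}$ already lie in $\Lambda_1/\Lambda$), the exact sequence with quotient $\Lambda_2/\Lambda_1$, and the identities $\vert\Lambda_2/\Lambda_1\vert_G=[\Lambda_2:\Lambda_1]_G$ and $[\Lambda_2:\Lambda_0]_G=[\Lambda_2:\Lambda_1]_G[\Lambda_1:\Lambda_0]_G$ all apply, because every term in that sequence ($\Lambda_i/\Lambda\times s(H)$ by the exact sequence with $V$ and $L_{\infty,\ell}^n/\Lambda_i$, and $\Lambda_2/\Lambda_1$ because both lattices are projective) is genuinely $G$-cohomologically trivial.

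The second bullet, however, has a genuine gap. Your factorization $\vert \Lambda'/\Lambda\times s_i(H)\vert_G=\vert\Lambda'/\Lambda\vert_G\cdot\vert H\vert_G$ requires $\Lambda'/\Lambda$ and $H$ to be finite-dimensional $\ell[G]$-projective (equivalently, $G$-cohomologically trivial) modules: otherwise $\vert\cdot\vert_G$ is not even defined for them, and over the non-Dedekind ring $\ell[\theta][G]$ the Fitting ideals of sub and quotient only give an inclusion, not an equality. Cohomological triviality of $M_i=\Lambda'/\Lambda\times s_i(H)$ does \emph{not} propagate to $\Lambda'/\Lambda$ and $H$: the long exact sequence only yields $\widehat{H}^j(G,H)\cong\widehat{H}^{j+1}(G,\Lambda'/\Lambda)$, and the sequence $0\to\Lambda\to\Lambda'\to\Lambda'/\Lambda\to 0$ shows that $\Lambda'/\Lambda$ fails to be $G$-cohomologically trivial exactly when $\Lambda$ does — and the class $\mathcal{C}$ does not assume $\Lambda$ projective. (For $G=\Z/p$ one easily builds a free $\ell[G]$-module as a non-split extension of two non-free modules, so this is not a vacuous worry.) A correct route is instead to note that $\delta=s_1-s_2$ takes values in $f(L_{\infty,\ell}^n/\Lambda)$, enlarge $\Lambda'$ to a projective lattice $\Lambda''$ whose image modulo $\Lambda$ contains both $\delta(H)$ and the cocycle values for $s_1$ and $s_2$ (so that $\Lambda''$ is admissible for both sections); then $\Lambda''/\Lambda\times s_1(H)$ and $\Lambda''/\Lambda\times s_2(H)$ coincide as submodules of $V$ because $f(x)+s_1(h)=f(x+\delta(h))+s_2(h)$, and the first bullet (applied once for each section to the pair $\Lambda'\subset\Lambda''$) transports the resulting equality back down to $\Lambda'$.
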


\begin{definition}
Let $V\in\mathcal{C}$ be such that $0\longrightarrow \Lie_E(L_{\infty,\ell})/\Lambda\rightarrow V\longrightarrow H\longrightarrow 0$. Let $s$ be a section and $\Lambda'$ an admissible $(V,\Lambda, H, s)$-lattice. We define $$\Vol_{\Lambda_0}(V)=\dfrac{\vert \Lambda'/\Lambda\times s(H)\vert_G}{[\Lambda':\Lambda_0]_G}.$$ By the previous proposition,  $\Vol_{\Lambda_0}(M)$ is independent of the section and the admissible  $(V,\Lambda, H, s)$-lattice. 
\end{definition}

\begin{proposition}\label{volume}
The funtion $\Vol_{\Lambda_0}:\mathcal{C}\longrightarrow \ell\left(\left(\theta^{-1}\right)\right)$ verifies : 
\begin{itemize}

\item $\Vol_{\Lambda_0}(L_{\infty,\ell}^n/\Lambda_0)=1$. 
\item If $V_1,V_2\in\mathcal{C}$, $\dfrac{\Vol_{\Lambda_0}(V_1)}{\Vol_{\Lambda_0}(V_2)}$ is independent of the choice of $\Lambda_0$. 
\item Let $V, V'\in \mathcal{C}$ be such that the diagram commutes

$$\begin{tikzcd}
0 \arrow[r] 
    & L_{\infty,\ell}^n/\Lambda\arrow[d ,]\arrow[r] \arrow[d]
    & V \arrow[d,"\phi" ]\arrow[d]\arrow[r,"\pi"] \arrow[d]
    & H \arrow[d,"id" ] \arrow[r] \arrow[d]
    & 0  \\
  0 \arrow[r ]
& L_{\infty,\ell}^n/\Lambda\arrow[r ]
& V'\arrow[r,"\pi'" ]
& H \arrow[r ]
& 0.\end{tikzcd}$$

Then $\Vol_{\Lambda_0}(M)=\Vol_{\Lambda_0}(M')$. 
\end{itemize}
\end{proposition}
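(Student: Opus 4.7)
The plan is to verify the three bullets in turn; each follows by unwinding the definition of $\Vol_{\Lambda_0}$ and using multiplicativity of the index $[\,\cdot\,:\,\cdot\,]_G$ established earlier for projective $\ell[\theta][G]$-lattices.

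For the first bullet, take $V=L_{\infty,\ell}^n/\Lambda_0$, so that $\Lambda=\Lambda_0$, $H=0$ and the only section is $s=0$. Then $\Lambda'=\Lambda_0$ itself is $(V,\Lambda_0,0,0)$-admissible: it is projective by hypothesis on $\Lambda_0$, it contains $\Lambda$, and $\Lambda_0/\Lambda_0\times s(0)=0$ is a submodule of $V$. Both $\vert 0\vert_G=1$ and $[\Lambda_0:\Lambda_0]_G=1$, so $\Vol_{\Lambda_0}(V)=1$.

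For the second bullet, I compare the volumes computed with respect to two reference lattices $\Lambda_0,\Lambda_0'$. Given $V\in\mathcal{C}$ together with a section $s$ and an admissible lattice $\Lambda'$ (which can be chosen independently of the reference lattice), the numerator $\vert \Lambda'/\Lambda\times s(H)\vert_G$ is the same in both computations, while the denominators are related by the multiplicativity $[\Lambda':\Lambda_0]_G=[\Lambda':\Lambda_0']_G\cdot[\Lambda_0':\Lambda_0]_G$ for projective lattices. Hence $\Vol_{\Lambda_0}(V)=[\Lambda_0':\Lambda_0]_G\cdot\Vol_{\Lambda_0'}(V)$, and because the scalar $[\Lambda_0':\Lambda_0]_G$ depends only on $\Lambda_0,\Lambda_0'$, it cancels from the ratio $\Vol_{\Lambda_0}(V_1)/\Vol_{\Lambda_0}(V_2)$.

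For the third bullet, the five lemma applied to the commutative diagram shows that $\phi\colon V\to V'$ is an $\ell[\theta][G]$-linear isomorphism. Fixing a section $s$ of $\pi$, set $s':=\phi\circ s$, which is a section of $\pi'$ since $\pi'\circ\phi=\pi$. Any $(V,\Lambda,H,s)$-admissible lattice $\Lambda'$ is automatically $(V',\Lambda,H,s')$-admissible: the first two admissibility conditions do not involve the ambient module, and $\phi$ carries the submodule $\Lambda'/\Lambda\times s(H)\subset V$ bijectively onto $\Lambda'/\Lambda\times s'(H)\subset V'$. This same $\ell[\theta][G]$-equivariant isomorphism identifies the two Fitting ideals, so $\vert\Lambda'/\Lambda\times s(H)\vert_G=\vert\Lambda'/\Lambda\times s'(H)\vert_G$, and dividing by the common $[\Lambda':\Lambda_0]_G$ yields $\Vol_{\Lambda_0}(V)=\Vol_{\Lambda_0}(V')$.

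None of the three arguments presents a substantive obstacle; the only point that requires a moment's care is in the last case, namely to check that the $\ell[\theta][G]$-module structure on $L_{\infty,\ell}^n/\Lambda\times s'(H)$ inherited from $V'$ coincides, via $\phi$, with the one inherited from $V$ on $L_{\infty,\ell}^n/\Lambda\times s(H)$, so that $\phi$ really does identify the two admissible submodules and their Fitting invariants.
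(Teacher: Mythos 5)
Your proof is correct and follows exactly the argument that the paper omits (it defers to \cite[Section 5]{ref4}): unwinding the definition of $\Vol_{\Lambda_0}$, using multiplicativity of $[\,\cdot:\cdot\,]_G$ for the first two bullets, and transporting the section via the isomorphism $\phi$ (which is $\ell[\theta][G]$-linear, so it identifies the admissible submodules and their Fitting ideals) for the third. The only blemish is that in the second bullet the constant should be $[\Lambda_0:\Lambda_0']_G$ rather than $[\Lambda_0':\Lambda_0]_G$, i.e.\ $\Vol_{\Lambda_0}(V)=[\Lambda_0':\Lambda_0]_G^{-1}\Vol_{\Lambda_0'}(V)$, but since it depends only on $\Lambda_0,\Lambda_0'$ it still cancels in the ratio and the conclusion stands.
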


\subsection{Application tangent to the identity}\label{52} ${}$\par
This section is inspired of \cite{ref4} and \cite{ref17}.
We endow $L_{\infty,\ell}$ with the norm sup of the local norms. We note it  $\vert\vert.\vert\vert$ and it is normalized so that $\vert\vert \theta\vert\vert=q$.
We still denote $\vert\vert.\vert\vert$ for $L_{\infty,\ell}^n$. We set $\mathcal{O}_{L_\infty,\ell}$ the elements of $L_{\infty,\ell}$ of norm less than 1.  The norm is taken so that $\mathcal{O}_{L_\infty,\ell}$ is a $R_\ell$-module (under the $G$-action, the norm is still less than 1 as it is the sup).

Let $c,d \in N$ be such that $\max_{i\in [\![0;q^n-1]\!]}\vert\vert\delta_E(\theta)^i\vert\vert=q^c$  and  $\vert\vert \delta_E(\theta)\vert\vert=q^d$.

Let $V_1, V_2\in \mathcal{C}$ verify $0\longrightarrow \Lie_E(L_{\infty,\ell})/\Lambda_j\xrightarrow{\hspace{0,1cm}\iota_j\hspace{0,1cm}} V_j\xrightarrow{\hspace{0,1cm}\pi_j\hspace{0,1cm}} H_j\longrightarrow 0$ for $j=1,2$.

 Let $r$ be large enough so that $(\theta^{-i}\mathcal{O}_{L_{\infty,\ell}})^n\cap\Lambda_j=\{0\}$ for $i\geq r$ et $j=1,2$. We identify $(\theta^{-i}\mathcal{O}_{L_{\infty,\ell}})^n$ with its image in $\Lie_E(L_{\infty,\ell})/\Lambda_j$. 
We fix a $\infty$-taming module $\mathcal{W}^\infty$ for $L_\ell/K_\ell$. Previously, we saw that $\{\iota_s(\mathcal{U}_{i,\infty})^n\}_{i\geq r}$ is a basis of neighborhoods of 0 in $V_j$ which are projective $R_\ell$-modules where $\mathcal{U}_{i,\infty}=\theta^{-i}\mathcal{W}^\infty$. For $a$ large enough and $i\geq r$, $\theta^{-a-i}\mathcal{O}_{L_\infty,\ell}\subset \mathcal{U}_{i,\infty}\subset \theta^{-i}\mathcal{O}_{L_\infty,\ell}$. 
We endow $\iota_j(\theta^{-i}\mathcal{O}_{L_\infty,\ell})$ with the norm so that $\iota_j:\theta^{-i}\mathcal{O}_{L_\infty,\ell}\longrightarrow \iota_j(\theta^{-i}\mathcal{O}_{L_\infty,\ell}$) is a bijective isometry for $j=1,2$. 

\begin{definition}\label{tangence}
Let $N\in\mathbb{N}$. We say that a continuous $R_\ell$-morphism  $\gamma:V_1\longrightarrow V_2$ is  {\it $N$-tangent to the identity} if theres exists $i\geq r$ such that

\begin{itemize}
\item $\gamma$ induces a bijective isometry $\gamma_i=(\iota_2^{-1}\circ\gamma\circ\iota_1):\theta^{-i}\mathcal{O}_{L_\infty,\ell}^n\longrightarrow \theta^{-i}\mathcal{O}_{L_\infty,\ell}^n$,
\item $\vert\vert \gamma_i(x)-x\vert\vert \leq \vert\vert \theta\vert \vert^{-N-a}\vert\vert x\vert\vert$ for all $x\in (\theta^{-i}\mathcal{O}_{L_\infty,\ell})^n$. 
\end{itemize}

If $\gamma$ is $N$-tangent to the identity for all $N\geq 0$, then we say that $\gamma$ is {\it $\infty$-tangent to the identity}. 
\end{definition}

\begin{proposition}\label{infiniment}
Let $\Gamma:L_{\infty,\ell}^n\longrightarrow L_{\infty,\ell}^n$ be a $R_\ell$-linear application given by the always convergent serie which belongs to $I_n+\tau \mathcal{M}_n(L_{\infty,\ell})\{\tau\}$. We assume that $\Gamma(\Lambda_1)\subset \Lambda_2$. We denote by $\bar{\Gamma}:(L_{\infty,\ell})^n/\Lambda_1\longrightarrow (L_{\infty,\ell})^n/\Lambda_2$ the induced application. We assume that $\gamma:V_1\longrightarrow V_2$ is a $R_\ell$-linear morphism such that $\iota_2^{-1}\circ\gamma\circ\iota_1=\bar{\Gamma}$ over $(\theta^{-l}\mathcal{O}_{L_\infty,\ell})^n$. Then $\gamma$ is $\infty$-tangent to the identity. 
\end{proposition}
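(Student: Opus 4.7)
I write $\Gamma = I_n + \sum_{j \geq 1} A_j \tau^j$ with $A_j \in \mathcal{M}_n(L_{\infty,\ell})$. The hypothesis that this series converges on all of $L_{\infty,\ell}^n$ is equivalent to the entire-function growth condition $\|A_j\|^{1/q^j} \to 0$. The plan is to verify both clauses of Definition \ref{tangence} directly for $\bar{\Gamma}$ on a sufficiently small ball and then transfer everything to $\gamma$ via the hypothesis $\iota_2^{-1}\circ \gamma \circ \iota_1 = \bar{\Gamma}$.

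Fix $N \geq 0$. The technically crucial step is to choose $i \geq l$ large enough that
\[
\sup_{j \geq 1} \|A_j\|\, q^{-i(q^j - 1)} \leq q^{-N-a}.
\]
Such an $i$ exists because the $j=1$ term requires only $i$ linear in $N$ and $\log_q \|A_1\|$, while for $j \to \infty$ the entire-function estimate forces $\log_q \|A_j\|$ to be dominated by $i(q^j - 1)$ uniformly, and the finitely many intermediate $j$ are absorbed by further enlarging $i$. For $x \in (\theta^{-i}\mathcal{O}_{L_\infty,\ell})^n$, using $\|\tau^j(x)\| = \|x\|^{q^j}$ and factoring out a single $\|x\|$, the ultrametric inequality yields $\|\Gamma(x) - x\| \leq q^{-N-a} \|x\|$. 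By the ultrametric property this forces $\|\Gamma(x)\| = \|x\|$, so $\Gamma$ maps the ball into itself and preserves norms on it.

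For bijectivity I exploit that $\Gamma - I_n$ is $\mathbb{F}_q$-linear (since $\tau$ is Frobenius), which upgrades the pointwise bound to a Lipschitz bound: for all $x_1,x_2$ in the ball, $\|(\Gamma - I_n)(x_2) - (\Gamma - I_n)(x_1)\| \leq q^{-N-a}\|x_2 - x_1\|$. Hence $x \mapsto y - (\Gamma - I_n)(x)$ is a strict contraction on the complete ball, and Banach's fixed-point theorem provides a unique preimage for each $y$; thus $\Gamma$ restricts to a bijective isometry on the ball. Since $\Gamma(x)$ already lies in the ball, its class in $L_{\infty,\ell}^n/\Lambda_2$ is represented by $\Gamma(x)$ itself, so the hypothesis yields $\gamma_i = \bar{\Gamma} = \Gamma$ on the ball, and both clauses of Definition \ref{tangence} follow for $\gamma$ at level $i$. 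As $N$ was arbitrary, $\gamma$ is $\infty$-tangent to the identity. The main obstacle is the uniform-in-$j$ estimate on $\sup_j \|A_j\|\, q^{-i(q^j-1)}$: one has to invoke everywhere-convergence of $\Gamma$ in its strongest form, since a weaker convergence statement on a single disc would not suffice to control the tail in $j$.
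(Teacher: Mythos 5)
Your proof is correct and is essentially the argument the paper omits by deferring to the corresponding statement in Ferrara--Green--Higgins--Popescu: use everywhere-convergence of $\Gamma=I_n+\sum_{j\ge1}A_j\tau^j$ to pick $i$ with $\sup_j\Vert A_j\Vert\, q^{-i(q^j-1)}\le q^{-N-a}$, deduce that $\Gamma$ is a bijective isometry of the ball $(\theta^{-i}\mathcal{O}_{L_\infty,\ell})^n$ agreeing with $\gamma_i$ there, and let $N\to\infty$. The only point worth tightening in a written version is the existence of $i$: handle the finitely many small $j$ by enlarging $i$, and the tail by noting that $\Vert A_j\Vert^{1/q^j}\to 0$ already gives $\log_q\Vert A_j\Vert\le -(N+a)+(q^j-1)$ for all large $j$ independently of $i$.
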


In particular, $\exp_{\widetilde{E}}$ is $\infty$-tangent to the identity. 
\bigskip

Let $V_1,V_2\in\mathcal{C}$ and $\gamma:V_1\longrightarrow V_2$ be a $R_\ell$-linear isomorphism. We denote by  $\Delta_\gamma$ the  isomorphism of $V_1[\![Z]\!]$ such that $$\Delta_\gamma=\dfrac{1-\gamma^{-1}\delta_E(\theta)\gamma Z}{1-\delta_E(\theta) Z}-1=\sum\limits_{s\geq 1}\delta_sZ^{s}$$ where $\delta_s=(\delta_E(\theta)-\gamma^{-1}\delta_E(\theta).\gamma)\delta^{s-1}_E(\theta)$ for $s\geq 1$.

We recall that $\displaystyle\max_{i=0, \ldots, q^n-1} \vert\vert \delta_E^i(t)\vert\vert=q^{c}.$ 

\begin{Lem}\label{tangentmod}
Let $\gamma:V_1\longrightarrow V_2$ be a $R_\ell$-linear isomorphism  $N'$-tangent to the identity. If $N\in \N$ verifies $\forall m <N, N'-q^n\lfloor\dfrac{m}{q^n}\rfloor-c\geq 1 \text{ and } N'-q^n\lfloor\dfrac{m-1}{q^n}\rfloor-2c\geq 1$,  then $\Delta_\gamma \mod Z^N$ is a nuclear endomorphism of $V_1[\![Z]\!]/Z^n$. 

If $\gamma:V_1\longrightarrow V_2$ is a $R_\ell$-linear isomorphism  $\infty$-tangent to the identity then $(1+\Delta_\gamma)$ is a nuclear endomorphism of $V_1[\![Z]\!]$. 
\end{Lem}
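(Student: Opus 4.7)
The plan is to expand each Taylor coefficient $\delta_s$ of $\Delta_\gamma=\sum_{s\ge 1}\delta_s Z^s$ using the explicit formula $\delta_s=(\delta_E(\theta)-\gamma^{-1}\delta_E(\theta)\gamma)\delta_E^{s-1}(\theta)$, then show each $\delta_s$ is locally contracting for the basis $\mathcal{U}=\{(\theta^{-i}\mathcal{O}_{L_\infty,\ell})^n\}_{i\ge r}$ (induced from a fixed $\infty$-taming module $\mathcal{W}^\infty$), by combining the $N'$-tangency estimate of Definition \ref{tangence} with the norm bounds on powers of $\delta_E(\theta)$ coming from the Anderson relation $(A_0-\theta I_n)^n=0$ (equivalently, $A_0^{q^n}=\theta^{q^n}I_n$, cf.\ Fang's lemma).

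First I would realize the tangency: on a deep enough neighborhood $(\theta^{-i_0}\mathcal{O}_{L_\infty,\ell})^n$, write $\gamma=\mathrm{id}+\eta$ and $\gamma^{-1}=\mathrm{id}+\eta'$ with $\|\eta(x)\|,\|\eta'(x)\|\le \|\theta\|^{-N'-a}\|x\|$, so that
$$\gamma^{-1}\delta_E(\theta)\gamma-\delta_E(\theta)=\delta_E(\theta)\eta+\eta'\delta_E(\theta)+\eta'\delta_E(\theta)\eta.$$
Next, for $s-1=q^n\lfloor (s-1)/q^n\rfloor+r$ with $0\le r<q^n$, the relation $A_0^{q^n}=\theta^{q^n}I_n$ gives $\|\delta_E^{s-1}(\theta)(x)\|\le q^{c+q^n\lfloor(s-1)/q^n\rfloor}\|x\|$ and $\|\delta_E^{s}(\theta)(x)\|\le q^{c+q^n\lfloor s/q^n\rfloor}\|x\|$.

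Applied to $y=\delta_E^{s-1}(\theta)(x)$, the three terms then contribute (after choosing $i$ large enough that $y$, $\delta_E(\theta)y$, $\eta(y)$ all remain inside the tangency domain):
$$\|\eta'\delta_E^{s}(\theta)(x)\|\le q^{-N'-a+c+q^n\lfloor s/q^n\rfloor}\|x\|,\quad \|\delta_E(\theta)\eta\,\delta_E^{s-1}(\theta)(x)\|\le q^{-N'-a+2c+q^n\lfloor(s-1)/q^n\rfloor}\|x\|,$$
the mixed term $\eta'\delta_E(\theta)\eta(y)$ being strictly smaller. Writing $m=s$, the hypothesis $N'-q^n\lfloor m/q^n\rfloor-c\ge 1$ kills the first term and $N'-q^n\lfloor (m-1)/q^n\rfloor-2c\ge 1$ kills the second (using $\|\delta_E(\theta)\|\le q^c$), so $\|\delta_s(x)\|\le q^{-1}\|x\|$ on a sufficiently deep neighborhood. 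Hence $\delta_s(\mathcal{U}_i)\subseteq \mathcal{U}_{i+1}$ for all large $i$, which is exactly local contractivity; since this holds for every $s\le N-1$, $\Delta_\gamma\bmod Z^N$ is nuclear on $V_1[\![Z]\!]/Z^N$.

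For the second assertion, if $\gamma$ is $\infty$-tangent the estimates of the previous paragraph are available with $N'$ arbitrarily large, so for any $s\ge 1$ we can verify both numerical conditions, and $\Delta_\gamma$ is nuclear on $V_1[\![Z]\!]$. I expect the main technical nuisance to be bookkeeping on the neighborhoods: the operator $\delta_E(\theta)$ shifts $(\theta^{-i}\mathcal{O}_{L_\infty,\ell})^n$ outward by at most $q^d$, so one must pick the reference index $i$ large enough (depending on $s$, $c$, $d$, $a$ and on the comparison between $\mathcal{W}^\infty$ and $\mathcal{O}_{L_\infty,\ell}$) that every intermediate vector still lies in the domain where the $N'$-tangency estimate is meaningful; once this is done, the contraction follows cleanly from the two arithmetic inequalities in the hypothesis.
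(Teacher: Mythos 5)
Your proposal is correct and follows essentially the same route as the paper: decompose $\delta_s=(\delta_E(\theta)-\gamma^{-1}\delta_E(\theta)\gamma)\delta_E^{s-1}(\theta)$ into terms each containing one factor of $\gamma-1$ (the paper uses the two-term form $\gamma_i^{-1}(\gamma_i-1)\delta_E(\theta)^m+\gamma_i^{-1}\delta_E(\theta)(1-\gamma_i)\delta_E(\theta)^{m-1}$, which is the same computation as your $\eta,\eta'$ expansion), bound $\|\delta_E(\theta)^m\|$ and $\|\delta_E(\theta)^{m-1}\|$ via $A_0^{q^n}=\theta^{q^n}I_n$ and the constant $c$, and apply the two numerical hypotheses to get a contraction by $q^{-1-a}$, which suffices because $U_{j,\infty}\subset\theta^{-j}\mathcal{O}_{L_\infty,\ell}$ and $\theta^{-1-a-j}\mathcal{O}_{L_\infty,\ell}\subset U_{j+1,\infty}$. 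The only cosmetic slip is stating the final bound as $q^{-1}\|x\|$ rather than $q^{-1-a}\|x\|$ (the latter is what your intermediate estimates give and what is needed to land in $U_{j+1,\infty}$), which you effectively acknowledge in your closing remark about comparing $\mathcal{W}^\infty$ with $\mathcal{O}_{L_\infty,\ell}$.
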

\begin{proof}
Let $N\in \N^*$ verify the previous condition and $i$ as in Definition \ref{tangence}. Let $m<N$. As in \cite[Lemma 5.1.5]{ref4} we have the following equality
$$\delta_m  =(\delta_E(\theta)-\gamma_i^{-1}\delta_E(\theta)\gamma_i)\delta_E(\theta)^{m-1}=\gamma_i^{-1}(\gamma_i-1)\delta_E(\theta)^m+\gamma_i^{-1}\delta_E(\theta)(1-\gamma_i )\delta_E(\theta)^{m-1}.$$

We can show that for $j\geq \max(\lfloor q^n\dfrac{m}{q^n}\rfloor +c+i, \lfloor q^n\dfrac{m-1}{q^n}\rfloor +c+i)$, for $x\in(\theta^{-j}\mathcal{O}_{L_\infty,\ell})^n$,  then $\delta_E^m(\theta)x$ and $\delta_E^{m-1}(\theta)x$ belong to $ \theta^{-i}\mathcal{O}_{L_\infty,\ell}^n$.

As $\gamma$ is $N'$-tangent to the identity, for $x\in \theta^{-j}\mathcal{O}_{L_\infty,\ell}^n$ we have the inequalities : 
\begin{equation*}
\begin{aligned} 
\vert\vert\delta_m^i(x) \vert\vert&\leq \max \{\vert\vert \theta\vert\vert^{-N'-a}\vert\vert(\delta_E(\theta)^mx)\vert\vert;\vert\vert \delta_E(\theta)\vert\vert\vert\vert \delta_E(\theta)^{m-1}x-\gamma_i \delta_E(\theta)^{m-1}x\vert\vert\} \\
&  \leq \max \{\vert\vert \theta\vert\vert^{-N'-a}\vert\vert \theta\vert\vert^{q^n\lfloor m/q^n\rfloor+c}   \vert\vert x\vert\vert; \vert\vert \theta\vert\vert^{c}q^{-N'-a}\vert\vert \theta\vert\vert^{q^n \lfloor (m-1)/q^n\rfloor+c}\vert\vert x\vert\vert\} \\
&  \leq q^{-1-a}\vert\vert x\vert\vert
\end{aligned}
\end{equation*} 
As $U_{j,\infty}\subset \theta^{-j}\mathcal{O}_{L_\infty,\ell}$ and $\theta^{-1-a-j}\mathcal{O}_{L_\infty,\ell}\subset U_{j+1,\infty}$, $\delta_m$ is locally contracting.

\end{proof}

\subsection{Endomorphisms of $L_{\infty,\ell}^n/\Lambda$}\label{53} ${}$\par
We look at the particular case where $V_1=V_2=(L_{\infty,\ell})^n/\Lambda$  with $\Lambda$ a projective  $\ell[\theta][G]$-lattice. We fix $r$ so that $(\theta^{-r}\mathcal{O}_{L_\infty,\ell})^n\cap \Lambda=\{0\}$ and $a$ such that for  $j\geq r$, $$\theta^{-a-j}\mathcal{O}_{L_\infty,\ell}\subset \mathcal{U}_{j,\infty}\subset \theta^{-j}\mathcal{O}_{L_\infty,\ell}.$$

\begin{definition}
A $R_\ell$-linear continuous endomorphism  $\phi:L_{\infty,\ell}^n/\Lambda\longrightarrow L_{\infty,\ell}^n/\Lambda$ is called a {\it local $m$-contraction} for $m\in\mathbb{N}$ if there exists $i\geq r$ such that $$\vert\vert\phi(x)\vert\vert\leq\vert\vert \theta\vert\vert^{-m}\vert\vert x \vert\vert \hspace{0,1cm} \text{for all} \hspace{0,1cm} x\in (\theta^{-i}\mathcal{O}_{L_\infty,\ell})^n.$$ 
\end{definition}

\begin{rem}\label{remarqueloc}
If $\phi$ is a local $m$-contraction for $m>a$ then $\phi$ is locally contracting on $(L_{\infty,\ell})^n/\Lambda$ for $\mathcal{U}^n$. So  $\det(1+\phi\vert (L_{\infty,\ell})^n/\Lambda)$ is well defined.  Indeed, let $i\geq r$ such that $\vert\vert\phi(x)\vert\vert\leq\vert\vert \theta\vert\vert^{-m}\vert\vert x \vert\vert $ for all $x\in \theta^{-i}\mathcal{O}_{L_\infty,\ell}$. For $j\geq i$, we have 

$$\phi(U_{j,\infty}^n)\subset \phi(\theta^{-j}\mathcal{O}_{L_\infty,\ell}^n)\subset \theta^{-j-m}\mathcal{O}_{L_\infty,\ell}^n\subset \theta^{-j-a-1}\mathcal{O}_{L_\infty,\ell}^n\subset U_{j+1,\infty}^n.$$ So $\phi$ is locally contracting for  $\mathcal{U}^n$ and $U_{i,\infty}^n$ is a  nucleus. 
\end{rem}

We recall that $\vert\vert \delta_E(\theta)\vert\vert=q^d$. 
\begin{proposition}\label{contraction}
Let  $\gamma : (L_{\infty,\ell})^n/\Lambda\longrightarrow (L_{\infty,\ell})^n/\Lambda$ be a  $R_\ell$-linear continuous isomorphism which is  $N$-tangent to the identity for $N>0$. Let $\psi:(L_{\infty,\ell})^n/\Lambda\longrightarrow (L_{\infty,\ell})^n/\Lambda$ a local $m$-contraction, $R_\ell$-linear for $m>2a+d$. Let $f=\delta_E(\theta)\gamma$. 
Then \begin{itemize}
\item $f\psi$ and $\psi f$ are local $(m-d)$-contractions of $(L_{\infty,\ell})^n/\Lambda$.
\item $\det(1+f\psi\vert (L_{\infty,\ell})^n/\Lambda)=\det(1+\psi f\vert (L_{\infty,\ell})^n/\Lambda)$.
\end{itemize}
\end{proposition}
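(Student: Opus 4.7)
The plan is to handle the two assertions separately: verify the local contraction estimates by direct norm computations, then establish the determinant identity via a Sylvester-type identity applied to a pair of finite-dimensional quotients linked by a non-square factorization of $f$ and $\psi$.

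For the local contraction claim, I would fix indices $i_\gamma, i_\psi \geq r$ such that $\gamma$ is a bijective isometry on $(\theta^{-i_\gamma}\mathcal{O}_{L_\infty,\ell})^n$ and $\|\psi(y)\| \leq q^{-m}\|y\|$ on $(\theta^{-i_\psi}\mathcal{O}_{L_\infty,\ell})^n$. For $j \geq \max(r+a+d,\, i_\gamma,\, i_\psi + d + a)$ and $x \in (\theta^{-j}\mathcal{O}_{L_\infty,\ell})^n$ one estimates directly
\[
\|f\psi(x)\| = \|\delta_E(\theta)\gamma(\psi(x))\| \leq q^d\|\psi(x)\| \leq q^{d-m}\|x\|
\]
(since $\psi(x)$ lies in the isometry region for $\gamma$), and
\[
\|\psi f(x)\| \leq q^{-m}\|f(x)\| \leq q^{d-m}\|x\|
\]
(since $f(x)$ lies in the $m$-contraction region for $\psi$, ensured by $j \geq i_\psi + d$). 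The hypothesis $m > 2a+d$ yields $m - d > 2a > a$, so by Remark \ref{remarqueloc} both $f\psi$ and $\psi f$ are locally contracting on $(L_{\infty,\ell})^n/\Lambda$, and their determinants are well-defined.

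For the determinant equality, the key idea is the formal relation $(1+f\psi)\,f = f\,(1+\psi f)$, realized on two different finite-dimensional quotients. Choosing $j$ sufficiently large, set $U_1 = U_{j,\infty}^n$ and $U_2 = U_{j-a-d,\infty}^n$, so that $U_1 \subseteq U_2$. Using $\theta^{-a-j'}\mathcal{O}_{L_\infty,\ell}^n \subseteq U_{j',\infty}^n \subseteq \theta^{-j'}\mathcal{O}_{L_\infty,\ell}^n$ combined with $\|f(x)\| \leq q^d\|x\|$ on $U_1$, one verifies $f(U_1) \subseteq U_2$; the local $m$-contraction of $\psi$ on $U_2$, together with $m > 2a + d$, yields $\psi(U_2) \subseteq U_1$. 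Consequently $f$ induces an $R_\ell$-linear map $\bar f : V/U_1 \to V/U_2$ and $\psi$ induces $\bar\psi : V/U_2 \to V/U_1$, where $V = (L_{\infty,\ell})^n/\Lambda$; moreover $U_2$ is a nucleus for $f\psi$ and $U_1$ is a nucleus for $\psi f$ on $V$. By invariance of the determinant under the choice of nucleus,
\[
\det(1+f\psi \mid V) = \det(I + \bar f \circ \bar\psi \mid V/U_2), \qquad \det(1+\psi f \mid V) = \det(I + \bar\psi \circ \bar f \mid V/U_1).
\]

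To conclude, I would invoke the universal Sylvester identity $\det(I_p + AB) = \det(I_q + BA)$ for $A \in \mathcal{M}_{p,q}(R_\ell)$ and $B \in \mathcal{M}_{q,p}(R_\ell)$, a polynomial identity in the entries valid over any commutative ring, hence over $R_\ell = \ell[G]$; the extension to finitely generated projective $R_\ell$-modules follows by localization, using that $V/U_1$ and $V/U_2$ are projective (as quotients of $G$-cohomologically trivial $R_\ell$-modules). The main technical obstacle is the simultaneous norm bookkeeping needed to construct $U_1 \subsetneq U_2$ with the prescribed containment pattern: the strict inequality $m > 2a + d$ is precisely what is required to make $\psi(U_2) \subseteq U_1$ hold, and without it the whole non-square factorization argument would collapse.
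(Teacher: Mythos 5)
Your proof is correct. The first assertion is handled exactly as in the paper: the same norm estimates $\|f\psi(x)\|\leq q^{d-m}\|x\|$ and $\|\psi f(x)\|\leq q^{d-m}\|x\|$ on a suitable $\theta^{-j}\mathcal{O}_{L_\infty,\ell}^n$, followed by Remark \ref{remarqueloc} (note $m-d>2a\geq a$). For the second assertion the underlying mechanism is also the same --- reduce to finitely generated projective quotients and invoke the non-square identity $\det(I+AB)=\det(I+BA)$ --- but your implementation of the reduction differs from the paper's. The paper, following \cite[Prop.\ 5.2.3]{ref4}, exploits the surjectivity of $f$ (via the divisibility of $L_{\infty,\ell}^n/\Lambda$ and the invertibility of $\delta_E(\theta)$), decomposes $f^{-1}(U_{i,\infty}^n)$ as $\gamma^{-1}(\delta_E(\theta)^{-1}\Lambda/\Lambda)\oplus f^{-1}(U_{i,\infty}^n)^*$, and substitutes $\delta_E(\theta)^{-1}U_{i,\infty}^n$ into the filtration to obtain a common nucleus; this requires checking that the modified filtration still computes the same determinant. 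You instead stay entirely inside the fixed basis $\mathcal{U}$, choosing two of its members $U_1=U_{j,\infty}^n\subseteq U_2=U_{j-a-d,\infty}^n$ with the crossed containments $f(U_1)\subseteq U_2$ and $\psi(U_2)\subseteq U_1$ --- and your observation that $m>2a+d$ is exactly what makes $\psi(U_2)\subseteq U_1$ work is the right accounting. This buys you a cleaner appeal to the already-established independence of the determinant from the choice of nucleus within $\mathcal{U}$, at the cost of invoking the Sylvester identity for maps between projective $R_\ell$-modules explicitly (which is fine: it is a universal polynomial identity, extended to projectives since $R_\ell=\ell[G]$ is a finite product of local rings). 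Both routes are valid; yours is somewhat more self-contained.
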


\begin{proof}
Let $i>r$ be such that $\gamma:\theta^{-i}\mathcal{O}_{L_\infty,\ell}^n\longrightarrow \theta^{-i}\mathcal{O}_{L_\infty,\ell}^n$ is a bijective isometry and  $\vert\vert \psi(x)\vert\vert\leq \vert\vert \theta\vert\vert^{-M}\vert\vert x \vert\vert $ for all $x\in \theta^{-(i-c)}\mathcal{O}_{L_\infty,\ell}^n$. 

For the first assertion, for $x\in \theta^{-i}\mathcal{O}_{L_\infty,\ell}^n$, as $\vert\vert \psi f(x)\vert\vert= \vert\vert\psi (\delta_E(t)\gamma(x))\vert\vert$, we get : 
 $$\vert\vert \psi f(x)\vert\vert\leq\vert\vert \theta\vert\vert^{-m} \vert\vert \delta_E(t)\gamma(x)\vert\vert\leq \vert\vert \theta\vert\vert^{-m} \vert\vert \delta_E(t)\vert\vert\vert\vert x \vert\vert\leq \vert\vert \theta\vert\vert^{-m+d}\vert\vert x \vert\vert.$$  

In the same way we get $\vert\vert \psi f(x)\vert\vert \leq  \vert\vert \theta\vert\vert^{-m+d} \vert\vert x \vert\vert$ for all $x\in \theta^{-i}\mathcal{O}_{L_\infty,\ell}^n$. Thus $f\psi$ and $\psi f$ are $M-d$-contractions over $L_{\infty,\ell}^n/\Lambda$ and then locally contracting by choice of  $m$ by \ref{remarqueloc}. 

For the second assertion, we proceed as \cite[Proposition 5.2.3]{ref4}. As   $\gamma$ is an isomorphism and $L_{\infty,\ell}^n/\Lambda$  is divisible (because $\delta_E(\theta)$ is invertible) then $f$ is surjective. We have $f^{-1}(U_{i,\infty}^n)=\gamma^{-1}(\delta_E(\theta)^{-1}\Lambda/\Lambda)\oplus\gamma^{-1}(\delta_E(\theta)^{-1}U_{i_\infty}^n)$. We set $f^{-1}(U_{i,\infty}^n)^*=\gamma^{-1}(\delta_E(\theta)^{-1}U_{i_\infty}^n)$. We take $\delta_E(\theta)^{-i}U_{i_\infty}^n$ instead of $U_{i+1,\infty}$. We can show that it is a common nucleus for $f\psi$ and $\psi f$.

\end{proof}

We recall that $\max_{i\in [\![0;q^n-1]\!]}\vert\vert\delta_E(\theta)^i\vert\vert=q^c$.
\begin{rem}
Let $\gamma : (L_{\infty,\ell}/\Lambda)^n\longrightarrow (L_{\infty,\ell}/\Lambda)^n$ a $R_\ell$-linear continuous isomorphism which is $N$-tangent to the identity $N>0$. Let $\psi:(L_{\infty,\ell}/\Lambda)^n\longrightarrow (L_{\infty,\ell}/\Lambda)^n$ be a local  $m$-contraction,  $R_\ell$-linear. Let $f=\delta_E(\theta)\gamma$. 

Let $\phi\in R_\ell\{f,\psi\}$ be a sum of  monomials of degree at most $r<m$ such that each monome contain a $\psi$. Then we can show  that $\phi$ is a  $m-r-c+1$ contraction over $L_{\infty,\ell}^n/\Lambda$.
 \end{rem}

\begin{Cor}\label{tangent1}
Let $\gamma : (L_{\infty,\ell})^n/\Lambda\longrightarrow (L_{\infty,\ell})^n/\Lambda$ be a $R_\ell$-linear continuous isomorphism which is  $2N$-tangent to the identity with $N>a+c+d-1$. Then $$\det_{R_\ell[\![Z]\!]/Z^N}\left(1+\Delta_\gamma\right\vert V[\![Z]\!]/Z^N)=1.$$

\end{Cor}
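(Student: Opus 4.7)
The plan is to exploit the factorisation
\[
(1-\delta_E(\theta)Z)(1+\Delta_\gamma)=\gamma^{-1}(1-\delta_E(\theta)Z)\gamma,
\]
which is immediate from the very definition of $\Delta_\gamma$. Morally this presents $1+\Delta_\gamma$ as a multiplicative commutator of $\gamma$ and $A:=1-\delta_E(\theta)Z$, and if both $A$ and $\gamma$ had well-defined nuclear determinants one would simply write $\det(1+\Delta_\gamma)=\det(\gamma^{-1}A\gamma)\det(A)^{-1}=1$ by conjugation invariance. Since neither is a nuclear perturbation of the identity in its own right, the identity has to be realised carefully within the nuclear-operator framework of Section~4.

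First, I would invoke Lemma~\ref{tangentmod} together with the hypothesis $N>a+c+d-1$ to ensure that $\Delta_\gamma\bmod Z^N$ is a nuclear endomorphism of $V[\![Z]\!]/Z^N$, so that the left-hand side of the statement is well defined. I would then pick a common nucleus $U$ for the coefficients $\delta_1,\ldots,\delta_{N-1}$ and, using the fact that $\gamma$ is a continuous $R_\ell$-linear isomorphism which is $2N$-tangent to the identity (hence an isometry on all sufficiently small shells of $L_{\infty,\ell}^n/\Lambda$), arrange that $\gamma(U)=U$. The quotient $V/U$ is then a finite-dimensional $R_\ell$-module on which $\gamma$ induces an automorphism, and the determinant may be computed on $V/U[\![Z]\!]/Z^N$.

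Next, I would rewrite $\Delta_\gamma=\psi_0\cdot Z(1-\delta_E(\theta)Z)^{-1}$ with $\psi_0=\delta_E(\theta)-\gamma^{-1}\delta_E(\theta)\gamma=\gamma^{-1}[\gamma,\delta_E(\theta)]$, and check that the $2N$-tangency together with $N>a+c+d-1$ forces $\psi_0$ to be a local $m$-contraction for $m$ large enough that Proposition~\ref{contraction} applies (in the special case where the auxiliary $\gamma$ in that statement is $\id$ and $f=\delta_E(\theta)$). Iterating the resulting cyclic permutation formula $\det(1+f\psi)=\det(1+\psi f)$ allows me to freely move powers of $\delta_E(\theta)$ past $\psi_0$ inside each coefficient of $\Delta_\gamma$.

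The main obstacle is that the factors $1-\delta_E(\theta)Z$ and $\gamma^{-1}(1-\delta_E(\theta)Z)\gamma$ do not individually define determinants in $R_\ell[\![Z]\!]/Z^N$, because $\delta_E(\theta)$ does not preserve $U$. To bypass this, I would pass to the trace-logarithm $\operatorname{tr}\log(1+\Delta_\gamma)\bmod Z^N$, which is a finite sum of traces of words in $\psi_0$ and $\delta_E(\theta)$ acting on $V/U$. Expanding each $\psi_0$ as $\delta_E(\theta)-\gamma^{-1}\delta_E(\theta)\gamma$ and invoking the cyclic property of the trace together with the equality $\gamma(U)=U$, each term is paired with its $\gamma$-conjugate and cancels. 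Concretely, writing $\log\bigl((1-\delta_E(\theta)Z)(1+\Delta_\gamma)\bigr)=\log\bigl(\gamma^{-1}(1-\delta_E(\theta)Z)\gamma\bigr)$ modulo $Z^N$ and applying a Baker--Campbell--Hausdorff expansion, the discrepancy between the two sides becomes a sum of iterated commutators (whose traces vanish on $V/U$) plus the difference $\operatorname{tr}\log\bigl(\gamma^{-1}(1-\delta_E(\theta)Z)\gamma\bigr)-\operatorname{tr}\log(1-\delta_E(\theta)Z)$, which is zero by cyclicity of the trace under conjugation by $\gamma$. This forces $\operatorname{tr}\log(1+\Delta_\gamma)\equiv 0\bmod Z^N$, and exponentiating yields $\det_{R_\ell[\![Z]\!]/Z^N}(1+\Delta_\gamma\vert V[\![Z]\!]/Z^N)=1$.
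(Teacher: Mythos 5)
Your opening factorisation is essentially the right starting point (note only that the explicit formula $\Delta_\gamma=\sum_{s\ge1}(\delta_E(\theta)-\gamma^{-1}\delta_E(\theta)\gamma)\delta_E(\theta)^{s-1}Z^s$ corresponds to the \emph{right} quotient, i.e.\ $(1+\Delta_\gamma)(1-\delta_E(\theta)Z)=\gamma^{-1}(1-\delta_E(\theta)Z)\gamma$, which matters here since $\delta_E(\theta)$ and $\gamma$ need not commute), and your second and third paragraphs correctly assemble the available tools (Lemma \ref{tangentmod}, the rewriting $\Delta_\gamma=\psi_0 Z(1-\delta_E(\theta)Z)^{-1}$, Proposition \ref{contraction}). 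But the mechanism you actually use to conclude --- $\operatorname{tr}\log$, a Baker--Campbell--Hausdorff expansion, and exponentiation --- does not exist in this setting. The base ring $R_\ell=\ell[G]$ has characteristic $p$, so the expansions of $\log(1+x)$, $\exp$, and BCH all involve denominators divisible by $p$ (already at order $p$ in $Z$, and $N$ is arbitrary since the application lets $N\to\infty$); moreover the nuclear determinant of the paper is defined as a genuine determinant on the finitely generated projective $R_\ell$-modules $V/U$, not via traces, and the identity $\det=\exp\operatorname{tr}\log$ is nowhere available. Finally, the obstacle you yourself flag --- that $1-\delta_E(\theta)Z$ and its $\gamma$-conjugate are not nuclear perturbations of the identity and that $\delta_E(\theta)$ does not stabilise $U$ --- is not actually bypassed by passing to logarithms: the BCH/cyclicity cancellation you invoke still requires assigning traces to individual words containing unpaired $\delta_E(\theta)$'s on $V/U$, which are undefined. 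So the core step of the argument fails.

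The proof the paper points to (\cite[Corollary 5.2.9]{ref4}, with the modified bound on $N$) stays entirely multiplicative. Set $\mu=\gamma^{-1}-1$, a strong local contraction by $2N$-tangency, and $f=\delta_E(\theta)\gamma$ (this is exactly why Proposition \ref{contraction} is stated for $f$ of this form). Then $1-\gamma^{-1}\delta_E(\theta)\gamma Z=1-fZ-\mu fZ$ and $1-\delta_E(\theta)Z=1-fZ-f\mu Z$, whence
\[
1+\Delta_\gamma=\bigl(1-\mu fZ(1-fZ)^{-1}\bigr)\bigl(1-f(1-fZ)^{-1}\mu Z\bigr)^{-1},
\]
where now \emph{every} monomial in the $Z$-expansion of each factor contains a $\mu$, so both factors are $1+(\text{nuclear})$ on $V/U$ (this is where $N>a+c+d-1$ is used, via the remark following Proposition \ref{contraction}). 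Multiplicativity of the nuclear determinant (Proposition \ref{prod}) together with the commutation lemma $\det(1+f\psi)=\det(1+\psi f)$ applied coefficientwise shows the two factors have the same determinant mod $Z^N$, giving $1$. If you replace your fourth paragraph by this factorisation-and-commutation argument, the proof goes through; as written, it does not.
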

\begin{proof}
It is the same proof as \cite[Corollary 5.2.9]{ref4} using the new conditions on $N$. 
\end{proof}

\subsection{Relation volume-determinant} ${}$\par
The goal of this section is to obtain some relations between volume and determinants. It will be used later because of the fact that the Fitting ideals can be expressed as determinants. 
\bigskip

\begin{Lem}\label{independance}
Let $V_1, V_2=(L_{\infty,\ell}/\Lambda_2)^n$ where $\Lambda_2$ is a projective $\ell[\theta][G]$-lattice of $L_{\infty,\ell}$. They belong to $\mathcal{C}$. Let $\gamma_1,\gamma_2:V_1\cong V_2$ be two isomorphisms $R_\ell$-linear, continuous and $2N'$-tangent to the identity. Let $N>a+c+d-1$ verify $N'-q^n\lfloor\dfrac{m}{q^n}\rfloor-c\geq 1 \text{ and } N'-q^n\lfloor\dfrac{m-1}{q^n}\rfloor-2c\geq 1$. Then 

$$\det_{R_\ell[\![Z]\!]/Z^N}(1+\Delta_{\gamma_1}\vert V_1)=\det_{R_\ell[\![Z]\!]/Z^N}(1+\Delta_{\gamma_2}\vert V_1).$$ 
\end{Lem}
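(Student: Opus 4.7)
The plan is to reduce to Corollary \ref{tangent1} via a suitable factorization. Let $\beta = \gamma_1 \gamma_2^{-1}: V_2 \to V_2$, an $R_\ell$-linear continuous automorphism of the ``nice'' space $V_2 = (L_{\infty,\ell}/\Lambda_2)^n$ to which Corollary \ref{tangent1} directly applies. First I would check that $\beta$ is itself $2N'$-tangent to the identity: writing $\iota_2^{-1}\beta\iota_2 = (\iota_2^{-1}\gamma_1\iota_1)\circ(\iota_1^{-1}\gamma_2^{-1}\iota_2)$, both factors are $2N'$-tangent bijective isometries on $(\theta^{-i}\mathcal{O}_{L_\infty,\ell})^n$ for $i$ large enough (the second because the inverse of a bijective $2N'$-tangent isometry in the ultrametric setting is again $2N'$-tangent), so their composition is $2N'$-tangent to the identity by a standard ultrametric estimate.

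Next, from $\gamma_1 = \beta\gamma_2$ I would derive the key factorization
$$1 + \Delta_{\gamma_1} = \gamma_2^{-1}(1 + \Delta_\beta)\gamma_2 \cdot (1 + \Delta_{\gamma_2})$$
in $\mathrm{End}_{R_\ell[\![Z]\!]/Z^N}(V_1[\![Z]\!]/Z^N)$, where $\Delta_\beta$ is the $\Delta$-operator attached to $\beta$ regarded as an endomorphism of $V_2$. The computation is direct: $\gamma_1^{-1}\delta_E(\theta)\gamma_1 = \gamma_2^{-1}(\beta^{-1}\delta_E(\theta)\beta)\gamma_2$, and since $1 - \beta^{-1}\delta_E(\theta)\beta Z = (1+\Delta_\beta)(1 - \delta_E(\theta) Z)$ on $V_2$, conjugating by $\gamma_2^{-1}$ turns the numerator $1 - \gamma_1^{-1}\delta_E(\theta)\gamma_1 Z$ of $1 + \Delta_{\gamma_1}$ into $\gamma_2^{-1}(1+\Delta_\beta)\gamma_2 \cdot (1 - \gamma_2^{-1}\delta_E(\theta)\gamma_2 Z)$; dividing by $(1 - \delta_E(\theta) Z)$ on $V_1$ produces the stated identity.

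Finally, combining multiplicativity of nuclear determinants (Proposition \ref{prod}) with the conjugation invariance $\det_{V_1}(\gamma_2^{-1} X\gamma_2) = \det_{V_2}(X)$ — valid because $\gamma_2$ is an $R_\ell$-linear isomorphism intertwining the two nuclear structures — gives
$$\det_{R_\ell[\![Z]\!]/Z^N}(1+\Delta_{\gamma_1} \mid V_1) = \det_{R_\ell[\![Z]\!]/Z^N}(1+\Delta_\beta \mid V_2) \cdot \det_{R_\ell[\![Z]\!]/Z^N}(1+\Delta_{\gamma_2} \mid V_1).$$
The numerical hypotheses on $N$ and $N'$ are precisely those required to ensure nuclearity of $\Delta_\beta \bmod Z^N$ via Lemma \ref{tangentmod} and to invoke Corollary \ref{tangent1} for $\beta$ on $V_2$; the latter then yields $\det_{V_2}(1+\Delta_\beta) = 1$, and the desired equality follows.

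The main obstacle will be the conjugation-invariance step: one must verify that $\gamma_2^{-1}(1+\Delta_\beta)\gamma_2$ really has the form $1 + (\text{nuclear endomorphism of }V_1)$ so that its determinant is defined, and that this determinant genuinely equals $\det_{V_2}(1+\Delta_\beta)$. This rests on the fact that $\gamma_2$, being $2N'$-tangent to the identity, restricts to a bijective isometry between the cofinal families of compact open $R_\ell$-submodules $\iota_j((\theta^{-i}\mathcal{O}_{L_\infty,\ell})^n)$ used to compute the determinants on $V_1$ and $V_2$; together with the independence of the determinant on the chosen nucleus, this delivers the needed equality.
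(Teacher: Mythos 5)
Your proposal is correct and follows essentially the same route as the paper, which simply defers to \cite[Lemma 5.3.4]{ref4}: that proof likewise sets $\beta=\gamma_1\gamma_2^{-1}$ (an automorphism of $(L_{\infty,\ell}/\Lambda_2)^n$ still $2N'$-tangent to the identity), establishes the factorization $1+\Delta_{\gamma_1}=\gamma_2^{-1}(1+\Delta_\beta)\gamma_2\cdot(1+\Delta_{\gamma_2})$, and concludes via multiplicativity, conjugation invariance of the nuclear determinant, and Corollary \ref{tangent1} applied to $\beta$. The numerical hypotheses on $N$ and $N'$ play exactly the role you assign them (nuclearity via Lemma \ref{tangentmod} and the applicability of Corollary \ref{tangent1}).
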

\begin{proof}
It is the same proof as \cite[Lemma 5.3.4]{ref4} using the new conditions on $N$. 
\end{proof}

We recall that $\Lambda_0$ is a fixed projective $\ell[\theta][G]$-lattice used to define the volume and that the quotient of volumes does not depend on this choice.

\begin{Lem} Let $V_1\in \mathcal{C}$ and $\Lambda_1$ be the associated lattice and $V_2=(L_{\infty,\ell}/\Lambda_2)^n$. We assume there exists $\Lambda$  a $\ell[\theta][G]$-lattice which contains $\Lambda_1$ and $\Lambda_2^n$. Let $\gamma:V_1\cong V_2$ a $R_\ell$-linear isomorphism  $2N'$-tangent to the identity. Let $N>a+c+d-1$ verify $N'-q^n\lfloor\dfrac{m}{q^n}\rfloor-c\geq 1 \text{ and } N'-q^n\lfloor\dfrac{m-1}{q^n}\rfloor-2c\geq 1$. Then $$\det_{R_\ell[\![Z]\!]/Z^N}(1+\Delta_{\gamma}\vert V_1)\equiv \dfrac{\Vol_{\Lambda_0}(V_2)}{\Vol_{\Lambda_0}(V_1)}\mod \theta^{-N}.$$
\end{Lem}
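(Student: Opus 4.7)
The plan is to reduce to a convenient isomorphism $\gamma$ via the preceding independence lemma, then compute the determinant by splitting it along a short exact sequence: one factor will be trivial thanks to Corollary \ref{tangent1}, and the other finite-dimensional factor will match the volume ratio essentially by definition.

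First, the previous lemma allows me to replace $\gamma$ by any convenient $2N'$-tangent-to-identity isomorphism. I would fix a section $s_1$ of the defining sequence of $V_1$ and a projective $(V_1,\Lambda_1,H_1,s_1)$-admissible lattice $\Lambda'$; after enlargement via Proposition \ref{admissible1}, I may assume $\Lambda'\supset \Lambda \supset \Lambda_2^n$. Writing $V_1 \cong L_{\infty,\ell}^n/\Lambda' \oplus (\Lambda'/\Lambda_1\times s_1(H_1))$ and $V_2 = L_{\infty,\ell}^n/\Lambda_2^n$ with the natural identifications of quotients, one builds an $R_\ell$-linear isomorphism $\gamma : V_1 \to V_2$ whose restriction on the $L_{\infty,\ell}^n/\Lambda'$ factor is the identity and whose restriction on the finite-dimensional factor sends $\Lambda'/\Lambda_1\times s_1(H_1)$ onto $\Lambda'/\Lambda_2^n\subset L_{\infty,\ell}^n/\Lambda_2^n$. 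By Proposition \ref{infiniment}, this $\gamma$ is $\infty$-tangent to the identity, so in particular $2N'$-tangent.

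Next I would apply \cite[Proposition 2.1.13]{ref4} to the exact sequence
\[
0 \longrightarrow \Lambda'/\Lambda_1\times s_1(H_1) \longrightarrow V_1 \longrightarrow L_{\infty,\ell}^n/\Lambda' \longrightarrow 0
\]
to factor $\det_{R_\ell[\![Z]\!]/Z^N}(1+\Delta_\gamma \mid V_1)$ as a product of two determinants. On the free quotient the restricted $\gamma$ is the identity, so $\Delta_\gamma = 0$ and this factor is trivially $1$; alternatively it follows from Corollary \ref{tangent1}. For the finite-dimensional factor, unwinding
\[
\Delta_\gamma = \frac{1-\gamma^{-1}\delta_E(\theta)\gamma Z}{1-\delta_E(\theta)Z} - 1
\]
together with Proposition \ref{fitt} identifies the determinant with the ratio of monic Fitting generators $\vert \Lambda'/\Lambda_2^n\vert_G / \vert \Lambda'/\Lambda_1\times s_1(H_1)\vert_G$. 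Substituting into the definition of the volume (with $\Lambda'$ serving as admissible lattice for both $V_1$ and $V_2$, and with the common $[\Lambda':\Lambda_0]_G$ cancelling), this ratio is precisely $\Vol_{\Lambda_0}(V_2)/\Vol_{\Lambda_0}(V_1)$.

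The main obstacle is the last step: the determinant naturally lives in $R_\ell[\![Z]\!]/Z^N$ while the volume ratio lives in $\ell((\theta^{-1}))[G]$, so I must check that the identification of the two sides with the characteristic polynomial of $\delta_E(\theta)$ introduces only error terms of $\theta$-valuation at least $N$. The hypothesis $N > a+c+d-1$ together with the conditions imposed on $N'$ are tailored exactly to this bookkeeping, following the computation of \cite[Lemma 5.3.5]{ref4} but with the extra factor $q^d = \vert\vert\delta_E(\theta)\vert\vert$ in place of $\vert\vert\theta\vert\vert$; this is precisely the subtle point that distinguishes the $t$-module case from the Drinfeld case and motivates the strengthened numerical conditions in the hypothesis.
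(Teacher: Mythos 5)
Your proposal is correct and follows essentially the same route as the paper, which simply defers to the proof of \cite[Lemma 5.3.6]{ref4} with the modified numerical conditions on $N$: reduce to a specially constructed isomorphism via Lemma \ref{independance}, split the determinant along the admissible-lattice exact sequence, and identify the finite-dimensional factor with the ratio of Fitting generators defining the volumes. The only points you elide --- that the two finite direct factors are isomorphic as $R_\ell$-modules (which follows from the existence of the given $\gamma$ together with cancellation for projectives over the semilocal ring $\ell[G]$), and that the numerical hypotheses are needed chiefly for nuclearity and for the independence lemma rather than only for the final $Z\mapsto\theta^{-1}$ bookkeeping --- are likewise elided in the source.
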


\begin{proof}
It is the same proof as \cite[Lemma 5.3.6]{ref4} using the new conditions on $N$. 
\end{proof}
These two lemmas permit us to obtain the following theorem. 

\begin{theorem}\label{relation}
Let $V_1, V_2=(L_{\infty,\ell}/\Lambda_2)^n$ where $\Lambda_2$ is a projective $\ell[\theta][G]$-lattice be two modules in $\mathcal{C}$ and $\gamma:V_1\cong V_2$ a $R_\ell$-linear, continuous isomorphism $\infty$-tangent to the identity. Then $$\displaystyle\det_{R_\ell[\![Z]\!]}(1+\Delta_{\gamma}\vert V_1) =\dfrac{\Vol_{\Lambda_0}(V_2)}{\Vol_{\Lambda_0}(V_1)}.$$ 
\end{theorem}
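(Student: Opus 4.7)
The strategy is to invoke the preceding volume--determinant lemma at every admissible truncation and then pass to the inverse limit defining $\det_{R_\ell[\![Z]\!]}$. The hypothesis that $\gamma$ is $\infty$-tangent to the identity is exactly what allows the truncated-level lemma to apply for arbitrarily large $N$.

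First I fix an integer $N>a+c+d-1$. Because $\gamma$ is $\infty$-tangent to the identity, for every $N'\ge 0$ it is $2N'$-tangent to the identity; in particular I may choose $N'$ large enough that the numerical constraints $N'-q^n\lfloor m/q^n\rfloor-c\ge 1$ and $N'-q^n\lfloor(m-1)/q^n\rfloor-2c\ge 1$ demanded by the two preceding lemmas hold for every $m<N$.

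Next I supply a common $\ell[\theta][G]$-lattice: the second preceding lemma is stated under the assumption that some $\ell[\theta][G]$-lattice $\Lambda\subset L_{\infty,\ell}^n$ contains both the lattice $\Lambda_1$ underlying $V_1$ and $\Lambda_2^n$. The sum $\Lambda_1+\Lambda_2^n$ is a finitely generated torsion-free $\ell[\theta]$-submodule of $L_{\infty,\ell}^n$, hence free over the PID $\ell[\theta]$; it generates $L_{\infty,\ell}^n$ over $\ell((\theta^{-1}))$ and is stable under $G$ by construction, so it is the required $\ell[\theta][G]$-lattice.

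With the two hypotheses satisfied, the preceding lemma yields
\[
\det_{R_\ell[\![Z]\!]/Z^N}\!\bigl(1+\Delta_{\gamma}\mid V_1\bigr)\equiv \frac{\Vol_{\Lambda_0}(V_2)}{\Vol_{\Lambda_0}(V_1)}\pmod{\theta^{-N}}
\]
for every admissible $N$. Since $\det_{R_\ell[\![Z]\!]}(\cdot)=\lim_{\overleftarrow{N}}\det_{R_\ell[\![Z]\!]/Z^N}(\cdot)$ by definition, and since the right-hand side is a fixed element independent of $N$, the joint limit as $N\to\infty$ collapses the congruence into the desired equality. The principal obstacle is precisely the book-keeping between the two filtrations: the left-hand side is recovered as a $Z$-adic projective limit, whereas each finite-level statement furnishes only a $\theta^{-N}$-adic approximation to the volume ratio. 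One must check that in the ambient series ring containing $\ell((\theta^{-1}))[G][\![Z]\!]$ these two filtrations are together separating, so that the simultaneous $Z$- and $\theta^{-1}$-approximations force genuine equality rather than merely asymptotic agreement. Lemma~\ref{independance} is used implicitly here, guaranteeing that the truncated determinant is a well-defined function of $(V_1,V_2)$ so that the limiting process does not depend on incidental choices.
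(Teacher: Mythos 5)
The limiting argument at the end is fine in outline, but there is a genuine gap at the decisive step: the verification of the common-lattice hypothesis of the preceding lemma. You claim that $\Lambda_1+\Lambda_2^n$ is the required $\ell[\theta][G]$-lattice because it is finitely generated, torsion-free (hence free over the PID $\ell[\theta]$), $G$-stable, and spans $L_{\infty,\ell}^n$ over $\ell\left(\left(\theta^{-1}\right)\right)$. But a lattice must in addition have rank exactly $n\dim_{\ell\left(\left(\theta^{-1}\right)\right)}L_{\infty,\ell}$, equivalently be discrete, and the sum of two lattices has this property only when they are commensurable, i.e. $\ell(\theta)\Lambda_1=\ell(\theta)\Lambda_2^n$ --- this is precisely the hypothesis in the third item of the quoted Proposition 7.2.1, and it is not part of the theorem's assumptions. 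In general $\Lambda_1+\Lambda_2^n$ is free of strictly larger rank and dense in $L_{\infty,\ell}^n$. This is not a pathological corner case: in the application to Theorem \ref{Formuleaaa} one has $\Lambda_1=\exp_{\widetilde{E}}^{-1}(\widetilde{M})=U(\widetilde{E}(\widetilde{M}))$ and $\Lambda_2^n=\widetilde{M}$, which are typically not commensurable (already for the Carlitz module the period lattice and $A$ span different $k$-subspaces of $k_\infty$). So the hypothesis of the lemma cannot be supplied this way, and your argument reduces the theorem to exactly the special case the lemma already covers, which is circular with respect to what the theorem is meant to add.

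The missing idea --- which is the actual content of the paper's proof, following \cite[Thm. 5.3.2]{ref4} --- is to use Lemma \ref{independance} actively rather than ``implicitly'': for each $N'$ one constructs an auxiliary isomorphism $\phi:V_1\to V_2$ that is $2N'$-tangent to the identity and for which a lattice containing both $\Lambda_1$ and $\Lambda_2^n$ does exist (one modifies $\gamma$ near $0$, exploiting that it restricts to an isometry on a small neighbourhood). Lemma \ref{independance} then gives $\det_{R_\ell[\![Z]\!]/Z^N}(1+\Delta_{\gamma}\vert V_1)=\det_{R_\ell[\![Z]\!]/Z^N}(1+\Delta_{\phi}\vert V_1)$, the second lemma applies to $\phi$ and yields the congruence modulo $\theta^{-N}$, and letting $N'\to\infty$ (so that the admissible $N$ can be taken arbitrarily large) gives the stated equality. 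Your write-up assigns Lemma \ref{independance} only the cosmetic role of ``well-definedness,'' whereas it is the tool that lets you trade $\gamma$ for a morphism to which the lemma actually applies; without that substitution the proof does not go through.
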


\begin{proof}
It is almost the same proof as \cite[Theorem 5.3.2]{ref4}. We fix $N'$ and we take $\phi$ a morphism which is $2N'$-tangent to the identity as in the proof of \cite[Theorem 5.3.2]{ref4}. For $N$ verifying some conditions we obtain the same equalities modulo $\theta^{-N}$. If we take $N'\rightarrow \infty$, $N$ can be as big as we want and we get the desired result. 
\end{proof}

\section{Equivariant class formula à la Taelman}
\subsection{An equivariant class formula over $\mathbb{F}_q(z)$ for $t$-module } ${}$\par

The goal of this section is to prove the equivariant class formula over $\F_q(z)$ for $t$-modules. 

Let    $L/k$ be  a finite extension and $E$ a $t$-module defined over $\mathcal{O}_K$ where $K$ verifies $k\subset K\subset L$ and $L/K$ is a finite abelian extension of Galois group $G$.  Let $M$ be an almost taming module for $L/K$.

We now take $\ell=\F_q(z)$. It means that $L_{\infty,\ell}=\widetilde{L}_\infty$, $M_\ell=\widetilde{M}$. 

We recall that $\mathcal{C}$ corresponds to compact $\F_q(z)[\theta][G]$-modules  $V$ which are $G$-cohomologically trivial and verify an exact sequence of $\widetilde{A}[G]$-modules 
$$0\longrightarrow \Lie_{\widetilde{E}}(\widetilde{L}_\infty)/\Lambda\longrightarrow V\longrightarrow H\longrightarrow 0$$ where $\Lambda$ is an  $\widetilde{A}[G]$-lattice of $\Lie_{\widetilde{E}}(\widetilde{L}_\infty)$ and $H$ is an  $\widetilde{A}[G]$-module and a $\F_q(z)$-vector space of finite dimension. 

\begin{proposition}\label{uniteproj}
$U(\widetilde{E}(\widetilde{M}))$ is a projective $\widetilde{A}[G]$-module. 

\end{proposition}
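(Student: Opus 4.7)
The plan is to apply Theorem~\ref{Projct}, which reduces the claim to three conditions on $U := U(\widetilde{E}(\widetilde{M}))$: finite generation over $\widetilde{A}[G]$, projectivity as an $\widetilde{A}$-module, and $G$-cohomological triviality. The first two are immediate from Proposition~\ref{reseautildea}: $U$ is an $\widetilde{A}$-lattice in $\Lie_{\widetilde{E}}(\widetilde{L}_\infty)$, hence a finitely generated torsion-free module over the principal ideal domain $\widetilde{A} = \F_q(z)[\theta]$, and is therefore in fact free, a fortiori $\widetilde{A}$-projective, and finitely generated over $\widetilde{A}[G]$.

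The heart of the argument is the $G$-cohomological triviality, which is where the $z$-deformation plays its decisive role via the already established vanishing $H(\widetilde{E}(\widetilde{M})) = 0$. This vanishing, combined with the definition of $U$ as the preimage of $\widetilde{E}(\widetilde{M})$ under $\exp_{\widetilde{E}}$, will produce the short exact sequence of $\widetilde{A}[G]$-modules
$$0 \longrightarrow U(\widetilde{E}(\widetilde{M})) \longrightarrow \Lie_{\widetilde{E}}(\widetilde{L}_\infty) \xrightarrow{\pi \circ \exp_{\widetilde{E}}} \widetilde{E}(\widetilde{L}_\infty)/\widetilde{E}(\widetilde{M}) \longrightarrow 0,$$
where $\pi$ denotes the canonical projection; the kernel is $U$ by definition and surjectivity is exactly the content of $H(\widetilde{E}(\widetilde{M})) = 0$. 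By the long exact sequence in $G$-cohomology it then suffices to check that the middle and right-hand modules are $G$-cohomologically trivial.

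For the middle term I would invoke the normal basis theorem: $L$ is a free $K[G]$-module of rank one, so $\widetilde{L}_\infty = L\otimes_K \widetilde{K}_\infty$ is free over $\widetilde{K}_\infty[G]$. Since $E$ is defined over $\mathcal{O}_K$, the morphisms $\delta_E$ and $\phi_{\widetilde{E}}$ commute with the $G$-action, so both $\Lie_{\widetilde{E}}(\widetilde{L}_\infty)$ and $\widetilde{E}(\widetilde{L}_\infty)$ are isomorphic as $G$-modules to $\widetilde{L}_\infty^n$, and hence $G$-cohomologically trivial. For the right-hand term, the defining properties of an almost taming module ensure that $M$ is $A[G]$-projective, so its base change $\widetilde{M} = M \otimes_{A[G]} \widetilde{A}[G]$ is $\widetilde{A}[G]$-projective; consequently $\widetilde{E}(\widetilde{M}) \cong \widetilde{M}^n$ is $G$-cohomologically trivial, and the same then holds for the quotient $\widetilde{E}(\widetilde{L}_\infty)/\widetilde{E}(\widetilde{M})$.

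The main obstacle I would expect is entirely concentrated in justifying the surjectivity in the exact sequence, i.e.\ the vanishing $H(\widetilde{E}(\widetilde{M})) = 0$; but this has already been absorbed into the $z$-deformation framework. Once it is in hand, what remains is a purely formal assembly of cohomological triviality statements, concluded by Theorem~\ref{Projct}.
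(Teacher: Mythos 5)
Your proof is correct and follows essentially the same route as the paper: both reduce the claim via Theorem~\ref{Projct} to $\widetilde{A}$-projectivity (clear, since $U(\widetilde{E}(\widetilde{M}))$ is an $\widetilde{A}$-lattice) plus $G$-cohomological triviality, and both deduce the latter from the vanishing $H(\widetilde{E}(\widetilde{M}))=\{0\}$ together with the cohomological triviality of $\widetilde{L}_\infty^n$ (normal basis theorem) and of $\widetilde{M}$ (projectivity of the almost taming module). The only cosmetic difference is that you splice the paper's two short exact sequences into the single sequence $0\to U(\widetilde{E}(\widetilde{M}))\to \Lie_{\widetilde{E}}(\widetilde{L}_\infty)\to \widetilde{E}(\widetilde{L}_\infty)/\widetilde{E}(\widetilde{M})\to 0$, which changes nothing of substance.
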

\begin{proof}
We have the exact sequence $$0\longrightarrow\Lie_{\widetilde{E}}(\widetilde{L}_\infty)/U(\widetilde{E}(\widetilde{M}))\longrightarrow \widetilde{E}(\widetilde{L}_\infty)/\widetilde{E}(\widetilde{M})\longrightarrow H(\widetilde{E}(\widetilde{M}))\longrightarrow0.$$

As $H(\widetilde{E}(\widetilde{M}))=\{0\}$, we have the isomorphism of  $\widetilde{A}[G]-$modules  $\widetilde{E}(\widetilde{L}_\infty)/\widetilde{E}(\widetilde{M})\cong \Lie_{\widetilde{E}}(\widetilde{L}_\infty)/U(\widetilde{E}(\widetilde{M}))$ induced by $\exp_{\widetilde{E}}$. 
 $\widetilde{E}(\widetilde{L}_\infty)/\widetilde{E}(\widetilde{M})$ is $G$-cohomologically trivial because $L_\infty$ is (normal basis theorem) and also $\widetilde{M}$  (because $\widetilde{M}$ is a projective $\widetilde{A}[G]$-module). It implies that $\Lie_{\widetilde{E}}(\widetilde{L}_\infty)/U(\widetilde{E}(\widetilde{M}))$ is $G$-cohomologically trivial. Furthermore, we have the exact sequence of $\widetilde{A}[G]$-modules, $$0\longrightarrow  U(\widetilde{E}(\widetilde{M}))\longrightarrow \Lie_{\widetilde{E}}(\widetilde{L}_\infty) \longrightarrow \Lie_{\widetilde{E}}(\widetilde{L}_\infty)/U(\widetilde{E}(\widetilde{M}))\longrightarrow 0.$$
It implies that $U(\widetilde{E}(\widetilde{M}))$ is $G$-cohomologically trivial. As $\widetilde{A}$ is a Dedekind ring and $U(\widetilde{E}(\widetilde{M}))$ is a projective $\widetilde{A}$-module (because it is a free $\widetilde{A}$-module), by Theorem \ref{Projct}, $U(\widetilde{E}(\widetilde{M}))$ is a projective $\widetilde{A}[G]$-module. 
\end{proof}

For the next theorem, we use the same line of proof as \cite[Thm. 6.1.1]{ref4} but we recall it for the convenience of the reader.
\begin{theorem}
We have the equality in $1+\theta^{-1}\mathbb{F}_q(z)[\![\theta^{-1}]\!][G]$ : 

$$ \mathcal{L}_G(\widetilde{E}( \widetilde{M}))=\dfrac{\Vol_{\Lambda_0}(\widetilde{E}(\widetilde{L}_\infty)/\widetilde{E}(\widetilde{M}))}{\Vol_{\Lambda_0}(\Lie_{\widetilde{E}}(\widetilde{L}_\infty)/\Lie_{\widetilde{E}}(\widetilde{M}))}.$$ 
\end{theorem}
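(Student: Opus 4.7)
The plan is to write both sides of the identity as nuclear $R_\ell[\![Z]\!]$-determinants and then identify them through the $\infty$-tangent-to-the-identity machinery developed in Sections~\ref{52}--\ref{53}. On the left, Corollary~\ref{fonctionl2} (applied to $\ell = \mathbb{F}_q(z)$) gives
\[
\mathcal{L}_G(\widetilde{E}(\widetilde{M})) \;=\; \det\nolimits_{R_\ell[\![Z]\!]}\bigl(1+\psi_\ell \,\big|\, (\widetilde{L}_\infty/\widetilde{M})^n\bigr),
\]
where $\psi_\ell = \frac{1-\phi_{\widetilde{E}}(\theta)Z}{1-\delta_E(\theta)Z} - 1$. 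So the remaining task is to exhibit this determinant as a ratio of volumes.

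To that end I would set $V_1 = \Lie_{\widetilde{E}}(\widetilde{L}_\infty)/\Lie_{\widetilde{E}}(\widetilde{M})$ and $V_2 = \widetilde{E}(\widetilde{L}_\infty)/\widetilde{E}(\widetilde{M})$, and first check that both belong to the class $\mathcal{C}$. For $V_1$ this is immediate since $\Lie_{\widetilde{E}}(\widetilde{M}) = \widetilde{M}^n$ is a projective $\widetilde{A}[G]$-lattice and $V_1 \to 0$ is the trivial exact sequence. For $V_2$, the key is $H(\widetilde{E}(\widetilde{M})) = 0$, which gives the $\widetilde{A}[G]$-isomorphism $\Lie_{\widetilde{E}}(\widetilde{L}_\infty)/U(\widetilde{E}(\widetilde{M})) \cong V_2$ induced by $\exp_{\widetilde{E}}$; combined with Proposition~\ref{uniteproj}, which guarantees that $U(\widetilde{E}(\widetilde{M}))$ is a projective $\widetilde{A}[G]$-lattice, this places $V_2$ in $\mathcal{C}$.

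The heart of the proof is then to realize $\psi_\ell$ as a $\Delta_\gamma$ for a convenient $R_\ell$-linear, $\infty$-tangent-to-identity isomorphism $\gamma : V_1 \xrightarrow{\sim} V_2$. The natural candidate is built from $\exp_{\widetilde{E}}$: the identity $\exp_{\widetilde{E}}\,\delta_E(\theta) = \phi_{\widetilde{E}}(\theta)\,\exp_{\widetilde{E}}$ forces $\gamma^{-1}\delta_E(\theta)\gamma = \phi_{\widetilde{E}}(\theta)$, so that the expansion $\delta_s = (\delta_E(\theta) - \gamma^{-1}\delta_E(\theta)\gamma)\delta_E^{s-1}(\theta)$ of $\Delta_\gamma$ coincides term by term with the expansion of $\psi_\ell$. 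Proposition~\ref{infiniment} ensures $\infty$-tangency. Theorem~\ref{relation} then yields
\[
\det\nolimits_{R_\ell[\![Z]\!]}(1+\Delta_\gamma \,|\, V_1) \;=\; \frac{\Vol_{\Lambda_0}(V_2)}{\Vol_{\Lambda_0}(V_1)},
\]
and chaining this with the formula from Corollary~\ref{fonctionl2} gives the announced equality.

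The main obstacle I anticipate is the construction of $\gamma$ itself: $\exp_{\widetilde{E}}$ only descends to an isomorphism out of $\Lie_{\widetilde{E}}(\widetilde{L}_\infty)/U(\widetilde{E}(\widetilde{M}))$, not out of $V_1 = \Lie_{\widetilde{E}}(\widetilde{L}_\infty)/\widetilde{M}^n$, so one must compare the two Lie-side quotients and absorb the mismatch between $\widetilde{M}^n$ and $U(\widetilde{E}(\widetilde{M}))$ into the volume computation (using the additivity properties of $\Vol$ from Proposition~\ref{volume} and the invariance established in Lemma~\ref{independance} for the choice of isomorphism $\infty$-tangent to the identity). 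This is precisely the delicate bookkeeping performed in \cite[Thm.~6.1.1]{ref4}, and transferring it verbatim to the $\mathbb{F}_q(z)$-setting for Anderson modules — with the adjusted constants $c$ and $d$ from Sections~\ref{52}--\ref{53} accommodating $\delta_E(\theta)\neq \theta I_n$ — yields the theorem.
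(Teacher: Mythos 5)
Your outline --- express $\mathcal{L}_G(\widetilde{E}(\widetilde{M}))$ as the nuclear determinant of $1+\psi_\ell$ via Corollary \ref{fonctionl2}, check that both quotients lie in $\mathcal{C}$ (using $H(\widetilde{E}(\widetilde{M}))=\{0\}$ and Proposition \ref{uniteproj} on the $\widetilde{E}$-side), and convert that determinant into a ratio of volumes via Theorem \ref{relation} --- is exactly the paper's strategy. But the step you yourself flag as the main obstacle, the construction of $\gamma$, is where the proposal breaks down, and the difficulty you anticipate is not the real one. If $\gamma$ is built from $\exp_{\widetilde{E}}$, the functional equation $\exp_{\widetilde{E}}\delta_E(\theta)=\phi_{\widetilde{E}}(\theta)\exp_{\widetilde{E}}$ gives $\gamma^{-1}\phi_{\widetilde{E}}(\theta)\gamma=\delta_E(\theta)$, i.e.\ $\gamma$ \emph{intertwines} the two $\theta$-actions; hence $\Delta_\gamma=0$, $\det_{R_\ell[\![Z]\!]}(1+\Delta_\gamma)=1$, and you only recover the (true, but here insufficient) fact that the exp-induced isomorphism preserves volumes --- which is what is used in the deduction of Theorem \ref{Formuleaaa}, not in the present statement. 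Your claimed identity ``$\gamma^{-1}\delta_E(\theta)\gamma=\phi_{\widetilde{E}}(\theta)$'' is the opposite of what the functional equation yields for such a $\gamma$, and no amount of volume bookkeeping will make $\mathcal{L}_G$ appear from a $\theta$-equivariant $\gamma$; trying to reintroduce it through the comparison of $\widetilde{M}^n$ with $U(\widetilde{E}(\widetilde{M}))$ would be circular, since that comparison is precisely Theorem \ref{Formuleaaa}.

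The correct choice is much simpler: $\gamma=\mathrm{id}$. The two modules $\Lie_{\widetilde{E}}(\widetilde{L}_\infty)/\Lie_{\widetilde{E}}(\widetilde{M})$ and $\widetilde{E}(\widetilde{L}_\infty)/\widetilde{E}(\widetilde{M})$ have the same underlying $\F_q(z)[G]$-module $\widetilde{L}_\infty^n/\widetilde{M}^n$ and differ only in the action of $\theta$ ($\delta_E(\theta)$ versus $\phi_{\widetilde{E}}(\theta)$). The identity map between them is $\F_q(z)[G]$-linear but not $\theta$-equivariant, and for it one has $1+\Delta_{\mathrm{id}}=(1-\phi_{\widetilde{E}}(\theta)Z)/(1-\delta_E(\theta)Z)$ up to which quotient is taken as the source, i.e.\ exactly $(1+\psi_\ell)^{\pm1}$, so its determinant is $\mathcal{L}_G(\widetilde{E}(\widetilde{M}))^{\mp1}$ by Corollary \ref{fonctionl2}. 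Its $\infty$-tangency to the identity is immediate from Proposition \ref{infiniment} (the paper certifies it by comparing with $\overline{\exp_{\widetilde{E}}}$, whose defining series lies in $I_n+\tau\mathcal{M}_n(\widetilde{L}_\infty)\{\tau\}$). In particular there is no mismatch between $\widetilde{M}^n$ and $U(\widetilde{E}(\widetilde{M}))$ to absorb at this stage: $\gamma=\mathrm{id}$ preserves the lattice $\widetilde{M}^n$ on the nose, and Theorem \ref{relation} applies directly to give the stated ratio of volumes.
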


\begin{proof}
We set $V_1=\widetilde{E}(\widetilde{L}_\infty)/\widetilde{E}(\widetilde{M})$ and $V_2=\Lie_{\widetilde{E}}(\widetilde{L}_\infty)/\Lie_{\widetilde{E}}(\widetilde{M})$. We have the isomorphism of $\widetilde{A}[G]$-modules induced by $\exp_{\widetilde{E}}$ because of the triviality of $H(\widetilde{E}(\widetilde{M}))$ $$ \dfrac{\Lie_{\widetilde{E}}(\widetilde{L}_\infty)}{\exp_{\widetilde{E}}^{-1}(\widetilde{M})} \cong\dfrac{\widetilde{E}(\widetilde{L}_\infty)}{\widetilde{E}(\widetilde{M})}.$$ 

By Proposition \ref{reseautildea},   $\exp_{\widetilde{E}}^{-1}(\widetilde{M})$ is an $\widetilde{A}[G]$-lattice. So $V_1$ et $V_2$ belong to $\mathcal{C}$ (both $H$ are trivial). 

They have the same $\mathbb{F}_q(z)[G]$-structure but $\theta$ does not act on them in the same way. $\theta$ acts on $V_2$ via $\delta_E$ but via $\phi_{\widetilde{E}}$ on $V_1$. We set $\gamma=\id: V_1\longrightarrow V_2$ which is $\mathbb{F}_q(z)[G]$-linear.  

The map $\exp_{\widetilde{E}} :\Lie_{\widetilde{E}}(\widetilde{L}_\infty)\longrightarrow \widetilde{E}(\widetilde{L}_\infty)$ is convergent everywhere. We denote by $\overline{\exp_{\widetilde{E}}}:V_2\longrightarrow V_1$ the induced application by $\exp_{\widetilde{E}}$. 
As $\overline{\exp_{\widetilde{E}}}=\gamma\circ \iota_1$, by Proposition \ref{infiniment}, $\gamma$ is $\infty$-tangent to the identity. Thus, we can apply Theorem \ref{relation} to obtain 

 $$\det_{\F_q(z)[G][\![Z]\!]}(1+\Delta_{\gamma})\vert \widetilde{E}(\widetilde{L}_\infty)/\widetilde{E}(\widetilde{M})) =\dfrac{\Vol_{\Lambda_0}(\Lie_{\widetilde{E}}(\widetilde{L}_\infty)/\Lie_{\widetilde{E}}(\widetilde{M}))}{\Vol_{\Lambda_0}(\widetilde{E}(\widetilde{L}_\infty)/\widetilde{E}(\widetilde{M}))}.$$

Furthermore, as $\gamma=\id$, we can show that $1+\Delta_{\gamma}=\dfrac{1-\delta_E(\theta).Z}{1-\phi_{\widetilde{E}}(\theta) Z}.$

By Corollary \ref{fonctionl2}, we have : $$\det_{\F_q(z)[G][\![Z]\!]}\left(\dfrac{1-\delta_E(\theta).Z}{1-\phi_{\widetilde{E}}(\theta) Z}\right)= \mathcal{L}_G(\widetilde{E}( \widetilde{M}))^{-1}.$$

It implies $$\mathcal{L}_G(\widetilde{E}( \widetilde{M}))=\dfrac{\Vol_{\Lambda_0}(\widetilde{E}(\widetilde{L}_\infty)/\widetilde{E}(\widetilde{M}))}{\Vol_{\Lambda_0}(\Lie_{\widetilde{E}}(\widetilde{L}_\infty)/\Lie_{\widetilde{E}}(\widetilde{M}))}.$$

\end{proof}
\bigskip
It allows us to have the following theorem. 
\begin{theorem}\label{Formuleaaa}

We have
$$\left[\Lie_{\widetilde{E}}(\widetilde{M}):U(\widetilde{E}(\widetilde{M}))\right]_G=\mathcal{L}_G(\widetilde{E}( \widetilde{M})).$$
\end{theorem}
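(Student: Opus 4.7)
The plan is to combine the preceding theorem, which identifies $\mathcal{L}_G(\widetilde{E}(\widetilde{M}))$ with the volume ratio
$$\dfrac{\Vol_{\Lambda_0}(\widetilde{E}(\widetilde{L}_\infty)/\widetilde{E}(\widetilde{M}))}{\Vol_{\Lambda_0}(\Lie_{\widetilde{E}}(\widetilde{L}_\infty)/\Lie_{\widetilde{E}}(\widetilde{M}))},$$
with a direct computation of each of the two volumes. The crucial input is the vanishing $H(\widetilde{E}(\widetilde{M}))=\{0\}$ recalled in Section~3; this makes both volumes collapse to indices of projective $\widetilde{A}[G]$-lattices.

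First I would verify that both $V_1 := \widetilde{E}(\widetilde{L}_\infty)/\widetilde{E}(\widetilde{M})$ and $V_2 := \Lie_{\widetilde{E}}(\widetilde{L}_\infty)/\Lie_{\widetilde{E}}(\widetilde{M})$ live in $\mathcal{C}$ with trivial $H$-part. For $V_2$ this is tautological, with lattice $\Lie_{\widetilde{E}}(\widetilde{M})$. For $V_1$, the intertwining identity $\exp_{\widetilde{E}}\delta_E(\theta) = \phi_{\widetilde{E}}(\theta)\exp_{\widetilde{E}}$ makes $\exp_{\widetilde{E}}$ an $\widetilde{A}[G]$-equivariant map, producing the exact sequence
$$0 \longrightarrow \Lie_{\widetilde{E}}(\widetilde{L}_\infty)/U(\widetilde{E}(\widetilde{M})) \longrightarrow V_1 \longrightarrow H(\widetilde{E}(\widetilde{M})) \longrightarrow 0,$$
and the vanishing of $H(\widetilde{E}(\widetilde{M}))$ places $V_1$ in $\mathcal{C}$ with lattice $U(\widetilde{E}(\widetilde{M}))$ and zero $H$-part.

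Next I would evaluate the two volumes. When $H=0$ and the underlying lattice $\Lambda$ is itself a projective $\widetilde{A}[G]$-module, the choice $\Lambda'=\Lambda$ in the definition of $\Vol_{\Lambda_0}$ is admissible and the formula reduces to $\Vol_{\Lambda_0}(V) = [\Lambda_0:\Lambda]_G$. The projectivity of $U(\widetilde{E}(\widetilde{M}))$ is exactly Proposition~\ref{uniteproj}, and $\Lie_{\widetilde{E}}(\widetilde{M}) = \widetilde{M}^n$ is projective because $\widetilde{M}$ is (this is the almost-taming hypothesis on $M$ base-changed to $\widetilde{A}$). Feeding these into the preceding theorem and invoking multiplicativity of the equivariant index then yields
$$\mathcal{L}_G(\widetilde{E}(\widetilde{M})) = \dfrac{[\Lambda_0:U(\widetilde{E}(\widetilde{M}))]_G}{[\Lambda_0:\Lie_{\widetilde{E}}(\widetilde{M})]_G} = [\Lie_{\widetilde{E}}(\widetilde{M}):U(\widetilde{E}(\widetilde{M}))]_G,$$
which is the desired formula.

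I do not expect any deep new obstacle: the main point to track carefully is the switch of $\theta$-action between $\phi_{\widetilde{E}}$ on the ambient module $V_1$ and $\delta_E$ on its lattice subquotient, which is precisely what the intertwining of $\exp_{\widetilde{E}}$ takes care of. In other words, all the nontrivial analytic work has already been absorbed into the preceding volume/determinant formula, while the two structural inputs specific to $z$-deformations---the vanishing of $H(\widetilde{E}(\widetilde{M}))$ and the $\widetilde{A}[G]$-projectivity of $U(\widetilde{E}(\widetilde{M}))$---are exactly what turn the abstract volume identity into a clean index formula.
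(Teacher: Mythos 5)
Your proposal is correct and follows essentially the same route as the paper: both arguments feed the vanishing of $H(\widetilde{E}(\widetilde{M}))$ and the $\widetilde{A}[G]$-projectivity of $U(\widetilde{E}(\widetilde{M}))=\exp_{\widetilde{E}}^{-1}(\widetilde{E}(\widetilde{M}))$ into the preceding volume identity, reducing each volume to an index against $\Lambda_0$ and concluding by multiplicativity of the equivariant index.
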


\begin{proof}
As $H(\widetilde{E}(\widetilde{M}))=\{0\}$, we have the exact sequence $$0\longrightarrow \dfrac{\Lie_{\widetilde{E}}(\widetilde{L}_\infty)}{\exp_{\widetilde{E}}^{-1}(\widetilde{M})} \longrightarrow \dfrac{\widetilde{E}(\widetilde{L}_\infty)}{\widetilde{E}(\widetilde{M})}\longrightarrow0.$$ 

By the previous theorem,  we obtain \begin{equation*}
\begin{aligned} 
\mathcal{L}_G(\widetilde{E}( \widetilde{M})) &=\dfrac{\Vol_{\Lambda_0}(\widetilde{E}(\widetilde{L}_\infty)/\widetilde{E}(\widetilde{M}))}{\Vol_{\Lambda_0}(\widetilde{L}_\infty/\widetilde{M})}\\
& =\dfrac{1}{\left[U(\widetilde{E}(\widetilde{M})):\Lambda_0\right]_G}\left[\Lie_{\widetilde{E}}(\widetilde{M}):\Lambda_0\right]_G\\
& =\left[\Lie_{\widetilde{E}}(\widetilde{M}):U(\widetilde{E}(\widetilde{M}))\right]_G.
\end{aligned}
\end{equation*} 

\end{proof}

\subsection{Determinant} ${}$\par
We see in this section how we get to the equivariant to the classical setting. 
\bigskip

Let $G$ be a finite abelian group. Let $x\in k_\infty[G]$, we denote by  $M_x$ the matrix of the multiplication by $x$ in $ k_\infty[G]$ seen as a $k_\infty$ endomorphism of $ k_\infty[G]$.
We define $\displaystyle\det_G$ as for $x\in k_\infty[G]$,  $\displaystyle\det_G(x)=\det(M_x)$.  

For $i,j=1, \ldots, \vert G \vert$, we denote by $[i;j]$ the integer $r\in [\![1; \vert G \vert ]\!]$ such that $g_rg_j=g_i$.

In particular, if $x=\sum\limits_{i=1}^{\vert G \vert}x_ig_i $, we have for $i,j=1, \ldots, \vert G \vert$, $(M_x)_{(i;j)}=x_{[i;j]}$.

\begin{proposition}
We have $x$ invertible in $A[G]$ if and only if $\displaystyle\det_Gx\in \F_q^*$. 
\end{proposition}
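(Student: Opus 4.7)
The plan is to prove both implications using the fact that the map $\rho: k_\infty[G] \hookrightarrow M_{\vert G \vert}(k_\infty)$, $y \mapsto M_y$ is an injective $k_\infty$-algebra homomorphism, together with the key observation that $\rho(A[G]) \subset M_{\vert G \vert}(A)$. This latter inclusion is immediate from the formula $(M_x)_{(i;j)} = x_{[i;j]}$ given just above the statement: the entries of $M_x$ are literally the coefficients of $x$ in the basis $\{g_i\}$, so $x \in A[G]$ forces $M_x$ to have entries in $A$.

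For the forward direction, I would take $y \in A[G]$ with $xy = 1$. Applying the multiplicative property of $\det_G$ (which follows from $M_{xy} = M_x M_y$) gives $\det_G(x)\det_G(y) = 1$ with both factors in $A = \F_q[\theta]$. Since $A^\times = \F_q^\times$, we conclude $\det_G(x) \in \F_q^\times$.

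For the reverse direction, suppose $\det_G(x) \in \F_q^\times$. Then $M_x$ is invertible in $M_{\vert G\vert}(k_\infty)$, so by injectivity of $\rho$ combined with the fact that its image is closed under inversion inside $\mathrm{GL}_{\vert G\vert}(k_\infty)$ (because $k_\infty[G]$ is a finite-dimensional commutative $k_\infty$-algebra, hence a product of fields, so an element is either a zero-divisor or a unit), there exists $y \in k_\infty[G]$ with $xy = 1$ and $M_y = M_x^{-1}$. I would then use Cramer's rule: $M_x^{-1} = \det_G(x)^{-1}\,\mathrm{adj}(M_x)$, where $\mathrm{adj}(M_x)$ has entries in $A$ (polynomial expressions in entries of $M_x$) and $\det_G(x)^{-1} \in \F_q^\times \subset A$. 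Hence $M_y \in M_{\vert G\vert}(A)$, and reading off the first column (or any column) via $(M_y)_{(i;j)} = y_{[i;j]}$ shows that the coefficients of $y$ lie in $A$, i.e.\ $y \in A[G]$.

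The only subtle step is verifying that if $x$ acts invertibly on $k_\infty[G]$ as a $k_\infty$-linear map, then its two-sided multiplicative inverse also lies in $k_\infty[G]$; but this is standard for finite-dimensional commutative algebras, or can be seen directly by noting that the inverse of the $k_\infty$-linear map ``multiplication by $x$'' commutes with all right multiplications by group elements and so is again multiplication by some element of $k_\infty[G]$. Once this is in hand, the Cramer's rule computation makes the rest of the argument essentially automatic.
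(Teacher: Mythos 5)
Your proof is correct and follows essentially the same route as the paper: multiplicativity of $\det_G$ (via $M_{xy}=M_xM_y$) for the forward direction, and the adjugate formula $M_x^{-1}=\det_G(x)^{-1}\,{}^t\mathrm{Com}(M_x)\in\mathcal{M}_{\vert G\vert}(A)$ together with reading off the first column for the converse — the paper simply \emph{defines} $y$ from that column and verifies $xy=\id$ by the matrix identity, which sidesteps your intermediate step about the inverse already lying in $k_\infty[G]$. One small caveat: your parenthetical claim that $k_\infty[G]$ is a product of fields is false when $p$ divides $\vert G\vert$ (the group ring then has nilpotents), but this does not affect the argument, since your alternative justification (the inverse of multiplication by $x$ commutes with all multiplications by group elements, hence is itself multiplication by some $y\in k_\infty[G]$) is valid, and the paper's direct verification avoids the issue entirely.
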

\begin{proof}
First, we suppose that $x$ is invertible in  $A[G]$. So there exists $y\in A[G]^*$ such that $xy=\id$. So we have $M_x, M_y \in \mathcal{M}_{\vert G\vert }(A)$ such that $M_xM_y=M_{\id}=I_{\vert G \vert }$. So $\displaystyle\det_G(x)\displaystyle\det_G(y)=1$. It implies that $\displaystyle\det_G(x)\in A^*=\F_q^*$.

Reciprocally, we suppose $\displaystyle\det_G(x)\in \F_q^*$.

We have $M_x^{-1}=\dfrac{1}{\displaystyle\det_G(x)} ^tCom(M_x)\in \mathcal{M}_{\vert G\vert }(A)$.
For $i=1,\ldots, \vert G\vert$, we let $y_i=(M_x^{-1})_{(i;1)}$ and $y=\sum\limits_{i=1}^{\vert G \vert}y_ig_i$.
As for $i,j=1, \ldots, \vert G \vert$, $[i;j]$ is the integer  $r\in [\![1; \vert G \vert ]\!]$ such that $g_rg_j=g_i$, we have  $$xy=\sum\limits_{r=1}^{\vert G \vert}\sum\limits_{j=1}^{\vert G \vert}x_ry_jg_kg_j=\sum\limits_{i=1}^{\vert G \vert}\left(\sum\limits_{j=1}^{\vert G \vert}y_jx_{[i;j]}\right)g_i.$$ 
As $\sum\limits_{j=1}^{\vert G \vert}y_jx_{[i;j]}$ correspond to the coefficient lign $i$ column 1 of the matrix $M_xM_x^{-1}$, we have $xy=\id$. So $x$ is invertible in $A[G]$. 
\end{proof}

We have the theorem of  Kovacs, Silver and Williams \cite{ref7} that we will use to prove the next propositions.  
\begin{theorem}\label{detttt}
Let $R$ be a commutative ring. Assume that $N$ is a block matrix $m\times m$ of  blocks $N^{i,j}\in \mathcal{M}_n(R)$ that commute pairwise. Then $$\vert N\vert =\left\vert \sum\limits_{\sigma \in S_m}\varepsilon(\sigma)N^{1,\sigma(1)}N^{2,\sigma(2)}\ldots N^{m,\sigma(m)}\right\vert.$$
\end{theorem}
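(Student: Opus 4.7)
The plan is to prove the identity by reducing to the case where the pairwise-commuting blocks $N^{i,j}$ are simultaneously upper triangular, a situation in which both sides can be computed explicitly and seen to agree.

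First, both sides of the asserted identity are polynomial expressions with integer coefficients in the entries of the $N^{i,j}$, and the pairwise commutativity hypothesis is itself encoded by polynomial relations in those entries. Hence it suffices to prove the identity universally; by passing to the generic fibre of the resulting universal commutative $\mathbb{Z}$-algebra and embedding in an algebraic closure, one may reduce to the case where $R$ is an algebraically closed field. This is the step I would expect to be the main obstacle: the scheme of commuting $m^2$-tuples of $n\times n$ matrices is in general reducible and non-reduced, so one must argue carefully — for instance by verifying the identity on the Zariski-dense locus of simultaneously diagonalizable tuples inside each irreducible component and then propagating back — before the universal specialization argument can be invoked.

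Once $R$ is an algebraically closed field, the hypothesis that the $N^{i,j}$ commute pairwise lets us apply the classical fact that any commuting family of endomorphisms of a finite-dimensional vector space admits a common complete flag: there exists $P\in GL_n(R)$ such that $P^{-1}N^{i,j}P$ is upper triangular for every $i,j$. Conjugating the $mn\times mn$ matrix $N$ by the block-diagonal matrix $\mathrm{diag}(P,\ldots,P)$ and the $n\times n$ matrix on the right-hand side by $P$ preserves both determinants, so we may assume each $N^{i,j}$ is already upper triangular; let $\lambda_k^{i,j}$ denote its $(k,k)$-entry.

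For the left-hand side, reorder the rows and columns of $N$ by the permutation sending the basis vector $e_{(i,k)}$ to $e_{(k,i)}$ (that is, sort first by the within-block index $k$, then by the block index $i$); since the same permutation is applied to rows and columns, the determinant is unchanged. In the new order, $N$ becomes block upper triangular with $k$-th diagonal block equal to the scalar $m\times m$ matrix $\Lambda_k:=(\lambda_k^{i,j})_{1\le i,j\le m}$, so $\det N=\prod_{k=1}^n \det\Lambda_k$. For the right-hand side, each product $N^{1,\sigma(1)}N^{2,\sigma(2)}\cdots N^{m,\sigma(m)}$ is upper triangular with $k$-th diagonal entry $\lambda_k^{1,\sigma(1)}\cdots\lambda_k^{m,\sigma(m)}$; summing over $\sigma\in S_m$ with signs yields an upper triangular $n\times n$ matrix whose $k$-th diagonal entry is precisely $\det\Lambda_k$, so its determinant is likewise $\prod_{k=1}^n\det\Lambda_k$. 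The two sides therefore agree, which completes the proof modulo the universal reduction identified above as the key difficulty.
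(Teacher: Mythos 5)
The paper gives no proof of this statement at all---it is quoted directly from Kovacs, Silver and Williams \cite{ref7}---so your proposal has to stand on its own. The closed-field half of your argument is correct: simultaneous upper-triangularization of a pairwise commuting family, the reindexing $(i,k)\mapsto(k,i)$ that turns $N$ into a block upper triangular matrix with diagonal blocks $\Lambda_k=(\lambda_k^{i,j})_{i,j}$, and the matching computation of the right-hand side all check out. (Two small points: $\Lambda_k$ is a matrix of scalars, not a scalar matrix; and since \emph{every} commuting family over an algebraically closed field is simultaneously triangularizable, reducibility of the commuting variety is not actually an obstacle, and you do not need density of the diagonalizable locus---which in any case fails for three or more commuting matrices, as the closure of the simultaneously diagonalizable tuples is a single irreducible component.)

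The genuine gap is the reduction from an arbitrary commutative ring to fields, which you flag but do not close, and your suggested repair does not close it. Verifying the identity at every field-valued point of the universal commuting scheme $\Spec\bigl(\mathbb{Z}[x^{i,j}_{kl}]/I\bigr)$, where $I$ is generated by the entries of the commutators $[X^{i,j},X^{i',j'}]$, only shows that the universal polynomial $\det N-\det\bigl(\sum_{\sigma}\varepsilon(\sigma)N^{1,\sigma(1)}\cdots N^{m,\sigma(m)}\bigr)$ lies in $\sqrt{I}$; the theorem asserts it lies in $I$. Bridging that difference is precisely the question of reducedness of commuting schemes, a notoriously difficult problem (open already for pairs of matrices), so this route cannot be completed as written; concretely, your argument says nothing about commuting blocks over a non-reduced ring such as $k[\epsilon]/(\epsilon^2)$ whose entries do not lift to a reduced ring. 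The proof in \cite{ref7} avoids all of this and is valid over any commutative ring: induct on $m$, first treating the case where $N^{m,m}$ is invertible by block column elimination (which clears the last block row, factors $\det N^{m,m}$ out of both sides, and leaves an $(m-1)\times(m-1)$ matrix of pairwise commuting blocks), then handling the general case by replacing $N^{m,m}$ with $N^{m,m}+tI_n$ over $R[t]$: the determinant $\det(N^{m,m}+tI_n)$ is monic in $t$, hence a non-zero-divisor, so $R[t]$ injects into the localization where the invertible case applies, the two sides agree as polynomials in $t$, and one sets $t=0$. You should either cite \cite{ref7} as the paper does or switch to that induction.
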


\begin{proposition}
Let $P$ and $Q$ be  two free $A[G]$-lattices of $L_\infty^n$. We have 
$$\displaystyle\det_G[P:Q]_{A[G]}=[P:Q]_{A}.$$
\end{proposition}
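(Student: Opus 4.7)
The plan is to reduce the identity to the Kovacs--Silver--Williams block-determinant formula (Theorem \ref{detttt}) via an explicit change-of-basis argument. First I would pick free $A[G]$-bases $\mathcal{B}_P = (b_1, \dots, b_N)$ of $P$ and $\mathcal{B}_Q = (c_1, \dots, c_N)$ of $Q$ of common $A[G]$-rank $N$, and let $X = (X_{ij}) \in \mathrm{GL}_N(k_\infty[G])$ be the change-of-basis matrix ($c_i = \sum_j X_{ij} b_j$), so that $[P:Q]_{A[G]} = \det(X)^+$ by definition.

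Next, I would enumerate $G = \{g_1, \dots, g_{|G|}\}$ and observe that $(g_j b_i)_{j,i}$ and $(g_j c_i)_{j,i}$ are free $A$-bases of $P$ and $Q$ respectively. Expanding each $g_j c_i$ in the $b$-basis, the resulting change-of-basis matrix $Y \in \mathrm{GL}_{N|G|}(k_\infty)$ becomes, up to a permutation of rows and columns, the $N \times N$ block matrix whose $(i,j)$-block is $M_{X_{ij}}$, the matrix of multiplication by $X_{ij}$ on $k_\infty[G]$ with respect to $(g_1, \dots, g_{|G|})$. By definition, $[P:Q]_A$ is the monic generator of the fractional ideal $(\det Y)$ of $A$.

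The third step is to compute $\det Y$ using Theorem \ref{detttt}. The blocks $M_{X_{ij}}$ commute pairwise because they are multiplication operators on the commutative ring $k_\infty[G]$, and since $x \mapsto M_x$ is a ring homomorphism we obtain
$$\det Y \;=\; \det\Bigl(\sum_{\sigma \in S_N} \varepsilon(\sigma)\, M_{X_{1,\sigma(1)}} \cdots M_{X_{N,\sigma(N)}}\Bigr) \;=\; \det\bigl(M_{\det X}\bigr) \;=\; {\det}_G(\det X).$$

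Finally, to pass from $\det X$ to $\det(X)^+$ I would invoke the Weierstrass-type decomposition $k_\infty[G]^\times = k_\infty[G]^+ \times A[G]^\times$ recalled in Corollary \ref{unitaire} together with the earlier proposition stating that $u \in A[G]^\times$ implies ${\det}_G(u) \in \F_q^\times$; writing $\det X = \det(X)^+ \cdot u$ then yields ${\det}_G(\det(X)^+) = {\det}_G(\det X) / {\det}_G(u)$, which still generates the ideal $(\det Y)$. The main obstacle, and the only step that is not immediate, is to verify that ${\det}_G$ sends $k_\infty[G]^+$ into the monic elements of $k_\infty$, so that ${\det}_G(\det(X)^+)$ is actually \emph{the} monic generator and hence equals $[P:Q]_A$. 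I would handle this by decomposing via the characters $\widehat{\chi}$ of $\Delta$, reducing to a computation in each summand $k_\infty(\chi)[H]$ and observing that the norm $N_{k_\infty(\chi)[H]/k_\infty}$ preserves the Weierstrass ``$+$'' structure.
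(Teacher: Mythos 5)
Your proposal follows essentially the same route as the paper: express the change of basis over $A[G]$ by a matrix $X$, observe that the corresponding change of basis over $A$ is the block matrix with commuting blocks $M_{X_{ij}}$, and apply the Kovacs--Silver--Williams theorem to identify $\det_G(\det X)$ with the $A$-determinant. Your final step checking that $\det_G$ carries monic elements of $k_\infty[G]$ (in the sense of $k_\infty[G]^+$) to monic elements of $k_\infty$ is a normalization point the paper's proof silently glosses over, and your character-decomposition argument for it is correct.
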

\begin{proof}

Let $P$ and $Q$ be two free $A[G]$-lattices of rank $m$, with respective bases $(e_1, \ldots e_{m})$ and $(f_1,\ldots, f_{m})$. There exists $X\in Gl_m(k_\infty[G])$ which sends  $(f_1,\ldots, f_m) $ over $(e_1,\ldots, e_m)$. We denote $(X)_{i;j}=x_{i,j}$. We have $$[P:Q]_{A[G]}=\det X= \sum\limits_{\sigma \in S_m}\varepsilon(\sigma)x_{1,\sigma(1)}x_{2,\sigma(2)}\ldots x_{m,\sigma(m)}.$$ 
Thus $\displaystyle\det_G[P:Q]_{A[G]}=\left\vert \sum\limits_{\sigma \in S_m}\varepsilon(\sigma)M_{x_{1,\sigma(1)}}M_{x_{2,\sigma(2)}}\ldots M_{x_{m,\sigma(m)}}\right\vert.$
We denote by  $(e_1',\ldots, e_{m\vert G\vert }')$ and $(f_1',\ldots, f_{m\vert G\vert }')$ the bases of $P$ and $Q$ as $A$-modules. The matrix which sends $(f_1',\ldots, f_{m\vert G\vert }')$ over $(e_1',\ldots, e_{m\vert G\vert }')$ is a block matrix $m\times m$ where the blocks are $M_{x_{i,j}}\in \mathcal{M}_{\vert G\vert}(k_\infty)$. They commute pairwise as they represent multiplications by $x_{i,j}$. 
As $[P:Q]_{A}=\det M$, by the previous theorem, we have $$\displaystyle\det_G[P:Q]_{A[G]}=\left\vert \sum\limits_{\sigma \in S_m}\varepsilon(\sigma)M_{x_{1,\sigma(1)}}M_{x_{2,\sigma(2)}}\ldots M_{x_{m,\sigma(m)}}\right\vert=\det M=[P:Q]_{A}.$$
\end{proof}

It allows us to obtain the following proposition. 
\begin{proposition}\label{detgreseau}
Let $P$ and $Q$ be two projective $A[G]$-lattices of $L_\infty^n$. We have 
$$\displaystyle\det_G[P:Q]_{A[G]}=[P:Q]_{A}.$$
\end{proposition}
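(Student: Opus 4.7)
The plan is to bootstrap from the free case (the previous proposition) by sandwiching both projective lattices inside auxiliary free $A[G]$-lattices and exploiting multiplicativity in three places: of $[\cdot:\cdot]_{A[G]}$, of $[\cdot:\cdot]_A$, and of $\det_G$.

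First, using Lemma \ref{reseaulibre}, I would pick free $A[G]$-lattices $F \supseteq P$ and $F' \supseteq Q$ in $L_\infty^n$. Combining the definition of $[\cdot:\cdot]_{A[G]}$ on projective lattices with the identity $\vert F/\Lambda\vert_{G}=[F:\Lambda]_G$ stated just after that definition (valid for projective lattices $\Lambda \subseteq F$), this immediately gives
$$
[P:Q]_{A[G]} \;=\; [F:F']_{A[G]}\,\frac{[F':Q]_{A[G]}}{[F:P]_{A[G]}}.
$$

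Next, I would apply $\det_G$ to this equality. The multiplicativity of $\det_G$ on $k_\infty[G]^\times$ is immediate from $M_{xy}=M_xM_y$, so this yields
$$
\det_G [P:Q]_{A[G]} \;=\; \det_G[F:F']_{A[G]}\cdot\frac{\det_G[F':Q]_{A[G]}}{\det_G[F:P]_{A[G]}}.
$$
Each of the three $A[G]$-indices on the right now compares two free $A[G]$-lattices, so by the previous proposition each equals the corresponding classical $A$-index. Finally, multiplicativity of $[\cdot:\cdot]_A$ along the chains $P\subset F$, $F \supseteq F'$, $F'\supseteq Q$ gives $[P:Q]_A = [F:F']_A\cdot[F':Q]_A/[F:P]_A$, and the proposition follows.

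The argument is essentially a formal manipulation, and no substantive new difficulty arises beyond the free case. The only point requiring a brief verification is that each index appearing can be viewed as an element of $k_\infty[G]^\times$ (and not merely as a fractional ideal class) so that $\det_G$ can be applied directly; this is guaranteed because each such index admits a canonical monic representative in $\F_q((\theta^{-1}))[G]^+$, whose existence and uniqueness were established through Corollary \ref{unitaire} and Proposition \ref{fitt}.
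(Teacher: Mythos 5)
Your overall skeleton is the same as the paper's: sandwich $P\subset F$ and $Q\subset F'$ with free $A[G]$-lattices via Lemma \ref{reseaulibre}, expand $[P:Q]_{A[G]}=[F:F']_{A[G]}\,\vert F'/Q\vert_{G}/\vert F/P\vert_{G}$, and push $\det_G$ through by multiplicativity. However, there is a genuine gap in the step where you claim that ``each of the three $A[G]$-indices on the right now compares two free $A[G]$-lattices.'' That is true only for $[F:F']_{A[G]}$. The lattices $P$ and $Q$ are merely projective over $A[G]$, and since $A$ is a Dedekind ring and $G$ may have nontrivial $p$-part, $A[G]$ is not semi-local and projective does not imply free; so $[F:P]_{A[G]}$ and $[F':Q]_{A[G]}$ are \emph{not} indices between two free lattices and the free-case proposition does not apply to them. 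These two quantities are by definition the monic generators $\vert F/P\vert_{G}$ and $\vert F'/Q\vert_{G}$ of Fitting ideals of finite modules, not determinants of change-of-basis matrices.

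What is missing is the compatibility $\det_G\vert N\vert_{G}=\vert N\vert_{A}$ for a finite $A[G]$-module $N$ that is $\F_q[G]$-projective (Proposition \ref{detgfitting} in the paper), applied to $N=F/P$ and $N=F'/Q$. This is not a formal consequence of the free-lattice case: it is proved separately by writing $\vert N\vert_G=\det_{A[G]}(\theta I_m-A_\theta)$ over each local summand of $\F_q[G]$ and invoking the Kovacs--Silver--Williams theorem on determinants of commuting block matrices to compare with $\det(\theta I_{\vert G\vert m}-B_\theta)=\vert N\vert_A$. Once you add that ingredient, your argument closes exactly as in the paper.
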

\begin{proof}
Let $F$ and $F'$ be two free $A[G]$-lattices such that $P\subset F$ et $Q\subset F'$. We recall that by definition  
$[P:Q]_{A[G]}=[F:F']_{A[G]}\dfrac{\vert F'/Q\vert_{A[G]}}{\vert F/P\vert_{A[G]}}.$ As $\det G$ is multiplicative, we get $\displaystyle\det_G[P:Q]_{A[G]}= \displaystyle\det_G[F:F']_{A[G]}\dfrac{\displaystyle\det_G\vert F'/Q\vert_{A[G]}}{\displaystyle\det_G\vert F/P\vert_{A[G]}}.$ By the previous propositions, we have $$\displaystyle\det_G[P:Q]_{A[G]}=[F:F']_{A}\dfrac{\vert F'/Q\vert_{A}}{\vert F/P\vert_{A}}=[P:Q]_{A}.$$
\end{proof}
We have a similar result with the unitary generator of the Fitting ideals. 
\begin{proposition}\label{detgfitting}
Let $N$ be  $A[G]$-module which is finitely generated anf projective as a $\F_q[G]$-module. Then $\displaystyle\det_G\vert N\vert_G=\vert N\vert_A$. 
\end{proposition}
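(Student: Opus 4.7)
The plan is to follow the same pattern as Propositions~\ref{detgfitting}'s preceding results (the indices case), substituting the characteristic polynomial presentation for Fitting ideals in the place of the change-of-basis matrix.

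First I would reduce to the local case via the character decomposition recalled in Section~2.3. Write $\F_q[G]=\bigoplus_{\widehat\chi\in\widehat\Delta(\F_q)} R_{\widehat\chi}$ with $R_{\widehat\chi}=\F_q(\chi)[H]$ local (each $R_{\widehat\chi}$ is a local ring because $H$ is a $p$-group and $\F_q(\chi)$ has characteristic $p$). Correspondingly $N=\bigoplus_{\widehat\chi} N^{\chi}$ with $N^\chi=e_{\widehat\chi}N$ a projective $R_{\widehat\chi}$-module, hence free of some rank $m_\chi$ by Theorem~\ref{Projct}. By the multiplicativity of Fitting ideals under direct sums, $\vert N\vert_A=\prod_{\widehat\chi}\vert N^\chi\vert_A$, and by Proposition~\ref{fitt} together with the ring isomorphism $\F_q[G][\theta]\cong\bigoplus R_{\widehat\chi}[\theta]$, one has the decomposition $\vert N\vert_G=\prod_{\widehat\chi}(e_{\widehat\chi}\vert N^\chi\vert_{R_{\widehat\chi}[\theta]}+(1-e_{\widehat\chi}))$. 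Since $\det_G$ is multiplicative and since multiplication by $e_{\widehat\chi}\vert N^\chi\vert_{R_{\widehat\chi}[\theta]}+(1-e_{\widehat\chi})$ acts as $\vert N^\chi\vert_{R_{\widehat\chi}[\theta]}$ on $R_{\widehat\chi}[\theta]$ and as the identity on the complementary summands, we get
\[
\det_G(\vert N\vert_G)=\prod_{\widehat\chi} N_{R_{\widehat\chi}[\theta]/\F_q[\theta]}\bigl(\vert N^\chi\vert_{R_{\widehat\chi}[\theta]}\bigr),
\]
so it suffices to prove the identity for each character component. Replacing $N$ by $N^\chi$, we may therefore assume that $R=\F_q[G]$ is already local (or rather that $R_{\widehat\chi}$ plays that role) and $N$ is free of rank $m$ over it.

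Next, the local part of Proposition~\ref{fitt} gives $\vert N\vert_G=\det_{R[\theta]}(\theta I_m-A_\theta)$, where $A_\theta\in\mathcal{M}_m(R)$ is the matrix of multiplication by $\theta$ in a chosen $R$-basis $(e_1,\dots,e_m)$ of $N$. Expanding this determinant as a signed sum over permutations and passing to the multiplication matrices $M_{(\cdot)}$ (which commute pairwise because $R[\theta]$ is commutative), I would invoke the Kovacs--Silver--Williams formula (Theorem~\ref{detttt}) exactly as in the proof of the analogous statement for free lattices: it converts $\det_G\det_{R[\theta]}(\theta I_m-A_\theta)$ into the plain $\F_q[\theta]$-determinant of the $m|G|\times m|G|$ block matrix whose $(i,j)$-block is $M_{(\theta I_m-A_\theta)_{i,j}}=\theta\delta_{ij}I_{|G|}-M_{(A_\theta)_{ij}}$.

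The final step is to identify this block matrix as $\theta I_{m|G|}-T_\theta$, where $T_\theta$ is the $\F_q$-linear endomorphism ``multiplication by $\theta$'' on $N$ expressed in the $\F_q$-basis $(g_re_j)_{r,j}$ obtained by tensoring the $R$-basis of $N$ with the group-element basis of $R$. Indeed, $\theta\cdot(g_re_j)=\sum_i g_r(A_\theta)_{ij}e_i$ and the expansion of $g_r(A_\theta)_{ij}$ in $(g_s)_s$ is given precisely by the $r$-th column of $M_{(A_\theta)_{ij}}$. Since $N$ is a torsion $A$-module that is a finite-dimensional $\F_q$-vector space, $\vert N\vert_A=\det_{\F_q[\theta]}(\theta I_{m|G|}-T_\theta)$, and this coincides with the block determinant computed above. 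Combining with the character-component reduction yields $\det_G\vert N\vert_G=\vert N\vert_A$.

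The main obstacle, and the only step that is not purely formal bookkeeping, is verifying cleanly that the block matrix produced by Kovacs--Silver--Williams is genuinely the $\F_q$-linear multiplication-by-$\theta$ matrix on $N$; everything else is an assembly of Proposition~\ref{fitt}, Theorem~\ref{detttt} and the multiplicativity of norms and Fitting ideals, paralleling the proof of Proposition~\ref{detgreseau}.
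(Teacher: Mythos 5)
Your proposal is correct and follows essentially the same route as the paper: Proposition~\ref{fitt} expresses $\vert N\vert_G$ as $\det_{R[\theta]}(\theta I_m-A_\theta)$ in the local case, and the Kovacs--Silver--Williams formula (Theorem~\ref{detttt}) converts $\det_G$ of that expression into $\det(\theta I_{m\vert G\vert}-B_\theta)=\vert N\vert_A$, where $B_\theta$ is multiplication by $\theta$ in an $\F_q$-basis. The only difference is that you spell out the semilocal-to-local reduction via the character idempotents (with the norms $N_{R_{\widehat\chi}[\theta]/\F_q[\theta]}$ appearing), whereas the paper simply opens with ``suppose $\F_q[G]$ is local'' and leaves that reduction implicit.
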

\begin{proof}
First we suppose that $\F_q[G]$ is local. We denote $m=rank(N)$. As $N$ is projective and $\F_q[G]$ is local, $N$ is a free $\F_q[G]$-module. We have 

\begin{equation*}
\begin{aligned} 
\vert N\vert_G &=\det_{A[G]}(\theta I_m -A_\theta)\\
& =\sum\limits_{\sigma \in S_m}\varepsilon(\sigma)(\theta I_m -A_\theta)_{1,\sigma(1)}(\theta I_m -A_\theta)_{2,\sigma(2)}\ldots (\theta I_m -A_\theta)^{m,\sigma(m)}
\end{aligned}
\end{equation*} 
 where $A_\theta$ is the multiplication by  $\theta$ in a $\F_q[G]$-basis of $N$. 
So we obtain 
\begin{equation*}
\begin{aligned} 
\displaystyle\det_G\vert N\vert_G &=\displaystyle\det_G\left(\sum\limits_{\sigma \in S_m}\varepsilon(\sigma)\prod\limits_{i=1}^{\vert G\vert}(\theta I_m -A_\theta)_{i,\sigma(i)}\right)\\
& =\det \left(\sum\limits_{\sigma \in S_m}\varepsilon(\sigma)\prod\limits_{i=1}^{\vert G\vert}M_{(\theta I_m -A_\theta)_{i,\sigma(i)}}\right).
\end{aligned}
\end{equation*} 

 We denote by $B_\theta$ the multiplication by $\theta$ in a $\F_q$-basis of $N$. So we have  that $\theta I_{\vert G\vert m}-B_\theta$ is a block matrix where the blocks are $M_{(\theta I_m -A_\theta)_{i,\sigma(i)}}$.
  As they commute pairwise, by  Theorem \ref{detttt} $$\vert N\vert_A=\det(\theta I_{\vert G\vert m}-B_\theta)\left\vert \sum\limits_{\sigma \in S_m}\varepsilon(\sigma)\prod\limits_{i=1}^{\vert G\vert}M_{(\theta I_m -A_\theta)_{i,\sigma(i)}}\right\vert.$$ So we have  $\displaystyle\det_G\vert N\vert_G=\vert N\vert_A$. 

\end{proof}

\begin{rems}\label{detg}
By Proposition \ref{detgfitting}, we obtain $\displaystyle\det_G(\mathcal{L}(\phi(M)))=\mathcal{L}(\phi(M))$. For $N$ and $N'$ two projective $A[G]$-lattices, by Proposition \ref{detgreseau}, we have $\displaystyle\det_G([N:N']_{A[G]})=[N:N']_{A}$. 

\end{rems}

\subsection{A partial equivariant class formula for $t$-modules} ${}$\par

In this last section, we will see a partial equivariant class formula over $\F_q$ for $t$-modules and some sufficient conditions to obtain this equivariant class formula. 

By Ferrara, Green, Higgins and D. Popescu \cite{ref4} (\ref{fonctionl2} when $\ell=\F_q$), we have the convergence of $\mathcal{L}_G(E( M))$. 

Following Anglès, Ngo Dac, Tavares Ribeiro \cite{ref1}, we obtain the next theorem. 
\begin{theorem}\label{sousmod}
Let $\Lambda$ be a projective $A[G]$-module such that $\Lambda\subset U_{St}(E(M))$ and $\Lambda$ is a $A$-lattice of $L_\infty$. 
 Let $E$ be a $t$-module defined over $\mathcal{O}_K$. Then
$$\mathcal{L}_G(E( M))^{-1}[ \Lie_E(M):\Lambda]_{A[G]}\in A[G].$$
Furthermore, 
$$\displaystyle\det_G\left(\dfrac{[ \Lie_E(M):\Lambda]_{A[G]}}{\mathcal{L}_G(E( M))}\right)=[ U_{St}(E(M)):\Lambda]_{A}.$$ 
\end{theorem}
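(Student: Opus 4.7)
The plan is to apply the equivariant class formula for the $z$-deformation proved above (Theorem~\ref{Formuleaaa}), insert the lattice $\Lambda$ by multiplicativity of the $G$-equivariant index, and then specialize at $z=1$ in the spirit of Anglès--Ngo Dac--Tavares Ribeiro \cite{ref1}. Since $\Lambda\subset U_{St}(E(M))=\ev(U(\widetilde{E}(M[z])))$ and $\Lambda$ is a projective $A[G]$-lattice, I would first lift an $A[G]$-generating family of $\Lambda$ to elements of $U(\widetilde{E}(M[z]))$, obtaining an $A[z][G]$-submodule $\Lambda^{\sharp}\subset U(\widetilde{E}(M[z]))$ with $\ev(\Lambda^{\sharp})=\Lambda$. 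The $\widetilde{A}[G]$-span $\widetilde{\Lambda}:=\widetilde{A}\cdot\Lambda^{\sharp}$ is then a projective $\widetilde{A}[G]$-lattice of $\Lie_{\widetilde{E}}(\widetilde{L}_\infty)$ contained in the projective $\widetilde{A}[G]$-module $U(\widetilde{E}(\widetilde{M}))$ (Propositions~\ref{engendre} and~\ref{uniteproj}).

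By Theorem~\ref{Formuleaaa} and multiplicativity of the equivariant index on projective $\widetilde{A}[G]$-lattices,
\[
\mathcal{L}_G(\widetilde{E}(\widetilde{M}))=[\Lie_{\widetilde{E}}(\widetilde{M}):U(\widetilde{E}(\widetilde{M}))]_G=\frac{[\Lie_{\widetilde{E}}(\widetilde{M}):\widetilde{\Lambda}]_G}{[U(\widetilde{E}(\widetilde{M})):\widetilde{\Lambda}]_G}.
\]
Because $\delta_{\widetilde{E}}=\delta_E$ does not depend on $z$, the module $\Lie_{\widetilde{E}}(\widetilde{M})$ is the base change of $\Lie_E(M)$ from $A[G]$ to $\widetilde{A}[G]$, and the corresponding change-of-basis determinant defining the index is a polynomial in $\theta$ with coefficients in $\mathbb{F}_q[G]$, independent of $z$. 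Hence $[\Lie_{\widetilde{E}}(\widetilde{M}):\widetilde{\Lambda}]_G=[\Lie_E(M):\Lambda]_{A[G]}\in A[G]^+$, and rearranging gives the identity
\[
\mathcal{L}_G(\widetilde{E}(\widetilde{M}))^{-1}\,[\Lie_E(M):\Lambda]_{A[G]}=[U(\widetilde{E}(\widetilde{M})):\widetilde{\Lambda}]_G
\]
in $\widetilde{A}[G]^+$. The Euler product $\mathcal{L}_G(\widetilde{E}(\widetilde{M}))$ is regular at $z=1$ with value $\mathcal{L}_G(E(M))$, since each local factor specializes correctly via $\widetilde{A}\to\widetilde{A}/(z-1)=A$ (using $\phi_{\widetilde{E}}|_{z=1}=\phi_E$ and compatibility of Fitting ideals with base change). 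A careful analysis of $\Lambda^{\sharp}$ as a projective $A[z][G]$-submodule of the finitely generated $A[z][G]$-module $U(\widetilde{E}(M[z]))$ (Proposition~\ref{engendre}) then shows the right-hand side is regular at $z=1$ with value in $A[G]^+$, which gives the first assertion $\mathcal{L}_G(E(M))^{-1}[\Lie_E(M):\Lambda]_{A[G]}\in A[G]$.

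For the determinant identity, I apply $\det_G$ to the previous displayed equation. By Propositions~\ref{detgreseau} and~\ref{detgfitting}, together with Remark~\ref{detg}, $\det_G$ sends equivariant indices to classical ones and sends $\mathcal{L}_G$ to $\mathcal{L}$. Specializing at $z=1$ and invoking the non-equivariant class formula $\mathcal{L}(E(M))=[\Lie_E(M):U_{St}(E(M))]_A$ (recalled at the end of Section~3.3, due to \cite{ref1}), I obtain
\[
\det_G\!\left(\frac{[\Lie_E(M):\Lambda]_{A[G]}}{\mathcal{L}_G(E(M))}\right)=\frac{[\Lie_E(M):\Lambda]_A}{[\Lie_E(M):U_{St}(E(M))]_A}=[U_{St}(E(M)):\Lambda]_A
\]
by multiplicativity of the classical index, which is the second assertion.

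The hard part will be the regularity-at-$z=1$ claim for $[U(\widetilde{E}(\widetilde{M})):\widetilde{\Lambda}]_G$ with integral specialization in $A[G]^+$ rather than merely in $\widetilde{A}[G]^+$. This forces one to choose the lift $\Lambda^{\sharp}$ compatibly with the $A[z][G]$-structure on $U(\widetilde{E}(M[z]))$ so that the resulting index belongs to a localization of $A[z][G]$ which extends across $z=1$. This is where the equivariant lattice framework of Ferrara--Green--Higgins--Popescu \cite{ref4} (through almost-taming pairs and the equivariant volume of Section~5) must be blended with the $z$-deformation and Stark-unit specialization analysis of Anglès--Ngo Dac--Tavares Ribeiro \cite{ref1}.
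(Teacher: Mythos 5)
Your overall strategy --- lift generators of $\Lambda$ into $U(\widetilde{E}(M[z]))$, invoke Theorem \ref{Formuleaaa} for the $z$-deformation, specialize at $z=1$, and obtain the second identity by applying $\det_G$ together with the non-equivariant Stark-unit formula $\mathcal{L}(E(M))=[\Lie_E(M):U_{St}(E(M))]_A$ --- is the paper's strategy, and your final paragraph on the $\det_G$ identity is essentially the paper's argument. However, there is a genuine error in the first half. The equality $[\Lie_{\widetilde{E}}(\widetilde{M}):\widetilde{\Lambda}]_G=[\Lie_E(M):\Lambda]_{A[G]}$ is false as stated: while $\Lie_{\widetilde{E}}(\widetilde{M})$ is indeed the base change of $\Lie_E(M)$, your $\widetilde{\Lambda}=\widetilde{A}\cdot\Lambda^{\sharp}$ is \emph{not} the base change of $\Lambda$. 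It is spanned by lifts $u_i\in U(\widetilde{E}(M[z]))$ with $\ev(u_i)$ generating $\Lambda$, and these lifts genuinely depend on $z$; the change-of-basis determinant $D(z)$ from a basis of $\Lie_{\widetilde{E}}(\widetilde{M})$ to the $u_i$ is an element of $\mathbb{T}_z(k_\infty)[G]$ whose value at $z=1$ is $[\Lie_E(M):\Lambda]_{A[G]}$, not that element itself. The object whose index would be $z$-independent, namely $\Lambda\otimes_A\widetilde{A}$, is not contained in $U(\widetilde{E}(\widetilde{M}))$: the hypothesis $\Lambda\subset U_{St}(E(M))=\ev(U(\widetilde{E}(M[z])))$ is a statement about the image at $z=1$ and gives no containment before specialization. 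Your displayed identity $\mathcal{L}_G(\widetilde{E}(\widetilde{M}))^{-1}[\Lie_E(M):\Lambda]_{A[G]}=[U(\widetilde{E}(\widetilde{M})):\widetilde{\Lambda}]_G$ therefore does not hold, and with it collapses the route to the first assertion; everything is pushed onto the specialization step you explicitly defer as ``the hard part.''

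The paper sidesteps this by never asserting an exact equivariant identity before specialization: it fixes a free $k_\infty[G]$-basis $\mathcal{B}$ with associated free lattice $N$, and argues at the level of \emph{divisibility} inside $\mathbb{T}_z(k_\infty)[G]$, where evaluation at $z=1$ is a ring homomorphism. Concretely, $D(z)$ lies both in $\mathbb{T}_z(k_\infty)[G]$ and in the $\widetilde{A}[G]$-ideal generated by $[\Lie_{\widetilde{E}}(\widetilde{M}):\widetilde{N}]_{\widetilde{A}[G]}^{-1}\mathcal{L}_G(\widetilde{E}(\widetilde{M}))$ (by Theorem \ref{Formuleaaa} and Proposition \ref{engendre}); since $\mathcal{L}_G(\widetilde{E}(\widetilde{M}))$ is a unit of $1+\theta^{-1}\F_q[z][G][\![\theta^{-1}]\!]$ specializing to $\mathcal{L}_G(E(M))$, the cofactor lies in $\widetilde{A}[G]\cap\F_q[z][G]\left(\left(\theta^{-1}\right)\right)=A[z][G]$ and can be evaluated at $z=1$, yielding $[\Lie_E(M):\Lambda]_{A[G]}\in\mathcal{L}_G(E(M))A[G]$. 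No identification of the resulting cofactor with $[U(\widetilde{E}(\widetilde{M})):\widetilde{\Lambda}]_G\vert_{z=1}$ is needed; the precise value of the quotient is recovered only after applying $\det_G$, exactly as in your last step. To repair your write-up you should (i) drop the $z$-independence claim and replace the exact identity by this ideal-theoretic containment in the Tate algebra, and (ii) note that the projectivity of $\widetilde{\Lambda}$ over $\widetilde{A}[G]$, which you assert without proof, is not needed once the argument is phrased through determinants of generating families rather than through indices of projective lattices.
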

\begin{proof}
Let $P$ be a monic prime of $A$. By Proposition \ref{fitt}, there exists  $x_P\in A[G][z]$ such that \begin{itemize}
\item $\Fitt_{\widetilde{A}[G]}\left(\widetilde{E}\left(\dfrac{\widetilde{M}}{P\widetilde{M}}\right)\right)=x_P\widetilde{A}[G]$,
 \item $\Fitt_{A[G]}\left(\dfrac{ M}{PM}\right)=x_P(0)A[G]$,
 \item $\Fitt_{A[G]}\left(E\left(\dfrac{ M}{PM}\right)\right)=x_P(1)\widetilde{A}[G]$. 
\end{itemize}

We obtain that $\prod\limits_{P\not\in S}\dfrac{x_P(0)}{x_P}$ converges in $\widetilde{k}_\infty [G]$  by Corollary \ref{fonctionl2} when $\ell=\F_q(z)$.

Let $\mathcal{B}$ be a basis of $\Lie_E(L_\infty)$ over $k_\infty[G]$.  It is also a basis of $\Lie_{\widetilde{E}}(\widetilde{L}_\infty)$ over $\widetilde{k}_\infty[G]$ and a basis of $\Lie_{\widetilde{E}}(\mathbb{T}_z(L_\infty))$ over $\mathbb{T}_z(k_\infty)[G]$.  We denote by $N$ the $A[G]$-module spanned by $\mathcal{B}$.
 We have $\left[\Lie_{\widetilde{E}}(\widetilde{M}):\widetilde{N}\right]_{\widetilde{A}[G]}=\F_q(z)[\Lie_E(M):N]_{A[G]}$ and $$\left[\Lie_{\widetilde{E}}(\widetilde{M}):U(\widetilde{E}(\widetilde{ M}))\right]_{\widetilde{A}[G]}=\left[\Lie_{\widetilde{E}}(\widetilde{M}):\widetilde{N}\right]_{\widetilde{A}[G]}\left[\widetilde{N}:U(\widetilde{E}(\widetilde{ M}))\right]_{\widetilde{A}[G]}.$$

By Theorem \ref{Formuleaaa}, $$\left[\Lie_{\widetilde{E}}(\widetilde{ M}):U(\widetilde{E}( \widetilde{M}))\right]_{\widetilde{A}[G]}\subset \mathcal{L}_G(\widetilde{E}(\widetilde{ M})).$$

Furthermore, by Proposition \ref{engendre}, 
 $U(\widetilde{E}(\widetilde{ M}))$ is the $\F_q(z)$-vector space generated by $U(\widetilde{E}( M[z]))$. 

As $\Lambda \subset U_{St}(E( M)))$, the determinant of elements of  $\Lambda$ in the basis $\mathcal{B}$ comes  from the evaluation at $z=1$ of $U(\widetilde{E}( M[z]))$. 

As $\left[\widetilde{N}:U(\widetilde{E}( \widetilde{M}))\right]_{\widetilde{A}[G]}\subset \left[\Lie_{\widetilde{E}}(\widetilde{M}):\widetilde{N}\right]_{\widetilde{A}[G]}^{-1} \mathcal{L}_G(\widetilde{E}(\widetilde{ M}))$, we have $$[N:\Lambda]_{A[G]}\subset [M:N]_{A[G]}^{-1} \mathcal{L}_G(E( M)).$$

It implies that $[N:\Lambda]_{A[G]}[\Lie_E(M):N]_{A[G]}\subset  \mathcal{L}_G(E( M))$.

Thus we have $$[ \Lie_E(M):\Lambda]_{A[G]}\subset  \mathcal{L}_G(E( M)).$$

Moreover, we have $[ \Lie_E(M):U_{St}(E( M))]_{A}= \mathcal{L}(E( M)).$

By Remarks \ref{detg}, it implies that
$$\displaystyle\det_G\left(\dfrac{[ \Lie_E(M):\Lambda]_{A[G]}}{\mathcal{L}_G(E( M))}\right)=\dfrac{[ \Lie_E(M):\Lambda]_{A}}{\mathcal{L}(E( M))}=[ U_{St}(E(M)):\Lambda]_{A}.$$ 

\end{proof}

\begin{rem} In particular, if $U_{St}(E(M))$ is a projective $A[G]$-module, then $$[ \Lie_E(M):U_{St}(E( M))]_{A[G]}= \mathcal{L}_G(E( M)).$$
\end{rem}
\bigskip

Now we show some sufficient conditions for $U_{St}(E(M))$ to be a projective $A[G]$-module.

We recall that for $f\in A[z]$, $H(\widetilde{E}(M[z]))[f]=\{x\in H(\widetilde{E}(M[z])), fx=0\}.$
\hyphenation{co-ho-mo-lo-gi-que-ment}
\begin{proposition}\label{proj}
\sloppy Let $E$ be a $t$-module such that $H(\phi(M))$ is $G$-cohomologically  trivial. Then $U(E(M))$ and $U_{St}(E(M))$ are projective $A[G]$-modules. Moreover,  $$[ \Lie_E(M):U_{St}(E(M))]_{A[G]}=\mathcal{L}_G(E( M)).$$ 

\end{proposition}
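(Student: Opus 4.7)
The plan is to deduce both projectivity assertions by a $G$-cohomological triviality argument, and then invoke the remark following Theorem~\ref{sousmod} for the class formula. For the projectivity of $U(E(M))$, I would adapt the proof of Proposition~\ref{uniteproj}, replacing the vanishing of $H(\widetilde{E}(\widetilde{M}))$ by the $G$-cohomological triviality of $H(E(M))$ assumed here. Concretely, the exact sequence
\[
0 \longrightarrow \Lie_E(L_\infty)/U(E(M)) \longrightarrow E(L_\infty)/E(M) \longrightarrow H(E(M)) \longrightarrow 0
\]
has $G$-cohomologically trivial middle term $L_\infty^n/M^n$ ($L_\infty$ by the normal basis theorem, $M$ as an $A[G]$-projective module by Theorem~\ref{Projct}), and with the hypothesis on $H(E(M))$ this forces $\Lie_E(L_\infty)/U(E(M))$ to be $G$-cohomologically trivial. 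The sequence
\[
0 \longrightarrow U(E(M)) \longrightarrow \Lie_E(L_\infty) \longrightarrow \Lie_E(L_\infty)/U(E(M)) \longrightarrow 0
\]
with $G$-cohomologically trivial middle term $\Lie_E(L_\infty) = L_\infty^n$ then gives the same for $U(E(M))$. Since $U(E(M))$ is an $A$-lattice (hence $A$-free, $A$ being a PID), Theorem~\ref{Projct} yields $A[G]$-projectivity.

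For the projectivity of $U_{St}(E(M))$, by Proposition~\ref{iso} the quotient $Q := U(E(M))/U_{St}(E(M))$ is isomorphic to $H(\widetilde{E}(M[z]))[z-1]$; once $Q$ is shown to be $G$-cohomologically trivial, the exact sequence $0 \to U_{St}(E(M)) \to U(E(M)) \to Q \to 0$ together with the first stage gives $U_{St}(E(M))$ $G$-cohomologically trivial, whence $A[G]$-projective via Theorem~\ref{Projct} (and Theorem~\ref{fitting} for the $A$-lattice structure). To obtain cohomological triviality of $Q$, I would split the four-term exact sequence
\[
0 \to H(\widetilde{E}(M[z]))[z-1] \to H(\widetilde{E}(M[z])) \xrightarrow{z-1} H(\widetilde{E}(M[z])) \to H(\widetilde{E}(M[z]))/(z-1) \to 0
\]
into two short exact sequences; the conclusion then reduces to the cohomological triviality of both $H(\widetilde{E}(M[z]))$ and $H(\widetilde{E}(M[z]))/(z-1) \simeq H(E(M))$, the latter identification obtained via evaluation at $z=1$ and the latter module being $G$-cohomologically trivial by hypothesis. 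The $G$-cohomological triviality of $H(\widetilde{E}(M[z]))$ itself is obtained by the Tate-algebra analogue of the first stage applied to $\widetilde{E}$, using that $\mathbb{T}_z(L_\infty)^n/M[z]^n$ is $G$-cohomologically trivial ($M[z] = M \otimes_A A[z]$ being $A[z][G]$-projective by base change of the projective $A[G]$-module $M$).

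Finally, the class formula $[\Lie_E(M) : U_{St}(E(M))]_{A[G]} = \mathcal{L}_G(E(M))$ is immediate from the remark following Theorem~\ref{sousmod} applied with $\Lambda = U_{St}(E(M))$, now known to be a projective $A[G]$-lattice. The principal obstacle lies in the second stage: rigorously establishing both the identification $H(\widetilde{E}(M[z]))/(z-1) \simeq H(E(M))$ and the $G$-cohomological triviality of $H(\widetilde{E}(M[z]))$. The latter requires analysing the $G$-cohomology of the Tate-algebra quotient $\widetilde{E}(\mathbb{T}_z(L_\infty))/\widetilde{E}(M[z])$, which is topological in nature and needs a Tate-algebra analogue of the normal basis theorem to complete the reduction cleanly.
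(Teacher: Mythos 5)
Your first stage (projectivity of $U(E(M))$) and your overall architecture --- reduce the projectivity of $U_{St}(E(M))$ to the $G$-cohomological triviality of $U(E(M))/U_{St}(E(M))\cong H(\widetilde{E}(M[z]))[z-1]$ via Proposition \ref{iso}, then get the class formula from the remark after Theorem \ref{sousmod} --- coincide with the paper's proof. The gap is exactly in the step you flag as the principal obstacle. You reduce the triviality of $H(\widetilde{E}(M[z]))[z-1]$ to the $G$-cohomological triviality of $H(\widetilde{E}(M[z]))$ itself, and propose to obtain the latter by a ``Tate-algebra analogue of the first stage''. This does not work: the first-stage argument deduces triviality of the unit module \emph{from} triviality of the class module (which is the hypothesis of the proposition), not the reverse. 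In the sequence $0\to \Lie_{\widetilde E}(\mathbb{T}_z(L_\infty))/U(\widetilde E(M[z]))\to \widetilde E(\mathbb{T}_z(L_\infty))/\widetilde E(M[z])\to H(\widetilde E(M[z]))\to 0$, knowing the middle term is cohomologically trivial tells you nothing about the quotient unless you already control the kernel, and there is no hypothesis available on $H(\widetilde E(M[z]))$. Nor can you imitate Proposition \ref{uniteproj}, whose input is the vanishing of $H(\widetilde E(\widetilde M))$: the module $H(\widetilde E(M[z]))$ is nonzero whenever $H(E(M))$ is, by Proposition \ref{iso} combined with Theorem \ref{fitting}.

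The paper avoids this entirely and never asserts that $H(\widetilde{E}(M[z]))$ is cohomologically trivial. Writing $H=H(\widetilde E(M[z]))$, it uses only the two short exact sequences $0\to H[z-1]\to H\xrightarrow{z-1}(z-1)H\to 0$ and $0\to (z-1)H\to H\to H(E(M))\to 0$ (the same two you obtain by splitting your four-term sequence). The hypothesis on $H(E(M))$, fed into the second sequence, shows that the inclusion $(z-1)H\hookrightarrow H$ induces isomorphisms on all Tate cohomology groups --- a statement strictly weaker than triviality of $H$ --- and it is this isomorphism, inserted into the long exact sequence of the first sequence, that forces $\widehat{H}^{*}(G,H[z-1])=0$. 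So the fix is to replace your reduction by this comparison of $(z-1)H$ with $H$; no Tate-algebra normal basis theorem is required. The remaining ingredients of your write-up (the identification $H/(z-1)H\cong H(E(M))$ via evaluation at $z=1$, the $A[G]$-linearity of the isomorphism of Proposition \ref{iso}, Theorem \ref{Projct} over the Dedekind ring $A$, and the final appeal to Theorem \ref{sousmod}) are as in the paper.
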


\begin{proof}
We have the exact sequence $$0\longrightarrow\dfrac{\Lie_E(L_\infty)}{U(E(M))}\longrightarrow \dfrac{E(L_\infty)}{E(M)}\longrightarrow H(E(M)) \longrightarrow 0.$$

As $M$ is a projective $A[G]$-module, it is $G$-cohomologically trivial. As $L_\infty$ and $H(E(M))$ are also $G$-cohomologically trivial, it is the same for $U(E(M)).$  As $A$ is a Dedekind ring and that $U(E(M))$ is a projective $A$-module, it is a projective $A[G]$-module by Theorem \ref{Projct}. 

We have the exact sequences $$0\longrightarrow H(\widetilde{E}( M[z]))[z-1]\longrightarrow H(\widetilde{E}( M[z])) \longrightarrow (z-1)H(\widetilde{E}( M[z]))\longrightarrow 0.$$
 
and
 
 $$0\longrightarrow (z-1)H(\widetilde{E}( M[z]))\longrightarrow H(\widetilde{E}( M[z])) \longrightarrow H(E( M)) \longrightarrow 0.$$
 
 As $H(E( M))$ is $G$-cohomologically trivial, $ (z-1)H(\widetilde{E}( M[z]))) $ and $H(\widetilde{E}( M[z]))$ have the same cohomology. It implies that $H(\widetilde{E}( M[z]))[z-1]$ is $G$-cohomologically trivial. 
Furthermore, we can show that the isomorphism in Proposition \ref{iso} is an isomorphism of $A[G]$-modules. Thus $\dfrac{U(E(M))}{U_{St}(E(M))}$ is $G$-cohomologically trivial. As $U(E(M))$ is also $G$-cohomologically trivial, it is the same for ${U_{St}(E(M))}$. 
As $A$ is a Dedekind ring and that $U_{St}(E(M))$ is a projective $A$-module, it is a projective $A[G]$-module by Theorem \ref{Projct}.

As $U_{St}(E(M))$ is a projective $A[G]$-module, by Theorem \ref{sousmod}, $$[ \Lie_E(M):U_{St}(E(M))]_{A[G]}=\mathcal{L}_G(E( M)).$$ 
\end{proof}

It is the case in particular when $H(E(M))$ is trivial.

\begin{Cor}
If $p\not \vert \hspace{0.1cm}\vert G\vert$ then $U_{St}(\phi(M))$ is $A[G]$-projective and $$[ \Lie_E(M):U_{St}(E(M))]_{A[G]}=\mathcal{L}_G(E( M)).$$
\end{Cor}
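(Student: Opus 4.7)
The plan is to deduce this corollary directly from Proposition \ref{proj}, whose hypothesis is that $H(E(M))$ is $G$-cohomologically trivial. So the only thing I need to check is that under the assumption $p \nmid |G|$, every $A[G]$-module (in particular $H(E(M))$) is automatically $G$-cohomologically trivial.

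First, I would recall that $A = \mathbb{F}_q[\theta]$ has characteristic $p$, so $|G|$ is invertible in $\mathbb{F}_q$ precisely when $p \nmid |G|$. Under this condition, the standard Maschke-type argument (averaging by $\frac{1}{|G|}\sum_{g\in G} g$) shows that for every $A[G]$-module $N$ and every $i \geq 1$, the groups $\widehat{H}^i(G, N)$ vanish, i.e., $N$ is $G$-cohomologically trivial. Equivalently, in the decomposition $G = H \times \Delta$ used throughout Section 2.3, the $p$-Sylow $H$ is trivial, so Theorem \ref{Projct} makes cohomological triviality automatic.

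In particular, $H(E(M))$ is $G$-cohomologically trivial. The hypothesis of Proposition \ref{proj} is therefore satisfied, which immediately yields that both $U(E(M))$ and $U_{St}(E(M))$ are projective $A[G]$-modules, and furthermore
\[
[\Lie_E(M):U_{St}(E(M))]_{A[G]} = \mathcal{L}_G(E(M)).
\]

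There is essentially no obstacle here: the only nontrivial content is the observation that $p \nmid |G|$ forces cohomological triviality of every $A[G]$-module, and the rest is a direct citation of Proposition \ref{proj}. The proof is therefore essentially a one-line deduction.
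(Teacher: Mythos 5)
Your proof is correct and is exactly the intended deduction: the paper states this corollary immediately after Proposition \ref{proj} with no separate argument, the point being precisely that $H(E(M))$ is an $\mathbb{F}_p$-vector space on which multiplication by $|G|$ is invertible when $p\nmid|G|$, so all Tate cohomology groups vanish and Proposition \ref{proj} applies. Nothing is missing.
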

In particular, if we take $M=\mathcal{O}_L$, we find the result of Anglès and Taelman in \cite{ref10}  and Anglès and Tavares Ribeiro in \cite{ref3} for Drinfeld modules and the result of Fang in \cite{ref20} for Anderson modules. 
\bigskip

\begin{corollary}\label{triv} 
We denote $N=\Tr_G(M)$.  If $H(E(N))$ is trivial, then $U(E(M))$ and $U_{St}(E(M))$ are projective $A[G]$-modules.  We have

 $$[ \Lie_E(M):U_{St}(E(M))]_{A[G]}=\mathcal{L}_G(E( M)).$$
 
Furthermore, $$[ \Lie_E(M):U(E(M))]_{A[G]}\vert H(E(M))\vert_{A[G]}=\mathcal{L}_G(E( M)).$$
\end{corollary}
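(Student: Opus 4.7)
The plan is to reduce the corollary to Proposition~\ref{proj} by showing that $H(E(N))=0$ implies $H(E(M))$ is $G$-cohomologically trivial. Given this reduction, Proposition~\ref{proj} yields the projectivity of $U(E(M))$ and $U_{St}(E(M))$ as $A[G]$-modules together with the first equivariant class formula $[\Lie_E(M):U_{St}(E(M))]_{A[G]}=\mathcal{L}_G(E(M))$.

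For the reduction, I would exploit the trace map $\Tr_G\colon E(L_\infty)\to E(K_\infty)$, which is surjective by separability of $L/K$ and commutes with $\exp_E$ since the latter has coefficients in $K$. Projectivity of $M$ over $A[G]$ gives $N=M^G=\Tr_G(M)$, hence $E(M)^G=E(N)$, $U(E(M))^G=U(E(N))$, and $\Lie_E(L_\infty)^G=\Lie_E(K_\infty)$. Writing $\mathcal{A}=\Lie_E(L_\infty)/U(E(M))$, $\mathcal{B}=E(L_\infty)/E(M)$ and $\mathcal{C}=H(E(M))$, the fundamental short exact sequence $0\to\mathcal{A}\to\mathcal{B}\to\mathcal{C}\to 0$ has $G$-cohomologically trivial middle term and, combined with $0\to U(E(M))\to\Lie_E(L_\infty)\to\mathcal{A}\to 0$, yields $\widehat{H}^{i}(G,H(E(M)))\cong \widehat{H}^{i+2}(G,U(E(M)))$ for all $i\in\mathbb{Z}$. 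Applying the snake lemma to the morphism from the short exact sequence $0\to\Lie_E(K_\infty)/U(E(N))\xrightarrow{\exp_E}E(K_\infty)/E(N)\to H(E(N))\to 0$ into $0\to\mathcal{A}^G\to\mathcal{B}^G\to \mathcal{B}^G/\mathcal{A}^G\to 0$ (natural inclusion on the left with cokernel $H^1(G,U(E(M)))$, identity in the middle) gives
\[
0\longrightarrow H^1(G,U(E(M)))\longrightarrow H(E(N))\longrightarrow \mathcal{B}^G/\mathcal{A}^G\longrightarrow 0,
\]
so $H(E(N))=0$ forces $H^{1}(G,U(E(M)))=0$. Separately, the hypothesis implies $\Tr_G(y)\in E(M)+\exp_E(\Lie_E(L_\infty))$ for every $y\in E(L_\infty)$, so the norm $N_G=\sum_{g\in G}g$ acts as zero on $H(E(M))$; therefore $\widehat{H}^{-1}(G,H(E(M)))=H(E(M))_G$, and the shift identifies this with $\widehat{H}^{1}(G,U(E(M)))=0$.

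Decomposing $G=J\times\Delta$ with $J$ the $p$-Sylow subgroup, Theorem~\ref{Projct} identifies $H(E(M))_G$ with the $J$-coinvariants of the trivial $\Delta$-isotypic component $H(E(M))^{\chi=\mathbf{1}}$. Since $H(E(M))$ is a torsion finitely generated $A$-module, this component is finitely generated over the local ring $\F_q[J]$, and Nakayama's lemma forces $H(E(M))^{\chi=\mathbf{1}}=0$. The Hochschild-Serre decomposition for $G=J\times\Delta$ collapses because $|\Delta|$ is prime to $p$, yielding $\widehat{H}^{i}(G,H(E(M)))=0$ for all $i\in\mathbb{Z}$. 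The furthermore then follows from Proposition~\ref{iso}, which provides the $A[G]$-isomorphism $U(E(M))/U_{St}(E(M))\cong H(\widetilde{E}(M[z]))[z-1]$, together with the two short exact sequences of $G$-cohomologically trivial modules linking $H(\widetilde{E}(M[z]))$ to $H(E(M))$ and to $H(\widetilde{E}(M[z]))[z-1]$ used in the proof of Proposition~\ref{proj}; multiplicativity of the monic Fitting generators over $A[G]$ yields $|H(\widetilde{E}(M[z]))[z-1]|_{A[G]}=|H(E(M))|_{A[G]}$, and multiplicativity of the index for projective $A[G]$-lattices gives $[\Lie_E(M):U(E(M))]_{A[G]}\cdot |H(E(M))|_{A[G]}=\mathcal{L}_G(E(M))$. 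The main obstacle is the snake-lemma diagram chase identifying $H^{1}(G,U(E(M)))$ as a subobject of $H(E(N))$; once this is established, the Nakayama bootstrap via the $\Delta$-character decomposition is routine.
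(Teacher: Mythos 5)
Your overall strategy --- reduce to Proposition \ref{proj} by proving that $H(E(N))=0$ forces $H(E(M))$ to be $G$-cohomologically trivial, then recover the ``furthermore'' from Proposition \ref{iso} and the two exact sequences appearing in the proof of Proposition \ref{proj} --- is the right one, and the first half of your cohomological argument is correct and genuinely different from the paper's: the dimension shift $\widehat{H}^{i}(G,H(E(M)))\cong\widehat{H}^{i+2}(G,U(E(M)))$, the snake-lemma identification of $H^{1}(G,U(E(M)))$ with a submodule of $H(E(N))$, and the Nakayama step giving $H(E(M))^{\chi=\mathbf 1}=0$ all check out (the identifications $U(E(M))^G=U(E(N))$ and $(E(L_\infty)/E(M))^G=E(K_\infty)/E(N)$ do use the projectivity of $M$, as you note).

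The gap is in the last step of the reduction. What you have proved is $\widehat{H}^{i}(G,H(E(M)))=0$ for all $i$, i.e.\ vanishing of the Tate cohomology of the \emph{full} group $G$. What Proposition \ref{proj} needs (via Theorem \ref{Projct}) is $G$-cohomological triviality, i.e.\ $\widehat{H}^{i}(G',H(E(M)))=0$ for every subgroup $G'$ --- equivalently, for the $p$-Sylow subgroup $J$. Since $\widehat{H}^{i}(J,X)\cong\bigoplus_{\widehat\chi}\widehat{H}^{i}(J,X^{\chi})$ while the Hochschild--Serre collapse only gives $\widehat{H}^{i}(G,X)\cong\widehat{H}^{i}(J,X^{\chi=\mathbf 1})$, your argument kills only the trivial isotypic component and says nothing about $H(E(M))^{\chi}$ for $\chi\neq\mathbf 1$; for instance $X=\F_q(\chi)$ with $J$ acting trivially and $\Delta$ acting through $\chi\neq\mathbf 1$ satisfies $\widehat{H}^{i}(G,X)=0$ for all $i$ yet is not $\F_q[G]$-projective. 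Your snake lemma cannot simply be rerun for $J$, because that would require triviality of $H(E(\Tr_J(M)))$, which is not among the hypotheses. The paper sidesteps this by working directly with the module $M^n+\exp_E(L_\infty^n)$: it computes $\Tr_G(M^n+\exp_E(L_\infty^n))=N^n+\exp_E(K_\infty^n)=K_\infty^n=(M^n+\exp_E(L_\infty^n))^G$, deduces $\widehat{H}^0(G,M^n+\exp_E(L_\infty^n))=0$, and invokes the cohomological-triviality criterion of \cite{ref15}; for the ``furthermore'' it then uses \cite[Theorem 6.2.1]{ref4} together with $\det_G$, whereas your Fitting-ideal computation from the two exact sequences is a valid (and arguably cleaner) alternative once cohomological triviality is actually in hand.
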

\begin{proof}
We have supposed $H(E(N))$ trivial. So we have $\exp_E(K_\infty^n)+N^n=K^n_\infty$. First we show that $H(E(M))$ is $G$-cohomologically trivial. 

We have the exact sequence  $$0\longrightarrow M^n+\exp_E(L_\infty^n)\longrightarrow L^n_\infty\longrightarrow H(E(M))\longrightarrow0.$$ As $L_\infty$ is $G$-cohomologically trivial, to show that $H(E(M))$ is $G$-cohomologically trivial, it suffices to show that $M^n+\exp_\phi(L_\infty^n)$ is. 

We have $$\Tr_G(M^n+\exp_E(L_\infty^n))=N^n+\exp_E(K_\infty^n)=K^n_\infty.$$
Furthermore,  if we look at the $G$-invariants we obtain $$(M^n+\exp_E(L_\infty^n))^G\subset (L^n_\infty)^G=K^n_\infty.$$  The following inclusion $\Tr_G(M^n+\exp_E(L_\infty^n))\subset (M^n+\exp_E(L_\infty^n))^G$ implies  $$\Tr_G(M^n+\exp_E(L_\infty^n))= (M^n+\exp_E(L_\infty^n))^G=K^n_\infty.$$ So we have $\widehat{H}^0(G,M^n+\exp_E(L_\infty))=\{0\}.$ By \cite[Thm. 6 p112]{ref15}, as there is one group of cohomology which is trivial, $M^n+\exp_E(L_\infty^n)$ is $G$-cohomologically trivial. So we have that $H(E(M))$ is $G$-cohomologically trivial. By Proposition \ref{proj},  $$[ \Lie_E(M):U_{St}(E(M))]_{A[G]}=\mathcal{L}_G(E( M)).$$ 

It implies that $$[ \Lie_E(M):U(E(M))]_{A[G]}[ U(E(M)):U_{St}(E(M))]_{A[G]}=\mathcal{L}_G(E( M)).$$ 

By \cite[Theorem 6.2.1]{ref4}, we have $$\dfrac{1}{[ \Lie_E(M):U(E(M))]_{A[G]}}\mathcal{L}_G(E( M))\in \Fitt_{A[G]}(H(E(M)).$$
Thus $[ U(E(M)):U_{St}(E(M))]_{A[G]}=\vert \dfrac{U(E(M))}{U_{St}(E(M))}\vert_{G}\in \Fitt_{A[G]}(H(E(M))$. 
Furthermore, $\vert \dfrac{U(E(M))}{U_{St}(E(M))}\vert_{A}=\vert H(E(M)\vert_A$.
As the same way as before, it gives us $\vert \dfrac{U(E(M))}{U_{St}(E(M))}\vert_{G}=\vert  H(E(M)\vert_{G}$ and so the desired result. 
\end{proof}

\section{An exemple of an Artin-Schreier extension}
In this section, we will see an example of a $L$-function associated to a lattice which is not contained in the ring of integers of an Artin–Schreier extension of a function field.
 For this section, we take $q=p$. 
 
First, we recall the definition of the {\it $d$-th power residue symbol}. 

\begin{definition}Let $b\in A$, $P\in A$ be an irreducible polynomial and $d$ a divisor of $p-1$.\begin{itemize}
\item If $P\not \vert b$, we set $\left(\dfrac{b}{P}\right)_d$ the unique element of $\F_p^*$ such that $$b^{\frac{p^{\deg(P)}-1}{d}}\equiv \left(\dfrac{b}{P}\right)_d \mod P.$$
\item If $P \vert b$, we set $\left(\dfrac{b}{P}\right)_d=0.$ 
\end{itemize}

\end{definition}

It can be extended to all non zero elements of $A$. Let $c=sgn(c)\prod_{i=1}^s P_i^{f_i}$ be the prime decomposition of $c\in A^*$. Then for $b\in A$, the $d$-th power residue symbol is defined as 

$$\left(\dfrac{b}{c}\right)_d=\prod_{i=1}^s \left(\dfrac{b}{P_i}\right)_d^{f_i}.$$
We refer the reader to \cite[Chapter 3]{ref12} for more details.

 Let $C$ be the Carlitz module : the $\F_p$-morphism such that $C_\theta=\theta+\tau$.

We can write $$C_P(x)=\displaystyle\sum_{i=0}^{\deg(P)}[P,i]x^{p^i}$$ where $[P,i]\in A$, $[P,\deg(P)]=1$ and $\forall i\in [\![0; \deg(P)-1 \vert ]\!], P~\vert ~[P,i]$ (see \cite[Chapter 12]{ref12}). 
Thus $C_P(x)\equiv x^{p^{\deg(P)}}\mod P$.

Let $Q(x)=x^p-x-\dfrac{1}{\theta}\in \F_p[\theta]$. We denote by $L$ the decomposition field of $Q$ i.e., $L=\F_p(\alpha)$ where $\alpha$ is a root of $P$. We set $G=\Gal(L/k)$. We can see that it is isomorphic to $\F_p$. 

By \cite[Theorem 2.1]{bae}, we can show that $\mathcal{O}_L=A\oplus_{i=1}^{p-1}\theta A\alpha^i$ and $\theta$ is the only prime which is ramified in $L/k$. Now, we want to prove that $\theta\alpha^{p-1}$ is a normal basis. For $i\in \F_p$, $$\sigma_i(\theta\alpha^{p-1})=\theta(\alpha+i)^{p-1}=\theta\sum_{j=0}^{p-1}\binom{p-1}{j}i^j\alpha^{p-1-j}.$$

If we express $\sigma_i(\theta\alpha^{p-1})$ in function of $1$, $\theta\alpha$, $\theta\alpha^2$, \ldots, $\theta\alpha^{p-1}$, we obtain the following matrix  
$\begin{pmatrix}
0 & \theta \binom{p-1}{p-1}& \theta \binom{p-1}{p-1}2^{p-1} &\ldots & \theta \binom{p-1}{p-1}(p-1)^{p-1}\\
0 &  \binom{p-1}{p-2}& \binom{p-1}{p-2}2^{p-2}& \ldots & \binom{p-1}{p-2}(p-1)^{p-2}\\
0 &  \binom{p-1}{p-3}& \binom{p-1}{p-3}2^{p-3}& \ldots & \binom{p-1}{p-3}(p-1)^{p-3}\\
 \vdots & \vdots & \vdots & & \vdots \\
0 &  \binom{p-1}{1}& \binom{p-1}{1}2& \ldots & \binom{p-1}{1}(p-1)\\
1 &  1& 1& \ldots & 1\\
\end{pmatrix}$.
As its determinant is invertible in $k$, we have $L=\theta k[G]\alpha^{p-1}$. 
  We set $M=\theta A[G]\alpha^{p-1}$. From this matrix, we obtain $M=\oplus_{i=0}^{p-1}\theta A\alpha^i$.
We denote $\delta=\sqrt[p-1]{1+\theta^{p-1}}$ and $M'=M\delta=\theta A[G]\alpha^{p-1}\delta$.

 By choice of $\delta$, we can remark that $M'\not\subset \mathcal{O}_L$ and $M'$ is an almost taming module for $L/k$.   
 
 First, we will look at the $\mathcal{L}$-function attached to $M'$. 

We have $(\delta\theta \alpha^{p-1})^p=\delta^p\theta^p(\alpha+\dfrac{1}{\theta})^{p-1}=(1+\theta^{p-1})\delta\theta(\theta\alpha+1)^{p-1}. $ So $\tau(\theta\delta \alpha^{p-1})\equiv \theta\delta \mod \theta M'.$ By \cite[Lemma 2.3]{bae} $\Tr_G(\theta\alpha^{p-1})=-\theta$. Therefore, $C(M'/\theta M')$ is annihilated by $\theta +\Tr_G$.

Let $P\in \Spec(A)\setminus\theta$ and $\sigma_P$ be a Frobenius associated to $P$.

We recall that $C_P\equiv \tau^{\deg(P)} \mod P$.   Then we have
\begin{align*} 
C_{P}(\theta \delta \alpha^{p-1}) &\equiv \theta^{p^{\deg(P)}}\delta^{p^{\deg(P)}}(\alpha^{p-1})^{p^{\deg(P)}}\mod P\\
&\equiv \theta (1+\theta^{p-1})^{\frac{p^{\deg(P)}}{p-1}}\sigma_P(\alpha^{p-1}) \mod P\\
&\equiv \theta \delta\left(\dfrac{1+\theta^{p-1}}{P}\right)_{p-1}\sigma_P(\alpha^{p-1}) \mod P. \notag
\end{align*}
    Thus we obtain $\vert C(M'/P M')\vert_G= P1_G-\left(\dfrac{1+\theta^{p-1}}{P}\right)_{p-1}\sigma_P$. 
It implies that we get the  following  equivariant  $\mathcal{L}$-function  : 
\begin{align*} 
\mathcal{L}_G(C( M')) &= \dfrac{\vert \Lie_{C}(M'/\theta M')\vert_G }{\vert C(M'/\theta M')\vert_G}\prod\limits_{P\in \MSpec(A)\setminus \{\theta\}}\dfrac{\vert \Lie_{C}(M'/PM')\vert_G }{\vert C(M'/PM')\vert_G} \\
&=\dfrac{\theta}{\theta+\Tr_G}\prod\limits_{\substack{
P\in \MSpec(A)\setminus \{\theta\} }}\dfrac{P}{P-\left(\tfrac{1+\theta^{p-1}}{P}\right)_{p-1}\sigma_P}\notag \\
&=(1-\dfrac{\Tr_G}{\theta})\sum\limits_{\substack{
a\in A^+,(a,\theta)=1 \\
(a,1+\theta^{p-1})=1}}\dfrac{\left(\tfrac{1+\theta^{p-1}}{a}\right)_{p-1}\sigma_a}{a}\\
&=\sum\limits_{\substack{
a\in A^+,(a,\theta)=1 \\
(a,1+\theta^{p-1})=1}}\frac{\left(\tfrac{1+\theta^{p-1}}{a}\right)_{p-1}\sigma_a}{a}-\left(\sum\limits_{\substack{
a\in A^+,(a,\theta)=1 \\
(a,1+\theta^{p-1})=1}}\frac{\left(\tfrac{1+\theta^{p-1}}{a}\right)_{p-1}}{a\theta}\right)\Tr_G. \notag
\end{align*}

As $k_\infty=A\oplus\theta^{-1}\F_p[\![\theta^{-1}]\!]$,  we have $L_\infty=M'\oplus \oplus_{i=0}^{p-1} \F_p[\![\dfrac{1}{\theta}]\!]\delta\alpha^i$ and $\oplus_{i=0}^{p-1} \F_p[\![\dfrac{1}{\theta}]\!]\delta\alpha^i\subset \exp_C(L_\infty)$ thus $H(C(M'))=\{0\}$.
It follows that $U_{St}(C(M'))=U(C(M))$ and it is $A[G]$-projective. Then 

$$[M:U(C(M))]_{A[G]}=\sum\limits_{\substack{
a\in A^+,(a,\theta)=1 \\
(a,1+\theta^{p-1})=1}}\dfrac{\left(\tfrac{1+\theta^{p-1}}{a}\right)_{p-1}\sigma_a}{a}-\left(\sum\limits_{\substack{
a\in A^+,(a,\theta)=1 \\
(a,1+\theta^{p-1})=1}}\dfrac{\left(\tfrac{1+\theta^{p-1}}{a}\right)_{p-1}}{a\theta}\right)\Tr_G.$$ 

Now, we look at the $\mathcal{L}$-function attached to $\widetilde{M}$. 

We have the isomorphism of  $\widetilde{A}[G]$-modules $\dfrac{\widetilde{M'}}{P\widetilde{M'}}\cong \dfrac{M'}{PM'}[z]\otimes_{\F_p[z]}\F_p(z)$.
As $\dfrac{M'}{PM'}[z]$ is a free
 $\dfrac{A}{PA}[z][G]$-module, we get
  $\vert\dfrac{\widetilde{M'}}{P\widetilde{M}}\vert_G=P1_G$.
  
By the same arguments used previously, we have  $\widetilde{C}_P\equiv z^{\deg(P)}\tau^{\deg(P)} \mod P$.
As the same way, we can show that for $P\in \Spec(A)\setminus \theta$, $\vert\widetilde{C}(\widetilde{M'}/P\widetilde{M'})\vert_G=P1_G-z^{\deg(P)}\left(\dfrac{1+\theta^{p-1}}{P}\right)_{p-1}\sigma_P$ and  $\vert\widetilde{C}(\widetilde{M'}/\theta\widetilde{M'})\vert_G=\theta1_G+z\Tr_G$.

So we obtain the $\mathcal{L}$-function :$$ \tiny{\mathcal{L}_G(\widetilde{C}(\widetilde{M'}))=\sum\limits_{\substack{
a\in A^+,(a,\theta)=1 \\
(a,1+\theta^{p-1})=1}}\tfrac{z^{\deg(a)}\left(\tfrac{1+\theta^{p-1}}{a}\right)_{p-1}\sigma_a}{a}-\left(\sum\limits_{\substack{
a\in A^+,(a,\theta)=1 \\
(a,1+\theta^{p-1})=1}}\tfrac{z^{\deg(a)+1}\left(\tfrac{1+\theta^{p-1}}{a}\right)_{p-1}}{a\theta}\right)\Tr_G.}$$

To finish this exemple, we look at the $\mathcal{L}$-functions associated to $N$ where  $N=\Tr_G(M')$. As  $\Tr_G(\theta\alpha^{p-1})=-\theta$  by \cite[Lemma 2.3]{bae}, $N=\theta A \delta$. 
For a prime that does not divide $1+\theta^{p-1}$, $C(N/PN)$ is annihilated by $P-\left(\dfrac{1+\theta^{p-1}}{P}\right)_{p-1}$. Indeed, for $x\in N$, we can write $x=\theta a \delta$ with $a\in A$. Thus 

\begin{align*} 
C_{P-\left(\tfrac{1+\theta^{p-1}}{P}\right)_{p-1}}(x) &\equiv x^{p^{\deg(P)}}-\left(\dfrac{1+\theta^{p-1}}{P}\right)_{p-1}a\theta \delta \mod P\\
&\equiv a\theta \delta^{p^{\deg(P)}}-\theta a\sqrt[p-1]{1+\theta^{p-1}}\sqrt[p-1]{1+\theta^{p-1}}^{p^{\deg(P)}-1} \mod P\\
&\equiv 0 \mod P. \notag
\end{align*}

As the same way, we can show that if $P$ divides $1+\theta^{p-1}$, then $C(N/PN)$ is annihilated by $P$.
We obtain the $\mathcal{L}$-function : 
\begin{align*} 
\mathcal{L}(C( N)) &= \prod\limits_{P\in \MSpec(A)}\dfrac{\vert \Lie_{C}(N/PN)\vert_A }{\vert C(N/PN)\vert_A} \\
&=\prod\limits_{\substack{
P\in \MSpec(A) \\
(P,1+\theta^{p-1})=1}}\dfrac{P}{P-\left(\frac{1+\theta^{p-1}}{P}\right)_{p-1}}\notag \\
&=\sum\limits_{\substack{
b\in A^+ \\
(b,1+\theta^{p-1})=1}}\dfrac{\left(\frac{1+\theta^{p-1}}{b}\right)_{p-1}}{b}. \notag
\end{align*}

As $k_\infty=A\oplus\theta^{-1}\F_p[\![\theta^{-1}]\!]=N\oplus  \F_p[\![\theta^{-1}]\!]b$ and $\F_p[\![\theta^{-1}]\!]\delta\subset \exp_C(k_\infty)$, we obtain $H(C(N))=\{0\}$. It means that $U(C(N))=U_{St}(N)$ and 

$$U_{St}(C(\theta\F_p[\theta]\sqrt[p-1]{1+\theta^{p-1}}))=\sum\limits_{\substack{
b\in A^+ \\
(b,1+\theta^{p-1})=1}}\dfrac{\left(\frac{1+\theta^{p-1}}{b}\right)_{p-1}}{b} \theta\F_p[\theta]\sqrt[p-1]{1+\theta^{p-1}}.$$

As the same way, it means that for $P$ which does not divide $1+\theta^{p-1}$, $\widetilde{C}(\widetilde{N}/P\widetilde{N})$ is annihilated by $P-\left(\dfrac{1+\theta^{p-1}}{P}\right)_{p-1}z^{\deg(P)}$. It implies that $$\mathcal{L}(\widetilde{C}( \widetilde{N}))=\sum\limits_{\substack{
b\in A^+ \\
(b,1+\theta^{p-1})=1}}\dfrac{\left(\frac{1+\theta^{p-1}}{b}\right)_{p-1}z^{\deg(b)}}{b} $$

and $$U(\widetilde{C}(\theta\F_q(z)[\theta]\delta) =\sum\limits_{\substack{
b\in A^+ \\
(b,1+\theta^{p-1})=1}}\dfrac{\left(\frac{1+\theta^{p-1}}{b}\right)_{p-1}z^{\deg(b)}}{b} \theta\F_q(z) [\theta]\delta.$$



\end{document}